\def\R{\mathbb R}
\def\N{\mathbb N}
\def\romega{{r_{\phantom{a}\!\!\!\!_\Omega}}}
\def\rgamma{{r_{\phantom{a}\!\!\!\!_\Gamma}}}
\def\somega{{s_{\phantom{a}\!\!\!\!_\Omega}}}
\def\oversomega{{\overline{s}_{\phantom{a}\!\!\!\!_\Omega}}}
\def\sgamma{{s_{\phantom{a}\!\!\!\!_\Gamma}}}
\def\sigmaomega{{\sigma_{\phantom{a}\!\!\!\!_\Omega}}}
\def\sigmagamma{{\sigma_{\phantom{a}\!\!\!\!_\Gamma}}}
\def\lomega{{l_{\phantom{a}\!\!\!\!_\Omega}}}
\def\lgamma{{l_{\phantom{a}\!\!\!\!_\Gamma}}}
\def\loc{{\text{\upshape loc}}}
\def\negquad{\!\!\!\!}
\def\eps{\varepsilon}
\newcommand{\cal}[1]{{\mathcal #1}}
\DeclareMathOperator\essinf{essinf}
\DeclareMathOperator*\essliminf{essliminf}
\numberwithin{equation}{section}
\newtheorem{thm}{Theorem}
\numberwithin{thm}{section}
\newtheorem{cor}{Corollary}
\numberwithin{cor}{section}
\newtheorem{lem}{Lemma}
\numberwithin{lem}{section}
\newtheorem{prop}{Proposition}
\numberwithin{prop}{section}
\numberwithin{summary}{section}
\theoremstyle{definition}
\newtheorem{definition}{Definition}
\numberwithin{definition}{section}
\theoremstyle{remark}
\newtheorem{rem}{Remark}
\numberwithin{rem}{section}
\begin{document}
\title[On the wave equation with hyperbolic...]
{On the  wave equation with hyperbolic dynamical  boundary
conditions, interior and boundary damping and supercritical sources}
\author{Enzo Vitillaro}
\address[E.~Vitillaro]
       {Dipartimento di Matematica ed Informatica, Universit\`a di Perugia\\
       Via Vanvitelli,1 06123 Perugia ITALY}
\email{enzo.vitillaro@unipg.it}
\date{\today}
\subjclass{35L05, 35L20, 35D30, 35D35, 35Q74}

\keywords{Wave equation, dynamical boundary conditions, damping,
supercritical sources}


\thanks{Work done in the framework of the M.I.U.R. project
"Variational and perturbative aspects of nonlinear differential
problems" (Italy)}

\begin{abstract} The aim of this paper is to study  the problem
$$
\begin{cases} u_{tt}-\Delta u+P(x,u_t)=f(x,u) \qquad &\text{in
$(0,\infty)\times\Omega$,}\\
u=0 &\text{on $(0,\infty)\times \Gamma_0$,}\\
u_{tt}+\partial_\nu u-\Delta_\Gamma u+Q(x,u_t)=g(x,u)\qquad
&\text{on
$(0,\infty)\times \Gamma_1$,}\\
u(0,x)=u_0(x),\quad u_t(0,x)=u_1(x) &
 \text{in $\overline{\Omega}$,}
\end{cases}$$
where $\Omega$ is a bounded  open subset of $\R^N$ with $C^1$ boundary ($N\ge 2$),
$\Gamma=\partial\Omega$, $\Gamma_1$ is relatively open on $\Gamma$,
  $\Delta_\Gamma$ denotes the
Laplace--Beltrami operator on $\Gamma$, $\nu$ is the outward normal
to $\Omega$, and the terms $P$ and $Q$ represent nonlinear damping
terms, while $f$ and $g$ are nonlinear perturbations.

In the paper we establish local and global existence, uniqueness and Hadamard
well--posedness results when  source terms can be supercritical or
super-supercritical.
\end{abstract}

\maketitle

\section{Introduction and main results} \label{intro}
\subsection{Presentation of the problem and literature overview}
We deal with the evolution problem consisting of
the
 wave equation posed in a bounded regular open subset  of
$\R^N$, supplied with a second order dynamical boundary condition of
hyperbolic type, in presence of interior and/or boundary damping
terms and sources. More precisely we consider the
initial --and--boundary value problem
\begin{equation}\label{1.1}
\begin{cases} u_{tt}-\Delta u+P(x,u_t)=f(x,u) \qquad &\text{in
$(0,\infty)\times\Omega$,}\\
u=0 &\text{on $(0,\infty)\times \Gamma_0$,}\\
u_{tt}+\partial_\nu u-\Delta_\Gamma u+Q(x,u_t)=g(x,u)\qquad
&\text{on
$(0,\infty)\times \Gamma_1$,}\\
u(0,x)=u_0(x),\quad u_t(0,x)=u_1(x) &
 \text{in $\overline{\Omega}$,}
\end{cases}
\end{equation}
where $\Omega$ is a bounded open subset of $\R^N$ ($N\ge
2$) with $C^1$ boundary (see \cite{grisvard}). We denote  $\Gamma=\partial\Omega$ and we assume
$\Gamma=\Gamma_0\cup\Gamma_1$, $\Gamma_0\cap\Gamma_1=\emptyset$,
$\Gamma_1$ being relatively open on $\Gamma$ (or equivalently $\overline{\Gamma_0}=\Gamma_0$). Moreover,  denoting by
$\sigma$ the standard Lebesgue hypersurface measure on $\Gamma$, we assume that
$\sigma(\overline{\Gamma}_0\cap\overline{\Gamma}_1)=0$.
 These
properties of $\Omega$, $\Gamma_0$ and $\Gamma_1$ will be assumed,
without further comments, throughout the paper.  Moreover $u=u(t,x)$, $t\ge 0$, $x\in\Omega$,
$\Delta=\Delta_x$ denotes the Laplace operator with respect to the space
variable, while $\Delta_\Gamma$ denotes the Laplace--Beltrami
operator on $\Gamma$ and $\nu$ is the outward normal to $\Omega$.

The terms $P$ and $Q$ represent nonlinear damping terms, i.e.
$P(x,v)v\ge 0$, $Q(x,v)v\ge 0$,  the cases $P\equiv 0$ and $Q\equiv
0$ being specifically allowed, while  $f$ and $g$  represent
nonlinear source, or sink, terms. The specific assumptions on them
will be introduced later on.

Problems with kinetic boundary conditions, that is boundary
conditions involving $u_{tt}$ on $\Gamma$, or on a part of it,
naturally arise in several physical applications. A one dimensional
model was studied by several authors to describe transversal small
oscillations of an elastic rod with a tip mass on one endpoint,
while the other one is pinched. See
\cite{andrewskuttlershillor,conradmorgul,darmvanhorssen,guoxu,markuslittman1,markuslittman2,morgulraoconrad} and also
\cite{meurerkugi} were a piezoelectric stack actuator is modeled.

 A two dimensional model introduced in
\cite{goldsteingiselle} deals with a vibrating membrane of surface
density $\mu$, subject to a tension $T$, both taken constant and
normalized here for simplicity.  If $u(t,x)$,
$x\in\Omega\subset\R^2$  denotes the vertical displacement from the
rest state, then (after a standard linear approximation) $u$
satisfies the wave equation $u_{tt}-\Delta u=0$,
$(t,x)\in\R\times\Omega$. Now suppose that a part $\Gamma_0$ of the
boundary  is pinched, while the other part $\Gamma_1$ carries a
constant linear mass density $m>0$ and  it is subject to a linear
tension $\tau$. A practical example of this situation is given by a
drumhead with a hole in the interior having  a thick border, as
common in bass drums. One linearly approximates the force exerted by
the membrane on the boundary with $-\partial_\nu u$. The boundary
condition thus reads as $mu_{tt}+\partial_\nu u-\tau
\Delta_{\Gamma_1}u=0$. In the quoted paper the case
$\Gamma_0=\emptyset$ and $\tau=0$ was studied, while here we
consider the more realistic case $\Gamma_0\not=\emptyset$ and
$\tau>0$, with $\tau$ and $m$ normalized for simplicity. We would
like to mention that this model belongs to a more general class of
models of Lagrangian type involving boundary energies, as introduced
for example in \cite{fagotinreyes}.

A three dimensional model involving kinetic dynamical boundary
conditions comes out from \cite{GGG}, where a gas undergoing small
irrotational perturbations from rest in a domain $\Omega\subset\R^3$
is considered. Normalizing the constant speed of propagation, the
velocity potential $\phi$ of the gas (i.e. $-\nabla \phi$ is the
particle velocity) satisfies the wave equation $\phi_{tt}-\Delta
\phi=0$ in $\R\times\Omega$. Each point $x\in\partial\Omega$ is
assumed to react to the excess pressure of the acoustic wave like a
resistive harmonic oscillator or spring, that is the boundary is
assumed to be locally reacting (see \cite[pp.
259--264]{morseingard}). The normal displacement $\delta$ of the
boundary into the domain then satisfies
$m\delta_{tt}+d\delta_t+k\delta+\rho\phi_t=0$, where $\rho>0$ is the
fluid density  and $m,d,k\in C(\partial\Omega)$, $m,k>0$, $d\ge 0$.
When the boundary is nonporous one has $\delta_t=\partial_{\nu}\phi$
on $\R\times\partial\Omega$, so the boundary condition reads as
$m\delta_{tt}+d\partial_\nu \phi+k\delta+\rho\phi_t=0$. In the
particular case $m=k$ and $d=\rho$  (see  \cite[Theorem~2]{GGG}) one
proves that $\phi_{|\Gamma}=\delta$, so the boundary condition reads
as  $m\phi_{tt}+d\partial_\nu \phi+k\phi+\rho\phi_t=0$, on
$\R\times\partial\Omega$.
 Now, if one consider the case in which the
boundary is not locally reacting, as in \cite{beale}, one adds
a Laplace--Beltrami term  so getting a dynamical
boundary condition like in \eqref{1.1}.

Several papers in the literature deal with the wave equation  with
kinetic boundary conditions. This fact is even more evident  if one
takes into account that, plugging the equation in \eqref{1.1} into
the boundary condition, we can rewrite it as  $\Delta u
+\partial_\nu u-\Delta_\Gamma u+ Q(x,u_t)+P(x,u_t)=f(x,u)+g(x,u)$.
Such a condition is usually called a {\em generalized Wentzell
boundary condition}, at least when nonlinear perturbations are not
present. We refer to \cite{mugnolo2011}, where abstract semigroup techniques are applied to dissipative wave equations, and to
\cite{doroninlarkinsouza,FGGGR,vazvitM3AS,xiaoliang2,xiaoliang1, zhang}.

Here we shall consider this type of kinetic boundary condition in
connection with nonlinear boundary damping and source terms. These
terms have been considered by several authors, but mainly in
connection with first order dynamical boundary conditions. See
\cite{MR2674175,
MR2645989,bociuNAMA,bociulasiecka2,bociulasiecka1,CDCL,CDCM,chueshovellerlasiecka,fisvit2,lastat,global, zuazua}.
The competition between interior damping and source terms is
methodologically related to the competition between boundary damping
and source and it possesses a large literature as well. See
\cite{MR2609953,georgiev,levserr,ps:private,radu1,STV,blowup}.

A linear problem strongly related to \eqref{1.1} has also
been recently studied in \cite{Fourrier, graberlasiecka}, and another one in the recent paper \cite{zahn}, dealing with
holography, a main theme in theoretical high energy physics and
quantum gravity. See also \cite{graber}.

 Problem \eqref{1.1} has been studied by  the author in the recent paper \cite{Dresda1} (see also \cite{AMS})
when source/sink terms are subcritical, that is when the Nemitskii operators associated to $f$ and $g$
\begin{footnote}{defined on $\Gamma_1$ and trivially extended to $\Gamma_0$.}\end{footnote}
are locally Lischitz, respectively from $H^1(\Omega)$ to $L^2(\Omega)$ and from $H^1(\Gamma)$ to $L^2(\Gamma)$.
In particular, using nonlinear semigroup theory, local Hadamard
well--posedness, and hence also local existence and uniqueness, has
been established in the natural energy space related to the problem. Moreover global existence and well--posedness
have been proved when source terms are either sublinear or dominated by corresponding damping terms.

The aim of the present paper is to extend the results of
\cite{Dresda1} to possibly non--subcritical perturbation terms $f$ and $g$.
As a consequence we do not expect
to get new results when all perturbation terms are subcritical, as in the case $N=2$, but we want to cover the case when one term is subcritical
while the other one is non--subcritical. As a byproduct the results in
\cite{Dresda1} will be a subcase of the more general analysis presented in the sequel.

Our main motivation is constituted by the three dimensional case, in which only the term $f$ can be non--subcritical.
Hence our choice to consider also non--subcritical boundary terms $g$ is only of mathematical interest, but it is motivated as follows.
At first in dimensions higher than $3$ both terms can be non--subcritical. At second this extension is costless,
since {\em all estimates} used in the paper will be explicitly proved  only for $f$ (in relation with $P$) and then trivially transposed to $g$.
Finally to consider terms $f$ and $g$ under the same setting allows to give results in which $f$ and $g$ play a symmetric role, as they do in dimension $N=4$.
\subsection{A simplified problem, source classification and main assumptions}\label{sourceclassification}
To best illustrate our results  we shall consider, in this section, the following simplified version
of problem \eqref{1.1}
\begin{equation}\label{1.2}
\begin{cases} u_{tt}-\Delta u+\alpha(x)P_0(u_t)=f_0(u) \qquad &\text{in
$(0,\infty)\times\Omega$,}\\
u=0 &\text{on $(0,\infty)\times \Gamma_0$,}\\
u_{tt}+\partial_\nu u-\Delta_\Gamma u+\beta(x)Q_0(u_t)=g_0(u)\qquad
&\text{on
$(0,\infty)\times \Gamma_1$,}\\
u(0,x)=u_0(x),\quad u_t(0,x)=u_1(x) &
 \text{in $\overline{\Omega}$,}
\end{cases}
\end{equation}
where $\alpha\in L^\infty(\Omega)$, $\beta\in L^\infty(\Gamma_1)$,
$\alpha,\beta\ge0$, and the following assumptions hold:
\renewcommand{\labelenumi}{{(\Roman{enumi})}}
\begin{enumerate}
\item \label{assumptionI}
$P_0$ and $Q_0$ are continuous and monotone increasing in $\R$,
$P_0(0)=Q_0(0)=0$, and there are $m,\mu >1$ such that
\begin{gather*}0<\varliminf_{|v|\to \infty} \frac {|P_0(v)|}{|v|^{m-1}}  \le \varlimsup_{|v|\to \infty} \frac {|P_0(v)|}{|v|^{m-1}}<\infty,\quad
\varliminf_{|v|\to 0} \frac {|P_0(v)|}{|v|^{m-1}}>0,\\
0<\varliminf_{|v|\to \infty} \frac {|Q_0(v)|}{|v|^{\mu-1}}  \le
\varlimsup_{|v|\to \infty} \frac
{|Q_0(v)|}{|v|^{\mu-1}}<\infty,\quad \varliminf_{|v|\to 0} \frac
{|Q_0(v)|}{|v|^{\mu-1}}>0;
\end{gather*}
\item \label{assumptionII} $f_0,g_0\in C^{0,1}_{\text{loc}}(\R)$ and there are exponents
$p,q\ge 2$ such that
$$|f_0'(u)|=O(|u|^{p-2})\qquad\text{and}\quad  |g_0'(u)|=O(|u|^{q-2})\quad\text{as
$|u|\to\infty$.}$$
\end{enumerate}
Our model nonlinearities, trivially satisfying assumptions (I--II),  are given by
\begin{equation}\label{modelli}
\left\{
\begin{alignedat}3 P_0(v)=&P_1(v):=a|v|^{\widetilde{m}-2}v+ |v|^{m-2}v,
 \quad&& 1<\widetilde{m}\le m,\quad &&a\ge 0,\\
Q_0(v)=&Q_1(v):=b|v|^{\widetilde{\mu}-2}v+ |v|^{\mu-2}v, \quad  &&
1<\widetilde{\mu}\le \mu, \quad &&b\ge 0,
\\
f_0(u)=&f_1(u):=\widetilde{\gamma}|u|^{\widetilde{p}-2}u+
\gamma|u|^{p-2}u+c_1,\quad &&2\le\widetilde{p}\le p,\quad
&&\widetilde{\gamma}, \gamma, c_1\in\R,
\\
g_0(u)=&g_1(u):=\widetilde{\delta}|u|^{\widetilde{q}-2}u+
\delta|u|^{q-2}u+c_2,\, \quad &&2\le\widetilde{q}\le q,\quad
&&\widetilde{\delta}, \delta, c_2\in\R.
\end{alignedat}
\right.
\end{equation}
We introduce the
critical exponents $\romega$ and $\rgamma$ of the Sobolev embeddings of $H^1(\Omega)$ and
$H^1(\Gamma)$ into the corresponding Lebesgue spaces, that is
$$\romega=
\begin{cases}
\dfrac {2N}{N-2} &\text{if $N \ge 3$,}\\ \infty &\text{if $N=2$},
\end{cases}
\qquad \rgamma=
\begin{cases}
\dfrac {2(N-1)}{N-3} &\text{if $N \ge 4$,}\\ \infty &\text{if $N=2,
3$}.
\end{cases}
$$
The term $f_0$, or $f_1$, is usually
classified as follows in the literature (see \cite{bociulasiecka2, bociulasiecka1}):
\renewcommand{\labelenumi}{{(\roman{enumi})}}
\begin{enumerate}
\item $f_0$ is {\em  subcritical} if $2\le p\le 1+\romega/2$, when the
Nemitskii operator $\widehat{f_0}$ associated to $f_0$ is locally
Lipschitz from $H^1(\Omega)$ into $L^2(\Omega)$;
\item $f_0$ is {\em supercritical} if $1+\romega/2<p\le \romega$, when  $\widehat{f_0}$ is no longer locally Lipschitz
from $H^1(\Omega)$ into $L^2(\Omega)$ but it still possesses a potential energy in
$H^1(\Omega)$;
\item $f_0$ is {\em super--supercritical} if $p>\romega$, when $\widehat{f_0}$ has no potentials in $H^1(\Omega)$.
\end{enumerate}
The  analogous classification is made for $g_0$,
depending on $q$ and $\rgamma$.

In \cite{Dresda1} the case when (I--II) hold and $f_0$ and $g_0$ are both subcritical  was studied, while here we are concerned with possibly supercritical and super--supercritical terms. To deal with them we need two further main assumptions:
\renewcommand{\labelenumi}{{(\Roman{enumi})}}
\begin{enumerate}
\setcounter{enumi}{2}
\item \label{III}
$\essinf_\Omega \alpha>0$ when $p>1+\romega/2$,
$\essinf_{\Gamma_1}\beta>0$ when $q>1+\rgamma/2$, and
 \begin{equation}\label{R}
2\le p\le 1+\romega/ \overline{m}', \quad 2\le q\le 1+\rgamma/
\overline{\mu}',
\end{equation}
where $\overline{m}=\max\{2,m\}$ and
$\overline{\mu}=\max\{2,\mu\}$;
\item \label{assumptionIV}if  $1+\romega/2<p=1+\romega/m'$ then $N\le 4$,
$f_0\in C^2(\R)$ and
$\varlimsup\limits_{|u|\to\infty} \frac {|f_0''(u)|}{|u|^{p-3}}<\infty$;
\\
 if $1+\rgamma/2<q=1+\rgamma/\mu'$ then $N\le 5$, $g_0\in C^2(\R)$ and
$\varlimsup\limits_{|u|\to\infty} \frac {|g_0''(u)|}{|u|^{q-3}}<\infty$.
\end{enumerate}

\begin{rem}
Clearly assumption (III) can be equally refereed to our model nonlinearities in \eqref{modelli}, while they satisfy (IV)
if
\begin{itemize}
\item[] $N\le 4$ and $\widetilde{p}>3$ or $\widetilde{\gamma}=0$ provided
$1+\romega/2<p=1+\romega/m'$;
\item[] $N\le 5$
and $\widetilde{q}>3$ or $\widetilde{\delta}=0$ provided
$1+\rgamma/2<q=1+\rgamma/\mu'$.
\end{itemize}
\end{rem}
\begin{rem}
\begin{figure}\label{Fig1}
\includegraphics[width=12truecm]{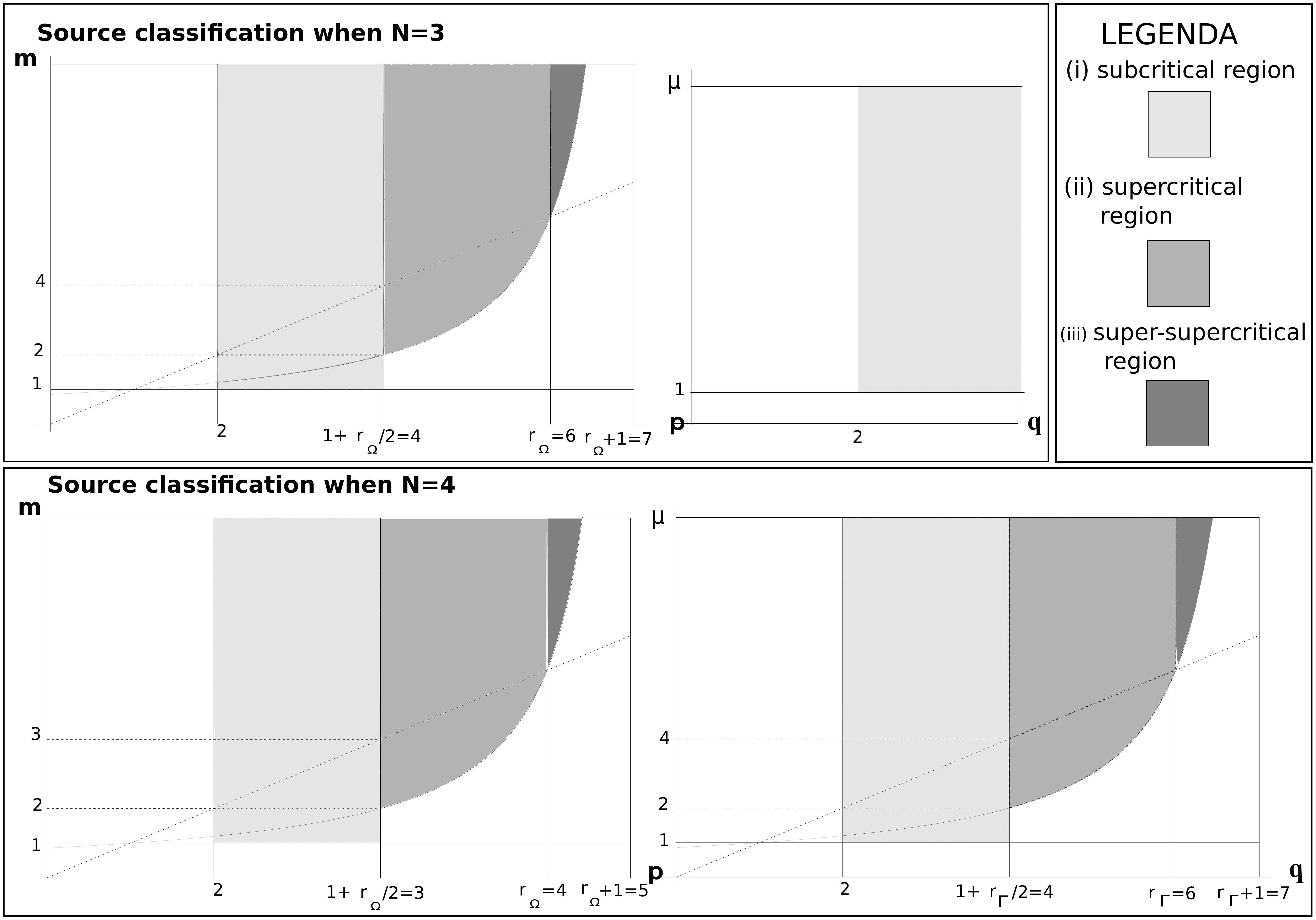}
\caption{The regions covered by \eqref{R} in dimensions $N=3,4$.}
\end{figure}
The sets in the planes $(p,m)$ and $(q,\mu)$, for which \eqref{R}
holds, corresponding to the classification above, are illustrated in
dimensions $N=3,4$ in Figure~1.
The subcritical regions were studied in \cite{Dresda1}, while this paper covers all regions in the picture.
In dimensions $N=3$ and $N=4$, the first one  being of
physical relevance, (IV) is a mild strengthening of (II). When $N=5$ assumption (IV) excludes parameter couples for which $1+\romega/2<p=1+\romega/m'$, while it is a strengthening of (II) for the term $g_0$. When $N\ge
6$ assumption (IV) simply excludes parameter couples for which $1+\romega/2<p=1+\romega/m'$ and $1+\rgamma/2<q=1+\rgamma/\mu'$.
\end{rem}
To present our results it is useful to subclassify supercritical terms $f_0$, by distinguishing between {\em intercritical} ones, when
$1+\romega/2<p<\romega$, and {\em Sobolev critical}  ones when
$p=\romega$.
Moreover terms $f_0$ for which $p<1+\romega/m'$ when $m>2$
will be treated in a different way from those for which
$p=1+\romega/m'$ when $m>2$, according to the compactness or non--compactness of  $\widehat{f_0}$  from $H^1(\Omega)$ to
$L^{m'(p-1)}(\Omega)$. In the latter case we shall say that $f_0$ {\em is on the hyperbola}. We shall also use the term {\em bicritical} for  Sobolev critical
terms on the hyperbola, when $p=m=\romega$. The same terminology will be adopted for $g_0$.
 \subsection{Add--on assumptions}
We now introduce some further assumptions which will be needed only in some results. In uniqueness and well--posedness results assumption (IV) will be
strengthened to the following one:
\renewcommand{\labelenumi}{{(\Roman{enumi})$'$}}
\begin{enumerate}
\setcounter{enumi}{3}
\item \label{assumptionIV'}if  $p>1+\romega/2$ then $N\le 4$,
$f_0\in C^2(\R)$ and
$\varlimsup\limits_{|u|\to\infty} \frac {|f_0''(u)|}{|u|^{p-3}}<\infty$;
\\
 if $q>1+\rgamma/2$ then $N\le 5$, $g_0\in C^2(\R)$ and
$\varlimsup\limits_{|u|\to\infty} \frac {|g_0''(u)|}{|u|^{q-3}}<\infty$.
\end{enumerate}
\begin{rem} The model nonlinearities in \eqref{modelli} satisfy assumption (IV)$'$
if
\begin{itemize}
\item[]$N\le 4$ and $\widetilde{p}>3$ or $\widetilde{\gamma}=0$ provided $p>1+\romega/2$;
\item[] $N\le 5$ and $\widetilde{q}>3$ or $\widetilde{\delta}=0$ provided $q>1+\rgamma/2$.
\end{itemize}
\end{rem}
\begin{rem}
In dimensions $N=3, 4$ assumption (IV)$'$ is a mild strengthening of
(II). When $N=5$ it says that $f_0$ is subcritical while it is a strengthening of (II) for $g_0$.
When $N\ge 6$ it simply says that $f_0$ and $g_0$ are both subcritical.
\end{rem}

When dealing with well--posedness, we shall restrict to
non--bicritical terms $f_0$, $g_0$ and  we shall use one
between the assumptions:
\renewcommand{\labelenumi}{{(\Roman{enumi})}}
\begin{enumerate}
\setcounter{enumi}{4}
\item  \label{assumptionV}if  $p\ge \romega$ then $\essliminf\limits_{|v|\to\infty}\frac {P_0'(v)}{|v|^{m-2}}>0$, if  $q\ge\rgamma $ then $\essliminf\limits_{|v|\to\infty}\frac {Q_0'(v)}{|v|^{\mu-2}}>0$;
\setcounter{enumi}{4}\renewcommand{\labelenumi}{{(\Roman{enumi})$'$}}
\item if  $m>\romega$ then $\essliminf\limits_{|v|\to\infty}\frac {P_0'(v)}{|v|^{m-2}}>0$, if  $\mu>\rgamma$ then $\essliminf\limits_{|v|\to\infty}\frac {Q_0'(v)}{|v|^{\mu-2}}>0$.
\end{enumerate}
\begin{rem}By \eqref{R}, we have $m>\romega$ when $p> \romega$ and
$\mu>\romega$ when $q> \romega$, the same implications being true
for weak inequalities, hence when sources are non--bicritical we
have $m>\romega$ when $p\ge \romega$ and $\mu>\romega$ when $q\ge
\romega$. Hence assumption (V)$'$ is  stronger than (V).
\end{rem}

\begin{rem} The model nonlinearities in \eqref{modelli} trivially satisfy both (V) and (V)$'$.\end{rem}
When looking for global solutions, as shown in \cite{Dresda1}, one has to restrict to perturbation terms which source part has at most linear growth at infinity or, roughly, it is dominated by the corresponding damping term.
Hence, denoting
\begin{equation}\label{23}
\mathfrak{F}_0(u)=\int_0^u f_0(s)\,ds,\qquad\mathfrak{G}_0(u)=\int_0^u g_0(s)\,ds\qquad\text{for all $u\in\R$,}
\end{equation}
we shall use the following specific global existence assumption:
\begin{enumerate}
\setcounter{enumi}5
\item there are $p_1$ and $q_1$ satisfying
\begin{equation}\label{V.2}2\le p_1\le \min\{p,\,\max\{2,m\}\}\quad\text{and}\quad  2\le q_1\le \min\{q,\,\max\{2,\mu\}\}
\end{equation}
and such that
\begin{equation}\label{V.1}
\varlimsup_{|u|\to\infty}\mathfrak{F}_0(u)/|u|^{p_1}<\infty\quad\text{and}\quad \varlimsup_{|u|\to\infty}\mathfrak{G}_0(u)/|u|^{q_1}<\infty.
\end{equation}
Moreover
\begin{equation}\label{V.3}
 \text{$\essinf_\Omega \alpha>0$ if $p_1>2$ and $\essinf_{\Gamma_1}\beta>0$ if $q_1>2$.}
\end{equation}
\end{enumerate}
Since $\mathfrak{F}_0(u)=\int_0^1 f_0(su)u\,ds$ (similarly for $\frak{G}_0$),
 (VI) is a weak version
 \begin{footnote}{ Actually (VI)$'$ is more general than (VI). Indeed, when
$f_0(u)=(m+1)|u|^{m-1}u\cos |u|^{m+1}$ and $g_0(u)=(\mu+1)|u|^{\mu-1}u\cos |u|^{\mu+1}$,
\eqref{Vstrong} holds only for $p_1\ge m+1$, $q_1\ge \mu+1$, while \eqref{V.1} does with $p_1=q_1=2$.
}\end{footnote}
 of the following assumption, which is adequate for most purposes and it is easier to verify:
\renewcommand{\labelenumi}{{(VI)$'$}}
\begin{enumerate}
\item there are $p_1$ and $q_1$ such that  \eqref{V.2} holds with \eqref{V.3} and
\begin{equation}\label{Vstrong}
\varlimsup_{|u|\to\infty}f_0(u)u/|u|^{p_1}<\infty\qquad\text{and}\quad \varlimsup_{|u|\to\infty}g_0(u)u/|u|^{q_1}<\infty.
\end{equation}
\end{enumerate}
\begin{rem}
Assumptions (II) and (VI)$'$ are satified by $f_0$ when it belongs to one among the following classes:
\renewcommand{\labelenumi}{{(\arabic{enumi})}}\begin{enumerate}
\setcounter{enumi}{-1}
\item $f_0$  is constant;
\item $f_0$ satisfies (II) with $p\le \max\{2,m\}$ and
$\essinf_\Omega \alpha>0$ if $p>2$;
\item $f_0$ satisfies (II) and $f_0(u)u\le 0$ in $\R$.
\end{enumerate}
The same remark applies to $g_0$, {\em mutatis mutandis}.
More generally (II) and (VI)$'$ hold when
\begin{equation}\label{centerdot}
f_0=f_0^0+f_0^1+f_0^2,\qquad g_0=g_0^0+g_0^1+g_0^2,
\end{equation}
and $f_0^i$ and $g_0^i$ belong to the class (i) for $i=0,1,2$.
\begin{footnote}{
Actually {\em all} functions verifying (II) and (VI)$'$ are of the form \eqref{centerdot},
where $f_0^1$ are $g_0^1$ are sources (that is $f_0^1(u)u\ge 0$ in $\R$). See Remark \ref{remark6.3ff}.}\end{footnote}
\end{rem}
\begin{rem}\label{remark1.5}
One easily checks that $f_1$ in \eqref{modelli} satisfies (II) and  (VI) if and only if  one among the following cases (the analogous ones applying to  $g_1$) occurs:
\renewcommand{\labelenumi}{{(\roman{enumi})}}
\begin{enumerate}
\item $\gamma>0$, $p\le \max\{2,m\}$ and $\essinf_\Omega\alpha >0$ if $p>2$;
\item $\gamma\le 0$, $\widetilde{\gamma}>0$,  $\widetilde{p}\le \max\{2,m\}$ and
$\essinf_\Omega \alpha>0$ if $\widetilde{p}>2$;
\item $\gamma,\widetilde{\gamma}\le0$.
\end{enumerate}
\end{rem}
\subsection{Function spaces and auxiliary exponents} We shall
identify $L^2(\Gamma_1)$ with its isometric image in $L^2(\Gamma)$,
that is
\begin{equation}\label{ID}
L^2(\Gamma_1)=\{u\in L^2(\Gamma): u=0\,\,\text{a.e. on
}\,\,\Gamma_0\}.
\end{equation}
We set, for $\rho\in [1,\infty)$, the Banach spaces
\begin{alignat*}2
&L^{2,\rho}_\alpha(\Omega)=\{u\in L^2(\Omega): \alpha^{1/\rho}u\in
L^\rho(\Omega)\}, \quad
&&\|\cdot\|_{2,\rho,\alpha}=\|\cdot\|_2+\|\alpha^{1/\rho}
\cdot\|_\rho,\\ &L^{2,\rho}_\beta(\Gamma_1)=\{u\in L^2(\Gamma_1):
\beta^{1/\rho}u\in L^\rho(\Gamma_1)\}, \quad
&&\|\cdot\|_{2,\rho,\beta,\Gamma_1}=\|\cdot\|_{2,\Gamma_1}+\|\beta^{1/\rho}
\cdot\|_{\rho,\Gamma_1},
\end{alignat*}
where
 $\|\cdot\|_\rho:=\|\cdot\|_{L^\rho(\Omega)}$ and
$\|\cdot\|_{\rho,\Gamma_1}:=\|\cdot\|_{L^\rho(\Gamma_1)}$. We denote by $u_{|\Gamma}$ the trace
on $\Gamma$ of $u\in H^1(\Omega)$. We introduce the Hilbert
spaces $H^0 = L^2(\Omega)\times L^2(\Gamma_1)$ and
\begin{equation}\label{H1}
H^1 = \{(u,v)\in H^1(\Omega)\times H^1(\Gamma): v=u_{|\Gamma}, v=0
\,\,\ \text{on $\Gamma_0$}\},
\end{equation}
with the norms inherited from the products. For the sake of
simplicity we shall identify, when useful, $H^1$ with its isomorphic
counterpart $\{u\in H^1(\Omega): u_{|\Gamma}\in H^1(\Gamma)\cap
L^2(\Gamma_1)\}$, through the identification $(u,u_{|\Gamma})\mapsto
u$, so we shall write, without further mention, $u\in H^1$ for
functions defined on $\Omega$. Moreover we shall drop the notation
$u_{|\Gamma}$, when useful, so we shall write $\|u\|_{2,\Gamma}$,
$\int_\Gamma u$, and so on, for $u\in H^1$. We also introduce, for
$\rho,\theta\in [1,\infty)$, the reflexive
spaces
\begin{footnote}{trivially it is possible to extend the definition of the spaces
$H^{1,\rho,\theta}_{\alpha,\beta}$ and $H^{1,\rho,\theta}$ also for $\rho,\theta=\infty$, and actually we shall do in Section \ref{section 2}, but we remark that in the statement of our results presented in this section these values \emph{are not allowed}.
}\end{footnote}
\begin{equation}\label{H1plus}
H^{1,\rho,\theta}_{\alpha,\beta}=H^1\cap
[L^{2,\rho}_\alpha(\Omega)\times L^{2,\theta}_\beta(\Gamma_1)],\,\,
H^{1,\rho,\theta}=H^{1,\rho,\theta}_{1,1}=H^1\cap
[L^\rho(\Omega)\times L^\theta(\Gamma_1)].
\end{equation}
\begin{rem}\label{remark1.1}
By assumption (III) we have
$H^{1,\rho,\theta}_{\alpha,\beta}=H^{1,\rho,\theta}$ when
\begin{equation}
\label{spaziuguali}
\rho\in\begin{cases}
[2,\romega],\quad&\text{if $p\le 1+\romega/2$,}\\
[2,\infty) ,\quad&\text{if $p>1+\romega/2$,}
\end{cases}\quad\text{and}\quad
\theta\in\begin{cases}
[2,\rgamma],\quad&\text{if $q\le 1+\rgamma/2$,}\\
[2,\infty) ,\quad&\text{if $q>1+\rgamma/2$.}
\end{cases}
\end{equation}
\end{rem}

We introduce the auxiliary exponents $\somega=\somega(p,N)$ and $\sgamma=\sgamma(q,N)$ by
\begin{equation}\label{somegagamma}
\somega=\max\left\{2,
\romega(p-2)/(\romega-2)\right\},\quad
\sgamma=\max\left\{2,\rgamma(p-2)/(\rgamma-2)\right\},
\end{equation}
extended by continuity when $\romega,\rgamma=\infty$.
Trivially one has
\begin{alignat}2\label{somegagammarelazioni1} &p\lesseqgtr
\romega \quad\Leftrightarrow\quad \somega\lesseqgtr
\romega,\qquad&& q\lesseqgtr \rgamma
\quad\Leftrightarrow\quad \sgamma\lesseqgtr \rgamma;\\
\label{ellis234}
&p\ge\romega \quad\Rightarrow\quad p\le \somega,\qquad&& q\ge\rgamma \quad\Rightarrow\quad q\le
\sgamma.
\end{alignat}
Moreover, when (IV)$'$ holds, so $\romega\ge 4$ when $p\ge
1+\romega/2$ and $\rgamma\ge 4$ when $q\ge 1+\rgamma/2$, by
\eqref{R} we have
\begin{footnote}{indeed when $p>\romega$ by \eqref{R} we have $m>\romega$ and, since $\romega\ge 4$ we get
$(\romega-2)m^2+\romega(1-\romega)m+\romega^2>0$ or equivalently
$m>\frac \romega{\romega-2}\left(\frac\romega{m'}-1\right)$ so using
\eqref{R} again $m>\somega$. The calculation can be repeated with
weak inequalities and with $p,m,\romega$ replaced by
$q,\mu\,\rgamma$. }\end{footnote}
\begin{equation}\label{somegagammarelazioni2}
p>\romega \quad\Rightarrow\quad \romega< \somega<
m,\qquad\text{and}\quad q>\rgamma \quad\Rightarrow\quad \rgamma<
\sgamma< \mu,
\end{equation}
the same implications being true with weak inequalities. We also set
\begin{gather}\label{sigmaomegagamma}
\sigmaomega=\begin{cases}
\somega &\text{if $\romega<p=1+\romega/m'$},\\
2 & \text{otherwise},
\end{cases}
\qquad\sigmagamma=\begin{cases}
\sgamma &\text{if $\rgamma<q=1+\rgamma/\mu'$},\\
2 & \text{otherwise,}
\end{cases}\\
\label{lomegagamma}
\lomega=\begin{cases}
\somega &\text{if $\romega<p=1+\romega/m'$},\\
p &\text{if $\romega<p<1+\romega/m'$},\\
2 &\text{if $p\le\romega$},
\end{cases}\qquad\lgamma=\begin{cases}
\sgamma &\text{if $\rgamma<q=1+\rgamma/\mu'$},\\
q &\text{if $\rgamma<q<1+\rgamma/\mu'$},\\
2 &\text{if $q\le\rgamma$},
\end{cases}
\end{gather}
so, by \eqref{H1plus}, \eqref{somegagamma}, \eqref{ellis234}, \eqref{sigmaomegagamma}--\eqref{lomegagamma}  and assumption (II),
\begin{equation}\label{lsomegagammaMINORE}
2\le \sigmaomega\le \lomega\le\somega,\qquad 2\le
\sigmagamma\le \lgamma\le\sgamma,
\end{equation}
and $$H^{1,\somega,\sgamma}\hookrightarrow H^{1,\lomega,\lgamma}\hookrightarrow
H^{1,\sigmaomega,\sigmagamma}\hookrightarrow H^1.$$
When (IV)$'$ holds, by \eqref{somegagammarelazioni1}, \eqref{somegagammarelazioni2} and \eqref{lsomegagammaMINORE} we have
\begin{equation}\label{lsomegagamma}
2\le \sigmaomega\le \lomega\le\somega\le\max\{\romega,m\},\quad 2\le
\sigmagamma\le \lgamma\le\sgamma\le \max\{\rgamma,\mu\}.
\end{equation}
\subsection{Local analysis} Our first main result is the following one.
\begin{thm}[\bf Local existence and continuation]  \label{theorem1.1}
Let (I--IV) hold.  Then:
\renewcommand{\labelenumi}{{(\roman{enumi})}}
\begin{enumerate}
\item
for any $(u_0,u_1)\in H^{1,\sigmaomega,\sigmagamma}\times H^0$
\eqref{1.2} has a maximal weak solution, that is
\begin{equation}\label{1.14}
\begin{aligned}
&u=(u,u_{|\Gamma})\in
L^\infty_{\text{loc}}([0,T_{\text{max}});H^1)\cap
W^{1,\infty}_{\text{loc}}([0,T_{\text{max}});H^0), \\
&u'=(u_t,{u_{|\Gamma}}_t)\in
L^m_{\text{loc}}([0,T_{\text{max}});L^{2,m}_\alpha(\Omega))\times
L^\mu_{\text{loc}}([0,T_{\text{max}});L^{2,\mu}_\beta(\Gamma_1)),
\end{aligned}\end{equation}
which satisfies \eqref{1.2} in a distribution sense (to be specified
later);
\item $u$ enjoys the regularity
\begin{equation}\label{1.15}
U=(u,u')\in C([0,T_{\text{max}});H^{1,\sigmaomega,\sigmagamma}\times
H^0)
\end{equation}
and satisfies, for $0\le s\le t<T_{\text{max}}$, the energy identity
\begin{footnote}{here $\nabla_\Gamma$
denotes the Riemannian gradient on $\Gamma$ and $|\cdot|_\Gamma$,
the norm associated to the Riemannian scalar product on the tangent
bundle of $\Gamma$. See Section \ref{section 2}.}\end{footnote}
\begin{equation}\label{energyidentityreduced}
\begin{split}
\tfrac 12 \left.\int_\Omega u_t^2\! + \tfrac 12\int_{\Gamma_1}
{u_{|\Gamma}}_t^2 +\tfrac 12 \int_\Omega |\nabla u|^2 +\tfrac 12
\int_{\Gamma_1} |\nabla_\Gamma  u|_\Gamma ^2\right|_s^t\!\! \!+
\!\int_s^t\!\!\!\int_\Omega \alpha
P_0(u_t)u_t\\+\int_s^t\int_{\Gamma_1} \beta
Q_0({u_{|\Gamma}}_t){u_{|\Gamma}}_t=\int_s^t\int_{\Omega}
f_0(u)u_t+\int_s^t\int_{\Gamma_1} g_0(u){u_{|\Gamma}}_t;
\end{split}
\end{equation}
\item
$\varlimsup_{t\to T^-_{\text{max}}}\|U(t)\|_{H^1\times H^0}=\infty$
provided  $T_{\text{max}}<\infty$.
\end{enumerate}
Finally the property (ii) is enjoyed by any weak maximal solution of
\eqref{1.2}.
\end{thm}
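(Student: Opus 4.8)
The plan is to prove the final assertion directly from the \emph{definition} of weak maximal solution, i.e.\ from the regularity \eqref{1.14} and the distributional form of \eqref{1.2}, so that uniqueness plays no role. Fix a weak maximal solution $u$ and $0<T<T_{\text{max}}$, and for small $h>0$ let $u_h$ denote the convolution in time of $u$ with a standard mollifier. Since the coefficients in \eqref{1.2} are $t$--independent, convolution commutes with $\partial_t$, $\Delta$ and $\Delta_\Gamma$, so $u_h$ satisfies, in the distributional sense on an interior subinterval,
\begin{equation*}
(u_h)_{tt}-\Delta u_h+\alpha\,(P_0(u_t))_h=(f_0(u))_h\ \text{ in }\Omega,\qquad
(u_h)_{tt}+\partial_\nu u_h-\Delta_\Gamma u_h+\beta\,(Q_0(u_t))_h=(g_0(u))_h\ \text{ on }\Gamma_1.
\end{equation*}
Because $u\in L^\infty_{\text{loc}}(H^1)$, one has $\partial_t u_h=(u_t)_h\in H^1$, so $u_h$ may legitimately be tested with $(u_t)_h$ and its boundary trace; integrating by parts in space and using $u=0$ on $\Gamma_0$, the two normal--derivative contributions on $\Gamma_1$ cancel exactly, as in the formal energy computation, and one obtains the \emph{exact} energy identity for $u_h$ on every $[s,t]$ in the interior.

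First I would establish the strong continuity \eqref{1.15} by showing that $\{U_h\}=\{(u_h,(u_t)_h)\}$ is a Cauchy net in $C([0,T];H^1\times H^0)$. The difference $w=u_h-u_{h'}$ is smooth in time and solves the \emph{linear} wave system with bulk datum $F_{h,h'}:=(f_0(u))_h-(f_0(u))_{h'}-\alpha[(P_0(u_t))_h-(P_0(u_t))_{h'}]$ and its boundary analogue, so the exact energy identity for $w$ gives, for an interior base point $s_0$,
\begin{equation*}
\sup_{[s_0,T)}\tfrac12\|U_h-U_{h'}\|_{H^1\times H^0}^2\le \tfrac12\|U_h(s_0)-U_{h'}(s_0)\|_{H^1\times H^0}^2+\int_0^T\!\!\int_\Omega|F_{h,h'}|\,|(u_t)_h-(u_t)_{h'}|+(\text{bdry}).
\end{equation*}
Choosing $s_0$ a common Lebesgue point of $\tau\mapsto u(\tau)\in H^1$ and $\tau\mapsto u_t(\tau)\in H^0$ makes the first term vanish as $h,h'\to0$. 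The crux is that the integral tends to $0$: by assumption (I) and \eqref{1.14} one has $\alpha^{1/m'}P_0(u_t)\in L^{m'}$ and $\alpha^{1/m}u_t\in L^m$ on $(0,T)\times\Omega$, so the damping increments pair in weighted $L^{m'}$--$L^m$ and vanish; while the source increments pair with $(u_t)_h-(u_t)_{h'}$ either in $L^{m'}$--$L^m$ (in the supercritical range $p>1+\romega/2$, where (III) gives $\essinf_\Omega\alpha>0$ and forces $m\ge2$, so \eqref{R} reads $(p-1)\le\romega/m'$ and hence $f_0(u)\in L^\infty(0,T;L^{m'}(\Omega))$) or in $L^2$--$L^2$ (in the subcritical range $p\le1+\romega/2$, where $f_0(u),u_t\in L^\infty(0,T;L^2)$). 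The same estimates hold on $\Gamma_1$ with $\mu,q,\rgamma,\beta,Q_0,g_0$. Thus $U_h\to U$ in $C$ of each interior interval into $H^1\times H^0$, whence $U$ has a strongly continuous representative (continuity up to $t=0$ coming from the initial conditions in the definition of weak solution).

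It remains to reach the finer space in \eqref{1.15} in the on--the--hyperbola case, where $\sigmaomega=\somega>2$: there \eqref{R} gives $m>\romega$, \eqref{somegagammarelazioni2} gives $\somega<m$, and (III) gives $\essinf_\Omega\alpha>0$, so that $u_t\in L^m(0,T;L^m(\Omega))\hookrightarrow L^1(0,T;L^{\somega}(\Omega))$; hence $u(t)=u_0+\int_0^t u_t\in C([0,T];L^{\somega}(\Omega))$, and the analogous argument on $\Gamma_1$ gives the trace in $C([0,T];L^{\sgamma}(\Gamma_1))$, which together with the $H^1$--continuity just proved yields $U\in C([0,T];H^{1,\sigmaomega,\sigmagamma}\times H^0)$. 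Finally, passing to the limit $h\to0$ in the exact identities for $u_h$ is now immediate for \emph{every} $s\le t$: the quadratic energy terms converge by the $C(H^1\times H^0)$ convergence of $U_h$, and the damping and source integrals converge by the very $L^{m'}$--$L^m$ (resp.\ $L^2$--$L^2$) convergences established above, giving \eqref{energyidentityreduced}.

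The main obstacle is precisely the uniform control of these mollification commutators in the Cauchy estimate: one must pair the regularized sources and dampings with the regularized velocity in the \emph{correct} dual spaces, and it is here that the structural restriction \eqref{R} of assumption (III) is indispensable, as it is exactly what places $f_0(u)$ in $L^{m'}$ against $u_t\in L^m$. The delicate sub--case is the borderline $p=1+\romega/m'$, when $f_0$ is on the hyperbola and $\widehat{f_0}$ is no longer compact from $H^1(\Omega)$ into $L^{m'(p-1)}(\Omega)$: the pairing is then critical and the finer continuity into $L^{\somega}(\Omega)$ must be extracted separately from the damping regularity, assumption (IV) being what keeps this borderline tractable. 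Once these convergences are secured, all remaining steps are routine.
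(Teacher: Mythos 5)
Your proposal addresses only the last sentence of the theorem (``property (ii) is enjoyed by any weak maximal solution''), and this leaves the core of the statement unproved. Parts (i) and (iii) --- the \emph{existence} of a maximal weak solution for data in $H^{1,\sigmaomega,\sigmagamma}\times H^0$, and the blow--up alternative at $T_{\text{max}}$ --- are not touched by a mollification argument applied to an already--given solution. In the paper these are the hard part: one truncates the nonlinearities as $f^n(x,u)=\eta_n(u)f(x,u)$, $g^n(x,u)=\eta_n(u)g(x,u)$ so as to fall into the subcritical theory of \cite{Dresda1}, proves uniform a priori bounds on a common time interval $T_1(\|U_0\|_{\cal H})$ (Lemma~\ref{lemma5.3}, which itself needs a further Lipschitz truncation $\widehat{f^n_R}$), and then passes to the limit combining Simon's compactness, the key estimates of Lemmas~\ref{lemma4.1} and \ref{lemma4.4} in the on--the--hyperbola case $p=1+\romega/m'$ (this is exactly where assumption (IV) enters --- not, as you suggest, in the continuity of $u$ into $L^{\somega}(\Omega)$), and a maximal--monotonicity argument to identify the weak limit of $B(\dot u^n)$ with $B(\dot u)$. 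Part (iii) then requires re--solving from the data $U(T_{\text{max}})$ with an existence time controlled by $\|U(T_{\text{max}})\|_{\cal{H}^{\sigmaomega,\sigmagamma}}$, so it too rests on the quantitative existence statement of Proposition~\ref{proposition5.1}. None of this can be extracted from the energy identity for a fixed solution.

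For the fragment you do treat, the strategy (time--mollification, exact identity for $u_h$, Cauchy estimate in $C([0,T];H^1\times H^0)$, limit $h\to 0$) is essentially the content of Lemma~\ref{lemma2.1} (quoted from \cite{Dresda1}) combined with Lemma~\ref{lemma3.3}, and your integrability bookkeeping --- $f_0(u)\in L^\infty(0,T;L^{m'}(\Omega))$ against $u_t\in L^m$ via \eqref{R}, and $u\in C([0,T];L^{\somega}(\Omega))$ from $u_t\in L^m(0,T;L^m(\Omega))$ with $\somega<m$ --- matches the paper's. One caution: the cancellation of the ``normal--derivative contributions on $\Gamma_1$'' is purely formal, since for $u\in L^\infty(0,T;H^1)$ neither $\Delta u_h$ nor $\partial_\nu u_h$ is a function; the argument must be run entirely inside the weak formulation \eqref{2.9}, testing with admissible $\phi$ and using the generalized identity of Lemma~\ref{lemma2.1}, rather than by integrating a pointwise equation by parts. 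So even restricted to the final assertion your argument needs repair at that step, and as a proof of Theorem~\ref{theorem1.1} it is missing its two principal components.
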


Theorem~\ref{theorem1.1} sharply extends
 the local existence
statement in \cite[Theorem~1.1]{Dresda1}
 with respect to internal sources/sink when $N=3$ and to  internal and
 boundary sources/sink when $N\ge 4$ (see Figure~1). Moreover
 it sharply generalizes all local existence results in the broader literature concerning
wave equation with internal/boundary damping and source terms (see \cite{bociulasiecka1,bociulasiecka2, STV}).

The proof of Theorem~\ref{theorem1.1} presented in the sequel is
based on \cite[Theorem~1.1]{Dresda1} together with a
truncation procedure inspired from \cite{bociulasiecka1}. Moreover
we use a combination of a compactness argument from \cite{STV}, when
$(p,m)$ and $(q,\mu)$ are not on the hyperbola, \begin{footnote}{in this case
 an alternative approach, only using compactness as in
\cite{STV}, is possible.}\end{footnote}with a key estimate, somewhat
simplified and extended to higher dimension, from \cite{bociulasiecka2} (see Lemmas~\ref{lemma4.1}
and \ref{lemma4.4} below) when they are on the hyperbola.

The generality of Theorem~\ref{theorem1.1} is best illustrated by
some of its corollaries, the first of which concerns data in
$H^1\times H^0$ and involves minimal assumptions on the
nonlinearities and no restrictions on $N$ but excludes source/sink terms on
the hyperbola, in the spirit of \cite{STV}.
\begin{cor} \label{corollary1.1}Under assumptions (I--III) and
\begin{equation}\label{R'}
p<1+\romega/m'\quad\text{when $m>2$,}\qquad
q<1+\rgamma/\mu'\quad\text{when $\mu>2$,}
\end{equation}
for any $(u_0,u_1)\in H^1\times H^0$
problem \eqref{1.2} has a maximal weak solution.
\end{cor}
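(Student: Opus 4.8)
The plan is to deduce Corollary~\ref{corollary1.1} directly from Theorem~\ref{theorem1.1}, by showing that the extra restriction \eqref{R'} empties the premises of assumption (IV) and collapses the phase space $H^{1,\sigmaomega,\sigmagamma}$ onto $H^1$. Once this is done, (I--III) together with \eqref{R'} place us squarely within the hypotheses of the theorem for data in $H^1\times H^0$, and the conclusion follows at once.

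First I would check that \eqref{R'} renders assumption (IV) vacuous, so that (I--III) entail (I--IV). The premise of (IV) relative to $f_0$ is the conjunction $1+\romega/2<p=1+\romega/m'$. If $m\le 2$, then $\overline{m}=2$ and \eqref{R} forces $p\le 1+\romega/2$, contradicting $1+\romega/2<p$; if $m>2$, then \eqref{R'} forces $p<1+\romega/m'$, contradicting $p=1+\romega/m'$. Either way the premise fails, so (IV) holds trivially for $f_0$; the identical argument, with $(p,m,\romega)$ replaced by $(q,\mu,\rgamma)$, handles $g_0$.

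Next I would show that \eqref{R'} gives $\sigmaomega=\sigmagamma=2$. By \eqref{sigmaomegagamma}, $\sigmaomega$ equals $\somega$ only in the case $\romega<p=1+\romega/m'$, and this case is impossible under \eqref{R'}: when $m>2$ the equality $p=1+\romega/m'$ is excluded exactly as above, while when $m\le 2$ assumption \eqref{R} gives $p\le 1+\romega/2$, hence $p<\romega$ (trivially so for $N=2$, where $\romega=\infty$), so that $\romega<p$ fails. Thus $\sigmaomega=2$, and symmetrically $\sigmagamma=2$. Consequently, by \eqref{H1plus} and the definition \eqref{H1} of $H^1$, we obtain $H^{1,\sigmaomega,\sigmagamma}=H^{1,2,2}=H^1\cap[L^2(\Omega)\times L^2(\Gamma_1)]=H^1$, since every $(u,u_{|\Gamma})\in H^1$ already lies in $L^2(\Omega)\times L^2(\Gamma_1)$ through the identification \eqref{ID}.

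It then remains only to invoke Theorem~\ref{theorem1.1}(i): the admissible data space $H^{1,\sigmaomega,\sigmagamma}\times H^0$ is now precisely $H^1\times H^0$, and the theorem furnishes the required maximal weak solution for every $(u_0,u_1)$ therein. The whole argument is essentially bookkeeping with the definitions \eqref{sigmaomegagamma}, the constraints \eqref{R}--\eqref{R'} and the exponent conventions, so there is no genuine analytical obstacle; the only point demanding attention is the case distinction $m\le 2$ versus $m>2$ (and the parallel one for $\mu$), together with the degenerate behaviour of the critical exponents $\romega,\rgamma,\somega,\sgamma$ at $N=2$, where they are infinite or defined by continuity.
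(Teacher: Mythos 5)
Your argument is correct and is essentially the paper's own proof (given there as Corollary~\ref{corollary5.1} in the general setting of problem \eqref{1.1}): one observes that \eqref{R'} makes the premise $1+\romega/2<p=1+\romega/m'$ (and its $\Gamma$--analogue) impossible, so (IV) is vacuous, and that by \eqref{sigmaomegagamma} it forces $\sigmaomega=\sigmagamma=2$, whence $H^{1,\sigmaomega,\sigmagamma}=H^1$ and Theorem~\ref{theorem1.1} applies. Your explicit case distinction $m\le 2$ versus $m>2$ just fills in the detail the paper leaves implicit.
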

By excluding only super-supercritical sources  on the hyperbola but having restrictions on $N$ on the supercritical part of it we get
\begin{cor}\label{corollary1.2} If assumptions (I--IV) are satisfied and
\begin{equation}\label{R''bis}
p<1+\romega/m'\quad\text{when $m>\romega$,}\qquad
q<1+\rgamma/\mu'\quad\text{when $\mu>\rgamma$,}
\end{equation}
for any $(u_0,u_1)\in H^1\times H^0$
problem \eqref{1.2} has a maximal weak solution.

In particular the same conclusion holds true when
\begin{equation}\label{subSobolevcritical}
2\le p\le \romega\qquad\text{and}\quad 2\le
q\le \rgamma.
\end{equation}
\end{cor}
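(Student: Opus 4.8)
The plan is to read off Corollary~\ref{corollary1.2} as a special case of Theorem~\ref{theorem1.1}, the point being that under hypothesis \eqref{R''bis} the data space $H^{1,\sigmaomega,\sigmagamma}$ occurring in the local existence statement actually collapses to $H^1$. Since $H^1\hookrightarrow L^2(\Omega)\times L^2(\Gamma_1)$, definition \eqref{H1plus} gives $H^{1,2,2}=H^1$; hence the whole argument reduces to checking that $\sigmaomega=\sigmagamma=2$, after which Theorem~\ref{theorem1.1}(i), applied with data in $H^{1,\sigmaomega,\sigmagamma}\times H^0=H^1\times H^0$, produces the desired maximal weak solution.

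First I would inspect the value of $\sigmaomega$ through its definition \eqref{sigmaomegagamma}: one has $\sigmaomega=\somega$ \emph{only} in the exceptional case $\romega<p=1+\romega/m'$, and $\sigmaomega=2$ in all other cases. So the task is simply to rule out that exceptional case. I split into two possibilities. If $p\le\romega$, the exceptional case cannot arise, as it requires $p>\romega$, so $\sigmaomega=2$. If instead $p>\romega$, then by \eqref{R} one has $m>\romega$, and \eqref{R''bis} then forces $p<1+\romega/m'$; thus the equality $p=1+\romega/m'$ fails and again $\sigmaomega=2$. Running the identical argument with $(p,m,\romega)$ replaced by $(q,\mu,\rgamma)$ yields $\sigmagamma=2$.

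Having secured $\sigmaomega=\sigmagamma=2$, I conclude $H^{1,\sigmaomega,\sigmagamma}=H^1$ and the first assertion is immediate from Theorem~\ref{theorem1.1}(i). For the concluding ``in particular'' claim I would observe that \eqref{subSobolevcritical} implies \eqref{R''bis}: when $p\le\romega$ and $m>\romega$ one has $1+\romega/m'=1+\romega(1-1/m)>\romega\ge p$, and symmetrically for $q$, so that the first part applies; alternatively, $p\le\romega$ already gives $\sigmaomega=2$ directly from \eqref{sigmaomegagamma}, bypassing \eqref{R''bis} altogether.

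The computation is entirely elementary, so I do not expect a genuine obstacle. The only step requiring care is the bookkeeping of the exponent definition \eqref{sigmaomegagamma} together with the implication $p>\romega\Rightarrow m>\romega$ drawn from \eqref{R}, which is precisely what makes the exceptional case disappear exactly under \eqref{R''bis}; verifying that this implication is the correct hinge is the one place where I would double-check the chain of inequalities.
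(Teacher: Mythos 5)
Your proposal is correct and follows essentially the same route as the paper: the paper's proof (via Corollary~\ref{corollary5.2}) consists precisely of the observation that \eqref{R''bis} rules out the exceptional case $\romega<p=1+\romega/m'$ (and its boundary analogue) in \eqref{sigmaomegagamma}, so that $\sigmaomega=\sigmagamma=2$ and $H^{1,\sigmaomega,\sigmagamma}=H^1$, after which Theorem~\ref{theorem1.1} applies. Your verification that $p>\romega$ forces $m>\romega$ via \eqref{R}, and that \eqref{subSobolevcritical} implies \eqref{R''bis}, correctly fills in the details the paper leaves implicit.
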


Theorem~\ref{theorem1.1} and Corollaries~\ref{corollary1.1}--
\ref{corollary1.2}  are stated under  minimal
regularity (or, more properly, integrability) assumptions on $u_0$. When $u_0$
is more regular solutions are more regular, as one sees by a trivial
time-integration, using \eqref{1.14}--\eqref{1.15} and
Remark~\ref{remark1.1}. We explicitly state this remark since it
will be crucial in the sequel.
\begin{cor} \label{corollary1.3} Under assumptions (I--IV) the
conclusions of Theorem~\ref{theorem1.1} hold when
$H^{1,\sigmaomega,\sigmagamma}$ is replaced by
$H^{1,\rho,\theta}_{\alpha,\beta}$, provided $\rho,\theta\in\R$ satisfy
\begin{equation}\label{ranges1}
(\rho,\theta)\in [\sigmaomega,\max\{\romega,m\}]\times
[\sigmagamma,\max\{\rgamma,\mu\}],
\end{equation}
and by $H^{1,\rho,\theta}$, provided $\rho$ and $\theta$ satisfy \eqref{spaziuguali} and \eqref{ranges1}.
 In particular, by
\eqref{lsomegagamma}, it can be
replaced by $H^{1,\somega,\sgamma}$ when also  (IV)$'$ holds.
\end{cor}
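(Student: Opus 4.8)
The plan is to read off the maximal weak solution for data in the larger space $H^{1,\rho,\theta}_{\alpha,\beta}$ directly from Theorem~\ref{theorem1.1}, and then to upgrade its time--continuity by the elementary time--integration announced just before the statement. First I would observe that under \eqref{ranges1} there is a continuous embedding $H^{1,\rho,\theta}_{\alpha,\beta}\hookrightarrow H^{1,\sigmaomega,\sigmagamma}$. Indeed both spaces lie in $H^1$, and since $\sigmaomega\le\rho$ with $\alpha\in L^\infty(\Omega)$, the factorisation $\alpha^{1/\sigmaomega}u_0=\alpha^{1/\sigmaomega-1/\rho}\,(\alpha^{1/\rho}u_0)$ with bounded first factor, combined with the continuous inclusion $L^\rho(\Omega)\hookrightarrow L^{\sigmaomega}(\Omega)$ on the bounded set $\Omega$, gives $\alpha^{1/\sigmaomega}u_0\in L^{\sigmaomega}(\Omega)$; the boundary factor is handled identically with $(\beta,\theta,\sigmagamma)$. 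Hence $(u_0,u_1)\in H^{1,\rho,\theta}_{\alpha,\beta}\times H^0$ is admissible for Theorem~\ref{theorem1.1}, which produces a maximal weak solution $u$ on $[0,T_{\text{max}})$ satisfying \eqref{1.14}, the energy identity \eqref{energyidentityreduced}, and the blow--up alternative. As this is the very same solution, property (i), the energy identity, and the blow--up statement (iii) (a statement about the $H^1\times H^0$ norm) transfer verbatim, $T_{\text{max}}$ is unchanged, and only the continuity \eqref{1.15} must be improved to $U\in C([0,T_{\text{max}});H^{1,\rho,\theta}_{\alpha,\beta}\times H^0)$.

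The crux is therefore to prove $u\in C([0,T_{\text{max}});L^{2,\rho}_\alpha(\Omega))$, the $H^1$--component and $u'\in C(H^0)$ being already contained in (ii). I would split on the size of $\rho$. When $\rho\le\romega$, the Sobolev embedding $H^1(\Omega)\hookrightarrow L^\rho(\Omega)$ and $u\in C([0,T_{\text{max}});H^1)$ (from (ii), using $H^{1,\sigmaomega,\sigmagamma}\hookrightarrow H^1$) give $u\in C([0,T_{\text{max}});L^\rho(\Omega))$, whence the claim since $\alpha^{1/\rho}$ is bounded. When $\rho>\romega$, necessarily $m>\romega$ and $\rho\le m$ by \eqref{ranges1}; now $H^1\not\hookrightarrow L^\rho$ and one must use the extra integrability of $u'$ furnished by the damping in \eqref{1.14}. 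The key estimate, valid on the bounded domain because $1/\rho-1/m\ge 0$ and $L^m(\Omega)\hookrightarrow L^\rho(\Omega)$, is
\begin{equation*}
\|\alpha^{1/\rho}u_t\|_\rho=\|\alpha^{1/\rho-1/m}\,\alpha^{1/m}u_t\|_\rho\le\|\alpha\|_\infty^{1/\rho-1/m}\,|\Omega|^{1/\rho-1/m}\,\|\alpha^{1/m}u_t\|_m .
\end{equation*}
Integrating in time and using Hölder, $\int_0^t\|\alpha^{1/\rho}u_t\|_\rho\,ds\lesssim t^{1/m'}\|\alpha^{1/m}u_t\|_{L^m((0,t);L^m(\Omega))}<\infty$ by \eqref{1.14}, so $u_t\in L^1_{\loc}([0,T_{\text{max}});L^{2,\rho}_\alpha(\Omega))$.

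With this, the fundamental theorem of calculus for the Bochner integral yields $u(t)=u_0+\int_0^t u_t\,ds$ as an identity in $L^{2,\rho}_\alpha(\Omega)$ (consistent with the $L^2$--valued identity coming from $u\in W^{1,\infty}_{\loc}([0,T_{\text{max}});H^0)$), and absolute continuity of the integral together with $u_0\in L^{2,\rho}_\alpha(\Omega)$ gives $u\in C([0,T_{\text{max}});L^{2,\rho}_\alpha(\Omega))$. The boundary component $u_{|\Gamma}\in C([0,T_{\text{max}});L^{2,\theta}_\beta(\Gamma_1))$ is obtained verbatim with $(\beta,\theta,\mu,\rgamma)$ in place of $(\alpha,\rho,m,\romega)$, establishing \eqref{1.15} in the upgraded space. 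Finally, the version with $H^{1,\rho,\theta}$ is immediate, since under \eqref{spaziuguali} Remark~\ref{remark1.1} identifies $H^{1,\rho,\theta}_{\alpha,\beta}=H^{1,\rho,\theta}$; and the special case $H^{1,\somega,\sgamma}$ under (IV)$'$ follows by taking $(\rho,\theta)=(\somega,\sgamma)$, which satisfies \eqref{ranges1} thanks to \eqref{lsomegagamma}.

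The only genuine obstacle is the super--supercritical regime $\rho>\romega$: there the gain of continuity cannot come from a Sobolev embedding and rests entirely on the damping--induced integrability $u'\in L^m_{\loc}([0,T_{\text{max}});L^{2,m}_\alpha(\Omega))$ of \eqref{1.14}, combined with the elementary fact that on a bounded domain weighted $L^\rho$ control is dominated by weighted $L^m$ control once $\rho\le m$. One must also take some care to justify the fundamental theorem of calculus in the weighted space $L^{2,\rho}_\alpha(\Omega)$ rather than merely in $L^2(\Omega)$, which is exactly where the integrability bound just established is used.
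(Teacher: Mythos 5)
Your argument is correct and is essentially the paper's own proof: the paper reduces the corollary to Lemma~\ref{lemma3.3}--(iii), whose proof is exactly your dichotomy (Sobolev embedding when $\rho\le\romega$; when $\rho>\romega$, hence $m>\romega$ and $\rho\le m$, the time--integration $u(t)=u_0+\int_0^t u_t$ combined with $\dot u\in Z(0,T')$ to get $u\in W^{1,\rho}(0,T';L^{2,\rho}_\alpha(\Omega))\hookrightarrow C([0,T'];L^{2,\rho}_\alpha(\Omega))$), followed by Remark~\ref{remark1.1} for the unweighted spaces and \eqref{lsomegagamma} for the $(\somega,\sgamma)$ case. One small point to tidy: your factorisation only shows $u_0\in L^{2,\sigmaomega}_\alpha(\Omega)$, whereas Theorem~\ref{theorem1.1} asks for the \emph{unweighted} $H^{1,\sigmaomega,\sigmagamma}$; this is closed by observing that either $\sigmaomega=2$ (so $H^1$ suffices) or $\sigmaomega=\somega>2$ forces $p>1+\romega/2$ and hence $\essinf_\Omega\alpha>0$ by (III), which is precisely the content of Remark~\ref{remark1.1} since $(\sigmaomega,\sigmagamma)$ satisfies \eqref{spaziuguali}.
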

Or second main result asserts also uniqueness when $(u_0,u_1)\in
H^{1,\somega,\sgamma}\times H^0$.
\begin{thm}[\bf Local existence--uniqueness and continuation]
\label{theorem1.2} Under assumptions (I--III) and (IV)$'$  the
following conclusions hold:
\renewcommand{\labelenumi}{{(\roman{enumi})}}
\begin{enumerate}
\item
for any $(u_0,u_1)\in H^{1,\somega,\sgamma}\times H^0$  problem
\eqref{1.2} has a unique  maximal weak solution $u$ in
$[0,T_{\text{max}})$;
\item $u$ enjoys the regularity
\begin{equation}\label{1.21}
U=(u,u')\in C([0,T_{\text{max}});H^{1,\somega,\sgamma}\times H^0)
\end{equation}
and satisfies, for $0\le s\le t<T_{\text{max}}$, the energy identity
\eqref{energyidentityreduced};
\item
if  $T_{\text{max}}<\infty$ then
\begin{equation}\label{specialblowup}
\varlimsup_{t\to T^-_{\text{max}}}\|U(t)\|_{H^1\times
H^0}=\lim_{t\to
T^-_{\text{max}}}\|U(t)\|_{H^{1,\somega,\sgamma}\times H^0}=\infty.
\end{equation}
\end{enumerate}
In particular, when $2\le p\le\romega$ and $2\le q\le \rgamma$, we have
 $H^{1,\somega,\sgamma}=H^1$.
\end{thm}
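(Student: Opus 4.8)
The plan is to obtain existence, the regularity \eqref{1.21} and the energy identity \eqref{energyidentityreduced} for free from the local theory already developed, and to concentrate the real work on uniqueness and on sharpening the blow-up alternative. Since (IV)$'$ is a strengthening of (IV), assumptions (I--IV) hold, so Corollary~\ref{corollary1.3} applies with $(\rho,\theta)=(\somega,\sgamma)$ and yields, for every $(u_0,u_1)\in H^{1,\somega,\sgamma}\times H^0$, a maximal weak solution satisfying \eqref{1.21} and \eqref{energyidentityreduced}; moreover Theorem~\ref{theorem1.1}(iii) gives $\varlimsup_{t\to T_{\text{max}}^-}\|U(t)\|_{H^1\times H^0}=\infty$ when $T_{\text{max}}<\infty$. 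This settles existence, part~(ii), and the first equality in \eqref{specialblowup}. The final assertion $H^{1,\somega,\sgamma}=H^1$ for $2\le p\le\romega$, $2\le q\le\rgamma$ follows from \eqref{somegagammarelazioni1}, which gives $\somega\le\romega$ and $\sgamma\le\rgamma$, combined with the Sobolev embeddings $H^1(\Omega)\hookrightarrow L^{\romega}(\Omega)$ and $H^1(\Gamma)\hookrightarrow L^{\rgamma}(\Gamma)$: the extra integrability defining $H^{1,\somega,\sgamma}$ is then automatic. Thus the genuinely new content is uniqueness and the upgrade of the weak-norm $\varlimsup$ to a full limit in the stronger norm.

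For uniqueness I would run the energy method on the difference of two maximal weak solutions $u,\hat u$ with the same data. Setting $w=u-\hat u$, so $w(0)=0$ and $w'(0)=0$, I subtract the two weak formulations of \eqref{1.2} and pair with $w'$ --- a step to be justified by a time-regularization (Steklov averaging) argument, since $w'$ is not \emph{a priori} admissible. As the damping differences are pointwise nonnegative by the monotonicity in (I), they may be discarded, leaving
\begin{equation*}
\tfrac{d}{dt}\,\mathcal E_w(t)\le\Big|\int_\Omega(f_0(u)-f_0(\hat u))\,w_t\Big|+\Big|\int_{\Gamma_1}(g_0(u)-g_0(\hat u))\,{w_{|\Gamma}}_t\Big|,
\end{equation*}
where $\mathcal E_w=\tfrac12\int_\Omega w_t^2+\tfrac12\int_{\Gamma_1}({w_{|\Gamma}}_t)^2+\tfrac12\int_\Omega|\nabla w|^2+\tfrac12\int_{\Gamma_1}|\nabla_\Gamma w|_\Gamma^2$. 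It then suffices to bound the right-hand side by $\Phi(t)\,\mathcal E_w(t)$ with $\Phi\in L^1_{\mathrm{loc}}[0,T_{\text{max}})$ and to apply Gronwall's inequality with $\mathcal E_w(0)=0$, obtaining $w\equiv0$.

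The source estimate is the heart of the argument and the step I expect to be the main obstacle. Following the paper's convention I would prove it only for the interior term $\int_\Omega(f_0(u)-f_0(\hat u))w_t$, the boundary term being identical. Writing $f_0(u)-f_0(\hat u)=\big(\int_0^1 f_0'(\hat u+sw)\,ds\big)\,w$ and using $|f_0'|=O(|u|^{p-2})$ from (II), a naive Hölder splitting with $w\in H^1(\Omega)\hookrightarrow L^{\romega}(\Omega)$ and $w_t\in L^2(\Omega)$ would require $u,\hat u\in L^{2\somega}(\Omega)$, far beyond the $L^{\somega}(\Omega)$ integrability guaranteed by \eqref{1.21}; this failure is genuine throughout the supercritical range and most acute on the hyperbola $p=1+\romega/m'$ (and at $p=\romega$), where $\widehat{f_0}$ is no longer locally Lipschitz from $H^1(\Omega)$ into $L^2(\Omega)$. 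Here assumption (IV)$'$ is decisive: the regularity $f_0\in C^2(\R)$ together with $\varlimsup_{|u|\to\infty}|f_0''(u)|/|u|^{p-3}<\infty$ enables the refined second-order estimate of Lemmas~\ref{lemma4.1} and~\ref{lemma4.4} --- a simplified, higher-dimensional version of the key estimate of \cite{bociulasiecka2} --- which controls the source difference by $\Phi(t)\,\mathcal E_w(t)$ with $\Phi$ dominated by $\|u(t)\|_{H^{1,\somega,\sgamma}}+\|\hat u(t)\|_{H^{1,\somega,\sgamma}}$, hence locally bounded along the solutions by \eqref{1.21}. The exponent $\somega$ of \eqref{somegagamma} is calibrated precisely so that this estimate closes.

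Finally, the sharpened blow-up in part~(iii) follows by combining uniqueness with a continuation argument, once one records that the local construction produces an existence time bounded below by a function of $\|(u_0,u_1)\|_{H^{1,\somega,\sgamma}\times H^0}$ alone. If $T_{\text{max}}<\infty$ and $\liminf_{t\to T_{\text{max}}^-}\|U(t)\|_{H^{1,\somega,\sgamma}\times H^0}$ were finite, choosing $t_n\uparrow T_{\text{max}}$ with $\|U(t_n)\|_{H^{1,\somega,\sgamma}\times H^0}$ bounded and restarting \eqref{1.2} at time $t_n$ from data $U(t_n)$ would give solutions on intervals $[t_n,t_n+\tau]$ with $\tau>0$ independent of $n$; by uniqueness these coincide with $U$ and extend it past $T_{\text{max}}$, contradicting maximality. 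Hence $\lim_{t\to T_{\text{max}}^-}\|U(t)\|_{H^{1,\somega,\sgamma}\times H^0}=\infty$, the second equality in \eqref{specialblowup}.
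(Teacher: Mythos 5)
Your proposal follows the paper's own route: existence, the regularity \eqref{1.21} and the energy identity are taken from the local theory (Corollary~\ref{corollary1.3}, i.e.\ Corollary~\ref{corollary5.3}--(iii)), uniqueness is obtained by the energy method on the difference of two solutions, with the damping discarded by monotonicity and the source differences controlled by the key estimates of Lemmas~\ref{lemma4.1} and~\ref{lemma4.4}, and the sharpened blow--up in \eqref{specialblowup} follows from the uniform lower bound on the existence time of Proposition~\ref{proposition5.1} combined with uniqueness and the gluing Lemma~\ref{lemma3.3}--(ii); this is exactly how Theorems~\ref{theorem6.1}--\ref{theorem6.2} are proved. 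The derivation of $H^{1,\somega,\sgamma}=H^1$ from \eqref{somegagammarelazioni1} and the Sobolev embeddings is also correct.

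One point in your write--up does not match what the cited lemmas actually deliver, and as literally stated your Gronwall step would not close. Estimate \eqref{4.3} bounds the \emph{time--integrated} source difference, it contains the non--Gronwall term $K_4(\eps+t)\|W(t)\|^2_{\cal{H}}$, which must be absorbed into the left--hand side $\tfrac 12\|W(t)\|^2_{\cal{H}}$ by choosing $\eps$ and the length of the time interval small, and the resulting Gronwall weight is $1+\|u_t\|_{m_p}+\|\hat u_t\|_{m_p}+\|{u_{|\Gamma}}_t\|_{\mu_q,\Gamma_1}+\|{\hat u_{|\Gamma}}_t\|_{\mu_q,\Gamma_1}$, which is only locally integrable in time (via $\dot u,\dot{\hat u}\in Z(0,T')$ and \eqref{ZLMP}), not locally bounded, and in particular is not dominated by $\|u(t)\|_{H^{1,\somega,\sgamma}}+\|\hat u(t)\|_{H^{1,\somega,\sgamma}}$. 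Consequently uniqueness is first obtained only on a short interval $[0,\widetilde T]$ and then propagated to the whole common existence interval by the standard supremum/continuation argument, as in the proof of Theorem~\ref{theorem6.1}. These are routine repairs, but the pointwise differential inequality $\frac{d}{dt}\cal{E}_w\le\Phi\,\cal{E}_w$ with bounded $\Phi$ that you describe is precisely what fails in the supercritical range and what the double integration by parts in time in Lemma~\ref{lemma4.1} is designed to circumvent.
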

The proof of Theorem~\ref{theorem1.2} is based on the key estimate
recalled above and on standard arguments.  When $u_0$ is more regular, as in Corollary~\ref{corollary1.3}, we have
\begin{cor} \label{corollary1.4} Under assumptions (I--III) and (IV)$'$ the main
conclusions of Theorem~\ref{theorem1.2} hold when the space
$H^{1,\somega,\sgamma}$ is replaced by
$H^{1,\rho,\theta}_{\alpha,\beta}$, provided  $\rho,\theta\in\R$ satisfy
\begin{equation}\label{ranges2}
(\rho,\theta)\in [\somega,\max\{\romega,m\}]\times
[\sgamma,\max\{\rgamma,\mu\}],
\end{equation}
and by $H^{1,\rho,\theta}$, provided $\rho$ and $\theta$ satisfy \eqref{spaziuguali} and \eqref{ranges2}.
\end{cor}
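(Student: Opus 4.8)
The plan is to deduce Corollary~\ref{corollary1.4} from Theorem~\ref{theorem1.2} by showing that the extra integrability of the datum $u_0$ is propagated in time, so that the \emph{same} unique solution already produced by Theorem~\ref{theorem1.2} automatically lives in the smaller space $H^{1,\rho,\theta}_{\alpha,\beta}$. I would organize the argument in three steps, plus the reduction of the unweighted statement. \emph{Step~1 (reduction).} First I would check that, for $(\rho,\theta)$ as in \eqref{ranges2}, the datum space embeds continuously into the one of Theorem~\ref{theorem1.2}, i.e. $H^{1,\rho,\theta}_{\alpha,\beta}\hookrightarrow H^{1,\somega,\sgamma}$. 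Since the $H^1$ parts coincide, this reduces to the bound $\|u\|_{\somega}\le C\,\|u\|_{H^{1,\rho,\theta}_{\alpha,\beta}}$ and its boundary analogue. When $\somega\le\romega$ this is immediate from $H^1(\Omega)\hookrightarrow L^{\romega}(\Omega)$; when $\somega>\romega$, by \eqref{somegagammarelazioni1} we are in the super--supercritical range $p>\romega$, so (III) yields $\essinf_\Omega\alpha>0$ and, since $\rho\ge\somega$ on the bounded set $\Omega$, $\|u\|_{\somega}\le C\|u\|_\rho\le C(\essinf_\Omega\alpha)^{-1/\rho}\|\alpha^{1/\rho}u\|_\rho$. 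Hence $(u_0,u_1)\in H^{1,\somega,\sgamma}\times H^0$ and Theorem~\ref{theorem1.2} provides the unique maximal weak solution $u$ on $[0,T_{\text{max}})$, satisfying \eqref{1.21} and the energy identity.

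\emph{Step~2 (regularity upgrade).} It remains to improve \eqref{1.21} to $U=(u,u')\in C([0,T_{\text{max}});H^{1,\rho,\theta}_{\alpha,\beta}\times H^0)$. The $H^0$ component and the $H^1$ part of the first component are already given by Theorem~\ref{theorem1.2}, so only continuity of $t\mapsto u(t)$ into $L^{2,\rho}_\alpha(\Omega)$ (and the boundary analogue) is new. Here I would use the representation $u(t)=u_0+\int_0^t u'(s)\,ds$, legitimate because $u_t\in L^m_{\loc}([0,T_{\text{max}});L^{2,m}_\alpha(\Omega))$ by \eqref{1.14}, and split the range \eqref{ranges2} into two regimes. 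If $\rho\le\romega$, continuity of $u$ in $L^{2,\rho}_\alpha(\Omega)$ follows directly from continuity in $H^1$ and $H^1(\Omega)\hookrightarrow L^{\romega}(\Omega)\hookrightarrow L^\rho(\Omega)$, using $\|\alpha^{1/\rho}v\|_\rho\le\|\alpha\|_\infty^{1/\rho}\|v\|_\rho$. If instead $\rho>\romega$, then $\max\{\romega,m\}=m\ge\rho$, so $\rho\le m$ and I write $\alpha^{1/\rho}u_t=\alpha^{1/\rho-1/m}(\alpha^{1/m}u_t)$ with $\alpha^{1/\rho-1/m}\in L^\infty(\Omega)$ and $\alpha^{1/m}u_t\in L^m(\Omega)\hookrightarrow L^\rho(\Omega)$; thus $\alpha^{1/\rho}u_t\in L^1_{\loc}([0,T_{\text{max}});L^\rho(\Omega))$ and the Bochner integral $t\mapsto\int_0^t\alpha^{1/\rho}u'(s)\,ds$ is absolutely continuous into $L^\rho(\Omega)$. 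Since the two regimes cover $[\somega,\max\{\romega,m\}]$, we get $u\in C([0,T_{\text{max}});L^{2,\rho}_\alpha(\Omega))$. The boundary term is handled verbatim, replacing $(\alpha,m,\romega,\somega,\rho)$ by $(\beta,\mu,\rgamma,\sgamma,\theta)$.

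\emph{Step~3 (continuation and the unweighted case).} For the blow-up alternative I would compare norms: the estimate of Step~1 gives $\|U(t)\|_{H^{1,\somega,\sgamma}\times H^0}\le C\|U(t)\|_{H^{1,\rho,\theta}_{\alpha,\beta}\times H^0}$, while trivially $\|U(t)\|_{H^1\times H^0}\le\|U(t)\|_{H^{1,\somega,\sgamma}\times H^0}$. As the solution coincides with that of Theorem~\ref{theorem1.2}, $T_{\text{max}}$ is unchanged, and when $T_{\text{max}}<\infty$ the identity \eqref{specialblowup} combined with the first inequality upgrades at once to $\lim_{t\to T^-_{\text{max}}}\|U(t)\|_{H^{1,\rho,\theta}_{\alpha,\beta}\times H^0}=\infty$, the equality $\varlimsup_{t\to T^-_{\text{max}}}\|U(t)\|_{H^1\times H^0}=\infty$ being already part of \eqref{specialblowup}. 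Finally, the unweighted statement follows from the weighted one through Remark~\ref{remark1.1}: under (III) one has $H^{1,\rho,\theta}_{\alpha,\beta}=H^{1,\rho,\theta}$ whenever \eqref{spaziuguali} holds, so the two formulations agree on the allowed range.

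The genuinely delicate point is Step~2 in the super--supercritical regime $m>\romega$, where the extra integrability of $u(t)$ cannot come from a Sobolev embedding and must be recovered purely from the damping-controlled datum $u_t\in L^{2,m}_\alpha(\Omega)$. There one must be careful that the factor $\alpha^{1/\rho-1/m}$ stays bounded (so that the constraint $\rho\le m$, i.e. the upper endpoint $\max\{\romega,m\}$ in \eqref{ranges2}, is exactly what is used), that the time integration delivers \emph{continuity} in the $L^\rho$ topology rather than mere boundedness, and that the weight can be absorbed thanks to $\essinf_\Omega\alpha>0$ from (III). All remaining computations are routine verifications.
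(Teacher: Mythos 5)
Your proposal is correct and follows essentially the same route as the paper: the paper obtains Corollary~\ref{corollary1.4} from Theorem~\ref{theorem6.2} (the general form of Theorem~\ref{theorem1.2}) together with Lemma~\ref{lemma3.3}--(iii) and Remark~\ref{remark1.1}, and your Step~2 is precisely the paper's proof of Lemma~\ref{lemma3.3}--(iii) (the Sobolev--embedding case versus the $\rho\le m$ time--integration case with the weight $\alpha^{1/\rho-1/m}$ absorbed). The only cosmetic difference is that you re-derive that propagation lemma inline instead of citing it.
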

The sequel of our local analysis concerns
local Hadamard well--posedness. Unfortunately it is possible to prove this type of result in the same space used in Theorem~\ref{theorem1.2} {\em only} when
sources are subcritical or intercritical, in this case being
$H^{1,\somega,\sgamma}=H^1.$

When $p\ge \romega$ we have to restrict
to $u_0\in L^{s_1}(\Omega)$ with $s_1\in (\somega,m]$, while when
$q\ge \rgamma$ to ${u_0}_{|\Gamma}\in L^{s_2}(\Gamma_1)$ with
$s_2\in (\sgamma,\mu]$. Since $(\somega,m]=\emptyset$ when
$p=m=\romega$ and $(\sgamma,\mu]=\emptyset$ when $q=\mu=\rgamma$,  we have to exclude these cases by assuming
\begin{equation}\label{R''}
(p,m)\not=(\romega,\romega)\qquad\text{and}\quad
(q,\mu)\not=(\rgamma,\rgamma).
\end{equation}
Consequently we shall consider $u_0\in H^{1,s_1,s_2}\hookrightarrow
H^{1,\somega,\sgamma}$  where
\begin{footnote}{by \eqref{somegagammarelazioni1} conditions $s_1>\somega$ when $p<\romega$  and $s_2>\sgamma$ when $q<\rgamma$ can be trivially skipped.}\end{footnote}
\begin{equation}\label{ranges2ter}
s_1\in
\begin{cases}
(\somega,\romega]&\text{if $p<\romega$},\\
(\somega,\max\{\romega,m\}] &\text{otherwise,}
\end{cases}\quad
s_2\in\begin{cases}
(\sgamma,\rgamma] &\text{if $q<\rgamma$},\\
(\sgamma,\max\{\rgamma,\mu\}]\, &\text{otherwise.}
\end{cases}
\end{equation}
Trivially, by \eqref{somegagammarelazioni1} and
\eqref{somegagammarelazioni2}, when \eqref{R''} holds  we have the implications
\begin{equation}\label{somegagammarelazioni3}
p\geq\romega \quad\Rightarrow\quad \romega\le \somega<
m,\qquad\text{and}\quad q\ge\rgamma \quad\Rightarrow\quad \rgamma\le
\sgamma< \mu.
\end{equation}
After these preliminary considerations we can state
\begin{thm}[\bf Local Hadamard well--posedness I]  \label{theorem1.3}
Let assumptions (I--III), (IV)$'$, (V) and \eqref{R''} hold. Then problem \eqref{1.2} is
locally well--posed in $H^{1,s_1,s_2}\times H^0$ for
$s_1$ and $s_2$ satisfying \eqref{ranges2ter}.

More explicitly,   given
$(u_{0n},u_{1n})\to (u_0,u_1)$ in $H^{1,s_1,s_2}\times H^0$,
respectively denoting by $u^n$ and $u$ the unique weak maximal solution of
\eqref{1.2} in $[0,T^n_{\text{max}})$ and $[0,T_{\text{max}})$
corresponding to initial data $(u_{0n},u_{1n})$ and $(u_0,u_1)$, which exist by Theorem~\ref{theorem1.2},
$U^n=(u^n,\dot{u}^n)$ and $U=(u,\dot{u})$,
 the following conclusions hold:
\renewcommand{\labelenumi}{{(\roman{enumi})}}
\begin{enumerate}
\item $T_{\text{max}}\le \varliminf_n T^n_{\text{max}}$ and
\item $U^n\to U$ in $C([0,T^*];H^{1,s_1,s_2}\times H^0)$ for any
$T^*\in (0,T_{\text{max}})$.
\end{enumerate}
In particular, when $2\le p<\romega$ and $2\le q< \rgamma$, since
(V) has empty content, problem \eqref{1.2} is locally well--posed in
$H^1\times H^0$ under assumptions (I--III), (IV)$'$.
\end{thm}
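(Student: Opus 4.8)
The plan is to establish the two continuous-dependence conclusions by a standard sequential-compactness-plus-uniqueness scheme, anchored on the energy identity \eqref{energyidentityreduced} and the uniqueness guaranteed by Theorem~\ref{theorem1.2}. First I would fix $T^*\in(0,T_{\text{max}})$ and observe that, since $(u_{0n},u_{1n})\to(u_0,u_1)$ in $H^{1,s_1,s_2}\times H^0$, the data are uniformly bounded in that space; applying the energy identity to each $u^n$ together with the sign conditions $P_0(v)v\ge0$, $Q_0(v)v\ge0$ and the growth bounds on $f_0,g_0$ from (II)--(III), I would derive a uniform-in-$n$ \emph{a priori} bound for $\|U^n(t)\|_{H^{1,\somega,\sgamma}\times H^0}$ on $[0,T^*]$. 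This simultaneously forces $\varliminf_n T^n_{\text{max}}\ge T^*$, and since $T^*<T_{\text{max}}$ is arbitrary, yields conclusion (i), namely $T_{\text{max}}\le\varliminf_n T^n_{\text{max}}$. The energy bound also controls the damping terms, so that $\dot u^n$ is uniformly bounded in $L^m((0,T^*);L^{2,m}_\alpha(\Omega))\times L^\mu((0,T^*);L^{2,\mu}_\beta(\Gamma_1))$ by assumption~(V), which is precisely where the hypothesis $p\ge\romega\Rightarrow\essliminf|v|^{2-m}P_0'(v)>0$ is used to recover integrability of $u^n_t$ at the critical growth rate $m$ matching the source exponent.

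Next I would extract, via Banach--Alaoglu and the Aubin--Lions lemma, a subsequence of $U^n$ converging weakly-$*$ in the energy space and strongly in lower-order norms (e.g. $u^n\to v$ in $C([0,T^*];H^0)$ and a.e.), and pass to the limit in the weak formulation of \eqref{1.2}. The monotonicity of $P_0$ and $Q_0$ lets me identify the weak limits of the damping terms through a Minty-type argument, while the a.e.\ convergence handles the sources $f_0,g_0$. The limit $v$ is thus a weak solution with data $(u_0,u_1)$; by the uniqueness in Theorem~\ref{theorem1.2} it must coincide with $u$ on $[0,T^*]$. A standard subsequence argument then upgrades this to convergence of the \emph{whole} sequence, giving weak-$*$ convergence $U^n\rightharpoonup U$.

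The genuinely hard part is promoting weak convergence to the \emph{strong} convergence in $C([0,T^*];H^{1,s_1,s_2}\times H^0)$ asserted in conclusion (ii), since the source terms are supercritical and the embedding of $H^1$ into $L^{s_1}(\Omega)\times L^{s_2}(\Gamma_1)$ is non-compact precisely when $s_1=m$ or $s_2=\mu$ (the on-the-hyperbola regime). My plan here is to prove convergence of the energies: writing the energy identity for $u^n$ and for $u$ over $[0,t]$, subtracting, and using the strong convergence of the data together with the already-established weak convergence and the monotone structure of the damping, I would show $\int_0^t\int_\Omega\alpha[P_0(u^n_t)-P_0(u_t)](u^n_t-u_t)\to0$ and the analogous boundary term. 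Combining the lower semicontinuity of the norms under weak convergence with convergence of the total energy forces the norm convergence $\|U^n(t)\|\to\|U(t)\|$, which in a Hilbert/uniformly convex setting upgrades weak to strong convergence pointwise in $t$; uniformity on $[0,T^*]$ then follows from the equicontinuity provided by the energy identity. The delicate point is controlling the source contribution $\int_0^t\int_\Omega [f_0(u^n)-f_0(u)]\,u^n_t$ in the supercritical range: here I would invoke the growth restriction \eqref{R}, i.e.\ $p\le 1+\romega/\overline m{}'$, which is exactly the balance guaranteeing that $f_0(u^n)$ stays bounded in the dual space $L^{m'}$ where $u^n_t$ lives, so that the source--damping product passes to the limit. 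Finally, the case $2\le p<\romega$, $2\le q<\rgamma$ reduces to well--posedness in $H^1\times H^0$ because then $H^{1,\somega,\sgamma}=H^1$ by \eqref{somegagammarelazioni1} and (V) is vacuous.
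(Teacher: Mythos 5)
Your outline (uniform bounds, identification of the limit by compactness and uniqueness, upgrade to strong convergence via energy convergence) is reasonable in shape, but two load-bearing steps fail precisely in the supercritical and super-supercritical ranges this theorem addresses, and your argument never actually uses (IV)$'$ or the strict inequalities $s_1>\somega$, $s_2>\sgamma$ in \eqref{ranges2ter}, both of which are essential. First, you cannot get a uniform-in-$n$ a priori bound for $\|U^n\|$ on an arbitrary $[0,T^*]$ from the energy identity alone: with supercritical sources the term $\int_0^t\int_\Omega f_0(u^n)u^n_t$ is not dominated by the energy, and you would in any case need to know $T^n_{\text{max}}>T^*$ before integrating up to $T^*$, so conclusion (i) cannot be obtained this way. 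The paper instead gets uniform bounds only on a small interval $T_3(T^*)$ whose length depends solely on the norm of the data (Lemma~\ref{lemma5.3} and Proposition~\ref{proposition5.1}), proves strong convergence there, and then restarts and iterates finitely many times (the argument from \eqref{6.18} onward); statement (i) falls out of that iteration.

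Second, and more seriously, your mechanism for the source difference does not close. Boundedness of $\widehat{f_0}(u^n)$ in $L^{m'}$ plus weak convergence of $w^n_t$ in $L^m$ does not let you pass to the limit in $\int_0^t\int_\Omega[\widehat{f_0}(u^n)-\widehat{f_0}(u)]w^n_t$: you need strong convergence of one factor, and on the hyperbola $p=1+\romega/m'$ the map $\widehat{f_0}\colon H^1(\Omega)\to L^{m'}(\Omega)$ is continuous but \emph{not} compact (the relevant exponent $m'(p-1)$ equals $\romega$ exactly), while for $p>\romega$ the quantity $\widehat{f_0}(u^n)$ is not even controlled by the $H^1$ norm. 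This is exactly where the paper's key estimate enters (Lemma~\ref{lemma4.1}, transposed in Lemma~\ref{lemma4.4}): a double integration by parts in time converts the source difference into terms involving $f_0'$ and $f_0''$ (hence the need for (IV)$'$) plus boundary-in-time terms controlled through $u_0\in L^{s_1}(\Omega)$ with $s_1>\somega$ strictly, yielding the Gronwall-ready bound \eqref{4.3}. Without this device the strong convergence in $\cal{H}$ is not reached. Relatedly, you misattribute the role of (V): the $L^m$ bound on $\dot u^n$ comes from the coercivity in (I)/(III), whereas (V) is what gives $(P_0(v)-P_0(w))(v-w)\gtrsim|v-w|^m$ (cf.\ \eqref{3.20}); that inequality is the only route from the vanishing of the damping-difference term to $w^n_t\to 0$ in $L^m$ and hence, by time integration and $s_1\le m$, to $w^n\to 0$ in $C([0,T^*];L^{s_1}(\Omega))$ --- the step you need to land in $H^{1,s_1,s_2}$ when $s_1>\romega$, and which an energy-convergence argument living only in $\cal{H}$ cannot supply.
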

Theorem~\ref{theorem1.3} covers the supercritical ranges exactly as in
\cite{bociulasiecka1}. Moreover it is the {\em first well--posedness result}, in the author's
knowledge, dealing with internal/boundary super--supercritical term $f_0$, $g_0$. The price paid for this generality
is to work in a Banach space smaller than the natural Hilbert energy space $H^1\times H^0$ and assumption (V).  The proof of Theorem~\ref{theorem1.3} is based on the key estimate recalled
above.

Theorem~\ref{theorem1.3} is aimed to get well--posedness in the largest possible
space, which turns out to be as close as one
likes to $H^{1,\somega,\sgamma}\times H^0$, not including it is some cases. The aim
of the following variant of
Theorem~\ref{theorem1.3}, is to complete the picture made in Corollaries~\ref{corollary1.3}--\ref{corollary1.4} by showing in
which part of the scale of spaces introduced there
 the problem is locally well-posed.
\begin{thm}[\bf Local Hadamard well--posedness II]  \label{theorem1.4}
Let assumptions (I--III), (IV)$'$, (V)$'$, \eqref{R''} hold and  $\rho,\theta\in\R$ satisfy
\begin{equation}\label{ranges3}
(\rho,\theta)\in (\somega,\max\{\romega,m\}]\times
(\sgamma,\max\{\rgamma,\mu\}].
\end{equation}
Then problem \eqref{1.2} is locally well--posed in
$H^{1,\rho,\theta}_{\alpha,\beta}\times H^0$, that is the conclusions
of Theorem~\ref{theorem1.3} hold true, when $H^{1,s_1,s_2}$ is replaced
by $H^{1,\rho,\theta}_{\alpha,\beta}$.
In particular  it is locally--well posed  in
$H^{1,\rho,\theta}\times H^0$ when $\rho,\theta$ satisfy
\eqref{spaziuguali} and \eqref{ranges3}.
\end{thm}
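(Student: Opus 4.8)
The plan is to run the proof of Theorem~\ref{theorem1.3} again, replacing the unweighted integrability constraints by the weighted ones built into $H^{1,\rho,\theta}_{\alpha,\beta}$ and using the stronger damping hypothesis (V)$'$ in place of (V). Existence and uniqueness require no new work: since the range \eqref{ranges3} is contained in $[\somega,\max\{\romega,m\}]\times[\sgamma,\max\{\rgamma,\mu\}]$, Corollary~\ref{corollary1.4} already provides, for every $(u_0,u_1)\in H^{1,\rho,\theta}_{\alpha,\beta}\times H^0$, a unique maximal weak solution enjoying the regularity \eqref{1.21} (with $H^{1,\somega,\sgamma}$ replaced by $H^{1,\rho,\theta}_{\alpha,\beta}$) and the blow-up alternative \eqref{specialblowup}. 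Hence the only genuinely new content is the continuous dependence (i)--(ii), which I would reduce, exactly as in Theorem~\ref{theorem1.3}, to a single quantitative difference estimate in the weighted norm, valid on each interval $[0,T^*]$ with $T^*<T_{\text{max}}$.

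First I would set up the difference estimate. Given two solutions $u,v$ with data in $H^{1,\rho,\theta}_{\alpha,\beta}\times H^0$, I put $w=u-v$ and derive the associated difference-energy inequality, structurally parallel to \eqref{energyidentityreduced}, obtained by testing the equation for $w$ with $w_t$ within the weak framework of Theorem~\ref{theorem1.2}. Monotonicity of the damping yields the nonnegative dissipation $\int_0^t\int_\Omega\alpha[P_0(u_t)-P_0(v_t)](u_t-v_t)$ and its boundary counterpart, while the sources contribute $\int_0^t\int_\Omega[f_0(u)-f_0(v)]w_t$ and $\int_0^t\int_{\Gamma_1}[g_0(u)-g_0(v)]{w_{|\Gamma}}_t$. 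These source differences are exactly the quantities handled by the key estimate of Lemmas~\ref{lemma4.1} and~\ref{lemma4.4}: under (IV)$'$, which supplies $f_0\in C^2$ and the growth $|f_0''(u)|=O(|u|^{p-3})$, they are dominated by a small multiple of the dissipation plus a term integrable in the natural energy of $w$. A Gronwall argument then bounds $\|w(t)\|_{H^1\times H^0}$ by the data difference uniformly on $[0,T^*]$; fed into the continuation criterion \eqref{specialblowup} through the usual bootstrap on the maximal existence time, this also gives $T_{\text{max}}\le\varliminf_n T^n_{\text{max}}$, i.e.\ conclusion (i).

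The main obstacle, and the place where (V)$'$ and the weights are indispensable, is upgrading this $H^1\times H^0$ bound to convergence in the full weighted norm, i.e.\ controlling $\int_\Omega\alpha|w_t|^\rho$ together with its boundary analogue $\int_{\Gamma_1}\beta|{w_{|\Gamma}}_t|^\theta$. The range \eqref{ranges3} reaches $\max\{\romega,m\}$, so in the regime $m>\romega$ --- which by \eqref{R} necessarily occurs when $p>\romega$ but may also occur when $p<\romega$, the case left out of Theorem~\ref{theorem1.3} --- one needs dissipation control up to the exponent $m$. Assumption (V)$'$ furnishes exactly this: when $m>\romega$, $\essliminf_{|v|\to\infty}P_0'(v)/|v|^{m-2}>0$ together with monotonicity and $P_0(0)=0$ gives $[P_0(a)-P_0(b)](a-b)\gtrsim|a-b|^m$ for large arguments, so the dissipation controls $\int_\Omega\alpha|w_t|^m$ up to lower order terms. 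Since $\rho\le m$ and $\alpha\in L^\infty(\Omega)$ has finite mass, Hölder's inequality gives $\int_\Omega\alpha|w_t|^\rho\le(\int_\Omega\alpha|w_t|^m)^{\rho/m}(\int_\Omega\alpha)^{1-\rho/m}$, so the weighted $L^\rho$-norm of $w_t$ is absorbed into the dissipation and hence into the Gronwall scheme; note this exploits the weight $\alpha$ directly and does not require $\essinf_\Omega\alpha>0$. Transposing the same reasoning to $Q_0,g_0,\beta,\theta$ closes conclusion (ii). Finally, the concluding ``in particular'' assertion is immediate from Remark~\ref{remark1.1}: under \eqref{spaziuguali} one has $H^{1,\rho,\theta}_{\alpha,\beta}=H^{1,\rho,\theta}$, so well-posedness transfers verbatim to the unweighted space.
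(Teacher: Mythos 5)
Your proposal is correct, and its decisive step --- upgrading the $H^1\times H^0$ convergence to convergence in the weighted space by using (V)$'$ to make the damping coercive at exponent $m$, then H\"older's inequality $\int_\Omega\alpha|w_t|^\rho\le\bigl(\int_\Omega\alpha|w_t|^m\bigr)^{\rho/m}\bigl(\int_\Omega\alpha\bigr)^{1-\rho/m}$ to descend from $m$ to $\rho\le m$, and finally a time integration against $w_{0n}\to0$ in $L^{2,\rho}_\alpha(\Omega)$ --- is exactly the paper's. The only difference is organizational: instead of re-running the difference-energy/Gronwall scheme, the paper sets $\overline{s_1}(\rho)=2$ if $p<\romega$ and $\overline{s_1}(\rho)=\rho$ otherwise (likewise for $\overline{s_2}(\theta)$), notes that $H^{1,\rho,\theta}_{\alpha,\beta}\hookrightarrow H^{1,\overline{s_1}(\rho),\overline{s_2}(\theta)}$ with $(\overline{s_1}(\rho),\overline{s_2}(\theta))$ admissible in \eqref{ranges2ter} (using $\essinf_\Omega\alpha>0$ to identify $L^{2,\rho}_\alpha(\Omega)$ with $L^\rho(\Omega)$ when $p\ge\romega$), and then quotes verbatim from the proof of Theorem~\ref{theorem1.3} both the convergence in $\cal{H}$ and the vanishing of the dissipation terms, so only your final upgrade step needs to be written out.
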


\subsection{Global analysis}
As a main application of the local analysis presented above we now state the global--in--time versions of Theorems~\ref{theorem1.1}--\ref{theorem1.4}
and their corollaries.
When $P_0(v)=Q_0(v)=$,  $f_0(u)=|u|^{p-2}u$ and $g_0(u)=|u|^{q-2}u$, $p,q>2$,  solutions of \eqref{1.2} blow--up in finite time
for suitably chosen initial data, as proved in \cite[Theorem~1.5]{Dresda1}.
Hence in the sequel we shall restrict to terms $f_0$ and $g_0$ satisfying assumption (VI) presented above, which excludes
this case.
\begin{thm}[\bf Global analysis]  \label{theorem1.5}
The following conclusions hold true.
\renewcommand{\labelenumi}{{(\roman{enumi})}}
\begin{enumerate}
\item {\bf (Global existence)} Under assumptions  (I--IV) and (VI), for any $(u_0,u_1)\in H^{1,\lomega,\lgamma}\times H^0$ the weak maximal solution $u$ of problem \eqref{1.2}
found in Theorem~\ref{theorem1.1} is global in time, that is $T_{\text{max}}=\infty$, and $u\in C([0,T_{\text{max}});H^{1,\lomega,\lgamma})$.

In particular, when \eqref{subSobolevcritical} holds, for any $(u_0,u_1)\in H^1\times H^0$ problem \eqref{1.2} has a global weak solution.

\item {\bf (Global existence--uniqueness)} Under assumptions (I--III), (IV)$'$ and (VI), for any $(u_0,u_1)\in H^{1,\somega,\sgamma}\times H^0$ the
unique maximal solution of problem \eqref{1.2} found in  Theorem~\ref{theorem1.2} is global in time, that is
$T_{\text{max}}=\infty$, and $u\in C([0,\infty);H^{1,\somega,\sgamma})$.

In particular, when \eqref{subSobolevcritical} holds, for any $(u_0,u_1)\in H^1\times H^0$ problem \eqref{1.2} has a unique global weak solution.

\item {\bf (Global Hadamard well--posedness)}  Under assumptions (I--III), (IV)$'$, (V--VI)
and \eqref{R''} problem \eqref{1.2} is  globally well--posed in $H^{1,s_1,s_2}\times H^0$ for
$s_1$ and $s_2$ satisfying \eqref{ranges2ter}, that is $T_{\text{max}}=\infty$ in Theorem~\ref{theorem1.3}.

Consequently the semi--flow generated by problem \eqref{1.2} is a dynamical system in $H^{1,s_1,s_2}\times H^0$.

In particular, when $2\le p<\romega$ and $2\le q<\rgamma$ and under assumptions (I--III), (IV)$'$ and (VI), problem \eqref{1.2} is globally well--posed in $H^1\times H^0$, so the semi--flow generated by \eqref{1.2} is a dynamical system in $H^1\times H^0$.
\end{enumerate}
\end{thm}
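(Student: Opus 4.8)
All three assertions reduce, through the local theory already established, to one \emph{a~priori energy bound}: for each $T<T_{\text{max}}$ the quantity $\|U(t)\|_{H^1\times H^0}$ stays bounded on $[0,T]$ by a constant depending only on $T$ and on the data. Granting this, the blow--up alternatives of Theorem~\ref{theorem1.1}(iii) and Theorem~\ref{theorem1.2}(iii) force $T_{\text{max}}=\infty$, for otherwise $\varlimsup_{t\to T_{\text{max}}^-}\|U(t)\|_{H^1\times H^0}$ would be finite. I would therefore introduce the quadratic energy
\[
E(t)=\tfrac12\int_\Omega u_t^2+\tfrac12\int_{\Gamma_1}{u_{|\Gamma}}_t^2+\tfrac12\int_\Omega|\nabla u|^2+\tfrac12\int_{\Gamma_1}|\nabla_\Gamma u|_\Gamma^2
\]
and, recalling \eqref{23}, the total energy $\mathcal E(t)=E(t)-\int_\Omega\mathfrak F_0(u)-\int_{\Gamma_1}\mathfrak G_0(u)$, which is well defined under (VI) since then $\mathfrak F_0(u)\le C(1+|u|^{p_1})$ and $\mathfrak G_0(u)\le C(1+|u|^{q_1})$ by \eqref{V.1}, with $u(t)\in L^{p_1}(\Omega)$ and ${u_{|\Gamma}}(t)\in L^{q_1}(\Gamma_1)$. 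By the energy identity \eqref{energyidentityreduced} the map $\mathcal E$ is non--increasing, its increment equalling minus the nonnegative damping integrals; hence $\mathcal E(t)\le\mathcal E(0)$ and
\[
E(t)\le\mathcal E(0)+C\bigl(1+\|u(t)\|_{p_1}^{p_1}+\|{u_{|\Gamma}}(t)\|_{q_1,\Gamma_1}^{q_1}\bigr).
\]

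\textbf{Closing the bound.} It remains to control $\|u(t)\|_{p_1}$ (the boundary term being identical \emph{mutatis mutandis}), and here \eqref{V.2}--\eqref{V.3} are decisive. Writing $u(t)=u_0+\int_0^t u_t$, I would distinguish two regimes. If $p_1=2$ (forced when $m\le2$), then $\|u_t\|_2\le\sqrt{2E}$ and Cauchy--Schwarz in time give an inequality $E(t)\le C+Ct\int_0^tE(s)\,ds$, which Gronwall closes on every finite interval. If $p_1>2$, then \eqref{V.3} yields $\essinf_\Omega\alpha>0$, so by (I) one has $\alpha P_0(v)v\ge c|v|^m-C$ and the time--integrated damping, equal to $\mathcal E(0)-\mathcal E(t)$, bounds $\int_0^t\|u_t\|_m^m$; since $p_1\le\max\{2,m\}=m$, Hölder in space and in time converts this into a bound for $\int_0^t\|u_t\|_{p_1}$, hence for $\|u(t)\|_{p_1}$. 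Reinserting into the last display and using Young's inequality then closes the estimate for $E(t)$ on $[0,T]$.

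\textbf{Conclusion of the three parts.} With the a~priori bound, parts (i) and (ii) follow immediately: $T_{\text{max}}=\infty$ by the blow--up alternative, while the stated continuity is inherited from the local regularity --- directly from Theorem~\ref{theorem1.2}(ii) for $H^{1,\somega,\sgamma}$ in part (ii), and from Corollary~\ref{corollary1.3} for $H^{1,\lomega,\lgamma}$ in part (i), the pair $(\lomega,\lgamma)$ lying in its admissible range by \eqref{lomegagamma} and \eqref{lsomegagammaMINORE} (the finer continuity being the time--integration upgrade recorded before that corollary). Part (iii) needs no new estimate: global existence makes $T^n_{\text{max}}=T_{\text{max}}=\infty$ for all solutions in Theorem~\ref{theorem1.3}, so its local continuous dependence on $[0,T^*]$ now holds for every $T^*\in(0,\infty)$; this is global well--posedness, and the resulting globally defined, continuous semi--flow is a dynamical system on $H^{1,s_1,s_2}\times H^0$.

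\textbf{Main obstacle.} The delicate point is closing the a~priori bound in the super--supercritical regime, where $m>\romega$ so that $\|u\|_{p_1}$ is no longer controlled by the $H^1$ energy via Sobolev embedding; one must genuinely exploit the dissipation and work in the finer data space. The competition becomes borderline precisely when $p_1=\max\{2,m\}$ (respectively $q_1=\max\{2,\mu\}$), i.e. when the potential grows exactly at the damping rate: then the constant produced by Hölder in time need not be $<1$, and the Young/Gronwall step does not close in a single shot. I expect to treat this by a local--in--time bootstrap --- establishing the bound on a short interval whose length makes that constant $<1$, and iterating over finitely many such intervals to cover $[0,T]$, the bound at each step feeding the next.
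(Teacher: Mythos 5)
Your overall architecture coincides with the paper's: Theorem~7.1 there proves exactly that parts (ii) and (iii) follow from part (i) combined with the local theorems and the blow--up alternatives, and part (i) is reduced to an a priori bound obtained by contradiction. The difference is in how the bound is closed, and your route, while viable, is heavier precisely in the borderline case you flag. The paper does not integrate $u_t$ in time at all. It adjoins the weighted functional $\cal{I}(v)=C_{p_1}\int_\Omega\alpha|v|^{p_1}+C_{q_1}\int_{\Gamma_1}\beta|v|^{q_1}$ to the quadratic energy, forming $\Upsilon(t)=\tfrac12\|u'(t)\|_{H^0}^2+\tfrac12\|u(t)\|_{H^1}^2+\cal{I}(u(t))$, differentiates $\cal{I}(u(t))$ along the flow, and estimates the resulting cross term $\int_\Omega\alpha|u|^{p_1-1}|u_t|$ by the weighted Young inequality: for every $\eps>0$ it is at most $C\eps^{1-p_1'}\int_\Omega\alpha|u|^{p_1}+C\eps\int_\Omega\alpha|u_t|^{p_1}$, the first summand being $\le C\eps^{1-p_1'}\Upsilon$ and the second, since $p_1\le\overline{m}$, being absorbed into the \emph{instantaneous} dissipation $c_m'\|[u_t]_\alpha\|_{m,\alpha}^m$ once $\eps$ is fixed small. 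A single Gronwall then bounds $\Upsilon$ on any finite interval, with no case distinction at $p_1=m$ and no bootstrap. Your version, which tests the \emph{time--integrated} dissipation $\cal{E}(0)-\cal{E}(t)$ against $\|u(t)\|_{p_1}\le\|u_0\|_{p_1}+\int_0^t\|u_t\|_{p_1}$, is circular on its face because $-\cal{E}(t)$ is itself controlled by the quantity being estimated; it does close, via $X(t)\le X(0)+Ct^{1/m'}\bigl(1+X(t)^{p_1}+Y(t)^{q_1}\bigr)^{1/m}$, by Young when $p_1<m$ and by your short--interval iteration when $p_1=m$ --- but the iteration must be run on the coupled quantity $1+X^{p_1}+Y^{q_1}$ (add the two estimates raised to the powers $p_1$ and $q_1$), since the boundary norm enters the interior inequality with exponent $q_1/m$, which may exceed~$1$; treating $X$ and $Y$ separately would not close.

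One step you take for granted deserves explicit justification: the identity $\cal{E}(t)-\cal{E}(s)=-\int_s^t\langle B(u'),u'\rangle_Y$ requires that $t\mapsto\int_\Omega\mathfrak{F}_0(u(t))$ be absolutely continuous with derivative $\int_\Omega f_0(u)u_t$, i.e.\ that the source integrals in \eqref{energyidentityreduced} really are potential differences. In the supercritical regime $\mathfrak{F}_0(u)\sim|u|^{p}$ is not controlled by the $H^1$ norm and $u_t$ only lies in $L^m_{\loc}(L^{2,m}_\alpha(\Omega))$, so this is not the classical chain rule; the paper devotes an abstract lemma (Lemma~\ref{chainrule}) to it, applied with $X_1=H^{1,p,q}$, and this is exactly where the hypothesis $u_0\in H^{1,\lomega,\lgamma}\subset H^{1,p,q}$ (rather than $u_0\in H^{1,\sigmaomega,\sigmagamma}$) is consumed. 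With that point supplied, and the coupled bootstrap written out, your argument is a correct alternative to the paper's.
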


\begin{rem} Parts (ii) and (iii) of Theorem~\ref{theorem1.5}  simply follow by combining
 Theorem~\ref{theorem1.5}--(i) with, respectively, Theorem~\ref{theorem1.2} and Theorem~\ref{theorem1.3}.
 We include them in Theorem~\ref{theorem1.5} in order to  illustrate in parallel   the global--in--time counterparts of Theorems~\ref{theorem1.1}--\ref{theorem1.3}.
 \end{rem}

By excluding source/sink terms on the hyperbola, so having no restriction on $N$, we get the
global--in--time counterpart of Corollary~\ref{corollary1.1}.
\begin{cor}\label{corollary1.5}
 Under assumptions (I--III), (VI) and \eqref{R'} for any $(u_0,u_1)\in H^{1,p.q}\times H^0$
problem \eqref{1.2} has a global weak solution $u\in C([0,\infty);H^{1,p,q})$.
\end{cor}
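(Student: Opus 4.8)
The plan is to read Corollary~\ref{corollary1.5} off the global existence statement Theorem~\ref{theorem1.5}--(i), after showing that hypothesis \eqref{R'} both reduces the assumptions of that theorem to the ones at hand and collapses its data space onto $H^{1,p,q}$. Thus I would isolate two arithmetic facts about the exponents: (a) under \eqref{R'} assumption (IV) holds vacuously, so that (I--III) already yields (I--IV); and (b) under \eqref{R'} one has $H^{1,\lomega,\lgamma}=H^{1,p,q}$ with equivalent norms. Granting these, the datum $(u_0,u_1)\in H^{1,p,q}\times H^0$ is exactly the datum allowed by Theorem~\ref{theorem1.5}--(i), whose conclusion then transports verbatim to the smaller space.

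For (a) I would test the two conditionals in (IV). The premise $1+\romega/2<p=1+\romega/m'$ of the first one forces $\romega/m'>\romega/2$, i.e.\ $m'<2$, i.e.\ $m>2$; but then the first clause of \eqref{R'} gives $p<1+\romega/m'$, contradicting $p=1+\romega/m'$. So this premise can never hold, and symmetrically the premise $1+\rgamma/2<q=1+\rgamma/\mu'$ of the second conditional forces $\mu>2$ and is killed by the second clause of \eqref{R'}. Hence (IV) is vacuously satisfied; together with the standing (I--III) and the assumed (VI), all the hypotheses (I--IV), (VI) of Theorem~\ref{theorem1.5}--(i) are in force.

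For (b) I would evaluate $\lomega$ from \eqref{lomegagamma}. If $p>\romega$ then, by the remark following assumption (V)$'$, $m>\romega>2$, so (III) reads $p\le 1+\romega/m'$ and, $m>2$ making the first clause of \eqref{R'} applicable, the top branch $\romega<p=1+\romega/m'$ is excluded: we fall in the middle branch and $\lomega=p$. If $p\le\romega$ the bottom branch gives $\lomega=2$, and the Sobolev embedding $H^1(\Omega)\hookrightarrow L^\romega(\Omega)\hookrightarrow L^p(\Omega)$ on the bounded set $\Omega$ makes the $L^p$-- and $L^2$--constraints both automatic, with norms equivalent to $\|\cdot\|_{H^1(\Omega)}$; thus the $\Omega$--factors of $H^{1,p,q}$ and $H^{1,\lomega,\lgamma}$ coincide. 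Repeating the dichotomy for the $\Gamma_1$--factor, via the boundary embedding $H^1(\Gamma)\hookrightarrow L^\rgamma(\Gamma)$ restricted to $\Gamma_1$ in the regime $q\le\rgamma$, yields $H^{1,\lomega,\lgamma}=H^{1,p,q}$ with equivalent norms.

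With (a) and (b) the proof closes at once: Theorem~\ref{theorem1.5}--(i) applies to $(u_0,u_1)\in H^{1,\lomega,\lgamma}\times H^0=H^{1,p,q}\times H^0$, furnishing the maximal weak solution of Theorem~\ref{theorem1.1} with $T_{\text{max}}=\infty$ and $u\in C([0,\infty);H^{1,\lomega,\lgamma})=C([0,\infty);H^{1,p,q})$. (That the maximal solution exists at all is consistent with Corollary~\ref{corollary1.1}, since under \eqref{R'} one also gets $\sigmaomega=\sigmagamma=2$, whence $H^{1,\sigmaomega,\sigmagamma}=H^1\supseteq H^{1,p,q}$.) I expect no analytic difficulty, as the substantive content---the energy bound preventing blow-up---lives entirely in Theorem~\ref{theorem1.5}; the only point demanding care is that (b) is a genuine two-sided identity of spaces, not merely the one-sided embedding $H^{1,p,q}\hookrightarrow H^{1,\lomega,\lgamma}$, since it is the reverse inclusion that lets the continuity conclusion be stated in $H^{1,p,q}$.
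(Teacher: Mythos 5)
Your proposal is correct and follows essentially the same route as the paper: the paper's proof of the generalized statement (Corollary~\ref{corollary7.1}) likewise observes that \eqref{R'} makes the on--the--hyperbola cases impossible, so that (IV) (i.e.\ (FG2)) can be skipped, and that \eqref{lomegagamma} then yields $H^{1,\lomega,\lgamma}=H^{1,p,q}$, after which Theorem~\ref{theorem1.5}--(i) (resp.\ Theorem~\ref{theorem7.1}--(i)) applies verbatim. Your verification of the space identity via the Sobolev embeddings in the regimes $p\le\romega$, $q\le\rgamma$ is exactly the content the paper leaves implicit, so nothing is missing.
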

By excluding only super-supercritical sources  on the hyperbola but having restrictions on $N$ on the supercritical part of it we get the
global--in--time counterpart of Corollary~\ref{corollary1.2}.
\begin{cor}\label{corollary1.6}
Under assumptions (I--IV), (VI) and \eqref{R''bis}
for any $(u_0,u_1)\in H^{1,p,q}\times H^0$
problem \eqref{1.2} has a global weak solution $u\in C([0,\infty);H^{1,p,q})$.
\end{cor}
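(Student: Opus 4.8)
The plan is to obtain Corollary~\ref{corollary1.6} as the global--in--time counterpart of Corollary~\ref{corollary1.2}, reducing it to the global existence statement Theorem~\ref{theorem1.5}(i). The crux is that, under \eqref{R''bis}, the space $H^{1,\lomega,\lgamma}$ in which Theorem~\ref{theorem1.5}(i) is phrased coincides, as a Banach space, with $H^{1,p,q}$. Granting this, the assertion of Corollary~\ref{corollary1.6} is literally that of Theorem~\ref{theorem1.5}(i) read through the identification: for $(u_0,u_1)\in H^{1,p,q}\times H^0=H^{1,\lomega,\lgamma}\times H^0$ the maximal solution of Theorem~\ref{theorem1.1} is global and $u\in C([0,\infty);H^{1,p,q})$.

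First I would simplify $\lomega$ and $\lgamma$ under \eqref{R''bis}. By \eqref{R}, $p>\romega$ forces $m>\romega$ (and $q>\rgamma$ forces $\mu>\rgamma$), so \eqref{R''bis} gives $p<1+\romega/m'$ whenever $p>\romega$. Reading the definition \eqref{lomegagamma}, this excludes the branch $\romega<p=1+\romega/m'$ on which $\lomega=\somega$, leaving $\lomega=p$ when $p>\romega$ and $\lomega=2$ when $p\le\romega$, and symmetrically for $\lgamma$.

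Next I would check $H^{1,\lomega,\lgamma}=H^{1,p,q}$ factor by factor. For the interior factor, if $p>\romega$ this is immediate from $\lomega=p$; if $p\le\romega$ the embedding $H^1(\Omega)\hookrightarrow L^\romega(\Omega)\hookrightarrow L^p(\Omega)$ makes $u\in L^p(\Omega)$ automatic on $H^1$, so both $\lomega=2$ and the exponent $p$ yield the same interior space with norm equivalent to $\|\cdot\|_{H^1(\Omega)}$. The boundary factor is treated identically via $H^1(\Gamma)\hookrightarrow L^\rgamma(\Gamma)$. Combining factors gives $H^{1,\lomega,\lgamma}=H^{1,p,q}$ with equivalent norms, whence $C([0,\infty);H^{1,\lomega,\lgamma})=C([0,\infty);H^{1,p,q})$ and Theorem~\ref{theorem1.5}(i) yields the claim; the regularity in the target space is already the one granted by Corollary~\ref{corollary1.3} inside Theorem~\ref{theorem1.5}(i).

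The only delicate point, and thus the main obstacle, is this space identification, specifically that \eqref{R''bis} is exactly what rules out the on--the--hyperbola branch $\lomega=\somega$. This matters because for $p>\romega$ one computes $\somega-p=2(p-\romega)/(\romega-2)>0$, so on the hyperbola $H^{1,\lomega,\lgamma}$ would be a \emph{proper} subspace of $H^{1,p,q}$ and the clean conclusion would fail; everything else reduces to the routine Sobolev collapse of the subcritical factors.
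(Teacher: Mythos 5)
Your proposal is correct and follows essentially the same route as the paper: the paper's proof of the corresponding general statement (Corollary~7.2) is precisely the one--line observation that under \eqref{R''bis} the definition \eqref{lomegagamma} gives $H^{1,\lomega,\lgamma}=H^{1,p,q}$, after which Theorem~\ref{theorem1.5}--(i) applies verbatim. Your additional checks — that \eqref{R} forces $m>\romega$ when $p>\romega$ so that \eqref{R''bis} excludes the hyperbola branch $\lomega=\somega$, and the Sobolev collapse of the subcritical factors — are exactly the (unwritten) details behind that identification.
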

The main difference between
Corollaries~\ref{corollary1.1} and  \ref{corollary1.5} (and between
Corollaries~\ref{corollary1.2} and  \ref{corollary1.6}) is that  the former concerns all data $u_0\in H^1$ while the latter restricts to  $u_0\in H^{1,p,q}$.
This restriction originates from the use maid, in the proof of Theorem~\ref{theorem1.5}--(i), of the potentials of $\widehat{f_0}$ and $\widehat{g_0}$. Clearly they are simultaneously defined only in $H^{1,p,q}$.
For the same reason Theorem~\ref{theorem1.1}  concerns  $u_0\in H^{1,\sigmaomega,\sigmagamma}$ while  Theorem~\ref{theorem1.5}--(i) concerns $u_0\in H^{1,\lomega,\lgamma}=H^{1,\sigmaomega,\sigmagamma}\cap H^{1,p,q}$.
Since, by \eqref{ellis234}, $H^{1,\somega,\sgamma}\subset H^{1,p,q}$, this restriction do not effects Theorem~\ref{theorem1.5}--(ii--iii).

We now state, for the reader convenience, the global--in--time version of the more general local analysis made in Corollaries~\ref{corollary1.3}--\ref{corollary1.4} and Theorem~\ref{theorem1.4}, simply obtained
by combining them with Theorem~\ref{theorem1.5}.

\begin{cor}[\bf Global analysis in the scale of spaces]  \label{corollary1.7}
The following conclusions hold true.
\renewcommand{\labelenumi}{{(\roman{enumi})}}
\begin{enumerate}
\item {\bf (Global existence)} Under assumptions  (I--IV) and (VI) the
main conclusion of Theorem~\ref{theorem1.5}--(i) hold when
$H^{1,\lomega,\lgamma}$ is replaced by
$H^{1,\rho,\theta}_{\alpha,\beta}$, provided $\rho,\theta\in\R$ satisfy
\begin{equation}\label{ranges4}
(\rho,\theta)\in [\lomega,\max\{\romega,m\}]\times
[\lgamma,\max\{\rgamma,\mu\}],
\end{equation}
and by $H^{1,\rho,\theta}$, provided $\rho$ and $\theta$ satisfy \eqref{spaziuguali} and \eqref{ranges4}.
\item {\bf (Global existence--uniqueness)} Under assumptions (I--III), (IV)$'$ and (VI) the
main conclusion of Theorem~\ref{theorem1.5}--(ii) holds when the space
$H^{1,\somega,\sgamma}$ is replaced by
$H^{1,\rho,\theta}_{\alpha,\beta}$, provided $\rho,\theta\in\R$ satisfy \eqref{ranges2}
and by $H^{1,\rho,\theta}$, provided $\rho$ and $\theta$ satisfy \eqref{spaziuguali} and \eqref{ranges2}.
\item {\bf (Global Hadamard well--posedness)}  Under assumptions (I--III), (IV)$'$, (V--VI)
and \eqref{R''} problem \eqref{1.2} is locally well--posed in
$H^{1,\rho,\theta}_{\alpha,\beta}\times H^0$ when $\rho,\theta\in\R$ satisfy \eqref{ranges3}, that is the conclusions
of Theorem~\ref{theorem1.5}--(iii) hold true when $H^{1,s_1,s_2}$ is replaced
by $H^{1,\rho,\theta}_{\alpha,\beta}$.
In particular  it is locally--well posed  in
$H^{1,\rho,\theta}\times H^0$ when $\rho,\theta$ satisfy
\eqref{spaziuguali} and \eqref{ranges3}.
\end{enumerate}
\end{cor}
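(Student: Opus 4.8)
The plan is to read off all three parts as combinations of the scale-of-spaces \emph{local} results already recorded (Corollaries~\ref{corollary1.3} and~\ref{corollary1.4} and Theorem~\ref{theorem1.4}) with the global existence mechanism of Theorem~\ref{theorem1.5}. The substantive point is only a chain of space inclusions: in each case the data space must embed into the coarser space where Theorem~\ref{theorem1.5} already forces $T_{\text{max}}=\infty$. Since the blow-up alternative is stated in the fixed norm $\|\cdot\|_{H^1\times H^0}$, common to every space in the scale, globality transfers automatically and the finer integrability is then propagated by the local theory.

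For part (i) I would take $(u_0,u_1)\in H^{1,\rho,\theta}_{\alpha,\beta}\times H^0$ with $(\rho,\theta)$ as in \eqref{ranges4}. Because $\lomega\ge\sigmaomega$ and $\lgamma\ge\sigmagamma$ by \eqref{lsomegagammaMINORE}, the range \eqref{ranges4} sits inside \eqref{ranges1}, so Corollary~\ref{corollary1.3} produces a maximal weak solution with $U\in C([0,T_{\text{max}});H^{1,\rho,\theta}_{\alpha,\beta}\times H^0)$ obeying \eqref{energyidentityreduced} and the blow-up alternative. Invoking assumption (III) and Remark~\ref{remark1.1} (the weight is bounded away from $0$ precisely when $\lomega>2$, i.e. $p>\romega$, and $L^\rho\cap L^2\hookrightarrow L^{\lomega}$ on the finite measure space) gives $H^{1,\rho,\theta}_{\alpha,\beta}\hookrightarrow H^{1,\lomega,\lgamma}$, so the data lie in $H^{1,\lomega,\lgamma}\times H^0$. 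Theorem~\ref{theorem1.5}--(i) then applies to this very solution and yields $T_{\text{max}}=\infty$; the continuity in the finer space holds on all of $[0,\infty)$ by Corollary~\ref{corollary1.3}, and the unweighted statement follows through \eqref{spaziuguali}.

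Parts (ii) and (iii) follow the same template. For (ii), under (IV)$'$ I would substitute Corollary~\ref{corollary1.4} for Corollary~\ref{corollary1.3}, taking $(\rho,\theta)$ in \eqref{ranges2}; since $\somega\ge\lomega$ and $\sgamma\ge\lgamma$, and by \eqref{ellis234} also $H^{1,\somega,\sgamma}\hookrightarrow H^{1,p,q}$, the data embed into $H^{1,\somega,\sgamma}\hookrightarrow H^{1,\lomega,\lgamma}$ with finite source potentials at $t=0$, so Theorem~\ref{theorem1.5}--(ii) delivers a unique global solution. For (iii), Theorem~\ref{theorem1.4} gives local well-posedness in $H^{1,\rho,\theta}_{\alpha,\beta}\times H^0$ for $(\rho,\theta)$ in \eqref{ranges3}; as \eqref{ranges3} is contained in \eqref{ranges2}, the globality $T_{\text{max}}=\infty$ of part (ii) holds for these data, and feeding this into the continuous-dependence statement on $[0,T^*]$ with $T^*<T_{\text{max}}$ upgrades it to every compact interval, giving global well-posedness and the dynamical-system structure; the unweighted version again follows from \eqref{spaziuguali}.

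The step needing the most care is the transfer of globality in part (i): I must be sure that the particular maximal solution produced by Corollary~\ref{corollary1.3} in the finer space is exactly the one controlled by the a priori energy bound underlying Theorem~\ref{theorem1.5}--(i), with no competing continuation in the coarser space ending sooner. This is harmless because $T_{\text{max}}$ is fixed by the blow-up alternative in $H^1\times H^0$ alone, identical across the scale, while \eqref{energyidentityreduced} together with assumption (VI) bounds $\|U(t)\|_{H^1\times H^0}$ on bounded time intervals independently of $(\rho,\theta)$; thus the conclusion is genuinely insensitive to the scale index.
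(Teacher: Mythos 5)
Your proposal is correct and follows essentially the same route as the paper, which obtains Corollary~\ref{corollary1.7} precisely by combining the local scale-of-spaces results (Corollaries~\ref{corollary1.3}--\ref{corollary1.4} and Theorem~\ref{theorem1.4}, via their general versions in Section~\ref{section7}) with Theorem~\ref{theorem1.5}. Your verification of the range inclusions \eqref{ranges4}$\subset$\eqref{ranges1}, \eqref{ranges3}$\subset$\eqref{ranges2} and of the embeddings $H^{1,\rho,\theta}_{\alpha,\beta}\hookrightarrow H^{1,\lomega,\lgamma}$ (resp.\ $H^{1,\somega,\sgamma}$) via assumption (III), together with the observation that the blow-up alternative and the energy bound are insensitive to the scale index, supplies exactly the details the paper leaves implicit.
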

Theorems~\ref{theorem1.1}--\ref{theorem1.5}, with their corollaries,
can be easily extended to more general second order uniformly
elliptic linear operators, both in $\Omega$ and $\Gamma$, under
suitable regularity assumptions on the coefficients. Here we prefer
to deal with the Laplace and Laplace--Beltrami operators for the
sake of clearness.
\subsection{Overall conclusions and paper organization.}
\label{subsection1.7}
The presentation of Theorems~\ref{theorem1.1}--\ref{theorem1.3} and \ref{theorem1.5}, dealing with results for data in the maximal space,
can be simplified and unified by specifing $N$ and slightly strengthening our assumptions set, that is by assuming assumptions (I--II) and
the following ones:
\renewcommand{\labelenumi}{\bf\arabic{enumi})}
\begin{enumerate}
\item when $\essinf_\Omega \alpha>0$ and $\essinf_{\Gamma_1} \beta>0$, the following properties are satisfied:
\renewcommand{\labelenumii}{{($\cal{P}_\arabic{enumii}$)}}
\begin{enumerate}
\item $f_0\in C^2(\R)$  and $\varlimsup_{|u|\to\infty} |f_0''(u)|/|u|^{p-3}<\infty$ when $p>1+\romega/2$;
\item $g_0\in C^2(\R)$  and $\varlimsup_{|u|\to\infty} |g_0''(u)|/|u|^{q-3}<\infty$ when $q>1+\rgamma/2$;
\item $\essliminf_{|v|\to\infty}P_0'(v)/|v|^{m-2}>0$ if $p\ge \romega$;
\item $\essliminf_{|v|\to\infty}Q_0'(v)/|v|^{\mu-2}>0$ if $q\ge \rgamma$.
\end{enumerate}
\item when $\essinf_\Omega \alpha>0=\essinf_{\Gamma_1} \beta$,  properties ($\cal{P}_1$) and ($\cal{P}_3$) are satisfied;
\item when $\essinf_\Omega \alpha=0<\essinf_{\Gamma_1} \beta$,  properties ($\cal{P}_2$) and ($\cal{P}_4$) are satisfied;
\item when $\essinf_\Omega \alpha=\essinf_{\Gamma_1} \beta=0$ no further properties are requested.
\end{enumerate}
Theorems~\ref{theorem1.1}--\ref{theorem1.3} and \ref{theorem1.5} are summarized  in Tables~1--4 in the sequel, respectively dealing with the cases $N=3$, $N=4$, $N=5$ and $N\ge 6$,
to be read according to the following conventions:
\begin{enumerate}
\item[-] boxes separated by continuous lines indicate different cases depending on $p,q,m$ and $\mu$,
while dashed lines separate different results in the same case;
\item[-] depending on $\alpha$ and $\beta$, the following parts of  Tables~1--4 apply:
\renewcommand{\labelenumii}{\bf\arabic{enumii})}
\begin{enumerate}
\item when $\essinf_\Omega \alpha>0$ and $\essinf_{\Gamma_1} \beta>0$ all the tables;
\item when $\essinf_\Omega \alpha>0=\essinf_{\Gamma_1} \beta$ only the first column;
\item when $\essinf_\Omega \alpha=0<\essinf_{\Gamma_1} \beta$ only the first row;
\item when $\essinf_\Omega \alpha=\essinf_{\Gamma_1} \beta=0$ only the first box of the first row;
\end{enumerate}
\item[-] by existence, existence--uniqueness and well--posedness in the Banach space $V$ we indicate the corresponding result among
Theorems~\ref{theorem1.1}--\ref{theorem1.3} and the corresponding one among parts (i-iii) of Theorem~\ref{theorem1.5} which applies for
all $(u_0,u_1)\in V\times H^0$, the latter only under the additional assumption (VI); when the space $V$ is different for
Theorem~\ref{theorem1.1} and part (i) of Theorem~\ref{theorem1.5},  we shall call the former local existence and the latter global existence;
\item[-] since well--posedness yields existence--uniqueness, which in turn yields existence, when two or three results hold
in the same space only the strongest result is explicitly written;
\item[-] $\varepsilon$ denotes a  sufficiently small positive number.
\end{enumerate}
The case $N=2$ is omitted, since in this case, without additional assumptions,
we simply get well--posedness in  $H^1$ for all $p,q\in [2,\infty)$.
\begin{table}
\begin{center}
\setlength{\tabcolsep}{28pt}
\renewcommand{\arraystretch}{1.3}
\setlength{\arrayrulewidth}{0.7pt}
\begin{tabular}{|c|c|}
\multicolumn{2}{>{\normalsize}c}{\sc Table 1. \rm Main results when $N=3$}\\
\hline
                     &$2\le q<\infty$ \\
\hline
$2\le p\le 4$        &   \\
\cline{1-1}
$4<p<6$        &  well--posedness in $H^1$ \\
\hline
$6=p<m$           &  existence--uniqueness in $H^1$ \\
\cdashline{2-2}[1pt/1pt]
                  &  well--posedness  in $H^{1,6+\eps,2}$\\
\hline
$6=p=m$           &  existence--uniqueness in $H^1$\\
\hline
$6<p<1+6/m'$      &  local existence in $H^1$ \\
\cdashline{2-2}[1pt/1pt]
                  &  global existence in $H^{1,p,2}$ \\
\cdashline{2-2}[1pt/1pt]
                  &  existence--uniqueness in $H^{1,3(p-2)/2,2}$\\
\cdashline{2-2}[1pt/1pt]
                  &  well--posedness  in $H^{1,3(p-2)/2+\eps,2}$\\
\hline
$6<p=1+6/m'$      & existence--uniqueness in $H^{1,3(p-2)/2,2}$ \\
\cdashline{2-2}[1pt/1pt]
                  &  well--posedness  in $H^{1,3(p-2)/2+\eps,2}$\\
\hline
\end{tabular}

\bigskip

\setlength{\tabcolsep}{0pt}
\renewcommand{\arraystretch}{0.5}
\setlength{\arrayrulewidth}{0.7pt}
\begin{tabular}{|>{\tiny}c|>{\tiny}c|>{\tiny}c|>{\tiny}c|>{\tiny}c|>{\tiny}c|}
\multicolumn{6}{>{\normalsize}c}{\sc Table 2. \rm Main results when $N=4$}\\
\multicolumn{6}{>{\normalsize}c}{}\\
\hline
                                 &\raisebox{1.5ex}[12pt]{\tiny{$2\le q\le 4$}}\vline\raisebox{1.5ex}[12pt]{\tiny{$4<q<6$}}   &  \raisebox{1.5ex}[12pt]{\tiny{$6=q<\mu$}}                            & \raisebox{1.5ex}[12pt]{\tiny{$6=q=\mu$}}                 & \raisebox{1.5ex}[12pt]{\tiny{$6<q<1+ 6/\mu'$}}                                  & \raisebox{1.5ex}[12pt]{\tiny{$6<q=1+6/\mu'$}}            \\
\hline
                                 &                                                  &                                                    &                                  &local  existence                                          &                                               \\
                                 &                                                  &                                                    &                                  &in  $H^1$                                                 &                                               \\
\cdashline{5-5}[1pt/1pt]
$2\le p\le 3$                    &                                                  &                                                    &                                  &global existence                                          &                                               \\
                                 &                                                  &                                                    &                                  & in $H^{1,2,q}$                                           &                                               \\
\cline{1-1}\cdashline{5-5}[1pt/1pt]
                                 &                                                  &existence--                                         &                                  &existence--                                               &existence--                                    \\
                                 &                                                  &uniqueness in                                       &                                  &uniqueness in                                             &uniqueness in                                      \\
$3< p<4$                         &                                                  &$H^1$                                               &                                  &$H^{1,2,3(q-2)/2}$                                        &$H^{1,2,\frac32(q-2)}$,                       \\
\cdashline{3-3}[1pt/1pt]\cdashline{5-6}[1pt/1pt]
                                 & well--posedness                                  & well-- posedness                                   &                                  &well--posedness                                           &well-- posedness                                        \\
                                 &  in                                              &  in                                                &                                  & in                                                       & in                                        \\
                                 & $H^1$                                            &$H^{1,2,6+\varepsilon}$                             &                                  &$H^{1,2,\frac32(q-2)+\varepsilon}$                        &$H^{1,2,\frac32(q-2)+\varepsilon}$            \\
\cline{1-3}\cline{5-6}
                                 &                                                  &                                                    &                                  &local existence                                           &                                               \\
                                 &                                                  &                                                    &                                  & in  $H^1$                                                &                                               \\
\cdashline{5-5}[1pt/1pt]
                                 &                                                  &                                                    &                                  &global                                                    &                                               \\
                                 &                                                  &                                                    &                                  &existence in $H^{1,2,q}$                                  &                                               \\
\cdashline{5-5}[1pt/1pt]
                                 &existence--                                       &existence--                                         &                                  &existence--                                               &existence--                                            \\
$4=p      <m$                    &uniqueness in                                     &uniqueness in                                       &                                  &uniqueness in                                             &uniqueness in                                            \\
                                 &$H^1$                                             &$H^1$                                               &                                  &$H^{1,2,3(q-2)/2}$                                        &$H^{1,2,3(q-2)/2}$                          \\
\cdashline{2-3}[1pt/1pt]\cdashline{5-6}[1pt/1pt]
                                 &well--                                            &well--                                              &                                  &well--                                                    &well--                                                 \\
                                 &posedness in                                      &posedness in                                        &                                  &posedness in                                              &posedness in                                             \\
                                 &$H^{1,4+\varepsilon,2}$                           &$H^{1,4+\varepsilon,6+\varepsilon}$                 &                                  &$H^{1,4+\varepsilon,3(q-2)/2+\varepsilon}$                &$H^{1,4+\varepsilon,3(q-2)/2+\varepsilon}$      \\
\cline{1-3}\cline{5-6}
                                  &  \multicolumn{3}{>{\tiny}c|}{}                                                                                                          &local existence                                           &                                                \\
                                  &  \multicolumn{3}{>{\tiny}c|}{}                                                                                                          & in $H^1$                                                 &                                                \\
\cdashline{5-5}[1pt/1pt]
                                  &  \multicolumn{3}{>{\tiny}c|}{}                                                                                                          &global                                                    &                                                \\
$4=p=m$                           &  \multicolumn{3}{>{\tiny}c|}{}                                                                                                          &existence in $H^{1,2,q}$                                  &                                                \\
\cdashline{5-5}[1pt/1pt]
                                  &  \multicolumn{3}{>{\tiny}c|}{\qquad \qquad\qquad \qquad \qquad \qquad \qquad \quad existence--}                                         &existence--                                               &existence--                                     \\
                                  &  \multicolumn{3}{>{\tiny}c|}{\qquad \qquad\qquad \qquad \qquad \qquad \qquad \quad uniqueness in}                                       &uniqueness in                                             &uniqueness in                                     \\
                                  &  \multicolumn{3}{>{\tiny}c|}{\qquad \qquad\qquad \qquad \qquad \qquad \qquad \quad  $H^1$}                                              &$H^{1,2,3(q-2)/2}$                                        &$H^{1,2,3(q-2)/2}$                          \\
\hline
                                 &local  existence                                   &local existence                                    &local  existence                 &local existence                                                    &local  existence                                                   \\
                                 &in  $H^1$                                          &in  $H^1$                                          &in  $H^1$                        &in  $H^1$                                                          &in  $H^{1,2,3(q-2)/2}$                                  \\
\cdashline{2-6}[1pt/1pt]
                                 &global existence                                   &global existence                                   &global existence                 &global existence                                                   &global existence                                                   \\
$4<p<$                           &in $H^{1,p,2}$                                     &in $H^{1,p,2}$                                     &in $H^{1,p,2}$                   &in $H^{1,p,q}$                                                     &in $H^{1,p,3(q-2)/2}$                                      \\
\cdashline{2-6}[1pt/1pt]
$1+4/m'$                         &existence--                                        &existence--                                        &existence--                      &existence--                                                        &existence--                                              \\
                                 &uniqueness in                                      &uniqueness in                                      &uniqueness in                    &uniqueness in                                                      &uniqueness in                                             \\
                                 &$H^{1,2(p-2),2}$                                   &$H^{1,2(p-2),2}$                                   &$H^{1,2(p-2),2}$                 &$H^{1,2(p-2),3(q-2)/2}$                                            &$H^{1,2(p-2),3(q-2)/2}$                                \\
\cdashline{2-6}[1pt/1pt]
                                 &well--                                             &well--                                             &                                 &well--                                                             &well--                                                       \\
                                 &posedness in                                       &posedness in                                       &                                 &posedness in                                                       &posedness in                                              \\
                                 &$H^{1,2(p-2)+\varepsilon,2}$                       &$H^{1,2(p-2)+\varepsilon,6+\varepsilon}$           &                                 &$H^{1,2(p-2)+\varepsilon,3(q-2)/2+\varepsilon}$                    &$H^{1,2(p-2)+\varepsilon,3(q-2)/2+\varepsilon}$  \\
\hline
                                 &                                                   &                                                   &                                  &local existence                                                    &                                                    \\
                                 &                                                   &                                                   &                                  &in $H^{1,2(p-2),2}$                                                &                                                    \\
 \cdashline{5-5}[1pt/1pt]
                                 &                                                   &                                                   &                                  &local existence                                                    &                                                    \\
$4<p=$                           &                                                   &                                                   &                                  &in $H^{1,2(p-2),q}$                                                &                                                    \\
 \cdashline{5-5}[1pt/1pt]
$1+4/m'$                         &existence--                                        &existence--                                        &existence--                      &existence--                                                        &existence--                                              \\
                                 &uniqueness in                                      &uniqueness in                                      &uniqueness in                    &uniqueness in                                                      &uniqueness in                                             \\
                                 &$H^{1,2(p-2),2}$                                   &$H^{1,2(p-2),2}$                                   &$H^{1,2(p-2),2}$                 &$H^{1,2(p-2),3(q-2)/2}$                                            &$H^{1,2(p-2),3(q-2)/2}$                                \\
\cdashline{2-3}[1pt/1pt]\cdashline{5-6}[1pt/1pt]
                                 &well--                                             &well--                                             &                                 &well--                                                             &well--                                                       \\
                                 &posedness in                                       &posedness in                                       &                                 &posedness in                                                       &posedness in                                              \\
                                 &$H^{1,2(p-2)+\varepsilon,2}$                       &$H^{1,2(p-2)+\varepsilon,6+\varepsilon}$           &                                 &$H^{1,2(p-2)+\varepsilon,3(q-2)/2+\varepsilon}$                    &$H^{1,2(p-2)+\varepsilon,3(q-2)/2+\varepsilon}$  \\

\hline
\end{tabular}
\end{center}
\end{table}
\begin{table}
\begin{center}
\setlength{\tabcolsep}{2pt}
\renewcommand{\arraystretch}{0.5}
\setlength{\arrayrulewidth}{0.7pt}
\begin{tabular}{|>{\tiny}c|>{\tiny}c|>{\tiny}c|>{\tiny}c|>{\tiny}c|>{\tiny}c|}
\multicolumn{6}{>{\normalsize}c}{\sc Table 3. \rm Main results when $N=5$}\\
\multicolumn{6}{>{\normalsize}c}{}\\
\hline
                                 &\raisebox{1.5ex}[12pt]{\tiny{$2\le q<4$}}\,\vline\,\raisebox{1.5ex}[12pt]{\tiny{$2\le q<4$}}   &  \raisebox{1.5ex}[12pt]{\tiny{$4=q<\mu$}}                            & \raisebox{1.5ex}[12pt]{\tiny{$4=q=\mu$}}                 & \raisebox{1.5ex}[12pt]{\tiny{$4<q<1+ 4/\mu'$}}                                  & \raisebox{1.5ex}[12pt]{\tiny{$4<q=1+4/\mu'$}}            \\
\hline
                                 &                                                  &                                                    &                                  &local  existence                                          &                                               \\
                                 &                                                  &                                                    &                                  &in  $H^1$                                                 &                                               \\
\cdashline{5-5}[1pt/1pt]
                                 &                                                  &                                                    &                                  &global existence                                          &                                               \\
                                 &                                                  &                                                    &                                  & in $H^{1,2,q}$                                           &                                               \\
\cdashline{5-5}[1pt/1pt]
                                 &                                                  &existence--                                         &                                  &existence--                                               &existence--                                    \\
$2\le p\le\frac 83$                 &                                                  &uniqueness in                                       &                                  &uniqueness in                                            &uniqueness in                                      \\
                                 &                                                  &$H^1$                                               &                                  &$H^{1,2,2(q-2)}$                                          &$H^{1,2,2(q-2)}$                       \\
\cdashline{3-3}[1pt/1pt]\cdashline{5-6}[1pt/1pt]
                                 & well--posedness                                  & well-- posedness                                   &                                  &well--posedness                                           &well-- posedness                                        \\
                                 &  in                                              &  in                                                &                                  & in                                                       & in                                        \\
                                 & $H^1$                                            &$H^{1,2,4+\varepsilon}$                             &                                  &$H^{1,2,2(q-2)+\varepsilon}$                              &$H^{1,2,2(q-2)+\varepsilon}$            \\
\cline{1-3}\cline{5-6}
                                  &  \multicolumn{3}{>{\tiny}c|}{}                                                                                                          &local existence                                           &                      \\
$\frac 83<p \le\frac{10}3$        &  \multicolumn{3}{>{\tiny}c|}{\qquad \qquad\qquad \qquad \qquad \qquad existence}                                                                                                          & in $H^1$                       &                                               \\
\cdashline{5-5}[1pt/1pt]
                                  &\multicolumn{3}{>{\tiny}c|}{\qquad \qquad\qquad \qquad \qquad \qquad in $H^1$}                                                                                                             &global                          &                                              \\
                                  &\multicolumn{3}{>{\tiny}c|}{}                                                                                                             &existence in $H^{1,2,q}$                                                         &existence                                               \\
\cline{1-5}
                                 &\multicolumn{3}{>{\tiny}c|}{local existence}                                                                                                            &local existence                                                    &in $H^{1,p,2(q-2)}$                                                \\
$\frac{10}3<p<1+\frac{10}{3m'}$  &\multicolumn{3}{>{\tiny}c|}{in $H^1$}                                                                                                            &in  $H^1$                                                                 &                                 \\
\cdashline{2-5}[1pt/1pt]
                                 &\multicolumn{3}{>{\tiny}c|}{global existence}                                                                                                            &global existence                                                 &                                                  \\
                                 &\multicolumn{3}{>{\tiny}c|}{in $H^{1,p,2}$}                                                                                                            &in $H^{1,p,q}$                                                     &                                      \\
\hline
&\multicolumn{5}{>{\tiny}c|}{} \\
$\frac{10}3<p=1+\frac{10}{3m'}$&\multicolumn{5}{>{\tiny}c|}{no results} \\
&\multicolumn{5}{>{\tiny}c|}{} \\
\hline
\end{tabular}

\bigskip

\setlength{\tabcolsep}{1pt}
\renewcommand{\arraystretch}{1.2}
\setlength{\arrayrulewidth}{0.7pt}
\begin{tabular}{|>{\tiny}c|>{\tiny}c|>{\tiny}c|>{\tiny}c|>{\tiny}c|}
\multicolumn{5}{>{\normalsize}c}{\sc Table 4. \rm Main results when $N\ge 6$}\\
\multicolumn{5}{>{\normalsize}c}{}\\
\hline
                                 &\raisebox{1.5ex}[12pt]{\tiny{$2\le q\le 1+\rgamma/2$}}   &  \raisebox{1.5ex}[12pt]{\tiny{$1+\rgamma/2<q\le\rgamma$}}                            & \raisebox{1.5ex}[12pt]{\tiny{$\rgamma<q<1+\rgamma/\mu'$}}                 & \raisebox{1.5ex}[12pt]{\tiny{$\rgamma<q=1+\rgamma/\mu'$}}                                            \\
\hline
$2\le p\le 1+\romega/2$          &well--posedness in $H^1$                                   &                                      &  local  existence  in  $H^1$                                 &                                                                                \\
\cline{1-2} \cdashline{4-4}[1pt/1pt]
$1+\romega/2<p\le\romega$        &\multicolumn{2}{>{\tiny}c|}{\qquad \qquad\qquad \qquad \qquad existence in $H^1$}                        &global existence   in $H^{1,2,q}$                              &                                                                                    \\
\cline{1-4}
$\romega<p<1+\romega/m'$                                 &  \multicolumn{2}{>{\tiny}c|}{local existence in $H^1$}                                                        &local existence  in $H^1$                 &                                                               \\
\cdashline{2-4}[1pt/1pt]
                                 &\multicolumn{2}{>{\tiny}c|}{global existence in $H^{1,p,2}$}                                                   &global   existence   in $H^{1,p,q}$                      &          \\
\cline{1-4}
$\romega<p=1+\romega/m'$&\multicolumn{4}{>{\tiny}c|}{\qquad\qquad \qquad\qquad\qquad\qquad \qquad\qquad \qquad\qquad \qquad\qquad \qquad\qquad\qquad no results} \\
\hline
\end{tabular}
\end{center}
\end{table}

Tables 1--4 show that when $N\ge 3$ and $\essinf_\Omega \alpha>0$ or $\essinf_{\Gamma_1}\beta>0$ the analysis made in the present paper essentially extends
the results in \cite{Dresda1}.  In dimension $N=3$ we get new results only when $\essinf_\Omega \alpha>0$,
as it is natural due to the essential role played by the damping term, which is even more clear in dimensions $N\ge 4$.
Moreover Tables 1--4  suggest that, in dimensions $N=3,4$ and partially in dimension $N=5$, in presence of a couple of effective damping terms,
the standard source classification presented above is mainly of technical nature, while Sobolev--criticality and belonging to the hyperbola are essential.

The outcomes of the analysis in the full scale of spaces, contained in Corollaries~\ref{corollary1.3}--\ref{corollary1.4},
Theorem~\ref{theorem1.4} and Corollary~\ref{corollary1.7}, are summarized (for simplicity when $\essinf_\Omega \alpha>0, \essinf_{\Gamma_1}\beta>0$)
in Tables 5--8, p. \pageref{Table5}--\pageref{Tables7-8}. In them we follow the same conventions presented above and we denote
$x\vee y=\max\{x,y,\}$ for $x,y\in\R$.

The paper is organized as follows:
\begin{itemize}
\item[-] in Sections~\ref{section 2}--\ref{section3} we recall some background material from
\cite{Dresda1}, we state our assumptions on $f$, $g$, $P$ $Q$  and we give some preliminary results;
\item[-] in
Section~\ref{section4} we give a key estimate to be used in the sequel;
\item[-]  Section~\ref{section5} is devoted to local
existence for problem \eqref{1.1}, including the proofs of Theorem~\ref{theorem1.1} and
Corollaries~\ref{corollary1.1}--\ref{corollary1.3};
\item[-]  Section~\ref{section6}
deals with uniqueness and local well--posedness, including the proofs of Theorems~\ref{theorem1.2}--\ref{theorem1.4} and
Corollary~\ref{corollary1.4};
\item[-]  Section~\ref{section7} is
devoted to our global analysis, including the proofs of Theorem~\ref{theorem1.5} and
Corollaries~\ref{corollary1.5}--\ref{corollary1.7};
\item[-] in Appendix~\ref{appendixB} we prove a density result used in the
paper.
\end{itemize}

\section{Background}\label{section 2}
\subsection{Notation.} We shall adopt the standard notation for
(real) Lebesgue and Sobolev spaces in $\Omega$ (see \cite{adams})
and $\Gamma$ (see \cite{grisvard}). As usual  $\rho'$ is the
H\"{o}lder conjugate of $\rho$, i.e. $1/\rho+1/\rho'=1$. Given a
Banach space $X$ and its dual $X'$ we shall denote by $\langle
\cdot,\cdot\rangle_X$ the duality product between them. Finally, we
shall use the standard notation for vector valued Lebesgue and
Sobolev spaces in a real interval, with the exception that the
derivative of $u$, a time derivative, will be denoted by $\dot{u}$.

 Given $\alpha\in
L^\infty(\Omega)$, $\beta\in L^\infty(\Gamma)$, $\alpha,\beta\ge 0$
and $\rho\in [1,\infty]$ we shall respectively denote by
$(L^\rho(\Omega),\|\cdot\|_\rho)$,
$(L^\rho(\Gamma),\|\cdot\|_{\rho,\Gamma})$,
$(L^\rho(\Gamma_1),\|\cdot\|_{\rho,\Gamma_1})$,
$(L^\rho(\Omega;\lambda_\alpha),\|\cdot\|_{\rho,\alpha})$,
$(L^\rho(\Gamma;\lambda_\beta),\|\cdot\|_{\rho,\beta,\Gamma})$ and
$(L^\rho(\Gamma_1;\lambda_\beta),\|\cdot\|_{\rho,\beta,\Gamma_1})$
the  Lebesgue spaces (and norms) with respect to the following
measures: the standard Lebesgue one in $\Omega$, the hypersurface
measure $\sigma$ on $\Gamma$ and $\Gamma_1$, $\lambda_\alpha$ in
$\Omega$ defined  by $d\lambda_\alpha=\lambda_\alpha\,dx$,
 $\lambda_\beta$ on $\Gamma$ and $\Gamma_1$ defined  by
$d\lambda_\beta=\lambda_\beta\,d\sigma$. The equivalence classes
with respect to the measures $\lambda_\alpha$ and $\lambda_\beta$
will be respectively denoted by $[\cdot]_\alpha$ and
$[\cdot]_\beta$.

We recall some well--known preliminaries on the Riemannian gradient,
where only the fact that $\Gamma$ is a $C^1$  compact manifold
endowed with a $C^0$ Riemannian metric is used. We refer to
\cite{taylor} for more details and proofs, given there for smooth
manifolds, and to \cite{sternberg} for a general background on
differential geometry on $C^k$  manifolds.  We  denote
by $(\cdot,\cdot)_\Gamma$ the metric inherited from $\R^N$,
  given in local coordinates
$(y_1,\ldots,y_{N-1})$ by $(g_{ij})_{i,j=1,\ldots,N-1}$,
$|\cdot|_\Gamma^2=(\cdot,\cdot)_\Gamma$, by $d\sigma$ the natural
volume element on $\Gamma$, given by $\sqrt{\widetilde{g}}
\,\,dy_1\wedge\ldots\wedge dy_{N-1}$, where
$\widetilde{g}=\operatorname{det} (g_{ij})$.  We denote by
$(\cdot|\cdot)_\Gamma$ the Riemannian (real) inner product on
$1$-forms on $\Gamma$ associated to the metric, given in local
coordinates by $(g^{ij})=(g_{ij})^{-1}$,   by $d_\Gamma$ the total
differential on $\Gamma$ and by $\nabla_\Gamma$ the Riemannian
gradient, given in local coordinates by $\nabla_\Gamma
u=g^{ij}\,\partial_j u\,\,\partial_i$ for any $u\in H^1(\Gamma)$. It
is then clear that $(d_\Gamma u|d_\Gamma v)_\Gamma=(\nabla_\Gamma u,
\nabla_\Gamma v)_\Gamma$ for $u,v\in H^1(\Gamma)$, so the use of
vectors or forms in the sequel is optional. It is well--known (see
\cite{taylor} in the smooth setting, and \cite{quarteroni} in the
$C^1$ setting) that the norm
$\|u\|_{H^1(\Gamma)}^2=\|u\|_{2,\Gamma}^2+\|\nabla_\Gamma
u\|_{2,\Gamma}^2$,  where $\|\nabla_\Gamma
u\|_{2,\Gamma}^2:=\int_\Gamma |\nabla_\Gamma u|_\Gamma^2$, is
equivalent in $H^1(\Gamma)$  to the standard one. In the sequel, the
notation $d\sigma$ will be dropped from the boundary integrals.

\subsection{Functional setting and weak solutions for a linear problem}
We start by recalling some facts about the spaces $L^{2,\rho}_\alpha(\Omega)$ and
$L^{2,\rho}_\beta(\Gamma_1)$, refereing  to \cite{Dresda1} for more details and proofs.
They are reflexive and, making the standard
identifications
\begin{equation}\label{dualiLebesgue}
[L^\rho(\Omega)]'\simeq L^{\rho'}(\Omega),\qquad\text{and}\quad
[L^\rho(\Gamma_1)]'\simeq L^{\rho'}(\Gamma_1),
\end{equation}
when $\rho\in [2,\infty)$ we have the two chains of embedding
\begin{footnote}{by \eqref{dualiLebesgue} we \emph{can not} identify $[L^\rho(\Omega,\lambda_\alpha)]'$ and
$[L^\rho(\Gamma_1,\lambda_\beta)]'$  with
$L^{\rho'}(\Omega,\lambda_\alpha)$ and
$L^{\rho'}(\Gamma_1,\lambda_\beta)$.}\end{footnote}
\begin{equation}\label{2.2}
[L^\rho(\Omega,\lambda_\alpha)]' \hookrightarrow [L^{2,\rho}_\alpha
(\Omega)]' \hookrightarrow L^{\rho'}(\Omega),\,
[L^\rho(\Gamma_1,\lambda_\beta)]' \hookrightarrow [L^{2,\rho}_\beta
(\Gamma_1)]' \hookrightarrow L^{\rho'}(\Gamma_1).
\end{equation}
 Next, given $\rho,\theta\in [2,\infty)$ and
$-\infty\le a<b\le\infty$ we introduce the reflexive space
$$L^{2,\rho,\theta}_{\alpha,\beta}(a,b)=L^\rho(a,b\, ;
L^{2,\rho}_\alpha (\Omega))\times L^\theta(a,b\, ;
L^{2,\theta}_\beta (\Gamma_1)),$$ with its dual
\begin{equation}\label{2.3}
[L^{2,\rho,\theta}_{\alpha,\beta}(a,b)]'\simeq L^{\rho'}(a,b\, ;
[L^{2,\rho}_\alpha (\Omega)]')\times L^{\theta'}(a,b\, ;
[L^{2,\theta}_\beta (\Gamma_1)]').
\end{equation}
By \eqref{2.2}--\eqref{2.3} we have the embedding
\begin{equation}\label{2.4}
L^{\rho'}(a,b\, ; [L^\rho(\Omega,\lambda_\alpha)]')\times
L^{\theta'}(a,b\, ;
[L^\theta(\Gamma_1,\lambda_\beta)]')\hookrightarrow
[L^{2,\rho,\theta}_{\alpha,\beta}(a,b)]'.
\end{equation}
The space $H^1$ introduced in \eqref{H1} is endowed with the norm
\begin{equation}\label{2.5} \|u\|_{H^1}^2=\int_\Omega |\nabla
u|^2+\int_{\Gamma_1} |\nabla_\Gamma
u|_\Gamma^2+\int_{\Gamma_1}|u|^2,
\end{equation}
equivalent to the one inherited from the product. The definition of the space $H^{1,\rho,\theta}_{\alpha,\beta}$ given in
\eqref{H1plus} can be extended also for $\rho,\theta=\infty$, loosing reflexivity, and clearly $H^{1,\rho,\theta}_{\alpha,\beta}\hookrightarrow H^1 \hookrightarrow
H^0$ and $H^{1,\rho,\theta}_{\alpha,\beta}\hookrightarrow
L^{2,\rho}_\alpha(\Omega)\times
L^{2,\theta}_\beta(\Gamma_1)\hookrightarrow H^0$, which are dense
thanks to \cite[Lemma 2.1]{Dresda1}.

Finally we introduce the phase spaces for problem \eqref{1.1}, that
is
\begin{equation}\label{varicalH}
\cal{H}=H^1\times H^0 \qquad\text{and}\quad
\cal{H}^{\rho,\theta}=H^{1,\rho,\theta}\times H^0\quad\text{for
$\rho,\theta\in [2,\infty)$.}\end{equation}
 We consider the linear
evolution boundary value problem
\begin{equation}\label{L}
\begin{cases} u_{tt}-\Delta u=\xi \qquad &\text{in
$(0,T)\times\Omega$,}\\
u=0 &\text{on $(0,T)\times \Gamma_0$,}\\
u_{tt}+\partial_\nu u-\Delta_\Gamma u=\eta\qquad &\text{on
$(0,T)\times \Gamma_1$,}
\end{cases}
\end{equation}
where $0<T<\infty$ and $\xi=\xi(t,x)$, $\eta=\eta(t,x)$ are given
forcing terms of the form
\begin{equation}\label{2.7}
\left\{
\begin{alignedat}3
&\xi=\xi_1+\alpha \xi_2,\qquad && \xi_1\in L^1(0,T;L^2(\Omega)), \quad && \xi_2\in L^{\rho'}(0,T;L^{\rho'}(\Omega, \lambda_\alpha)), \\
&\eta=\eta_1+\beta \eta_2,\qquad && \eta_1\in
L^1(0,T;L^2(\Gamma_1)), \quad && \eta_2\in
L^{\theta'}(0,T;L^{\theta'}(\Gamma_1, \lambda_\beta)),
\end{alignedat}\right.
\end{equation}
where $\alpha\in L^\infty(\Omega)$, $\beta\in L^\infty(\Gamma_1)$,
$\alpha, \beta\ge 0$ and $\rho,\theta\in [2,\infty)$.

By a {\em weak solution} of \eqref{L} in $[0,T]$   we mean
\begin{equation}\label{2.8}
u\in L^\infty(0,T;H^1)\cap W^{1,\infty}(0,T;H^0),\qquad \dot{u}\in
L^{2,\rho,\theta}_{\alpha,\beta}(0,T),
\end{equation}
such that the distribution identity
\begin{equation}\label{2.9}
\int_0^T\left[-(\dot{u},\dot{\phi})_{H^0}+\int_\Omega \nabla u\nabla
\phi+\int_{\Gamma_1} (\nabla_\Gamma
u,\nabla_\Gamma\phi)_\Gamma-\int_\Omega
\xi\phi-\int_{\Gamma_1}\eta\phi\right]=0
\end{equation}
holds for all $\phi\in C_c((0,T);H^1)\cap C^1_c((0,T);H^0)\cap
L^{2,\rho,\theta}_{\alpha,\beta}(0,T)$.

When dealing with $u$ satisfying \eqref{2.8}, we shall {\em
systematically denote in the paper}
$$\dot{u}=(u_t,{u_{|\Gamma}}_t)\qquad\text{and}\quad U=(u,\dot{u})\in
L^\infty([0,T];\cal{H}).$$ We shall also denote
$A\in\cal{L}(H^1,(H^1)')$ defined by
\begin{equation}\label{2.10}
\langle Au,v\rangle_{H^1}=\int_\Omega \nabla u\nabla
v+\int_{\Gamma_1} (\nabla_\Gamma u,\nabla_\Gamma v)_\Gamma,\qquad
\text{for all $u,v\in H^1$}.
\end{equation}
We recall the following result (see \cite[Lemma 2.2]{Dresda1}).
\begin{lem}\label{lemma2.1}
Suppose that \eqref{2.7} holds. Then any weak solution $u$ of
\eqref{L} enjoys the further regularity $U\in C([0,T];\cal{H})$ and
satisfies the energy identity
$$\frac 12\|\dot{u}\|_{H^0}^2+\frac 12
\langle Au, u\rangle _{H^1}\Big|_s^t=\int_s^t\int_\Omega \xi
u_t+\int_{\Gamma_1}\eta {u_{|\Gamma}}_t\,d\tau  $$ for $0\le s\le
t\le T$. Moreover \eqref{2.9} holds in the generalized form
$$(\dot{u},\phi)_{H^0}\Big|_0^T+
\int_0^T\left[-(\dot{u},\dot{\phi})_{H^0}+\langle
Au,\phi\rangle_{H^1}-\int_\Omega
\xi\phi-\int_{\Gamma_1}\eta\phi\right]=0$$ for all $\phi\in
C([0,T];H^1)\cap C^1([0,T];H^0)\cap
L^{2,\rho,\theta}_{\alpha,\beta}(0,T)$.
\end{lem}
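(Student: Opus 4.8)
The plan is to establish both assertions by the classical time--regularization technique, upgrading the merely distributional identity \eqref{2.9} to a pointwise--in--time energy balance. First I would reinterpret \eqref{2.9} as a second order evolution equation in a suitable dual space. Testing against all admissible $\phi$, identity \eqref{2.9} says precisely that the distributional derivative $\ddot u=\tfrac{d}{dt}\dot u$ satisfies $\ddot u=-Au+\Xi$, where $\Xi(t)$ denotes the functional $\phi\mapsto\int_\Omega\xi\phi+\int_{\Gamma_1}\eta\phi$. By the splitting \eqref{2.7} and the embeddings \eqref{2.2}--\eqref{2.4}, one has $\Xi\in L^1(0,T;H^0)+[L^{2,\rho,\theta}_{\alpha,\beta}(0,T)]'$, while $Au\in L^\infty(0,T;(H^1)')$, so the equation holds in a well--defined dual space and $\dot u$ acquires a distributional time--derivative there.

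Next I would mollify in time. Extending $u$ suitably outside $(0,T)$ and convolving with a Friedrichs mollifier, I set $u^\eps=u*\rho_\eps$. Since $u\in L^\infty(0,T;H^1)$, the regularization $u^\eps$ is smooth in $t$ with values in $H^1$; the decisive gain is that $\dot u^\eps$ now also takes values in $H^1$ (rather than only in $H^0$), so it becomes an admissible test element, and moreover $\dot u^\eps(t)\in H^{1,\rho,\theta}_{\alpha,\beta}$ for a.e.\ $t$ by \eqref{2.8}. Because $A$ and $\tfrac{d}{dt}$ commute with convolution in $t$, the mollified equation $\ddot u^\eps+Au^\eps=\Xi^\eps$ holds classically. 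Pairing it with $\dot u^\eps$ and using the symmetry of $A$ together with $\langle\ddot u^\eps,\dot u^\eps\rangle_{H^0}=\tfrac12\tfrac{d}{dt}\|\dot u^\eps\|_{H^0}^2$ and $\langle Au^\eps,\dot u^\eps\rangle_{H^1}=\tfrac12\tfrac{d}{dt}\langle Au^\eps,u^\eps\rangle_{H^1}$, then integrating over $[s,t]$, yields a mollified energy identity. Letting $\eps\to0$ I would use that $u^\eps\to u$ and $\dot u^\eps\to\dot u$ strongly in $L^p(0,T;H^1)$, $L^p(0,T;H^0)$ for $p<\infty$ and in $L^{2,\rho,\theta}_{\alpha,\beta}(0,T)$, that $\Xi^\eps\to\Xi$ in its space, and that convergence holds at Lebesgue points of the integrand, recovering the energy identity for a.e.\ $0\le s\le t\le T$.

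It remains to promote this to genuine continuity and to arbitrary endpoints. From $u\in W^{1,\infty}(0,T;H^0)$ one gets $u\in C([0,T];H^0)$, hence, $u$ being also in $L^\infty(0,T;H^1)$, a standard weak--continuity lemma gives that $u$ is weakly continuous into $H^1$; the dual equation for $\ddot u$ likewise gives that $\dot u$ is weakly continuous into $H^0$. The a.e.\ energy identity shows that $t\mapsto\tfrac12\|\dot u(t)\|_{H^0}^2+\tfrac12\langle Au(t),u(t)\rangle_{H^1}$ coincides a.e.\ with a continuous function, and combined with the continuity of $\|u(t)\|_{H^0}$ --- which controls the $\int_{\Gamma_1}|u|^2$ term missing from the quadratic form but present in the norm \eqref{2.5} --- this makes $t\mapsto\|U(t)\|_{\cal{H}}^2$ continuous. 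In the Hilbert space $\cal{H}$, weak continuity together with norm continuity forces $U\in C([0,T];\cal{H})$. Once this is known, the energy identity extends to all $s,t$ by continuity, and the generalized form of \eqref{2.9} follows by integrating \eqref{2.9} by parts in $t$, the boundary contributions $(\dot u,\phi)_{H^0}\big|_0^T$ now being meaningful.

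I expect the main obstacle to be precisely this weak--to--strong upgrade. The quadratic form $\langle A\,\cdot\,,\cdot\rangle_{H^1}$ is only a seminorm on $H^1$, so its continuity alone does not control $\|u(t)\|_{H^1}$; one must feed in the independent $H^0$--continuity of the trace on $\Gamma_1$ to close the norm--convergence argument. A secondary point requiring care is that the interior mollification estimate has to be propagated to the endpoints $s=0$ and $t=T$, which is handled by combining the a.e.\ identity with the weak continuity of $U$ at those points.
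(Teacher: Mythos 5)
The paper does not prove this lemma at all: it is recalled, with proof, from \cite[Lemma~2.2]{Dresda1}, so there is no in--paper argument to compare against. Your sketch is the classical Lions--type regularization proof (recast \eqref{2.9} as $\ddot u+Au=\Xi$ in the dual of $H^{1,\rho,\theta}_{\alpha,\beta}$, mollify in time, derive the energy identity for the regularizations, pass to the limit to get it for a.e.\ $s\le t$, then upgrade the weak continuity of $U$ to strong continuity in $\cal{H}$ via continuity of the norm), and it is essentially sound; you correctly isolate the two genuinely delicate points, namely that $\langle A\cdot,\cdot\rangle_{H^1}$ is only a seminorm, so the $\int_{\Gamma_1}|u|^2$ piece of \eqref{2.5} must be supplied by $u\in W^{1,\infty}(0,T;H^0)$, and that the interior identity must be propagated to $s=0$, $t=T$. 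Two details you should make explicit when writing this up: to pair the mollified equation with $\dot u^\eps$ you need \emph{both} representations $\dot u^\eps=u*\dot\rho_\eps$ (valued in $H^1$, hence admissible against $Au^\eps$) and $\dot u^\eps=\dot u*\rho_\eps$ (valued in $L^{2,\rho}_\alpha(\Omega)\times L^{2,\theta}_\beta(\Gamma_1)$, hence admissible against the $\alpha\xi_2$, $\beta\eta_2$ parts of \eqref{2.7}); and passing from the a.e.\ identity to the identity at \emph{every} $t$ requires pairing weak lower semicontinuity with the time--reversibility of the linear problem (or a one--sided mollification at the endpoint), since lower semicontinuity alone only gives one inequality. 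Neither point is an obstruction, and the final generalized form of \eqref{2.9} indeed follows by cutting off general test functions and using the just--established continuity of $\dot u$ into $H^0$ to identify the boundary terms.
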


\section{Preliminaries}
\label{section3}
\subsection{Main assumptions}
With reference to problem \eqref{1.1} we suppose that
\renewcommand{\labelenumi}{{(PQ\arabic{enumi})}}
\begin{enumerate}
\item $P$ and $Q$ are Carath\'eodory functions, respectively in
$\Omega\times\R$ and $\Gamma_1\times\R$, and  there are $\alpha\in
L^\infty(\Omega)$, $\beta\in L^\infty(\Gamma_1)$, $\alpha,\beta\ge
0$, and $m,\mu>1$, $c_m,c_\mu> 0$,  such that
\begin{alignat}2\label{Pgrowth}
|&P(x,v)|\le c_m\alpha(x) (1+|v|^{m-1})\,\,&&\text{for a.a.
$x\in\Omega$, all $v\in\R$;}\\\label{Qgrowth} |&Q(x,v)|\le
c_\mu\beta(x) (1+|v|^{\mu-1})\,\,&&\text{for a.a. $x\in\Gamma_1$,
all $v\in\R$;}
\end{alignat}
\item $P$ (respectively $Q$)  is monotone increasing in $v$ for a.a. $x\in\Omega$ ($x\in\Gamma_1$);
\item $P$ and $Q$ are coercive, that is there are constants $c'_m,c'_\mu>
0$ such that
\begin{alignat}2\label{Plow}
&P(x,v)v\ge c'_m\alpha(x)|v|^m \qquad&&\text{for a.a. $x\in\Omega$, all $v\in\R$;}\\
&Q(x,v)v\ge c'_\mu\beta(x)|v|^\mu\qquad&&\text{for a.a.
$x\in\Gamma_1$, all $v\in\R$.}
\end{alignat}
\end{enumerate}
\begin{rem}\label{remark3.1} Trivially (PQ1--3) yield
$P(\cdot,0)\equiv 0$ and $Q(\cdot,0)\equiv0$. Moreover, when
$P(x,v)=\alpha(x)P_0(v)$ and  $Q(x,v)=\beta(x)Q_0(v)$ with
$\alpha\in L^\infty(\Omega)$ and $\beta\in L^\infty(\Gamma_1)$,
$\alpha,\beta\ge 0$, (PQ1--3) reduce
 to assumption (I), p. \pageref{assumptionI}.
\end{rem}
We denote $\overline{m}=\max\{2,m\}$,
$\overline{\mu}=\max\{2,\mu\}$, and, for $-\infty\le a<b\le\infty$,
\begin{equation}\label{3.5}
Y=L^{2,\overline{m}}_\alpha(\Omega)\times
L^{2,\overline{\mu}}_\beta(\Gamma_1),\quad
X=H^{1,\overline{m},\overline{\mu}}_{\alpha,\beta}, \quad
Z(a,b)=L^{2,\overline{m},\overline{\mu}}_{\alpha,\beta}(a,b).
\end{equation}
By (PQ1) the Nemitskii operators $\widehat{P}$ and $\widehat{Q}$
(respectively) associated to $P$ and $Q$ are continuous from
$L^{\overline{m}}(\Omega)$ to $L^{\overline{m}'}(\Omega))\simeq
[L^{\overline{m}}(\Omega))]'$ and from
$L^{\overline{\mu}}(\Gamma_1)$ to
$L^{\overline{\mu}'}(\Gamma_1))\simeq
[L^{\overline{\mu}}(\Gamma_1))]'$, and they can be uniquely extended to
$\widehat{P}: L^{2,\overline{m}}_\alpha (\Omega)\to
[L^{\overline{m}}(\Omega,\lambda_\alpha)]'$  and $\widehat{Q}:
L^{2,\overline{\mu}}_\beta (\Gamma_1)\to
[L^{\overline{\mu}}(\Gamma_1,\lambda_\beta)]'$. We denote
$$B=(\widehat{P},\widehat{Q}):Y\to
[L^{\overline{m}}(\Omega,\lambda_\alpha)]'\times
[L^{\overline{\mu}}(\Gamma_1,\lambda_\beta)]'.$$
We recall (see \cite{Dresda1})
\begin{lem}\label{lemma3.1}
Let (PQ1---2) hold and $(a,b)\subset\R$ is bounded. Then
\renewcommand{\labelenumi}{{(\roman{enumi})}}
\begin{enumerate}
\item B is continuous and bounded from $Y$ to $[L^{\overline{m}}(\Omega,\lambda_\alpha)]'\times
[L^{\overline{\mu}}(\Gamma_1,\lambda_\beta)]'$ and hence, by
\eqref{2.3}, to $Y'$;
\item $B$ acts boundedly and continuously from $Z(a,b)$ to $L^{\overline{m}'}(a,b\, ; [L^{\overline{m}}(\Omega,\lambda_\alpha)]')\times
L^{\overline{\mu}'}(a,b\, ;
[L^{\overline{\mu}}(\Gamma_1,\lambda_\beta)]')$ and hence, by
\eqref{2.4}, to $Z'(a,b)$;
\item $B$ is monotone in $Y$ and in $Z(a,b)$.
\end{enumerate}
\end{lem}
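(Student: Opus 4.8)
The plan is to reduce all three assertions to the classical Krasnoselskii continuity and boundedness theorem for Nemitskii (superposition) operators on a \emph{finite} measure space, applied separately to the weighted spaces $(\Omega,\lambda_\alpha)$ and $(\Gamma_1,\lambda_\beta)$. I shall detail the argument for $\widehat P$, the one for $\widehat Q$ being identical with $(\Omega,\alpha,m)$ replaced by $(\Gamma_1,\beta,\mu)$. Since $\alpha\in L^\infty(\Omega)$ and $\Omega$ is bounded, $\lambda_\alpha$ is finite. By \eqref{Pgrowth} (see Remark~\ref{remark3.1}) we have $P(x,\cdot)\equiv 0$ wherever $\alpha(x)=0$, so I may set $\widetilde P(x,v)=P(x,v)/\alpha(x)$ on $\{\alpha>0\}$ and $\widetilde P(x,v)=0$ elsewhere; this is a Carath\'eodory function on $\Omega\times\R$ obeying $|\widetilde P(x,v)|\le c_m(1+|v|^{m-1})$ for a.a.\ $x$ and all $v$.

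First I would record the exponent inequality that motivates the choice $\overline m=\max\{2,m\}$: since $m\le\overline m$ one has $m-1\le\overline m-1=\overline m/\overline m'$, which is exactly the growth that lets the superposition operator $\widehat{\widetilde P}$ map $L^{\overline m}(\Omega,\lambda_\alpha)$ into $L^{\overline m'}(\Omega,\lambda_\alpha)$; the constant majorant $c_m$ belongs to $L^{\overline m'}(\Omega,\lambda_\alpha)$ precisely because $\lambda_\alpha$ is finite. Krasnoselskii's theorem then gives that $\widehat{\widetilde P}\colon L^{\overline m}(\Omega,\lambda_\alpha)\to L^{\overline m'}(\Omega,\lambda_\alpha)$ is bounded and continuous; here continuity crucially uses that the target exponent $\overline m'\le 2$ is finite. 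Next I would identify this with the claimed map: the inclusion $u\mapsto[u]_\alpha$ embeds $L^{2,\overline m}_\alpha(\Omega)$ continuously into $L^{\overline m}(\Omega,\lambda_\alpha)$, and under $[L^{\overline m}(\Omega,\lambda_\alpha)]'\simeq L^{\overline m'}(\Omega,\lambda_\alpha)$ (via the $\lambda_\alpha$-pairing) one checks, for $w\in L^{\overline m}(\Omega,\lambda_\alpha)$, that
\[
\langle \widehat P(u),w\rangle=\int_\Omega P(x,u)\,w\,dx=\int_\Omega \widetilde P(x,u)\,w\,d\lambda_\alpha,
\]
so that $\widehat P=\widehat{\widetilde P}\circ[\,\cdot\,]_\alpha$ as a map into $[L^{\overline m}(\Omega,\lambda_\alpha)]'$. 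Composing the two continuous bounded maps yields part (i) for $\widehat P$; the analogue for $\widehat Q$, together with the first inclusions in \eqref{2.2}, embeds $B$ into $Y'$ and finishes (i).

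For part (ii) I would run the identical argument on the product measure space $((a,b)\times\Omega,\,dt\otimes\lambda_\alpha)$, which is finite since $(a,b)$ is bounded. Viewing $\widetilde P$ as a Carath\'eodory function of $((t,x),v)$ independent of $t$, Krasnoselskii again gives boundedness and continuity of $\widehat{\widetilde P}$ from $L^{\overline m}((a,b)\times\Omega,dt\otimes\lambda_\alpha)$ to $L^{\overline m'}((a,b)\times\Omega,dt\otimes\lambda_\alpha)$. It then remains to translate this into the stated iterated spaces: the continuous inclusion $L^{\overline m}(a,b;L^{2,\overline m}_\alpha(\Omega))\hookrightarrow L^{\overline m}((a,b)\times\Omega,dt\otimes\lambda_\alpha)$ and the Fubini isomorphism $L^{\overline m'}((a,b)\times\Omega,dt\otimes\lambda_\alpha)\simeq L^{\overline m'}(a,b;[L^{\overline m}(\Omega,\lambda_\alpha)]')$ deliver continuity and boundedness of $\widehat P$ from $Z(a,b)$ into the first target factor; the analogue for $\widehat Q$ and \eqref{2.4} then land $B$ in $Z'(a,b)$.

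Finally, part (iii) is immediate from monotonicity of $P$ and $Q$ in $v$ (assumption (PQ2)) combined with the fact, established in the pairing computation above, that the duality reduces to the plain Lebesgue integral: for $(u_1,z_1),(u_2,z_2)\in Y$,
\[
\langle B(u_1,z_1)-B(u_2,z_2),\,(u_1-u_2,z_1-z_2)\rangle=\int_\Omega\big(P(x,u_1)-P(x,u_2)\big)(u_1-u_2)+\int_{\Gamma_1}\big(Q(x,z_1)-Q(x,z_2)\big)(z_1-z_2)\ge 0,
\]
each integrand being nonnegative pointwise; integrating also in $t$ gives monotonicity in $Z(a,b)$. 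The only genuinely nontrivial ingredient is the continuity half of Krasnoselskii's theorem, and the single point deserving care is the bookkeeping making the abstract $\lambda_\alpha$- and $\lambda_\beta$-pairings coincide with the Lebesgue pairing through the embeddings \eqref{2.2}, so that the target identifications and the monotonicity in (iii) remain consistent.
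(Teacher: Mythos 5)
Your proof is correct. The paper does not actually prove Lemma~\ref{lemma3.1} here (it is recalled from \cite{Dresda1}), but your argument --- factoring $\widehat P=\widehat{\widetilde P}\circ[\,\cdot\,]_\alpha$ with $\widetilde P=P/\alpha$, applying Krasnoselskii's theorem on the finite measure spaces $(\Omega,\lambda_\alpha)$, $(\Gamma_1,\lambda_\beta)$ and on their products with $(a,b)$ for (i)--(ii), and reducing the duality pairing to the pointwise nonnegative Lebesgue integrand for (iii) --- is exactly the standard route for this statement, and the exponent bookkeeping ($m-1\le\overline m/\overline m'$, finiteness of $\lambda_\alpha$, and $\overline m'<\infty$ for the continuity half) is all in order.
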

Our main assumption on $f$ and $g$ is the following one:
\renewcommand{\labelenumi}{{(FG\arabic{enumi})\phantom{a}}}
\begin{enumerate}\item
\begin{enumerate}
\renewcommand{\labelenumii}{{(F\arabic{enumii})}}
\item $f$ is a Carath\'eodory function in
$\Omega\times\R$ and  there are an exponent $p\ge 2$ and constants
$c_p,c_p'\ge 0$  such that, for a.a. $x\in\Omega$ and all $u,v\in\R$,
\begin{align}
\label{3.6}&|f(x,u)|\le c_p(1+|u|^{p-1}),\\
\label{3.7}&|f(x,u)-f(x,v)| \le c'_p |u-v|(1+|u|^{p-2}+|v|^{p-2});
\end{align}
\renewcommand{\labelenumii}{{(G\arabic{enumii})}}
\setcounter{enumii}{0}
\item $g$ is a  Carath\'eodory function in
$\Gamma_1\times\R$, and  there are an exponent $q\ge 2$ and constants
$c_q,c_q'\ge 0$  such that, for a.a. $x\in\Gamma_1$ and all $u,v\in\R$,
\begin{align}
&|g(x,u)|\le c_q(1+|u|^{q-1}) \\
&|g(x,u)-g(x,v)|\le c'_q |u-v|(1+|u|^{q-2}+|v|^{q-2})
\end{align}
\end{enumerate}
\end{enumerate}
\begin{rem}\label{remark3.2}
 Assumption (FG1) can be equivalently formulated as follows:
\renewcommand{\labelenumi}{{(FG\arabic{enumi})$'$\phantom{a}}}
\renewcommand{\labelenumii}{{(F\arabic{enumii})$'$}}
\begin{enumerate}\item
\begin{enumerate}
\item $f$ is a Carath\'eodory function in
$\Omega\times\R$, $f(x,\cdot)\in
C^{0,1}_{\text{loc}}(\R)$ for a.a. $x\in\Omega$, and there are an exponent $p\ge 2$ and
constants $\widetilde{c_p},\widetilde{c_p}'\ge 0$ such that
\begin{alignat}2
\label{3.10}&|f(x,0)|\le \widetilde{c_p},\quad &&\text{for
a. a. $x\in\Omega$,}\\
\label{3.11}&|f_u(x,u)|\le \widetilde{c_p}'(1+|u|^{p-2}),\qquad&&\text{for
a.a. $(x,u)\in\Omega\times\R$, }
\end{alignat}
\renewcommand{\labelenumii}{{(G\arabic{enumii})$'$}}
\setcounter{enumii}0
\item $g$ is a Carath\'eodory function in
$\Omega\times\Gamma_1$, $g(x,\cdot)\in
C^{0,1}_{\text{loc}}(\R)$ for a.a. $x\in\Gamma_1$, and there are an exponent $q\ge 2$ and
constants $\widetilde{c_q},\widetilde{c_q}'\ge 0$ such that
\begin{alignat}2
&|g(x,0)|\le \widetilde{c_q},\quad &&\text{for
a. a. $x\in\Gamma_1$,}\\
&\label{3.13}|g_u(x,u)|\le \widetilde{c_q}'(1+|u|^{q-2}),\qquad&&\text{for
a.a. $(x,u)\in\Gamma_1\times\R$, }
\end{alignat}
\end{enumerate}
\end{enumerate}
Indeed by \eqref{3.6} we immediately get \eqref{3.10} with
$\widetilde{c_p}=c_p$ and by \eqref{3.7} we have $f(x,\cdot)\in
C^{0,1}_{\text{loc}}(\R)$ for a.a. $x\in\Omega$, hence $f_u$  exist a.e.
\begin{footnote}{the fact that measurable functions in an open set, which are locally absolutely continuous with respect to a variable,
 possess a.e. the partial derivative with respect to that variable is classical, as stated for example in \cite[p.297]{mm}. However
 the sceptical reader can prove it by repeating \cite[Proof~of~ Proposition~2.1~p.~173]{DiBenedettorealanalysis}
 for Carath\'eodory functions, so getting the measurability of the four Dini derivatives. Hence the set where the
 derivative does not exist is measurable and finally  it has zero measure by
 Fubini's theorem.}\end{footnote}
and \eqref{3.11} follows from \eqref{3.7}, with $\widetilde{c_p}'=2c'_p$.
In the same way from (G1) we get (G1)$'$ with $\widetilde{c_q}=c_q$ and $\widetilde{c_q}'=2c'_q$.
Conversely by (FG1)$'$, integrating
\eqref{3.11} and \eqref{3.13} with respect to the second variable in the convenient
interval, one gets (FG1), with
$c_p=\widetilde{c_p}+2\widetilde{c_p}'$,
$c_q=\widetilde{c_q}+2\widetilde{c_q}'$, $c'_p=\widetilde{c_p}'$,
$c'_q=\widetilde{c_q}'$.
Consequently when $f(x,u)=f_0(u)$ and
$g(x,u)=g_0(u)$ assumption (FG1) reduces to (II), p. \pageref{assumptionII}. Other relevant
examples of functions $f$ and $g$ satisfying (FG1) are given by
\begin{equation}\label{modellisupplementari}
\begin{alignedat}3
f_2(x,u)&=\gamma_1(x)|u|^{\widetilde{p}-2}u+
\gamma_2(x)|u|^{p-2}u+\gamma_3(x),\quad &&2\le\widetilde{p}\le
p,\,\, && \gamma_i\in L^\infty(\Omega),
\\
g_2(x,u)&=\delta_1(x)|u|^{\widetilde{q}-2}u+
\delta_2(x)|u|^{q-2}u+\delta_3(x),\, \quad &&2\le\widetilde{q}\le
q,\,\, &&\delta_i\in L^\infty(\Gamma_1),
\end{alignedat}
\end{equation}
and by
\begin{equation}\label{f3g3}
f_3(x,u)=\gamma(x) f_0(u),\quad g_3(x,u)=\delta(x) g_0(u),\quad
\gamma\in L^\infty(\Omega),\quad \delta\in L^\infty(\Gamma_1),
\end{equation}
where
$f_0$ and $g_0$ satisfy (II).
\end{rem}

Beside the structural assumptions (PQ1--3) and (FG1) we introduce
the following assumption relating $f$ with $P$ and $g$ with $Q$
\renewcommand{\labelenumi}{{(FGQP1)}}
\begin{enumerate}
\item $p$, $q$, $m$, $\mu$, $\alpha$ and $\beta$ in (PQ1--3) and (FG1) satisfy  (III),
p. \pageref{III}.
\end{enumerate}
\begin{rem} \label{remark3.3} To simplify several estimates in the sequel we
remark that, when $p>1+\romega/2$, so $\alpha_0:=\essinf_\Omega
\alpha>0$, by replacing $\alpha$ with $\alpha/\alpha_0$ and
consequently $c_m$ with $\alpha_0c_m$ in \eqref{Pgrowth} and $c'_m$
with $\alpha_0c'_m$ in \eqref{Plow} we can normalize $\alpha_0=1$.
For the same reason when $q>1+\rgamma/2$ we shall assume without
restriction that $\essinf_{\Gamma_1}\beta=1$.
\end{rem}

\begin{rem} Trivially, by (FGQP1), $m=\overline{m}>2$ when
$p>1+\romega/2$ and $\mu=\overline{\mu}>2$ when $q>1+\rgamma/2$.
Moreover, when $p\le1+\romega/2$ and  $q\le1+\rgamma/2$ assumption
(FGQP1) can be skipped.
\end{rem}

We now introduce the auxiliary exponents
\begin{equation}\label{mpmuq}
m_p=\begin{cases}
2 &\text{if $p\le 1+\romega/2$},\\
m=\overline{m} &\text{otherwise,}
\end{cases}\quad
\mu_q=\begin{cases}
2 &\text{if $q\le 1+\rgamma/2$},\\
\mu=\overline{\mu}, &\text{otherwise,}
\end{cases}
\end{equation}
so by \eqref{R} we have
\begin{equation}\label{mpmuqbound}
m_p'\le \romega/(p-1)\qquad\text{and}\quad \mu_q'\le\rgamma/(q-1).
\end{equation}
Moreover, by \eqref{3.5}, \eqref{mpmuq} and assumption (FGQP1), for
any $T>0$
\begin{equation}\label{ZLMP}
\|w\|_{L^{m_p}((0,T)\times\Omega)\times
L^{\mu_q}((0,T)\times\Gamma_1)}\le \|w\|_{Z(0,T)}\quad\text{for all
$w\in Z(0,T)$.}
\end{equation}
 The following lemma points out some
easy consequences of (PQ1--3), (FG1), \eqref{R}.
\begin{lem}\label{Nemitskii} If $\mathfrak{f},\mathfrak{g}$ satisfy (FG1) with constants
$\mathfrak{c_p}$,$\mathfrak{c_p}'$, $\mathfrak{c_q}$,$\mathfrak{c_q}'$,
and $\rho\in
[p-1,\infty)$, $\theta\in [q-1,\infty)$, the Nemitskii operators
$\widehat{\mathfrak{f}}: L^\rho(\Omega)\to L^{\rho/(p-1)}(\Omega)$ and
$\widehat{\mathfrak{g}}: L^\theta(\Gamma_1)\to L^{\theta/(q-1)}(\Gamma_1)$ associated to them are
locally Lipschitz and bounded, and  there are $k_1,k_2>0$,
depending only on $\Omega$, such that for any $R\ge 0$
\begin{alignat*}4
&\|\widehat{\mathfrak{f}}(u)\|_{\frac \rho {p-1}}&&\le  \mathfrak{c_p} k_1
(1+R^{p-1}),\quad
&&\|\widehat{\mathfrak{f}}(u)-\widehat{\mathfrak{f}}(v)\|_{\frac \rho {p-1}}&&\le  \mathfrak{c_p}'k_1(1+R^{p-2})\|u-v\|_\rho,\\
&\|\widehat{\mathfrak{g}}(\widetilde{u})\|_{\frac \theta {q-1},\Gamma_1}&&\le
\mathfrak{c_q} k_2 (1+R^{q-1}),\quad
&&\|\widehat{\mathfrak{g}}(\widetilde{u})-\widehat{\mathfrak{g}}(\widetilde{v})\|_{\frac
\theta {q-1},\Gamma_1}&&\le \mathfrak{c_q}'k_2
(1+R^{q-2})\|\widetilde{u}-\widetilde{v}\|_{\theta,\Gamma_1},
\end{alignat*}
provided $\|u\|_\rho, \|v\|_\rho, \|\widetilde{u}\|_{\theta,\Gamma_1},
\|\widetilde{v}\|_{\theta,\Gamma_1}\le R$.
Moreover, if also (PQ1--3) and \eqref{R} hold then $\widehat{\mathfrak{f}}:
H^1(\Omega)\to L^{m_p'}(\Omega)$ and $\widehat{\mathfrak{g}}: H^1(\Gamma)\cap
L^2(\Gamma_1)\to L^{\mu_q'}(\Gamma_1)$ enjoy the same properties and
there is $k_3>0$, depending only on $\Omega$, such that, for any
$R\ge 0$
\begin{alignat*}4
&\|\widehat{\mathfrak{f}}(u)\|_{m_p'}&&\le \mathfrak{c_p} k_3 (1+R^{p-1}),\quad
&&\|\widehat{\mathfrak{f}}(u)-\widehat{\mathfrak{f}}(v)\|_{m_p'}&&\le \mathfrak{c_p}'k_3 (1+R^{p-2})\|u-v\|_{H^1(\Omega)},\\
&\|\widehat{\mathfrak{g}}(\widetilde{u})\|_{\mu_q',\Gamma_1}&&\le \mathfrak{c_q} k_3
(1+R^{q-1}),\quad
&&\|\widehat{\mathfrak{g}}(\widetilde{u})-\widehat{\mathfrak{g}}(\widetilde{v})\|_{\mu_q',\Gamma_1}&&\le
\mathfrak{c_q}' k_3(1+R^{q-2})\|\widetilde{u}-\widetilde{v}\|_{H^1(\Gamma)},
\end{alignat*}
provided $\|u\|_{H^1(\Omega)}, \|v\|_{H^1(\Omega)},
\|\widetilde{u}\|_{H^1(\Gamma)}, \|\widetilde{v}\|_{H^1(\Gamma)}\le
R$.
\end{lem}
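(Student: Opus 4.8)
The plan is to prove the first, abstract part of the statement by direct manipulation of the growth bounds \eqref{3.6}--\eqref{3.7} through H\"older's inequality, and then to obtain the second part as an immediate corollary, by composing with the relevant Sobolev embeddings and invoking \eqref{mpmuqbound}.

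First I would treat boundedness of $\widehat{\mathfrak{f}}$ from $L^\rho(\Omega)$ to $L^{\rho/(p-1)}(\Omega)$. Measurability of $x\mapsto \mathfrak{f}(x,u(x))$ is automatic since $\mathfrak{f}$ is Carath\'eodory and $u$ is measurable, so only the norm estimate is at issue. Writing $s=\rho/(p-1)\ge 1$, raising \eqref{3.6} to the power $s$ and using $(1+t)^s\le 2^{s-1}(1+t^s)$ gives $\int_\Omega|\mathfrak{f}(x,u)|^{s}\le \mathfrak{c_p}^{s}2^{s-1}(|\Omega|+\|u\|_\rho^\rho)$; taking the $1/s$ power, the factor $2^{(s-1)/s}\le 2$ and the subadditivity of $t\mapsto t^{1/s}$ produce the bound $\mathfrak{c_p}k_1(1+R^{p-1})$, with $k_1$ controlled by $|\Omega|$ alone (here one uses $\rho\ge p-1$ to keep $|\Omega|^{(p-1)/\rho}$ bounded by $1+|\Omega|$). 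For the Lipschitz estimate I would raise \eqref{3.7} to the power $s$ and apply H\"older on $\Omega$ with the conjugate pair $p-1$ and $(p-1)/(p-2)$, chosen exactly so that $|u-v|^{s}$ is integrated to the power $\rho$, yielding $\|u-v\|_\rho$, while the correction factor $(1+|u|^{p-2}+|v|^{p-2})^{s}$ is integrated to the power $\rho/(p-2)$ and is bounded, again by the elementary inequality, by $C(|\Omega|+\|u\|_\rho^\rho+\|v\|_\rho^\rho)$. After collecting powers, the exponent $(p-2)/\rho$ falling on the correction integral turns $2R^\rho$ into the factor $(1+R^{p-2})$, giving the claimed estimate. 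The degenerate case $p=2$ is simpler still, as the correction factor is then the constant $3$ and no H\"older splitting is required. Local Lipschitzness is precisely the second displayed estimate, and continuity of $\widehat{\mathfrak{f}}$ follows from it. The statements for $\widehat{\mathfrak{g}}$ on $\Gamma_1$ are obtained verbatim, replacing $\Omega,p,\mathfrak{c_p},\mathfrak{c_p}'$ by $\Gamma_1,q,\mathfrak{c_q},\mathfrak{c_q}'$.

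For the second part I would fix $\rho=m_p'(p-1)$, so that $L^{\rho/(p-1)}(\Omega)=L^{m_p'}(\Omega)$. By \eqref{mpmuqbound}, which is the translation of \eqref{R} through \eqref{mpmuq}, one has $\rho\le\romega$, while $\rho\ge p-1$ because $m_p'\ge 1$; hence the Sobolev embedding $H^1(\Omega)\hookrightarrow L^\rho(\Omega)$ is continuous, say with constant $C_S$, and the first part applies with this $\rho$. Composing, if $\|u\|_{H^1(\Omega)}\le R$ then $\|u\|_\rho\le C_S R$, and substituting into the estimates already proved, absorbing $C_S$ into a new constant $k_3$, yields the asserted bounds for $\widehat{\mathfrak{f}}$ on $H^1(\Omega)$. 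The same argument with $\theta=\mu_q'(q-1)\le\rgamma$ and the embedding $H^1(\Gamma)\hookrightarrow L^{\rgamma}(\Gamma)$, followed by restriction to $\Gamma_1$, handles $\widehat{\mathfrak{g}}$ on $H^1(\Gamma)\cap L^2(\Gamma_1)$.

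The whole argument is elementary; there is no deep obstacle. The only points demanding care are the bookkeeping of the H\"older exponents in the Lipschitz estimate (and the separate, trivial treatment of $p=2$), and the verification that the single choice $\rho=m_p'(p-1)$ is simultaneously compatible with $\rho\ge p-1$, needed to invoke the first part, and with $\rho\le\romega$, needed for the embedding --- the latter compatibility being exactly the content of hypothesis \eqref{R}.
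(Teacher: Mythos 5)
Your proof is correct; the paper offers no proof of Lemma~\ref{Nemitskii} at all (it is introduced as an ``easy consequence'' of (FG1) and \eqref{R}), and your H\"older--plus--Sobolev argument, with the exponent bookkeeping $s=\rho/(p-1)$, the conjugate pair $p-1$, $(p-1)/(p-2)$, the separate trivial case $p=2$, and the choice $\rho=m_p'(p-1)\le\romega$ justified by \eqref{mpmuqbound}, is exactly the standard argument the paper implicitly relies on. The only cosmetic point: when $\rgamma=\infty$ (i.e.\ $N=2,3$) one should invoke the embedding $H^1(\Gamma)\hookrightarrow L^\theta(\Gamma)$ for the finite exponent $\theta=\mu_q'(q-1)$ rather than ``$H^1(\Gamma)\hookrightarrow L^{\rgamma}(\Gamma)$'' literally, which is in any case what your argument actually uses.
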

\subsection{Weak solutions}
We note that by Lemma~\ref{Nemitskii} and \eqref{mpmuqbound}, for
any $u$ satisfying \eqref{2.8}, we have
$$\widehat{f}(u)\in
L^\infty(0,T;L^{m_p'}(\Omega))\quad\text{and}\quad
\widehat{g}(u_{|\Gamma})\in L^\infty(0,T;L^{\mu_q'}(\Gamma_1)).$$
Hence, when $2\le p\le1+\romega/2$ we get $\widehat{f}(u)\in
L^1(0,T;L^2(\Omega))$, while when $p>1+\romega/2$ we get
$\widehat{f}(u)\in L^{m'}(0,T;L^{m'}(\Omega))$ and by
(FGQP1) we thus have $\widehat{f}(u)\in
L^{\overline{m}'}(0,T;[L^{\overline{m}}(\Omega;\lambda_\alpha)]')$.
In conclusion in both cases we can write $\widehat{f}(u)$ in the
form \eqref{2.7} with $\rho=\overline{m}$. Similar arguments show
that $\widehat{g}(u)$ can be written in the form \eqref{2.7} with
$\theta=\overline{\mu}$. Moreover, by Lemma~\ref{lemma3.1},
$\widehat{P}(u_t)\in L^{\overline{m}'}(0,T;[L^{\overline{m}}(\Omega,
\lambda_\alpha)]')$ and $\widehat{Q}({u_{|\Gamma}}_t)\in
L^{\overline{\mu}'}(0,T;[L^{\overline{\mu}}(\Gamma_1,
\lambda_\alpha)]')$, so they can be written in the same form.
By previous considerations and Lemma~\ref{lemma2.1} the following
definition makes sense.
\begin{definition}\label{definition3.1} Let (PQ1--3), (FG1),
(FGQP1) hold and $U_0=(u_0,u_1)\in\cal{H}$. A weak solution of
problem \eqref{1.1} in $[0,T]$, $0<T<\infty$,  is a weak solution of
\eqref{L} with
\begin{equation}\label{xieta}
\xi=\widehat{f}(u)-\widehat{P}(u_t),\quad
\eta=\widehat{g}(u_{|\Gamma})-\widehat{Q}({u_{|\Gamma}}_t), \quad
\rho=\overline{m}\quad\text{and}\quad \theta=\overline{\mu},
\end{equation} such that $u(0)=u_0$ and $\dot{u}(0)=u_1$.
A weak solution of \eqref{1.1} in $[0,T)$, $0<T\le\infty$, is $u\in
L^\infty_{\text{loc}}([0,T);H^1)$  which is a weak solution of
\eqref{1.1} in $[0,T']$ for any $T'\in (0,T)$. Such a solution is
called maximal if it has no proper extensions.
\end{definition}
Weak solutions enjoy good properties, as shown in the next result.
\begin{lem}\label{lemma3.3}
Let $u$ be a weak solution of \eqref{1.1} in $\text{dom }u=[0,T]$ or
$\text{dom }u=[0,T)$. Then
\renewcommand{\labelenumi}{{(\roman{enumi})}}
\begin{enumerate}
\item  $u\in C(\text{dom }u;H^1)\cap
C^1(\text{dom }u;H^0)$, it satisfies the energy identity
\begin{multline}\label{energyidentity}
\tfrac 12 \left[\int_\Omega u_t^2+\int_{\Gamma_1}
{u_{|\Gamma}}_t^2+\int_\Omega |\nabla
u|^2+\int_{\Gamma_1}|\nabla_\Gamma
u|_\Gamma^2\right]_s^t+\int_s^t\int_\Omega
P(\cdot,u_t)u_t\\
+\int_s^t\left[\int_{\Gamma_1}Q(\cdot,{u_{|\Gamma}}_t){u_{|\Gamma}}_t
-\int_\Omega f(\cdot,u)u_t-\int_{\Gamma_1}g(\cdot,
u){u_{|\Gamma}}_t\right]=0
\end{multline}
for all $s,t\in \text{dom }u$, and the distribution identity
\begin{multline}\label{generdistribution}
\left[\int_\Omega
u_t\phi+\int_{\Gamma_1}{u_{|\Gamma}}_t\phi\right]_0^{T'}
+\int_0^{T'}\left[-\int_\Omega
u_t\phi_t-\int_{\Gamma_1}{u_{|\Gamma}}_t{\phi_{|\Gamma}}_t+\int_\Omega
\nabla u\nabla \phi\right.\\
+\left.\int_{\Gamma_1} (\nabla_\Gamma
u,\nabla_\Gamma\phi)_\Gamma+\int_\Omega
P(\cdot,u_t)\phi+\int_{\Gamma_1}
Q(\cdot,{u_{|\Gamma}}_t)\phi-\!\int_\Omega
f(\cdot,u)\phi-\!\!\int_{\Gamma_1}  g(\cdot,u)\phi\right]=0
\end{multline}
for all $T'\in \text{dom }u$ and $\phi\in C([0,T'];H^1)\cap
C^1([0,T'];H^0)\cap Z(0,T')$;
\item if $u$ is a weak solution of \eqref{1.1} in $[0,T_u]$, $v$ is
a weak solution of \eqref{1.1} with initial data $v_0,v_1$ in
$[0,T_v]$ and $u(T_u)=v_0, \dot{u}(T_u)=v_1$ then $w$ defined  by
$w(t)=u(t)$ when $t\in [0,T_u]$, $w(t)=v(t-T_u)$ when $t\in
[T_u,T_u+T_v]$, is a weak solution of \eqref{1.1} in $[0,T_u+T_v]$;
\item for any  $(\rho,\theta)\in
[2,\max\{\romega,m\}]\times[2,\max\{\rgamma,\mu\}]\cap\R^2$
\begin{equation}\label{3.14}
u_0\in H^{1,\rho,\theta}_{\alpha,\beta}\Rightarrow u\in C(\text{dom
}u; H^{1,\rho,\theta}_{\alpha,\beta});
\end{equation}
\item if $\text{dom }u=[0,T)$, $T<\infty$ and  $U\in
L^\infty(0,T;\cal{H})$ then $u$ is a weak solution in $[0,T]$ and
$U\in C([0,T];\cal{H})$.
\end{enumerate}
\end{lem}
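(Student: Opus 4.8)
The plan is to deduce all four statements from the linear theory of Lemma~\ref{lemma2.1}, the Nemitskii estimates of Lemmas~\ref{lemma3.1} and \ref{Nemitskii}, and the energy identity; only part (iv) carries the essential difficulty.

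For part (i) I would invoke Definition~\ref{definition3.1} and the discussion preceding it: on every compact subinterval $[0,T']\subseteq\text{dom }u$ the function $u$ is a weak solution of the linear problem \eqref{L} with forcing terms $\xi=\widehat{f}(u)-\widehat{P}(u_t)$, $\eta=\widehat{g}(u_{|\Gamma})-\widehat{Q}({u_{|\Gamma}}_t)$, which are of the form \eqref{2.7} with $\rho=\overline{m}$, $\theta=\overline{\mu}$. Lemma~\ref{lemma2.1} then applies verbatim on each $[0,T']$, yielding $U\in C([0,T'];\cal{H})$ — hence $u\in C(\text{dom }u;H^1)\cap C^1(\text{dom }u;H^0)$ — the linear energy identity and the generalized distribution identity. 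Substituting the above $\xi,\eta$, and recording that $\|\dot u\|_{H^0}^2=\int_\Omega u_t^2+\int_{\Gamma_1}{u_{|\Gamma}}_t^2$ and $\langle Au,u\rangle_{H^1}=\int_\Omega|\nabla u|^2+\int_{\Gamma_1}|\nabla_\Gamma u|_\Gamma^2$ by \eqref{2.10} while all the products are genuinely integrable by Lemma~\ref{Nemitskii}, turns these two identities into \eqref{energyidentity} and \eqref{generdistribution}.

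For part (ii) I would apply the identity \eqref{generdistribution} of part (i) separately to $u$ on $[0,T_u]$ with test function $\phi$, and to $v$ on $[0,T_v]$ with test function $t\mapsto\phi(t+T_u)$; the regularity \eqref{2.8} of $w$ and its continuity as an $\cal{H}$-valued map follow from part (i) together with the matchings $u(T_u)=v_0$, $\dot u(T_u)=v_1$. Adding the two identities and changing variables, the bulk integrals assemble into the bulk integral for $w$ on $[0,T_u+T_v]$, while the two junction contributions at $t=T_u$, namely $\int_\Omega u_t(T_u)\phi+\int_{\Gamma_1}{u_{|\Gamma}}_t(T_u)\phi$ and its negative arising from $\dot v(0)=v_1=\dot u(T_u)$, cancel. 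This proves \eqref{generdistribution} for $w$ (whose forcing terms are automatically the concatenation of those of $u$ and $v$), and specializing to test functions supported in the open interval recovers \eqref{2.9}; hence $w$ is a weak solution on $[0,T_u+T_v]$. For part (iii), since continuity is local I would fix $T'<\sup\text{dom }u$ and use $u(t)=u(s)+\int_s^t u_\tau\,d\tau$ from $u\in C^1(H^0)$. If $\rho\le\romega$ the embedding $H^1\hookrightarrow L^{2,\rho}_\alpha(\Omega)$ (from $\alpha\in L^\infty$ and Sobolev) makes the $\Omega$-component of \eqref{3.14} automatic from $u\in C(H^1)$; if $\romega<\rho\le m$ then necessarily $m>\romega$, so $m=\overline{m}$, and $\dot u\in Z(0,T')$ gives $\alpha^{1/\overline{m}}u_t\in L^{\overline{m}}((0,T')\times\Omega)\subseteq L^1(0,T';L^{\overline{m}}(\Omega))$, whence $t\mapsto\alpha^{1/\overline{m}}u(t)$ is continuous into $L^{\overline{m}}(\Omega)$; combined with the elementary inclusion $L^{2,\overline{m}}_\alpha(\Omega)\hookrightarrow L^{2,\rho}_\alpha(\Omega)$ for $2\le\rho\le\overline{m}$ (Hölder, $\Omega$ bounded) this yields $u\in C(L^{2,\rho}_\alpha(\Omega))$. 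The same argument on $\Gamma_1$ for $\theta$, together with $u\in C(H^1)$, gives \eqref{3.14}.

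Part (iv) is the main obstacle, since the hypothesis $U\in L^\infty(0,T;\cal{H})$ must first be promoted to $\dot u\in Z(0,T)$ before Lemma~\ref{lemma2.1} can be applied on all of $[0,T]$. Writing \eqref{energyidentity} on $[0,t]$ for $t<T$ and using the nonnegativity of the damping terms together with coercivity (PQ3), the left-hand damping integrals are bounded by the (bounded) energy plus the source integrals $\int_0^t\int_\Omega f(\cdot,u)u_t$ and $\int_0^t\int_{\Gamma_1}g(\cdot,u){u_{|\Gamma}}_t$. I would control these by Lemma~\ref{Nemitskii}: when $p\le1+\romega/2$ the factor $\widehat f(u)\in L^\infty(0,T;L^2(\Omega))$ pairs directly with $u_t\in L^\infty(0,T;L^2(\Omega))$ over the finite interval, whereas when $p>1+\romega/2$ (so $\essinf_\Omega\alpha>0$ and $m=\overline{m}>2$) a Young inequality splits $\|\widehat f(u)\|_{m'}\|u_t\|_m$ into a small multiple of $\int_\Omega\alpha|u_t|^m$, absorbed into the coercive term, plus an $L^\infty$-bounded remainder; the boundary terms are handled identically. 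Letting $t\to T^-$ by monotone convergence yields $\int_0^T\int_\Omega\alpha|u_t|^m<\infty$ and $\int_0^T\int_{\Gamma_1}\beta|{u_{|\Gamma}}_t|^\mu<\infty$, hence $\dot u\in Z(0,T)$. Then $\xi,\eta$ are of the form \eqref{2.7} on all of $[0,T]$ by Lemmas~\ref{lemma3.1} and \ref{Nemitskii}, and since \eqref{2.9} on $(0,T)$ follows from its validity on each $(0,T')$ (every admissible test function is compactly supported), $u$ is a weak solution of \eqref{L}, hence of \eqref{1.1}, on $[0,T]$. A final application of Lemma~\ref{lemma2.1} on $[0,T]$ gives $U\in C([0,T];\cal{H})$.
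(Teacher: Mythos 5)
Your proposal is correct and follows essentially the same route as the paper: (i) and (ii) from Lemma~\ref{lemma2.1}, the substitution \eqref{xieta} and the autonomy of the problem; (iii) by splitting $\rho\le\romega$ (Sobolev) from $\rho>\romega$ (time-integration of $\dot u\in Z$); and (iv) by using the energy identity with (PQ3), Lemma~\ref{Nemitskii} and a weighted Young inequality to absorb the source pairing into the coercive damping term and conclude $\dot u\in Z(0,T)$, which is exactly the paper's argument culminating in \eqref{3.22}--\eqref{3.25}.
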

\begin{proof}
Clearly  (i) follows from Lemma~\ref{lemma2.1} while (ii) follows by
\eqref{generdistribution}
since \eqref{1.1} is autonomous. To prove (iii) let us take $u_0\in
H^{1,\rho,\theta}_{\alpha,\beta}$. When $m\le \romega$ then, by the
trivial embedding $L^\rho(\Omega)\hookrightarrow
L^{2,\rho}_\alpha(\Omega)$ and Sobolev embedding we get $u\in
C(\text{dom }u; L^{2,\rho}_\alpha(\Omega))$, while when $m>\romega$,
so $\rho\le m$, as $\dot{u}\in Z(0,T')$ for all $T'\in \text{dom }u$ we
get $u_t\in L^\rho(0,T';L^{2,\rho}_\alpha(\Omega))$, and hence $u\in
W^{1,\rho}(0,T';L^{2,\rho}_\alpha(\Omega))\hookrightarrow
C([0,T'];L^{2,\rho}_\alpha(\Omega))$ since $u_0\in
L^{2,\rho}_\alpha(\Omega)$ and $u(t)=u_0+\int_0^t
u_t(\tau,\cdot)\,d\tau$ in $L^2(\Omega)$. Then $u\in C(\text{dom }u;
H^{1,\rho,2}_{\alpha,1})$. Since the same arguments show that $u\in
C(\text{dom }u; H^{1,2,\theta}_{1,\beta})$ and
$H^{1,\rho,\theta}_{\alpha,\beta}=H^{1,\rho,2}_{\alpha,1}\cap
H^{1,2,\theta}_{1,\beta}$ we get \eqref{3.14}.

To prove (iv), thanks to (i), we just have to prove that if $\text{dom }u=[0,T)$,
$T<\infty$ and  $U\in L^\infty(0,T;\cal{H})$ then $\dot{u}\in
Z(0,T)$. Set $S=\|U\|_{L^\infty(0,T;\cal{H})}<\infty$. By the energy identity
\eqref{energyidentity} and assumption (PQ3)
\begin{multline}\label{3.22}
c_m'\int_0^t \|[u_t]_{\alpha}\|_{m,\alpha}^m+c_\mu'\int_0^t
\|[{u_{|\Gamma}}_t]_{\beta}\|_{\mu,\beta,\Gamma_1}^\mu\le \tfrac 12
\|U_0\|_{\cal{H}}^2\\+\int_0^t \int_\Omega\widehat{f}(u)u_t+\int_0^t
\int_{\Gamma_1}\widehat{g}(u){u_{|\Gamma}}_t\qquad\text{for $t\in
[0,T)$.}
\end{multline}
By H\"{o}lder and weighted Young inequalities together with
Lemma~\ref{Nemitskii} we have
\begin{equation}\label{3.23}
\int_0^t \int_\Omega\widehat{f}(u)u_t\le\int_0^t
c_pk_3(1+S^{p-1})\|u_t\|_{m_p}\le \tfrac
{c_m'}{m_p}\int_0^t\|u_t\|_{m_p}^{m_p}+K_1,
\end{equation}
where $K_1=(c_m')^{-1/m_p}[c_pk_3(1+S^{p-1})]^{m_p'}T$. We now
distinguish between the cases $2\le p\le 1+\romega/2$ and $p>
1+\romega/2$. In the first one, by \eqref{mpmuq}, we have $m_p=2$ so
$\int_0^t\|u_t\|_{m_p}^{m_p}\le S^2T$, while in the second one
$m_p=m$ and, by assumption (FGQP1) and Remark~\ref{remark3.3}, we
have $\int_0^t\|u_t\|_{m_p}^{m_p}\le \int_0^t
\|[u_t]_{\alpha}\|_{m,\alpha}^m$. Hence
\begin{equation}\label{3.24}
\int_0^t \int_\Omega\widehat{f}(u)u_t\le \tfrac {c_m'}m\int_0^t
\|[u_t]_{\alpha}\|_{m,\alpha}^m+K_2\quad\text{for all $t\in [0,T)$},
\end{equation}
where $K_2=\{(c_m')^{-1/m_p}[c_pk_3(1+S^{p-1})]^{m_p'}+c_m'S^2\}T$.
 Using the same arguments
\begin{equation}\label{3.25}
\int_0^t \int_{\Gamma_1}\widehat{g}(u){u_{|\Gamma}}_t\le \tfrac
{c_\mu'}\mu\int_0^t
\|[{u_{|\Gamma}}_t]_{\beta}\|_{\mu,\beta,\Gamma_1}^\mu+K_3\quad\text{for
all $t\in [0,T)$},
\end{equation}
where
$K_3=\{(c_\mu')^{-1/\mu_q}[c_qk_3(1+S^{q-1})]^{\mu_q'}+c_\mu'S^2\}T$.
 Plugging \eqref{3.24}--\eqref{3.25} in \eqref{3.22} we get
$$\tfrac{c_m'}{m'}\int_0^t
\|[u_t]_{\alpha}\|_{m,\alpha}^m+\tfrac{c_\mu'}{\mu'}\int_0^t
\|[{u_{|\Gamma}}_t]_{\beta}\|_{\mu,\beta,\Gamma_1}^\mu\le\tfrac 12
\|U_0\|_{\cal{H}}^2+K_2+K_3\quad\text{for all $t\in [0,T)$,}$$ from
which, since $\dot{u}\in L^\infty(0,T:H^0)$, we get $\dot{u}\in Z(0,T)$,
concluding he proof.
\end{proof}

\subsection{Additional assumptions}
The following properties of $f$ and $g$ will be
assumed only in connection with (FG1) and for some values of
$p,q>3$ to be precised:
\renewcommand{\labelenumi}{{(F2)}}
\begin{enumerate}
\item $f(x,\cdot)\in C^2(\R)$ for a.a. $x\in\Omega$
and there is a constant $c''_p\ge 0$ such that
$$|f_u(x,u)-f_u(x,v)|\le c''_p |u-v|(1+|u|^{p-3}+|v|^{p-3})\quad\text{for a.a. $x\in\Omega$ and all $u,v\in\R$;
}$$
\renewcommand{\labelenumi}{{(G2)}}
\item  $g(x,\cdot)\in C^2(\R)$ for a.a. $x\in\Gamma_1$
and there is a constant $c''_q\ge 0$ such that
$$|g_u(x,u)-g_u(x,v)|\le c''_q |u-v|(1+|u|^{q-3}+|v|^{q-3})\quad\text{for a.a. $x\in\Gamma_1$ and all $u,v\in\R$.}$$
\end{enumerate}

\begin{rem} \label{remark3.7} We point out that (F2) and (G2) are respectively equivalent to
\renewcommand{\labelenumi}{{(F2)$'$}}
\begin{enumerate}
\item $f(x,\cdot)\in C^2(\R)$ for a.a. $x\in\Omega$
and there is a constant $\widetilde{c_p}''\ge 0$ such that
$$|f_{uu}(x,u)|\le \widetilde{c_p}'' (1+|u|^{p-3})\quad\text{for a.a. $x\in\Omega$ and all $u\in\R$;
}$$
\renewcommand{\labelenumi}{{(G2)$'$}}
\item $g(x,\cdot)\in C^2(\R)$ for a.a. $x\in\Gamma_1$
and there is a constant $\widetilde{c_q}''\ge 0$ such that
$$|g_{uu}(x,u)|\le \widetilde{c_q}''(1+|u|^{q-3})\quad\text{for a.a. $x\in\Gamma_1$ and all $u\in\R$.
}$$
\end{enumerate}
Moreover (F2) implies (F2)$'$ with $\widetilde{c_p}''=2c_p''$,  (G2)
implies (G2)$'$ with $\widetilde{c_q}''=2c_q''$ and conversely (F2)$'$
implies (F2) with $\widetilde{c_p}''=c_p''$,  (G2)$'$ implies (G2)
with $\widetilde{c_q}''=c_q''$. Finally, in the case considered in
problem \eqref{1.2}, that is $f(x,u)=f_0(u)$ and $g(x,u)=g_0(u)$,
(F2) means that $f_0\in\cal{F}_p$, while (G2) that
$g_0\in\cal{F}_q$.
\end{rem}
In  the sequel we shall use one between the following
two assumptions, the latter being trivially stronger than the
former:
\begin{enumerate}
\renewcommand{\labelenumi}{{(FG2)}}
\item $N\le 4$ and (F2) holds when $1+\romega /2<p=1+\romega /m'$,  \\ $N\le 5$ and (G2)
holds when $1+\rgamma /2<q=1+\rgamma/ {\mu'}$;
\renewcommand{\labelenumi}{{(FG2)$'$}}
\item $N\le 4$ and (F2) holds when $p>1+\romega /2$,  \\ $N\le 5$ and (G2)
holds when $q>1+\rgamma /2$.
\end{enumerate}
\begin{rem} \label{remark3.6} By previous remark when $f(x,u)=f_0(u)$ and $g(x,u)=g_0(u)$
clearly (FG2) and (FG2)$'$ reduce to (IV) and (IV)$'$, pp. \pageref{assumptionIV}--\pageref{assumptionIV'}.
\end{rem}

The following properties of $P$ and $Q$ will be assumed only for
some values of $p,q,m$ and $\mu$ to be specified later on:
\begin{enumerate}
\renewcommand{\labelenumi}{{(P4)}}
\item there are constants $c_m'', M_m>0$  such that
\begin{equation}\label{P4}
P_v(x,v)\ge c_m''\, \alpha(x)|v|^{m-2}\quad\text{for a.a. $(x,v)\in
\Omega\times(\R\setminus (-M_m,M_m))$,}
\end{equation}
\renewcommand{\labelenumi}{{(Q4)}}
\item
there are constants $c_\mu'', M_\mu>0$  such that
\begin{equation}\label{Q4}
Q_v(x,v)\ge c_\mu''\, \beta(x)|v|^{\mu-2}\quad\text{for a.a.
$(x,v)\in \Gamma_1\times(\R\setminus (-M_\mu,M_\mu))$.}
\end{equation}

\end{enumerate}

\begin{rem}\label{remark4.2bis}
Since by (PQ1--2)  the partial derivatives $P_v$ and $Q_v$ exist
almost everywhere (see \cite{DiBenedettorealanalysis}) and are
nonnegative, \eqref{P4}--\eqref{Q4} always hold if one allows
$c_m''$ and $c_\mu''$ to vanish, and  (P4) and (Q4) respectively
reduce to ask that  there is $M_m>0$ such that one can take
$c_m''>0$ in \eqref{P4} and that there is $M_\mu>0$ such that one
can take $c_\mu''>0$ in \eqref{Q4}.
\end{rem}

In particular in our well--posedness result we shall use one between
the following two assumptions:
\begin{enumerate}
\renewcommand{\labelenumi}{{(PQ4)}}
\item if $p\ge \romega$\,\, then (P4) holds, if  $q\ge \rgamma$ \,then (Q4)
holds;
\renewcommand{\labelenumi}{{(PQ4)$'$}}
\item if $m>\romega$ then (P4) holds, if  $\mu>\rgamma$ then (Q4)
holds.
\end{enumerate}
Clearly, when  \eqref{R''} holds, (PQ4)$'$ is stronger than (PQ4) by
\eqref{somegagammarelazioni3}.

\begin{rem}\label{remark3.8} We remark some trivial consequences of assumptions (PQ1--4) and
(PQ1--3)--(PQ4)$'$. Setting $c_m''=0$ when (P4) is not assumed to hold
and $c_\mu''=0$ when (Q4) is not assumed to hold, since $P_v,Q_v\ge
0$ a.e., from (P4) and (Q4) (when they are assumed) we have
\begin{alignat}2\label{3.17}
P_v(x,v)\ge
&\alpha(x)\left[c_m''|v|^{m-2}-c_m'''\right]\quad&&\text{for a.a. $(x,v)\in \Omega\times\R$,}\\
\label{3.18} Q_v(x,v)\ge
&\beta(x)\left[c_\mu''|v|^{\mu-2}-c_\mu'''\right]\quad&&\text{for
a.a. $(x,v)\in \Gamma_1\times\R$,}
\end{alignat}
where $c_m'''=c_m''M_m^{m-2}$, $c_\mu'''=c_\mu''M_\mu^{\mu-2}$. Then, by
(PQ2), integrating \eqref{3.17} we get, for a.a. $x\in\Omega$
and all $v<w$,
\begin{equation}\label{3.19}
P(x,w)-P(x,v)\ge  \alpha(x)\left[\frac
{c_m''}{m-1}\left(|w|^{m-2}w-|v|^{m-2}v\right)-c_m'''(w-v)\right].\end{equation}
Consequently, using the elementary inequality
$$\left(|w|^{m-2}w-|v|^{m-2}v\right)(w-v)\ge \widetilde{c_m}|w-v|^m\qquad\text{for all $v,w\in\R$,}$$
where $\widetilde{c_m}$ is a positive constant, setting
$\widetilde{c_m}''=c_m''\widetilde{c_m}/(m-1)$, from \eqref{3.19} we
get
\begin{equation}\label{3.20}
\widetilde{c_m}''\alpha(x)|v-w|^{\overline{m}}\le c_m'''
\alpha(x)|v-w|^2+ (P(x,w)-P(x,v))(w-v)\end{equation} for a.a.
$x\in\Omega$ and all $v,w\in\R$, with $\widetilde{c_m}''>0$ when
$p\ge \romega$ if (PQ4) is assumed and when $m>\romega$ if (PQ4)$'$ is
assumed. Using the same arguments we get from \eqref{3.18} the
existence of $\widetilde{c_\mu}''\ge0$ such that
\begin{equation}\label{3.21}
\widetilde{c_\mu}''\beta(x)|v-w|^{\overline{\mu}}\le c_\mu'''
\beta(x)|v-w|^2+ (Q(x,w)-Q(x,v))(w-v)\end{equation} for a.a.
$x\in\Gamma_1$ and all $v,w\in\R$, with $\widetilde{c_\mu}''>0$ when
$q\ge \rgamma$ if (PQ4) is assumed and when $\mu>\rgamma$ if (PQ4)$'$
is assumed.
\end{rem}
\begin{rem}\label{remark3.9} When $P(x,v)=\alpha(x)P_0(v)$ and
$Q(x,v)=\beta(x)Q_0(v)$ with $\alpha\in L^\infty(\Omega)$, $\beta\in
L^\infty(\Gamma_1)$, $\alpha,\beta\ge 0$, (PQ4) and (PQ4)$'$
reduce to (V) and (V)$'$, p. \pageref{assumptionV}.
\end{rem}
\begin{rem}
In the paper  we shall introduce several positive constants
depending on $\Omega$, $P$, $Q$, $f$ and $g$, and on the various constants appearing in the assumptions. Since they are fixed
we shall denote these
constants by $k_i$, $i\in\N$.  We shall denote positive constants (possibly) depending
on other objects $\Upsilon_1,\ldots,\Upsilon_n$ by
$K_i=K_i(\Upsilon_1,\ldots,\Upsilon_n)$, $i\in\N$.
\end{rem}
\section{A key estimate}\label{section4}
This section is devoted to give the key estimate which will be used
when (FG2) or (FG2)$'$ hold,  the $\Omega$ -- version of which
constitutes the content of the following
\begin{lem}\label{lemma4.1} Suppose that
$\mathfrak{f}$ satisfies (F1) with constants  $\mathfrak{c_p}$, $\mathfrak{c_p}'$,  that
\eqref{R} holds and
that either $2\le p\le 1+\romega/2$ or $p>1+\romega/2$, $N\le 4$ and  $\mathfrak{f}$
satisfies (F2) with constant $\mathfrak{c_p}''$. Let $T\in (0,1]$, $U=(u,\dot{u}),
V=(v,\dot{v})\in C([0,T];\cal{H})$, $\dot{u},\dot{v}\in Z(0,T)$ and denote
$W=U-V=(w,\dot{w})$, $U_0=U(0)=(u_0,u_1)$, $V_0=V(0)=(v_0,v_1)$,
$W_0=U_0-V_0=(w_0,w_1)$, $\boldsymbol{\mathfrak{c_p}}=(\mathfrak{c_p},\mathfrak{c_p}')$,
$\boldsymbol{\mathfrak{c_p}'}=(\mathfrak{c_p},\mathfrak{c_p}',\mathfrak{c_p}'')$.
 Suppose moreover that $u_0,v_0\in
L^\somega(\Omega)$ and take $R\ge 0$ such that
\begin{equation}\label{4.1}
\|U\|_{C([0,T];\cal{H})},\, \|V\|_{C([0,T];\cal{H})},\,
\|\dot{u}\|_{Z(0,T)},\,\|\dot{v}\|_{Z(0,T)},\,\|u_0\|_\somega,\,
\|v_0\|_\somega\le R.
\end{equation}
Then given any $\eps>0$ there are
$$K_4=K_4(R,\boldsymbol{\mathfrak{c_p}})\qquad\text{and}\quad K_5=K_5(\eps,R,u_0,v_0,\boldsymbol{\mathfrak{c_p}'}),$$
independent on $\mathfrak{f}$ and increasing in $R$, such that for all $t\in
[0,T]$
\begin{multline}\label{4.3}
I_\mathfrak{f}(t):=\int_0^t\int_\Omega[\widehat{\mathfrak{f}}(u)-\widehat{\mathfrak{f}}(v)]w_t  \le
K_4(\eps+t)\|W(t)\|^2_\cal{H} \\+
K_5\left[\|W_0\|^2_\cal{H}+\int_0^t(1+\|u_t\|_{m_p}+\|v_t\|_{m_p})\|W(\tau)\|^2_\cal{H}\,d\tau\right].
\end{multline}
Moreover, if $u_0,v_0\in L^{s_1}(\Omega)$ for some $s_1>\somega$
and, in addition to \eqref{4.1}, we have
\begin{equation}\label{4.4}
\|u_0\|_{s_1},\quad \|v_0\|_{s_1}\le R,
\end{equation}
then $K_5$ is  independent on $u_0$ and $v_0$, that is
$K_5(\eps,R,u_0,v_0,\boldsymbol{\mathfrak{c_p}}')=K_6(\eps,R,\boldsymbol{\mathfrak{c_p}}')$.
\end{lem}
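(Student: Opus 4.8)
The plan is to reduce to the genuinely supercritical regime and then use a time integration by parts, in the spirit of \cite{bociulasiecka1,bociulasiecka2}, trading the uncontrollable factor $w_t$ for spatial derivatives of $\mathfrak{f}$. First I would dispose of the subcritical alternative $2\le p\le 1+\romega/2$: here $m_p=2$ and, by Lemma~\ref{Nemitskii}, $\widehat{\mathfrak{f}}$ is locally Lipschitz from $H^1(\Omega)$ into $L^2(\Omega)$, so under \eqref{4.1} one has $\|\widehat{\mathfrak{f}}(u)-\widehat{\mathfrak{f}}(v)\|_2\le \mathfrak{c_p}'k_3(1+R^{p-2})\|w\|_{H^1(\Omega)}$ on $[0,T]$. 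Cauchy--Schwarz and Young then give $I_\mathfrak{f}(t)\le \tfrac12 \mathfrak{c_p}'k_3(1+R^{p-2})\int_0^t(\|w\|^2_{H^1}+\|w_t\|_2^2)$, which is already of the form \eqref{4.3} with the whole bound placed in the Gronwall integral (through the summand $1$ in $1+\|u_t\|_{m_p}+\|v_t\|_{m_p}$); no $\eps$ is needed and $K_5$ depends only on $R$. From now on I assume $p>1+\romega/2$, $N\le 4$ and that $\mathfrak{f}$ satisfies (F2).

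The core step is to integrate by parts in time, obtaining (suppressing the $x$--dependence)
\[ I_\mathfrak{f}(t)=\int_\Omega[\mathfrak{f}(u(t))-\mathfrak{f}(v(t))]w(t)-\int_\Omega[\mathfrak{f}(u_0)-\mathfrak{f}(v_0)]w_0-\int_0^t\int_\Omega[\mathfrak{f}_u(u)u_t-\mathfrak{f}_u(v)v_t]w. \]
In the last integral I would write $\mathfrak{f}_u(u)u_t-\mathfrak{f}_u(v)v_t=\mathfrak{f}_u(u)w_t+[\mathfrak{f}_u(u)-\mathfrak{f}_u(v)]v_t$ and integrate by parts in time once more in $\int_0^t\int_\Omega\mathfrak{f}_u(u)w_t\,w=\tfrac12\int_0^t\int_\Omega\mathfrak{f}_u(u)\partial_t(w^2)$, which produces the boundary contributions $\tfrac12\int_\Omega\mathfrak{f}_u(u(t))w(t)^2$ and $-\tfrac12\int_\Omega\mathfrak{f}_u(u_0)w_0^2$ together with $-\tfrac12\int_0^t\int_\Omega\mathfrak{f}_{uu}(u)u_t\,w^2$; controlling this last term is exactly where (F2), and hence the restriction $N\le 4$, enters. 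The resulting pieces split into three types. The time-$0$ boundary terms are bounded by $K_5\|W_0\|^2_{\cal{H}}$ using the growth bounds of \emph{(F1)--(F2)}, Hölder and the Sobolev embedding $H^1\hookrightarrow L^\romega$, with constants controlled through $\|u_0\|_\somega,\|v_0\|_\somega\le R$. The space-time terms carrying $u_t$ or $v_t$ are bounded, via Hölder $\|u_t\|_{m_p}\,\||w|^2(1+|u|^{p-3}+|v|^{p-3})\|_{m_p'}$ followed by the exponent bookkeeping dictated by \eqref{R} (feasible precisely because $N\le 4$), by the Gronwall integral in \eqref{4.3}.

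The delicate type is the time-$t$ boundary term $\int_\Omega(1+|u(t)|^{p-2}+|v(t)|^{p-2})w(t)^2$, which must be absorbed into $K_4(\eps+t)\|W(t)\|_\cal{H}^2$. The obstruction is that in the super-supercritical range $p>\romega$ one has $\somega>\romega$, so $u(t)\in H^1$ does \emph{not} embed into $L^\somega$ and no uniformly small constant is available a priori. The device is to transfer integrability from $u_0$ to $u(t)$ through the higher time-regularity $\dot{u}\in Z(0,T)$: since here $\overline{m}=m>\somega$ (by \eqref{R} and $N\le4$, cf. \eqref{somegagammarelazioni2}), writing $u(t)=u_0+\int_0^t u_\tau\,d\tau$ gives $\|u(t)-u_0\|_\somega\le k\,t^{1/m'}R$. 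I then split $\Omega=\{|u(t)|\le M\}\cup\{|u(t)|>M\}$: on the first set the term is $\le M^{p-2}\|w(t)\|_2^2$, which, rewriting $\|w(t)\|_2^2=\|w_0\|_2^2+2\int_0^t\int_\Omega w\,w_\tau$, redistributes into $K_5\|W_0\|^2_\cal{H}+K_5\int_0^t\|W(\tau)\|^2_\cal{H}$; on the second set Hölder yields $\big(\int_{\{|u(t)|>M\}}|u(t)|^\somega\big)^{(p-2)/\somega}\|w(t)\|_\romega^2$, and equiintegrability of the fixed function $|u_0|^\somega$ (plus the $t$-small increment) lets me choose $M=M(\eps,u_0,v_0)$ making the tail $<\eps$, whence this piece is $\le \eps\,C_S^2\|W(t)\|^2_\cal{H}$ with $C_S$ the Sobolev constant, absorbed into $K_4$ \emph{independently of} $\eps$; the increment contributes the remaining $O(t)\|W(t)\|^2_\cal{H}$. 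When $p\le\romega$ instead $\somega\le\romega$ and $u(t)\in H^1\hookrightarrow L^\somega$ directly, with equiintegrability furnished by $u\in C([0,T];H^1)$, and the same split applies. This is the heart of the argument and the point where $K_5$ is forced to depend on the actual functions $u_0,v_0$ (through $M^{p-2}$), explaining the dependence $K_5=K_5(\eps,R,u_0,v_0,\boldsymbol{\mathfrak{c_p}'})$.

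Finally, the improvement under \eqref{4.4} is immediate from this scheme: when $u_0,v_0\in L^{s_1}(\Omega)$ with $s_1>\somega$, the tail in the splitting is estimated explicitly by $\int_{\{|u_0|>M/2\}}|u_0|^\somega\le (M/2)^{\somega-s_1}\|u_0\|_{s_1}^{s_1}\le (M/2)^{\somega-s_1}R^{s_1}$, so that $M$ can be chosen as a function of $\eps$ and $R$ only; consequently $K_5$ loses its dependence on the specific data and becomes $K_6(\eps,R,\boldsymbol{\mathfrak{c_p}'})$, as claimed. The $\Gamma$-counterpart of the whole proof is identical, replacing $\romega,\somega,m$ and the threshold $N\le4$ by $\rgamma,\sgamma,\mu$ and $N\le5$, and using the trace version of the Sobolev embedding.
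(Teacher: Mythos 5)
Your proposal is correct and follows essentially the same route as the paper: the same double integration by parts in time justified by the regularity $\dot u,\dot v\in Z(0,T)$, the same five resulting terms, the same splitting $u(t)=(u(t)-u_0)+u_0$ with the increment controlled through $\somega\le m$ and H\"older in time, and the same mechanism forcing $K_5$ to depend on $u_0,v_0$ in the borderline case. The only (immaterial) variation is at the $\eps$--absorption step, where you truncate by a level set and invoke equiintegrability of $|u_0|^{\somega}$, while the paper approximates $u_0$ by a bounded function via density of $L^\infty(\Omega)$ in $L^{\somega}(\Omega)$ --- two equivalent quantifications of the same absolute continuity.
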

To prove Lemma~\ref{lemma4.1} we shall use the following well--known
abstract version of the Leibnitz formula, which can be proved as in \cite[Theorem~2,
p.~477]{dautraylionsvol5}.
\begin{lem}\label{lemma4.2} Let $T>0$, $X_1,X_2$ be Banach spaces,
$X_1\hookrightarrow X_2$ with dense embedding,  $u\in L^2(0,T;X_1)$,
$v\in L^2(0,T;X_2')$, $\dot{u}\in L^2(0,T;X_2)$ and $\dot{v}\in
L^2(0,T;X_1')$. Then $\langle v,u\rangle_{X_1}\in W^{1,1}(0,T)$ and
$\langle v,u\rangle'_{X_1}=\langle v,\dot{u}\rangle_{X_2}+\langle
\dot{v},u\rangle_{X_1}$.
\end{lem}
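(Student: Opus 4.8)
The plan is to prove the formula by time-mollification, reducing to the elementary product rule for smooth curves and then passing to the limit, exactly in the spirit of the classical scalar-triple result of \cite[Theorem~2, p.~477]{dautraylionsvol5}. First I would record the duality setup that makes the statement meaningful. Since $X_1\hookrightarrow X_2$ is \emph{dense}, the transpose of the embedding is an injective continuous map $X_2'\hookrightarrow X_1'$, so every $v(t)\in X_2'$ is unambiguously an element of $X_1'$ and the pairing $\langle v,u\rangle_{X_1}$ is well defined; moreover the two pairings are compatible, i.e.\ $\langle w,z\rangle_{X_1}=\langle w,z\rangle_{X_2}$ whenever $w\in X_2'$ and $z\in X_1\subset X_2$. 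In particular $v\in L^2(0,T;X_2')\hookrightarrow L^2(0,T;X_1')$, so $g:=\langle v,u\rangle_{X_1}$ lies in $L^1(0,T)$ (being the pairing of an $L^2(X_1')$ function with an $L^2(X_1)$ function), and the candidate derivative $h:=\langle \dot v,u\rangle_{X_1}+\langle v,\dot u\rangle_{X_2}$ lies in $L^1(0,T)$ by Cauchy--Schwarz in time. It therefore suffices to prove $g'=h$ in the distribution sense on every compact subinterval $[a,b]\subset(0,T)$, since this yields $g\in W^{1,1}_{\loc}(0,T)$ with $g'=h\in L^1(0,T)$, hence $g\in W^{1,1}(0,T)$.

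Next I would mollify in time. Fixing $[a,b]\subset(0,T)$ and a standard mollifier $\rho_\eps$, set $u_\eps=\rho_\eps\ast u$ and $v_\eps=\rho_\eps\ast v$, which for small $\eps$ are well defined and $C^\infty$ on $[a,b]$ with values in $X_1$ and $X_2'$ respectively. Their derivatives, computed by differentiating the mollifier, are continuous with values in $X_1$ and $X_2'$; and, because mollification commutes with the distributional derivative, they coincide in the weaker spaces with $\rho_\eps\ast\dot u$ in $X_2$ and $\rho_\eps\ast\dot v$ in $X_1'$. Viewing $u_\eps\colon[a,b]\to X_1$ and $v_\eps\colon[a,b]\to X_1'$ as $C^1$ curves, the product rule for the continuous bilinear pairing $\langle\cdot,\cdot\rangle\colon X_1'\times X_1\to\R$ gives, pointwise on $[a,b]$, $\tfrac{d}{dt}\langle v_\eps,u_\eps\rangle_{X_1}=\langle \rho_\eps\ast\dot v,u_\eps\rangle_{X_1}+\langle v_\eps,\rho_\eps\ast\dot u\rangle_{X_2}$, where in the last term I have used the compatibility of the pairings together with $\dot u_\eps\in X_1\subset X_2$. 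Integrating over $[a,b]$ produces the regularized identity to be passed to the limit.

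Finally I would let $\eps\to 0$. Standard mollifier estimates give $u_\eps\to u$ in $L^2(a,b;X_1)$, $v_\eps\to v$ in $L^2(a,b;X_2')\subset L^2(a,b;X_1')$, $\rho_\eps\ast\dot u\to\dot u$ in $L^2(a,b;X_2)$ and $\rho_\eps\ast\dot v\to\dot v$ in $L^2(a,b;X_1')$. Writing each pairing-integral as a sum of two products of sequences converging strongly in mutually dual $L^2$ spaces, I obtain $\int_a^b\langle\rho_\eps\ast\dot v,u_\eps\rangle_{X_1}\to\int_a^b\langle\dot v,u\rangle_{X_1}$ and $\int_a^b\langle v_\eps,\rho_\eps\ast\dot u\rangle_{X_2}\to\int_a^b\langle v,\dot u\rangle_{X_2}$, while $\langle v_\eps,u_\eps\rangle_{X_1}\to g$ in $L^1(a,b)$, so that the endpoint values converge for a.e.\ $a,b$. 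This yields $g(b)-g(a)=\int_a^b h$ for a.e.\ $a,b$, i.e.\ $g'=h$, completing the proof.

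I expect the main obstacle to be bookkeeping the two different dualities correctly: one must keep careful track of which space each mollified object and its derivative lives in, invoke density precisely where it is needed (to legitimize $X_2'\hookrightarrow X_1'$ and the pairing $\langle v,u\rangle_{X_1}$), and verify the compatibility identity $\langle v_\eps,\dot u_\eps\rangle_{X_1}=\langle v_\eps,\dot u_\eps\rangle_{X_2}$ that lets the limit land on the well-defined $\langle v,\dot u\rangle_{X_2}$ rather than on an ill-defined $\langle v,\dot u\rangle_{X_1}$. The analytic limit passage itself is routine once these identifications are in place.
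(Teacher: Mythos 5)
Your proof is correct and follows essentially the same route as the paper, which itself gives no独 detailed argument but defers to the time--mollification proof of \cite[Theorem~2, p.~477]{dautraylionsvol5}: regularize $u$ and $v$ in time, apply the product rule to the smooth curves via the continuous pairing $X_1'\times X_1\to\R$, use the dense embedding to identify $X_2'\hookrightarrow X_1'$ and the compatibility $\langle v_\eps,\dot u_\eps\rangle_{X_1}=\langle v_\eps,\dot u_\eps\rangle_{X_2}$, and pass to the limit in $L^1$. Your bookkeeping of the two dualities and of where each mollified derivative lives is exactly the adaptation needed to go from the Gelfand-triple setting of the cited reference to the two-space setting of the lemma.
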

\begin{proof}[Proof of Lemma~\ref{lemma4.1}] We distinguish between
the cases $2\le p\le 1+\romega/2$ and $p>1+\romega/2$ since the
first one is trivial.  Indeed, as $m_p=2$, by Lemma~\ref{Nemitskii},
H\"{o}lder and Young inequalities and \eqref{4.1} we immediately get
\begin{equation}\label{4.5}
\begin{aligned}
I_{\mathfrak{f}}(t)\le & \int_0^t \|\widehat{{\mathfrak{f}}}(u)-\widehat{{\mathfrak{f}}}(v)\|_2\|w_t\|_2\le
c_p'k_3(1+R^{p-2})\int_0^t\|w\|_{H^1(\Omega)}\|w_t\|_2\\
\le & \tfrac 12 c_p'k_3(1+R^{p-2})\int_0^t
\|W(\tau)\|^2_{\cal{H}}\,d\tau
\end{aligned}
\end{equation}
so getting \eqref{4.3} with $K_4=1$ and $K_5=\frac 12
c_p'k_3(1+R^{p-2})$.

The case $p>1+\romega/2$ (so $\romega<\infty$) is much more
involved, and assumption $N\le 4$ and property (F2) will be essentially used. We set
$\oversomega=\max\{\romega,\somega\}<\infty$ and we note that, when
$p> \romega$, by \eqref{somegagamma}--\eqref{somegagammarelazioni1}
we have $\somega=\romega(p-2)/(\romega-2)>p$. Hence, as $\romega\ge
4$, for any value of $p$ we have $3<p\le \oversomega$. Since $m=\overline{m}$ by
\eqref{R}, we have
\begin{equation}\label{4.6}
[\min\{m,\oversomega\}]'=\max\{m',\oversomega'\}\le
\frac{\oversomega}{p-1}<\frac{\oversomega}{p-2},
\end{equation}
hence we can set $l_1>1$ by
\begin{equation}\label{4.7}
\frac 1{l_1}:=\frac{p-2}{\oversomega}+\frac 1m.
\end{equation}
Moreover we note that, when $p>\romega$ then, by
\eqref{somegagammarelazioni2} (which holds when $\romega\ge 4$),
$\romega<\somega<m$ so, since $u_0,v_0\in L^{\somega}(\Omega)$,
$u(t)=u_0+\int_0^t u_t$, $v(t)=v_0+\int_0^t v_t$ in $L^2(\Omega)$
and $u_t,v_t\in L^m(0,T;L^m(\Omega))$ by (III), we have $u,v\in
C([0,T];L^{\somega}(\Omega))$. Then, using Sobolev embedding when
$p\le\romega$, in general we get
\begin{equation}\label{4.8}
u,v\in C([0,T];L^{\oversomega}(\Omega)).
\end{equation}
From  (FG1)$'$ and (F2)$'$ and well--known
continuity results for Nemitskii operators (see
\cite[Theorem~2.2,~p.16]{ambrosettiprodi}), we then get
\begin{equation}\label{4.9}
\begin{gathered}
\widehat{{\mathfrak{f}}}(u),\, \widehat{{\mathfrak{f}}}(v) \in
C([0,T];L^{\oversomega/(p-1)}(\Omega)),\quad
\widehat{{\mathfrak{f}_u}}(u),\, \widehat{{\mathfrak{f}_u}}(v) \in
C([0,T];L^{\oversomega/(p-2)}(\Omega)),\\
\widehat{{\mathfrak{f}_{uu}}}(u),\,\widehat{{\mathfrak{f}_{uu}}}(v) \in
C([0,T];L^{\oversomega/(p-3)}(\Omega)).
\end{gathered}
\end{equation}
Moreover, being $u,v\in H^1((0,T)\times\Omega)$ we have
$[\widehat{{\mathfrak{f}}}(u)]_t=\widehat{{\mathfrak{f}}}_u(u)u_t$ and
$[\widehat{{\mathfrak{f}}}(v)]_t=\widehat{{\mathfrak{f}}}_u(v)v_t$ a.e. in
$(0,T)\times\Omega$, so being $u_t,v_t\in L^m(0,T;L^m(\Omega))$ by
\eqref{4.7} and \eqref{4.9} we have $[\widehat{{\mathfrak{f}}}(u)]_t,
[\widehat{{\mathfrak{f}}}(v)]_t\in L^m(0,T; L^{l_1}(\Omega))$. By
\eqref{4.6}--\eqref{4.7} we have $\oversomega'\le l_1$, so
$[\widehat{{\mathfrak{f}}}(u)]_t, [\widehat{{\mathfrak{f}}}(v)]_t\in L^m(0,T;
L^{\oversomega'}(\Omega))$. Hence, setting
$X_1=L^{\oversomega}(\Omega)$ and
$X_2=L^{\min\{m,\oversomega\}}(\Omega)$, by \eqref{4.6} we have
$\widehat{{\mathfrak{f}}}(u),\widehat{{\mathfrak{f}}}(v)\in L^2(0,T; X_2')$, $w\in
L^2(0,T;X_1)$, $[\widehat{{\mathfrak{f}}}(u)]_t, [\widehat{{\mathfrak{f}}}(v)]_t\in L^m(0,T;
X_1')$ and $w_t\in L^2(0,T;X_2)$, so by  Lemma~\ref{lemma4.2} we can
integrate by parts with respect to $t$  in \eqref{4.3} to get, for
$t\in [0,T]$,
\begin{equation}\label{4.10}
I_{\mathfrak{f}}(t)=\left.\int_\Omega[\widehat{{\mathfrak{f}}}(u)-\widehat{{\mathfrak{f}}}(v)]w\right|_0^t-\int_0^t\int_\Omega
\widehat{{\mathfrak{f}}}_u(u)w w_t-\int_0^t\int_\Omega
[\widehat{{\mathfrak{f}}}_u(u)-\widehat{{\mathfrak{f}}}_u(v)]wv_t.
\end{equation}
Now, since $m,\oversomega>2$ we can set $l>1$  by
\begin{equation}\label{4.10bis}
\frac 1l=\frac 1 \oversomega+\frac 1m.
\end{equation}
Since $p\le \oversomega$ and, by \eqref{R}, $p\le 1+\oversomega/m'$,
we have
\begin{equation}\label{4.11}
\left[\min\left\{\oversomega/2,l
\right\}\right]'=\max\left\{\left(\oversomega /
2\right)',l'\right\}\le \oversomega/(p-2).
\end{equation}
Moreover, using \eqref{4.6} again, we can set $l_2>1$ by
\begin{equation}\label{4.12}
\frac 1{l_2}=\frac {p-3}\oversomega+\frac 1m.
\end{equation}
Since $[\widehat{{\mathfrak{f}}}_u(u)]_t=\widehat{{\mathfrak{f}}}_{uu}(u)u_t$ and
$[\widehat{{\mathfrak{f}}}_u(v)]_t=\widehat{{\mathfrak{f}}}_{uu}(v)v_t$ a.e. in
$(0,T)\times\Omega$, by \eqref{4.9} and \eqref{4.12} we have
$[\widehat{{\mathfrak{f}}}_u(u)]_t, [\widehat{{\mathfrak{f}}}_u(v)]_t\in L^m(0,T;
L^{l_2}(\Omega))$. Since $p\le 1+\oversomega/m'$ we also have
$(p-3)/\oversomega+1/m\le (\oversomega-2)/\oversomega$, hence by
\eqref{4.12} we get $(\oversomega/2)'=\oversomega/(\oversomega-2)\le
l_2$ and consequently $[\widehat{{\mathfrak{f}}}_u(u)]_t, [\widehat{{\mathfrak{f}}}_u(v)]_t\in
L^m(0,T; L^{(\oversomega/2)'}(\Omega))$. Moreover, by \eqref{4.9}
and \eqref{4.11} we have $\widehat{{\mathfrak{f}}}_u(u), \widehat{{\mathfrak{f}}}_u(v)\in
C([0,T]; L^{(\min\{\oversomega/2,l\})'}(\Omega))$. By Sobolev
embedding $w^2\in C([0,T];L^{\oversomega/2}(\Omega))$ and, as
$u_t,v_t\in L^m(0,T;L^m(\Omega))$, $(w^2)_t=2ww_t\in
L^m(0,T;L^l(\Omega) )\hookrightarrow L^m(0,T;
L^{\min\{l,\oversomega/2\}}(\Omega))$ by \eqref{4.10bis}. From
previous considerations we can apply Lemma~\ref{lemma4.2} with
$X_1=L^{\oversomega/2}(\Omega)$ and
$X_2=L^{\min\{l,\oversomega/2\}}(\Omega)$ and integrate by parts
with respect to $t$ once again in the second addendum of
\eqref{4.10} to get the final form of $I_{\mathfrak{f}}(t)$ suitable for our
estimate, that is, for $t\in [0,T]$,
\begin{multline*}
I_{\mathfrak{f}}(t)=\left.\int_\Omega[\widehat{{\mathfrak{f}}}(u)-\widehat{{\mathfrak{f}}}(v)]w\right|_0^t-\left.\frac
12 \int_\Omega \widehat{{\mathfrak{f}}}_u(u)w^2\right|_0^t\\+\frac
12\int_0^t\int_\Omega \widehat{{\mathfrak{f}}}_{uu}(u)w^2u_t-\int_0^t\int_\Omega
[\widehat{{\mathfrak{f}}}_u(u)-\widehat{{\mathfrak{f}}}_u(v)]wv_t.
\end{multline*}
By (FG1), (FG1)$'$, (F2), (F2)$'$ we then derive
 the preliminary estimate
\begin{equation}\label{4.14}
\begin{aligned}
I_{\mathfrak{f}}(t)\le
&2\mathfrak{c_p}'\int_\Omega(1+|u_0|^{p-2}+\!|v_0|^{p-2})w_0^2+2\mathfrak{c_p}'\|w(t)\|_2^2\\+&2\mathfrak{c_p}'\int_\Omega\!\!(|u(t)|^{p-2}+|v(t)|^{p-2})w^2(t)
+\mathfrak{c_p}''\int_0^t\int_\Omega
(|u_t|+|v_t|)w^2\\+&\mathfrak{c_p}''\int_0^t\int_\Omega
(|u|^{p-3}+|v|^{p-3})(|u_t|+|v_t|)w^2
\end{aligned}
\end{equation}
for $t\in [0,T]$. In the sequel we shall estimate the addenda in the
right--hand side of \eqref{4.14}, denoting by $I^i_{\mathfrak{f}}(t)$ the $i$--th
among them, for $i=1,\ldots,5$.

\noindent{\bf Estimate of $\boldsymbol{I_{\mathfrak{f}}^1(t)}$.} Denoting
$\nu=\oversomega/(\oversomega-p+2)>1$, so $\nu'=\oversomega/(p-2)$,
by H\"{o}lder inequality we have $\int_\Omega|u_0|^{p-2}w_0^2\le
\|u_0\|_{\oversomega}^{p-2}\|w_0\|_{2\nu}^2$. Since
$\oversomega=\max\{\romega,\somega\}$, by \eqref{somegagamma} we
have $\nu'\ge \romega/(\romega-2)$, hence $2\nu\le \romega$ so as
$\romega<\infty$ by Sobolev embedding and \eqref{4.1}
$\int_\Omega|u_0|^{p-2}w_0^2\le k_4 R^{p-2}\|W_0\|^2_\cal{H}$.
Estimating in the same way $\int_\Omega|v_0|^{p-2}w_0^2$
\begin{equation}\label{4.15}
I_{\mathfrak{f}}^1(t)\le k_5 \mathfrak{c_p}'(1+R^{p-2})\|W_0\|^2_\cal{H}.
\end{equation}
\noindent{\bf Estimate of $\boldsymbol{I_{\mathfrak{f}}^2(t)}$.} By the trivial
estimate
\begin{equation}\label{4.15bis}
\|w(t)\|_2^2\le 2\|w_0\|_2^2+2t\int_0^t \|w_t\|_2^2\le
2\|w_0\|_2^2+2\int_0^t \|w_t\|_2^2
\end{equation}
where $0\le t\le T\le 1$ was used, we get
\begin{equation}\label{4.16}
I_{\mathfrak{f}}^2(t)\le k_6 \mathfrak{c_p}'\left(\|W_0\|^2_\cal{H}+\int_0^t
\|W(\tau)\|_{\cal{H}}^2\,d\tau\right).
\end{equation}
\noindent{\bf Estimate of $\boldsymbol{I_{\mathfrak{f}}^4(t)}$.} Since
$\romega\ge 4$ so $\romega/(\romega-2)\le 2$, by
H\"{o}lder inequality, Sobolev embedding and \eqref{4.1} we
immediately get
\begin{equation}\label{4.17}
I_{\mathfrak{f}}^4(t)\le \mathfrak{c_p}''\int_0^t
\|w\|_{\romega}^2\left(\|u_t\|_{\frac\romega{\romega-2}}+\|v_t\|_{\frac\romega{\romega-2}}\right)\le
k_7\mathfrak{c_p}''R\int_0^t \|W(\tau)\|^2_{\cal{H}}\,d\tau.
\end{equation}
\noindent{\bf Estimate of $\boldsymbol{I_{\mathfrak{f}}^5(t)}$.} By
H\"{o}lder inequality with exponents
$\romega/(p-3)$, $\romega/(\romega-p+1)$, $\romega/2$,
 and Sobolev embedding, since by \eqref{R} we have $m=m_p\ge \romega/(\romega-p+1)$,
\begin{equation}\label{4.18}
\begin{aligned} I_{\mathfrak{f}}^5(t)\le &\mathfrak{c_p}''\int_0^t
(\|u\|_{\romega}^{p-3}+\|v\|_{\romega}^{p-3})\left(\|u_t\|_{\frac\romega{\romega-p+1}}+\|v_t\|_{\frac\romega{\romega-p+1}}\right)\|w\|_\romega^2\,d\tau\\
\le & k_8\mathfrak{c_p}''R^{p-3}\int_0^t
\left(\|u_t\|_{m_p}+\|v_t\|_{m_p}\right) \|W(\tau)\|_{\cal{H}}^2\,d\tau.
\end{aligned}
\end{equation}
\noindent{\bf Estimate of $\boldsymbol{I_{\mathfrak{f}}^3(t)}$.} We shall
distinguish between the two subcases $p<\romega$ and $p\ge \romega$.
In the first one, by H\"{o}lder inequality with conjugate exponents
$\romega/(p-2)$ and $\romega/(\romega-p+2)$ and \eqref{4.1} we get
$$\int_\Omega|u(t)|^{p-2}w^2(t)\le
\|u(t)\|_{\romega}^{p-2}\|w(t)\|^2_{2\romega/(\romega-p+2)}\le
k_9R^{p-2}\|w(t)\|^2_{2\romega/(\romega-p+2)}.$$ Since $3<p<\romega$ we
have $2\romega/(\romega-p+2)\in (2,\romega)$ and then, by
interpolation and weighted Young inequalities we get, for any
$\eps>0$,
$$\int_\Omega|u(t)|^{p-2}w^2(t)\le
k_9R^{p-2}\|w(t)\|_2^{2\theta_1}\|w(t)\|_{\romega}^{2(1-\theta_1)}\le
k_9R^{p-2}\left(\frac 1\eps\|w(t)\|_2^2+\eps\|w(t)\|_{\romega}^2\right),$$
where $\theta_1\in (0,1)$ is given by
$\frac{\romega-p+2}{2\romega}=\frac {\theta_1} 2+\frac
{1-\theta_1}{\romega}$,
 and consequently, using \eqref{4.15bis},
$$\int_\Omega|u(t)|^{p-2}w^2(t)\le \frac 14 k_{10}R^{p-2}\left(\frac 1\eps \|W_0\|_{\cal{H}}
+\frac 1\eps \int_0^t \|W(\tau)\|_{\cal{H}}^2\,d\tau+\eps
\|W(t)\|_{\cal{H}}^2\right).$$ By estimating the
term $\int_\Omega|v|^{p-2}w^2$ in the same way we get our estimate
for $I_{\mathfrak{f}}^3(t)$ in case $1+\romega/2<p<\romega$, that is
\begin{equation}\label{4.20}
I_{\mathfrak{f}}^3(t)\le k_{10} \mathfrak{c_p}'R^{p-2}\left(\frac 1\eps \|W_0\|^2_{\cal{H}}
+\frac 1\eps \int_0^t \|W(\tau)\|_{\cal{H}}^2\,d\tau+\eps
\|W(t)\|_{\cal{H}}^2\right).
\end{equation}
We now consider the subcase $p\ge\romega$.  Trivially
\begin{equation}\label{4.21}
\int_\Omega|u(t)|^{p-2}w^2(t)\le2^{p-3}\left(\int_\Omega|u(t)-u_0|^{p-2}w^2(t)+\int_\Omega|u_0|^{p-2}w^2(t)\right).
\end{equation}
We shall estimate separately the two addenda inside brackets in
\eqref{4.21}. For the first one we use H\"{o}lder inequality with
conjugate exponents $\romega/(\romega-2)$ and $\romega/2$ to get,
recalling that $\somega=\romega(p-2)/(\romega-2)$ in this case,
$$\int_\Omega|u(t)-u_0|^{p-2}w^2\le
\|u(t)-u_0\|_\somega^{p-2}\|w(t)\|_\romega^2\le
k_{11}\left(\int_0^t\|u_t\|_\somega\right)^{p-2}\|W(t)\|^2_{\cal{H}}.
$$
Consequently, since by \eqref{somegagammarelazioni2} we have
$\somega\le m$,
$$\int_\Omega|u(t)-u_0|^{p-2}w^2(t)\le
k_{12}\left(\int_0^t\|u_t\|_m\right)^{p-2}\|W(t)\|^2_{\cal{H}}.
$$
Then using H\"{o}lder inequality in time, \eqref{R} and
Remark~\ref{remark3.3} we get
$$
\int_\Omega|u(t)-u_0|^{p-2}w^2\le
k_{12}t^{(p-2)(m-1)/m}\left(\int_0^t\|[u_t]_\alpha\|_{\overline{m},\alpha}^{\overline{m}}\right)^{(p-2)/m}\|W(t)\|^2_{\cal{H}}.
$$
Since $0\le t\le T\le 1$,  $p\ge \romega\ge 4$ and $m>2$, by
\eqref{4.1} we get
\begin{equation}\label{4.22}
\int_\Omega|u(t)-u_0|^{p-2}w^2\le k_{12}R^{p-2}t\,\|W(t)\|_{\cal{H}}^2.
\end{equation}
We now estimate the second addendum inside brackets in \eqref{4.21}
when $u_0\in L^{s_1}(\Omega)$,  $s_1>\somega$ and \eqref{4.4} holds.
Using H\"{o}lder inequality with conjugate exponents $s_1/(p-2)$,
$s_1/(s_1-p+2)$ and  \eqref{4.4}
$$\int_\Omega|u_0|^{p-2}w^2\le
\|u_0\|_{s_1}^{p-2}\|w(t)\|^2_{2s_1/(s_1-p+2)}\le
R^{p-2}\|w(t)\|^2_{2s_1/(s_1-p+2)}.$$ Since
$s_1/(p-2)>\somega/(p-2)=\romega/(\romega-2)$ we have
$2s_1/(s_1-p+2)\in (2,\romega)$, hence by interpolation and weighted
Young inequalities, for $\eps>0$ we get
$$\int_\Omega|u_0|^{p-2}w^2(t)\le R^{p-2}\|w(t)\|_2^{2\theta_2}\|w(t)\|_{\romega}^{2(1-\theta_2)}
\le R^{p-2}\left(\frac
1\eps\|w(t)\|_2^2+\eps\|w(t)\|_{\romega}^2\right),$$ where $\theta_2\in
(0,1)$ is given by $\frac{s_1-p+2}{2s_1}=\frac {\theta_2} 2+\frac
{1-\theta_2}{\romega}$. Consequently by \eqref{4.15bis} we get
\begin{equation}\label{4.23}
\begin{aligned}
\int_\Omega|u_0|^{p-2}w^2(t)\le & R^{p-2}\left[\frac 2\eps
\left(\|w_0\|_2^2+\int_0^t
\|w_t\|_2^2\right)+\eps\|w(t)\|_\romega^2\right]\\
\le & k_{13}R^{p-2}\left( \frac 1\eps \|W_0\|_\cal{H}^2+\frac 1\eps
\int_0^t \|W(\tau)\|_\cal{H}^2\,d\tau +\eps \|W(t)\|_{\cal{H}}^2\right).
\end{aligned}
\end{equation}
In the general case $u_0\in L^\somega(\Omega)$ a different argument
is needed. Since $L^\infty(\Omega)$ is dense in $L^\somega(\Omega)$,
in correspondence to $\eps>0$ there is
$\overline{u}_0=\overline{u_0}(\eps,u_0)\in L^\infty(\Omega)$ such
that $\|\overline{u_0}-u_0\|_\somega\le \eps^{1/(p-2)}$. Then, using
H\"{o}lder inequality with conjugate exponents
$\romega/(\romega-2)$, $\romega/2$ and \eqref{4.15bis} we get
\begin{equation}\label{4.24}
\begin{aligned}
\int_\Omega|u_0|^{p-2}w^2(t) \le
&2^{p-3}\left(\int_\Omega|u_0-\overline{u_0}|^{p-2}w^2(t)+\int_\Omega|\overline{u_0}|^{p-2}w^2(t)\right)\\
\le &
2^{p-3}\left(\|\overline{u_0}-u_0\|_\somega^{p-2}\|w(t)\|^2_\romega+\|\overline{u_0}\|_\infty^{p-2}\|w(t)\|_2^2\right)\\
\le &
2^{p-3}\left[\eps\|w(t)\|^2_\romega+\|\overline{u_0}\|_\infty^{p-2}\left(2\|w_0\|_2^2+2\int_0^t\|w_t\|_2^2\right)\right]\\
\le &
K_7(\eps,u_0)\left(\|W_0\|^2_\cal{H}+\int_0^t\|W(\tau)\|_\cal{H}^2\,d\tau\right)+k_{14}\eps
\|W(t)\|_\cal{H}^2.
\end{aligned}
\end{equation}
 Comparing \eqref{4.23} and \eqref{4.24}
there are $K_8=K_8(\eps,R,u_0)$ and  $K_9=K_9(R)$,
increasing in $R$, such that
\begin{equation}\label{4.25}
\int_\Omega|u_0|^{p-2}w^2(t) \le K_8\left(\|W_0\|_\cal{H}^2+\int_0^t
\|W(\tau)\|_\cal{H}^2\,d\tau\right)+\eps K_9\|W(t)\|_\cal{H}^2,
\end{equation}
with $K_8$ independent on $u_0$ when $u_0\in L^{s_1}(\Omega)$,
$s_1>\somega$ and \eqref{4.4} holds. Plugging \eqref{4.22} and
\eqref{4.25} in \eqref{4.21} and using exactly the same arguments to
estimate the term $\int_\Omega |v|^{p-2}w^2$ we then get our final
estimate for $I^3_{\mathfrak{f}}(t)$ when $p\ge \romega$, that is
\begin{equation}\label{4.26}
I_{\mathfrak{f}}^3(t)\le \mathfrak{c_p}' K_{10}\left(\|W_0\|_\cal{H}^2+\int_0^t
\|W(\tau)\|_\cal{H}^2\,d\tau\right)+\mathfrak{c_p}'K_{11}(\eps+t)\|W(t)\|_\cal{H}^2
\end{equation}
with $K_{10}=K_{10}(\eps,R,u_0,v_0)$, $K_{11}=K_{11}(R)$ increasing
in $R$, $K_{10}$ being independent on $u_0$, $v_0$ when $u_0,v_0\in
L^{s_1}(\Omega)$, $s_1>\somega$ and \eqref{4.4} holds. Comparing
\eqref{4.26} with \eqref{4.20} and possibly changing the values of
$K_{10}$ and $K_{11}$ we get that actually \eqref{4.26} is our final
estimate for $I^3_{\mathfrak{f}}(t)$ for any $p>1+\romega/2$.

By plugging \eqref{4.15}--\eqref{4.18} and \eqref{4.26} in
\eqref{4.14} we get \eqref{4.3} when
$p>1+\romega/2$.
\end{proof}

A trivial transposition of the arguments used in the proof of
Lemma~\ref{lemma4.1} allows to prove the following $\Gamma$ --
version of the estimate.
\begin{lem}\label{lemma4.4} Suppose that
$\mathfrak{g}$ satisfies (G1) with constants  $\mathfrak{c_q}$, $\mathfrak{c_q}'$, that
\eqref{R} holds and
that either $2\le q\le 1+\rgamma/2$ or $q>1+\rgamma/2$, $N\le 5$ and  $\mathfrak{g}$
satisfies (F2) with constant $\mathfrak{c_p}''$. Let $T\in (0,1]$, $U=(u,\dot{u}),
V=(v,\dot{v})\in C([0,T];\cal{H})$, $\dot{u},\dot{v}\in Z(0,T)$ and denote
$W=U-V=(w,\dot{w})$, $U_0=U(0)=(u_0,u_1)$, $V_0=V(0)=(v_0,v_1)$,
$W_0=U_0-V_0=(w_0,w_1)$, $\boldsymbol{\mathfrak{c_q}}=(\mathfrak{c_q},\mathfrak{c_q}')$,
$\boldsymbol{\mathfrak{c_q}'}=(\mathfrak{c_q},\mathfrak{c_q}',\mathfrak{c_q}'')$. Suppose moreover that
$u_0,v_0\in L^\sgamma(\Gamma_1)$ and take $R\ge 0$ such that
\begin{equation}\label{4.1bis}
\|U\|_{C([0,T];\cal{H})}, \|V\|_{C([0,T];\cal{H})},
\|\dot{u}\|_{Z(0,T)},\|\dot{v}\|_{Z(0,T)},\,\|u_0\|_{\sgamma,\Gamma_1}
\|v_0\|_{\sgamma,\Gamma_1}\le R.
\end{equation}
Then given any $\eps>0$ there are
$$K_{12}=K_{12}(R,\boldsymbol{\mathfrak{c_q}}),\qquad\text{and}\quad K_{13}=K_{13}(\eps,R,u_0,v_0,\boldsymbol{\mathfrak{c_q}}'),$$
independent on $\mathfrak{g}$ and increasing in $R$, such that for all $t\in
[0,T]$
\begin{multline}\label{4.3bis}
\int_0^t\int_{\Gamma_1}[\widehat{\mathfrak{g}}(u)-\widehat{\mathfrak{g}}(v)]{w_{|\Gamma}}_t
\le K_{12}(\eps+t)\|W(t)\|^2_\cal{H} \\+
K_{13}\left[\|W_0\|^2_\cal{H}+\int_0^t(1+\|{u_{|\Gamma}}_t\|_{\mu_q}+\|{v_{|\Gamma}}_t\|_{\mu_q})\|W(\tau)\|^2_\cal{H}\,d\tau\right].
\end{multline}
Moreover, if $u_0,v_0\in L^{s_2}(\Gamma_1)$ for some $s_2>\sgamma$
and, in addition to \eqref{4.1bis}, we have
\begin{equation}\label{4.4bis}
\|{u_0}_{|\Gamma}\|_{s_2,\Gamma_1},\quad
\|{v_0}_{|\Gamma}\|_{s_2,\Gamma_1}\le R,
\end{equation}
then $K_{13}$ is  independent on $u_0$ and $v_0$, that is
$K_{13}(\eps,R,u_0,v_0,\boldsymbol{\mathfrak{c_q}}')=K_{14}(\eps,R,\boldsymbol{\mathfrak{c_q}}')$.
\end{lem}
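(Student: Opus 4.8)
The plan is to transpose the proof of Lemma~\ref{lemma4.1} verbatim, replacing the open set $\Omega$ by the boundary piece $\Gamma_1$ (and the whole boundary $\Gamma$) and applying throughout the dictionary $\mathfrak{f}\leftrightarrow\mathfrak{g}$, $p\leftrightarrow q$, $m\leftrightarrow\mu$, $\romega\leftrightarrow\rgamma$, $\somega\leftrightarrow\sgamma$, $m_p\leftrightarrow\mu_q$, together with the structural constants $c_p,c_p',c_p''\leftrightarrow c_q,c_q',c_q''$. Every interior Sobolev embedding $H^1(\Omega)\hookrightarrow L^\rho(\Omega)$ used there is replaced by the boundary embedding $H^1(\Gamma)\hookrightarrow L^\rho(\Gamma)$, valid for $\rho\le\rgamma$ since $\Gamma$ is a compact $C^1$ manifold of dimension $N-1$; the Nemitskii estimates needed on $\Gamma_1$ are precisely the ones recorded for $\widehat{\mathfrak{g}}$ in Lemma~\ref{Nemitskii}. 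As in the interior case I would first dispose of the easy regime $2\le q\le 1+\rgamma/2$, where $\mu_q=2$ and \eqref{4.3bis} follows at once from H\"older and Young inequalities together with Lemma~\ref{Nemitskii}, exactly as in \eqref{4.5}.

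For the hard regime $q>1+\rgamma/2$ I would put $\max\{\rgamma,\sgamma\}$ in the role played by $\oversomega$, note that $3<q\le\max\{\rgamma,\sgamma\}$, and use continuity of the Nemitskii operators attached to $\mathfrak{g},\mathfrak{g}_u,\mathfrak{g}_{uu}$ to place $\widehat{\mathfrak{g}}(u_{|\Gamma})$, $\widehat{\mathfrak{g}_u}(u_{|\Gamma})$ and $\widehat{\mathfrak{g}_{uu}}(u_{|\Gamma})$ in the appropriate spaces $C([0,T];L^r(\Gamma_1))$, the key boundary regularity $u_{|\Gamma},v_{|\Gamma}\in C([0,T];L^{\max\{\rgamma,\sgamma\}}(\Gamma_1))$ following from assumption (III) and the trace identity $v=u_{|\Gamma}$ built into $H^1$. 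Two applications of the abstract Leibnitz formula of Lemma~\ref{lemma4.2}, now with $X_1,X_2$ Lebesgue spaces over $\Gamma_1$, recast the integral into the boundary analogue of the five--term estimate \eqref{4.14}, after which each term is controlled by H\"older, Sobolev embedding on $\Gamma$, interpolation and weighted Young inequalities, exactly as the interior terms $I_{\mathfrak{f}}^i$ were. The delicate third term again splits into the subcases $q<\rgamma$ and $q\ge\rgamma$, the latter requiring the $L^\infty$--density argument over $\Gamma_1$ (when only $u_0,v_0\in L^{\sgamma}(\Gamma_1)$) that accounts for the $u_0,v_0$--dependence of $K_{13}$, and for its disappearance into $K_{14}$ when instead $u_0,v_0\in L^{s_2}(\Gamma_1)$ with $s_2>\sgamma$.

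The one point that genuinely has to be checked rather than merely copied is the dimension bookkeeping. Every place where the interior argument invokes $\romega\ge 4$ --- to secure $3<p$, to obtain $\romega/(\romega-2)\le 2$ in the estimate of $I_{\mathfrak{f}}^4$, and to make the auxiliary exponents in \eqref{4.11} and its neighbours admissible --- is replaced here by the requirement $\rgamma\ge 4$. Since $\rgamma=2(N-1)/(N-3)$ for $N\ge 4$, one has $\rgamma\ge 4$ exactly when $N\le 5$, which is precisely why the hypothesis $N\le 4$ of Lemma~\ref{lemma4.1} relaxes to $N\le 5$ in the present statement. I expect this shift of the admissible dimension, together with the attendant verification that the implication \eqref{somegagammarelazioni2} and the conjugate-exponent bounds hold under $\rgamma\ge 4$ exactly as they did under $\romega\ge 4$, to be the only substantive obstacle; once it is in place the remaining estimates are term by term replicas of those in the proof of Lemma~\ref{lemma4.1}, and the constants $K_{12},K_{13},K_{14}$ occupy exactly the roles played there by $K_4,K_5,K_6$.
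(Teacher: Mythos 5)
Your proposal is correct and coincides with the paper's own treatment, which disposes of Lemma~\ref{lemma4.4} precisely as "a trivial transposition of the arguments used in the proof of Lemma~\ref{lemma4.1}" from $\Omega$ to $\Gamma_1$. You have also correctly isolated the only point requiring genuine verification, namely that $\rgamma=2(N-1)/(N-3)\ge 4$ exactly when $N\le 5$, which is what relaxes the dimensional hypothesis from $N\le 4$ to $N\le 5$.
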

\section{Local existence}\label{section5}
This section is devoted to our local existence result for
problem \eqref{1.1}, that is
\begin{thm}[\bf Local existence]\label{theorem5.1} Suppose that (PQ1--3), (FG1--2) and
(FGQP1) hold. Then all conclusions of Theorem~\ref{theorem1.1} hold
true when problem \eqref{1.2} is generalized to problem \eqref{1.1},
provided the energy identity \eqref{energyidentityreduced} is
generalized to \eqref{energyidentity}.
\end{thm}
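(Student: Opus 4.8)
The plan is to deduce Theorem~\ref{theorem5.1} from the subcritical theory of \cite{Dresda1} by a truncation-and-limit procedure, the delicate point being the passage to the limit in the source terms. First I would truncate the sources: for $n\in\N$ introduce $f_n,g_n$ coinciding with $f,g$ for $|u|\le n$ and continued with (at most) linear growth for $|u|>n$, so that each $f_n,g_n$ satisfies (FG1) with a \emph{subcritical} exponent and, crucially, with structural constants bounded by those of $f,g$, still verifying (FG2) where it is invoked. The truncated problem then has subcritical sources, so \cite[Theorem~1.1]{Dresda1} (transposed to \eqref{1.1}) furnishes, for the datum $(u_0,u_1)\in\cal{H}$, a weak solution $u^n$ of the truncated problem, enjoying the regularity of Lemma~\ref{lemma3.3} and the energy identity \eqref{energyidentity} with $f,g$ replaced by $f_n,g_n$.

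Next I would derive a priori bounds uniform in $n$. Writing the energy identity for $u^n$ and using the coercivity (PQ3) together with Lemma~\ref{Nemitskii}, \eqref{mpmuqbound} and Young's inequality---exactly the computation of Lemma~\ref{lemma3.3}(iv)---the source terms are partly absorbed into the damping, yielding bounds on $\|U^n\|_{C([0,T];\cal{H})}$ and on $\|\dot{u}^n\|_{Z(0,T)}$ that are independent of $n$ on a common interval $[0,T]$, $T>0$; since $f_n,g_n$ agree with $f,g$ on $\{|u^n|\le n\}$, this $T$ may be chosen independent of $n$. Passing to a subsequence I obtain $u^n\rightharpoonup u$ weakly-$*$ in $L^\infty(0,T;H^1)$, $\dot{u}^n\rightharpoonup\dot{u}$ weakly-$*$ in $L^\infty(0,T;H^0)$ and weakly in $Z(0,T)$.

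The heart of the proof---and the main obstacle---is identifying the nonlinear limits. For the damping terms I would use the monotonicity of $\widehat{P},\widehat{Q}$ (Lemma~\ref{lemma3.1}(iii)) together with the Minty--Browder device to show that the weak limits of $\widehat{P}(u^n_t)$, $\widehat{Q}({u^n_{|\Gamma}}_t)$ are $\widehat{P}(u_t)$, $\widehat{Q}({u_{|\Gamma}}_t)$. For the source terms I would split according to whether $(p,m)$ and $(q,\mu)$ lie on the hyperbola. Off the hyperbola the Nemitskii operators $\widehat{f}\colon H^1(\Omega)\to L^{m_p'}(\Omega)$ and its $\Gamma$-analogue are compact (this is precisely the off-hyperbola condition), so an Aubin--Lions argument as in \cite{STV} gives strong convergence $u^n\to u$ and the limit passage is routine. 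On the hyperbola this compactness fails; here I would instead invoke Lemmas~\ref{lemma4.1} and \ref{lemma4.4}, applied to the difference of two truncated solutions $u^n,u^{n'}$ (the mismatch $\widehat{f_n}-\widehat{f_{n'}}$ contributing only a lower-order truncation error that vanishes as $n,n'\to\infty$). Combining the estimate with the energy identity for $w=u^n-u^{n'}$---in which the damping difference carries the favourable sign by monotonicity---and choosing $\eps$ and $T$ small enough to absorb the $K_4(\eps+t)\|W(t)\|^2_{\cal{H}}$ term into the left-hand side, Gronwall's inequality shows $\{U^n\}$ is Cauchy in $C([0,T];\cal{H})$, hence converges strongly. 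The integrability $u_0,v_0\in L^{\somega}$ required by Lemma~\ref{lemma4.1} is available: when $p\le\romega$ it is free from $H^1(\Omega)\hookrightarrow L^{\somega}(\Omega)$, and on the super-supercritical hyperbola it is exactly the content of the data space, since there $\sigmaomega=\somega$.

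Finally, having a weak solution $u$ on $[0,T]$, I would record its properties. Regularity \eqref{1.15} in $H^{1,\sigmaomega,\sigmagamma}$ and the energy identity \eqref{energyidentity} follow from Lemma~\ref{lemma3.3}(i),(iii), applied with $(\rho,\theta)=(\sigmaomega,\sigmagamma)$. A standard maximality argument, using autonomy and the concatenation property Lemma~\ref{lemma3.3}(ii), produces the maximal solution on $[0,T_{\text{max}})$. The blow-up alternative is the contrapositive of Lemma~\ref{lemma3.3}(iv): were $T_{\text{max}}<\infty$ with $\|U\|_{L^\infty(0,T_{\text{max}};\cal{H})}$ finite, the solution would extend past $T_{\text{max}}$, contradicting maximality. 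The closing assertion, that property (ii) holds for \emph{every} maximal weak solution, is immediate since Lemma~\ref{lemma3.3}(i),(iii) apply to any weak solution, not only to the constructed one.
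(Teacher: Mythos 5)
Your proposal follows the paper's own strategy in all essentials: truncation of the sources so that the (FG1)--(FG2) constants are uniform in $n$, uniform a priori bounds from the energy identity and Gronwall on an $n$-independent interval, identification of the source limits by compactness off the hyperbola and by the key estimates of Lemmas~\ref{lemma4.1} and \ref{lemma4.4} on it, a monotonicity (Minty--Browder) argument for the damping, and the standard Zorn/concatenation arguments for maximality and the blow-up alternative. The only deviations are cosmetic: the paper applies the key estimate to $w^n=u^n-u$ (with $u$ the weak limit) rather than to $u^n-u^{n'}$, and it obtains the $n$-independent existence time from the uniform structural constants via a further globally Lipschitz truncation and Gronwall, rather than from your (somewhat circular) remark that $f_n$ agrees with $f$ on $\{|u^n|\le n\}$ --- but both of your variants can be carried out.
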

To prove Theorem~\ref{theorem5.1} we approximate, following a
procedure from \cite{bociulasiecka1}, problem \eqref{1.1} with a
sequence of problems involving subcritical sources.
We start by introducing a suitable cut--off sequence. At first we fix
\begin{footnote}{$\eta_1$ is easily built as follows. Let
$\eta_0\in W^{1,\infty}(\R)$ be defined by $\eta_0(\tau)=1$ for
$|\tau|\le 5/4$, $\eta_0(\tau)=0$ for $|\tau|\ge 7/4$, linear for
$5/4\le |\tau|\le 7/4$, and $(\rho_n)_n$ the standard mollifying
sequence in $\R$ defined at  \cite[p. 108]{brezis2}. Then
$\eta_1=\rho_4*\eta_0$ satisfies the required properties. Indeed
$\|\eta_0'\|_\infty=2$ so $\|\eta_1'\|_\infty\le 2$. Moreover
$\rho_4(x)=4\rho_1(4x)/\int_{-1}^1\rho_1$, where
$\rho_1(x)=e^{1/(x^2-1)}$ in $(-1,1)$, vanishing outside,
$\|\rho_1'\|_\infty\le 2\max_{y\ge 0}y^2e^{-y}=8e^{-2}$ and
$\int_{-1}^1\rho_1\ge 2\int_0^{1/\sqrt{2}}\rho_1>e^{-2}$ so
$\|\rho_4'\|_\infty\le 2^7$ and consequently $\|\eta_1''\|_\infty\le
\|\rho_4'\|_\infty\|\eta_0'\|_1=2\|\rho_4'\|_\infty\le
2^8$.}\end{footnote} $\eta_1\in C^\infty_c(\R)$ such that
$\eta_1\equiv 1$ in $[-1,1]$, $\text{supp }\eta_1\subset [-2,2]$,
$0\le\eta_1\le 1$, $\|\eta_1'\|_\infty\le 2$ and
$\|\eta_1''\|_\infty\le 2^8$. Then we set the sequence $(\eta_n)_n$
by $\eta_n(x)=\eta_1(x/n)$. For all $n\in\N$ we have
\begin{equation}\label{5.1}
\begin{alignedat}3
& \eta_n\in C^\infty_c(\R), \quad &&\eta_n\equiv
1\,\text{in}\,[-n,n],\quad &&\text{supp }\eta_n\subset [-2n,2n],\\
&0\le\eta_n\le 1,\quad &&\|\eta_n'\|_\infty\le 2/n,\quad
&&\|\eta_n''\|_\infty\le 2^8/n^2.
\end{alignedat}
\end{equation}
We then define, for $f$ and $g$ satisfying (FG1) and $n\in\N$, the
truncated nonlinearities $f^n$ and $g^n$ by setting, for all
$u\in\R$, a.a. $x\in\Omega$ and $y\in\Gamma_1$,
\begin{equation}\label{5.2}
f^n(x,u)=\eta_n(u)f(x,u), \qquad g^n(y,u)=\eta_n(u)g(y,u).
\end{equation}
By (FG1), Remark~\ref{remark3.2} and \eqref{5.1} for each $n\in\N$
we trivially have
\begin{equation}\label{5.3}
\begin{alignedat}2
&|f^n(\cdot,0)|\le c_p,\quad |f^n_u(\cdot,u)|\le
&&\left[2c_p'(1+|u|^{p-2})+2c_p(1+|u|^{p-1})/n\right]\chi_{A_n}(u)\\
&|g^n(\cdot,0)|\le c_q,\quad |g^n_u(\cdot,u)|\le
&&\left[2c_q'(1+|u|^{q-2})+2c_q(1+|u|^{q-1})/n\right]\chi_{A_n}(u)
\end{alignedat}
\end{equation}
where $\chi_{A_n}$ denotes the characteristic function of
$A_n=[-2n,2n]$, hence $f^n$ and $g^n$ satisfy assumptions (FG1)
with exponents $p=q=2$ and  constants  dependent on $n$. Then, by
\cite[Theorem~3.1]{Dresda1} for each $U_0\in\cal{H}$ and $n\in\N$
the approximating problem
\begin{equation}\label{1.1n}
\begin{cases} u^n_{tt}-\Delta u^n+P(x,u^n_t)=f^n(x,u^n) \qquad &\text{in
$(0,\infty)\times\Omega$,}\\
u^n=0 &\text{on $(0,\infty)\times \Gamma_0$,}\\
u^n_{tt}+\partial_\nu u^n-\Delta_\Gamma
u^n+Q(x,u^n_t)=g^n(x,u^n)\qquad &\text{on
$(0,\infty)\times \Gamma_1$,}\\
u(0,x)=u_0(x),\quad u_t(0,x)=u_1(x) &
 \text{in $\overline{\Omega}$}
\end{cases}
\end{equation}
has a unique weak solution $u^n$  with $U^n\in
C([0,\infty);\cal{H})$.
The strategy of the proof of Theorem~\ref{theorem5.1} is to pass to
the limit as $n\to\infty$ in \eqref{1.1n}. With this aim we point
out the following uniform estimates on $f^n$,  $g^n$ and  the
Nemitskii operators $\widehat{f^n}$ and $\widehat{g^n}$ associated
with them.
\begin{lem}\label{lemma5.1}
Let (PQ1--3), (FG1) and  (FGQP1) hold. Then:
\renewcommand{\labelenumi}{{(\roman{enumi})}}
\begin{enumerate}
\item for all $n\in\N$ the couples $(f^n,g^n)$ satisfy (FG1)
with constants
$$
\overline{\boldsymbol{c}_p}=(\overline{c_p},\overline{c_p}')=(c_p,
4(c_p+c_p'))\quad
\text{and}\quad\overline{\boldsymbol{c}_q}=(\overline{c_q},\overline{c_q}')=(c_q,
4(c_q+c_q'));
$$
\item for any  $\rho\in [p-1,\infty)$ and  $\theta\in
[q-1,\infty)$, $\widehat{f^n}: L^\rho(\Omega)\to
L^{\rho/(p-1)}(\Omega)$, $\widehat{f^n}: H^1(\Omega)\to
L^{m_p'}(\Omega)$, $\widehat{g^n}: L^\theta(\Gamma_1)\to
L^{\theta/(q-1)}(\Gamma_1)$ and $\widehat{g^n}: H^1(\Gamma)\cap
L^2(\Gamma_1)\to L^{\mu_q'}(\Gamma_1)$ are  locally Lipschitz and
bounded, uniformly in $n\in\N$, and for any $R\ge 0$ we
have
\begin{equation}\label{5.9}
\begin{alignedat}2
&\|\widehat{f^n}(u)\|_{\rho/(p-1)} &&\le\overline{c_p} k_1
(1+R^{p-1}),\\
&\|\widehat{g}(\widetilde{u})\|_{\theta/(q-1),\Gamma_1}&&\le\overline{c_q}
k_2
(1+R^{q-1}),\\
&\|\widehat{f^n}(u)-\widehat{f^n}(v)\|_{\rho/(p-1)}
&&\le\overline{c_p'}k_1(1+R^{p-2})\|u-v\|_\rho,\\
&\|\widehat{g^n}(\widetilde{u})-\widehat{g^n}(\widetilde{v})\|_{\theta/
(q-1),\Gamma_1} && \le\overline{c_q'}k_2
(1+R^{q-2})\|\widetilde{u}-\widetilde{v}\|_{\theta,\Gamma_1}
\end{alignedat}
\end{equation}
provided $\|u\|_\rho, \|v\|_\rho,
\|\widetilde{u}\|_{\theta,\Gamma_1},
\|\widetilde{v}\|_{\theta,\Gamma_1}\le R$, and
\begin{equation}\label{5.10}
\begin{alignedat}2
&\|\widehat{f^n}(u)\|_{m_p'}&&\le \overline{c_p} k_3 (1+R^{p-1}),\\
&\|\widehat{g^n}(\widetilde{u})\|_{\mu_q',\Gamma_1}&&\le
\overline{c_q}
k_3 (1+R^{q-1}),\\
&\|\widehat{f^n}(u)-\widehat{f^n}(v)\|_{m_p'}&&\le \overline{c_p'}k_3 (1+R^{p-2})\|u-v\|_{H^1(\Omega)},\\
&\|\widehat{g^n}(\widetilde{u})-\widehat{g^n}(\widetilde{v})\|_{\mu_q',\Gamma_1}&&\le
\overline{c_q'}
k_3(1+R^{q-2})\|\widetilde{u}-\widetilde{v}\|_{H^1(\Gamma)},
\end{alignedat}
\end{equation}
provided $\|u\|_{H^1(\Omega)}, \|v\|_{H^1(\Omega)},
\|\widetilde{u}\|_{H^1(\Gamma)}, \|\widetilde{v}\|_{H^1(\Gamma)}\le
R$;
\item when $f$ and $g$ satisfy also  (FG2) or (FG2)$'$
then $f^n$ and $g^n$ satisfy the same assumption with constants
\begin{equation}\label{5.7}
\overline{c_p}''=2^{10} c_p+16c_p'+2c_p'',\quad \text{and}\quad
\overline{c_q}''=2^{10} c_q+16c_q'+2c_q'',
\end{equation}
hence $(f^n,g^n)$ satisfy (FG1--2) or (FG1)--(FG2)$'$ with constants
$\overline{\boldsymbol{c}_p}'=(\overline{\boldsymbol{c}_p},\overline{c_p}'')$
and
$\overline{\boldsymbol{c}_q}'=(\overline{\boldsymbol{c}_q},\overline{c_q}'')$,
independent on $n\in\N$.
\end{enumerate}
\end{lem}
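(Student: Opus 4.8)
The plan is to exploit systematically the product structure $f^n=\eta_n f$, $g^n=\eta_n g$ together with the single quantitative fact that $\eta_n$, $\eta_n'$ and $\eta_n''$ are all supported in $A_n=[-2n,2n]$, where $|u|\le 2n$ and $1/n\le 1$ (recall $n\ge 1$). This is precisely what lets the factors $1/n$ and $1/n^2$ appearing in \eqref{5.1} be absorbed into powers of $|u|$, and is the reason all resulting constants are independent of $n$. For part (i) I would prove the two halves of (FG1) separately. The growth bound is immediate: since $0\le\eta_n\le 1$ we have $|f^n(x,u)|\le|f(x,u)|\le c_p(1+|u|^{p-1})$, whence $\overline{c_p}=c_p$, and symmetrically $\overline{c_q}=c_q$. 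For the difference bound I would avoid arguing directly from the decomposition $f^n(x,u)-f^n(x,v)=\eta_n(u)[f(x,u)-f(x,v)]+[\eta_n(u)-\eta_n(v)]f(x,v)$, which is awkward when one of $u,v$ leaves $A_n$, and instead integrate the pointwise derivative bound.

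Concretely, starting from the already recorded estimate \eqref{5.3}, on $A_n$ one has $|u|^{p-1}/n=|u|^{p-2}\,|u|/n\le 2|u|^{p-2}$ and $1/n\le 1$, so the term $2c_p(1+|u|^{p-1})/n$ is $\le 4c_p(1+|u|^{p-2})$; hence $|f^n_u(x,u)|\le(4c_p+2c_p')(1+|u|^{p-2})$ for a.a.\ $x$ and all $u$. Integrating in the second variable and using the elementary inequality $\bigl|\int_v^u(1+|s|^{p-2})\,ds\bigr|\le|u-v|\,(1+|u|^{p-2}+|v|^{p-2})$, valid for $p\ge 2$, yields the second inequality of (F1) for $f^n$ with constant $4c_p+2c_p'\le 4(c_p+c_p')=\overline{c_p}'$; the same computation on $\Gamma_1$ gives $\overline{c_q}'$. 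This proves (i).

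Part (ii) is then immediate and requires no new work. Since each pair $(f^n,g^n)$ satisfies (FG1) with the $n$-independent constants $\overline{\boldsymbol{c}_p}$, $\overline{\boldsymbol{c}_q}$ produced in (i), and since the constants $k_1,k_2,k_3$ in Lemma~\ref{Nemitskii} depend only on $\Omega$, applying that lemma to each $(f^n,g^n)$ delivers exactly \eqref{5.9}--\eqref{5.10} with bounds uniform in $n$.

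For part (iii), under (FG2) or (FG2)$'$ we have $f(x,\cdot)\in C^2(\R)$, hence $f^n(x,\cdot)\in C^2(\R)$, and by Remark~\ref{remark3.7} it suffices to bound the second derivative. Differentiating twice gives $f^n_{uu}=\eta_n''f+2\eta_n'f_u+\eta_nf_{uu}$, with all three terms supported in $A_n$. Inserting $|\eta_n''|\le 2^8/n^2$, $|\eta_n'|\le 2/n$, $|\eta_n|\le1$ together with $|f|\le c_p(1+|u|^{p-1})$, $|f_u|\le 2c_p'(1+|u|^{p-2})$ and $|f_{uu}|\le 2c_p''(1+|u|^{p-3})$ (the latter two via Remarks~\ref{remark3.2} and \ref{remark3.7}), and then absorbing on $A_n$ by means of $|u|^{p-1}/n^2\le 4|u|^{p-3}$ and $|u|^{p-2}/n\le 2|u|^{p-3}$, the three contributions are bounded respectively by $2^{10}c_p(1+|u|^{p-3})$, $16c_p'(1+|u|^{p-3})$ and $2c_p''(1+|u|^{p-3})$. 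Summing gives (F2)$'$ for $f^n$ with the constant in \eqref{5.7}, and Remark~\ref{remark3.7} converts this back into (F2); the argument on $\Gamma_1$ is identical. The only real --- though wholly elementary --- obstacle throughout is this $A_n$-absorption device, which is what renders every constant independent of $n$; once it is in hand, what remains is just the product rule, the integration inequality above, and the equivalences of Remarks~\ref{remark3.2} and \ref{remark3.7}.
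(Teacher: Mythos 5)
Your proposal is correct and follows essentially the same route as the paper: part (i) via the derivative bound \eqref{5.3} absorbed on $A_n$ using $\chi_{A_n}(u)|u|\le 2n$ and then the (FG1)$'\Rightarrow$(FG1) conversion of Remark~\ref{remark3.2}, part (ii) as an immediate application of Lemma~\ref{Nemitskii} with the $n$--independent constants from (i), and part (iii) by the product rule for $f^n_{uu}$ with the same absorption device yielding exactly the constants in \eqref{5.7}. The only cosmetic difference is that you carry out the integration of the derivative bound explicitly rather than citing Remark~\ref{remark3.2}, which changes nothing of substance.
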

\begin{proof} We first note that since  $\chi_{A_n}(u)|u|\le 2n$, from
\eqref{5.3} we also get $|f^n_u(\cdot,u)|\le
2(c_p+c_p')+4(c_p+c_p')|u|^{p-2}$ and  $|g^n_u(\cdot,u)|\le
2(c_q+c_q')+4(c_q+c_q')|u|^{q-2}$, and by combining them with
\eqref{5.3} and Remark~\ref{remark3.2} we  complete the proof of
(i). By combining Lemma~\ref{Nemitskii} with (i) we immediately
derive (ii). To prove (iii) we note that, when (F2)  holds
true we have, by \eqref{5.1},
$$|f^n_{uu}(\cdot,u)|\le \left[\tfrac{2^8}{n^2}
|f(\cdot,u)|+\tfrac 4n
|f_u(\cdot,u)|+|f_{uu}(\cdot,u)|\right]\chi_{A_n}(u)$$ and
consequently, using (FG1)$'$ and (F2)$'$, see Remarks~\ref{remark3.2}
and \ref{remark3.7}, we get
$$|f^n_{uu}(\cdot,u)|\le \left[\tfrac{2^8}{n^2}
c_p(1+|u^{p-1})+\tfrac 8n c_p'(1+|u|^{p-2})
+2c_p''(1+|u|^{p-3})\right]\chi_{A_n}(u),
$$
and then (F2) follows since $\chi_{A_n}(u)|u|\le 2n$, using
Remark~\ref{remark3.7} again. By the same arguments we get (G2).
\end{proof}
Our first main estimate on the sequence $(u^n)_n$ is  the following
one.
\begin{lem}\label{lemma5.3} If (PQ1--3), (FG1) and (FGQP1) hold
there are a decreasing function $T_1:[0,\infty)\to (0,1]$ and an
increasing one $\kappa:[0,\infty)\to [0,\infty)$, such that
\begin{alignat}3\label{5.11}
&\|U^n\|_{C([0,T_1(\|U_0\|_{\cal{H}})];\cal{H})}\le
&&1+2\|U_0\|_{\cal{H}},\quad&&\text{for all
$n\in\N$,}\\
\label{5.12}&\|\dot{u}^n\|_{Z(0,T_1(\|U_0\|_{\cal{H}}))}\le
&&\kappa(\|U_0\|_{\cal{H}}),\quad&&\text{for all $n\in\N$.}
\end{alignat}
\end{lem}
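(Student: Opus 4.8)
The plan is to derive \eqref{5.11}--\eqref{5.12} from the energy identity \eqref{energyidentity}, which each $u^n$ satisfies by Lemma~\ref{lemma3.3}(i) applied to problem \eqref{1.1n} (of the same form as \eqref{1.1}, with the truncated data $f^n,g^n$), combined with the \emph{uniform in $n$} Nemitskii bounds of Lemma~\ref{lemma5.1}. Writing $r=\|U_0\|_{\cal{H}}$ and $E^n(t)=\tfrac12\|\dot{u}^n(t)\|_{H^0}^2+\tfrac12\langle Au^n(t),u^n(t)\rangle_{H^1}$, one has $E^n(0)\le\tfrac12 r^2$ and, by \eqref{2.5} and \eqref{2.10}, $\|U^n(t)\|_{\cal{H}}^2=2E^n(t)+\|u^n_{|\Gamma}(t)\|_{2,\Gamma_1}^2$. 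I would start from \eqref{energyidentity} with $s=0$, keep the dissipation on the left via (PQ3), and estimate the two source integrals by \eqref{5.10}, \eqref{ZLMP} and H\"older's inequality, obtaining $\int_0^t\int_\Omega f^n(u^n)u^n_t\le\int_0^t\overline{c_p}k_3(1+\|u^n\|_{H^1(\Omega)}^{p-1})\|u^n_t\|_{m_p}$ together with its boundary analogue, all constants being independent of $n$ thanks to Lemma~\ref{lemma5.1}(i).

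Next I would close these estimates according to the two regimes of \eqref{mpmuq}. When $p>1+\romega/2$, so $m_p=m=\overline{m}$ and (after the normalisation of Remark~\ref{remark3.3}) $\essinf_\Omega\alpha=1$, a weighted Young inequality absorbs $\tfrac{c_m'}{m}\int_0^t\|[u^n_t]_\alpha\|_{m,\alpha}^m$ into the coercive dissipation exactly as in \eqref{3.22}--\eqref{3.24}, leaving a source contribution $\le C\int_0^t(1+\|u^n\|_{H^1}^{(p-1)m'})$; when $p\le 1+\romega/2$, so $m_p=2$, I would instead use $\|u^n_t\|_2\le\|\dot{u}^n\|_{H^0}\le\sqrt{2E^n}$ and Young directly, with no need of coercivity. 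The boundary term is treated identically with $q,\mu$ replacing $p,m$. The extra summand $\|u^n_{|\Gamma}(t)\|_{2,\Gamma_1}^2$ in $\|U^n\|_{\cal{H}}^2$ is controlled by time integration of the trace velocity: from $\tfrac{d}{dt}\|u^n_{|\Gamma}\|_{2,\Gamma_1}\le\|\dot{u}^n\|_{H^0}$ and $0\le t\le 1$ one gets $\|u^n_{|\Gamma}(t)\|_{2,\Gamma_1}^2\le 2r^2+2t^2\sup_{[0,t]}\|\dot{u}^n\|_{H^0}^2$. Collecting everything and setting $M^n(t)=\|U^n\|_{C([0,t];\cal{H})}$, I expect to reach a closed inequality $M^n(t)^2\le 3r^2+C\,t\,\psi(M^n(t))$ on $[0,1]$, where $\psi$ is an increasing polynomial depending only on the structural constants and $C$ is independent of $n$.

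Since $(1+2r)^2-3r^2=1+4r+r^2>0$, the bound \eqref{5.11} follows by a continuity (bootstrap) argument: $M^n$ is continuous because $U^n\in C([0,\infty);\cal{H})$, and $M^n(0)=r<1+2r$, so if $t_*\le T_1$ were the first time with $M^n(t_*)=1+2r$, evaluating the inequality at $t_*$ and choosing $T_1=T_1(r)\in(0,1]$, decreasing in $r$, small enough that $C\,T_1\,\psi(1+2r)<1+4r+r^2$ yields a contradiction; crucially $T_1$ is independent of $n$. Feeding \eqref{5.11} back into the absorbed energy inequality bounds the dissipation integrals $\int_0^{T_1}\|[u^n_t]_\alpha\|_{m,\alpha}^m$ and $\int_0^{T_1}\|[u^n_{|\Gamma}{}_t]_\beta\|_{\mu,\beta,\Gamma_1}^\mu$ by a function of $r$ alone, which together with $\sup_{[0,T_1]}\|\dot{u}^n\|_{H^0}\le 1+2r$ controls both factors of the $Z(0,T_1)$-norm and gives \eqref{5.12} with an increasing $\kappa$ independent of $n$. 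The main obstacle is precisely the superlinearity of the sources when $p>2$ or $q>2$: it forbids a global Gronwall bound and forces the data-dependent local time $T_1$, so the delicate points are keeping every constant uniform in $n$ (which is exactly what Lemma~\ref{lemma5.1} guarantees) and recovering the full $\cal{H}$-norm, in particular the $L^2(\Gamma_1)$-part of $u^n$, from the energy.
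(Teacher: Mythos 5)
Your proposal is correct, and the core estimates coincide with the paper's: the energy identity for the approximating problem, the $n$--uniform Nemitskii bounds of Lemma~\ref{lemma5.1}, the weighted Young inequality absorbing the source into the coercive dissipation (with the case distinction $m_p=2$ versus $m_p=m$ and the normalization of Remark~\ref{remark3.3}), and the recovery of the $L^2(\Gamma_1)$--component of the $\cal{H}$--norm by time integration. Where you genuinely diverge is in how the a priori bound is closed. The paper does not run a bootstrap on $u^n$ itself: it first replaces $\widehat{f^n},\widehat{g^n}$ by their globally Lipschitz truncations $\widehat{f^n_R}=\widehat{f^n}\cdot\Pi_{H^1(\Omega),R}$, $\widehat{g^n_R}=\widehat{g^n}\cdot\Pi_{H^1(\Gamma)\cap L^2(\Gamma_1),R}$ with $R=1+2\|U_0\|_{\cal H}$, solves the truncated abstract problem \eqref{5.13} globally via \cite[Theorem~3.2]{Dresda1}, and since the projection caps the source at level $R$ the resulting integral inequality \eqref{5.18} is \emph{linear} in $\|V^n\|_{\cal H}^2$, so a plain Gronwall lemma gives $\|V^n(t)\|_{\cal H}\le R$ on $[0,T_1]$; on that interval the truncation is inactive, and the identification $v^n=u^n$ then rests on the uniqueness of weak solutions of \eqref{1.1n}. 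Your route keeps $u^n$ throughout, accepts a nonlinear inequality $M^n(t)^2\le 3r^2+Ct\,\psi(M^n(t))$, and closes it by continuity of $t\mapsto M^n(t)$ together with $(1+2r)^2-3r^2>0$. Both are sound: the paper's version trades the bootstrap for the extra machinery of the projected problem and an appeal to uniqueness of \eqref{1.1n}, while yours is more self-contained but must justify (as you do) that $U^n\in C([0,\infty);\cal{H})$ so that the first-crossing-time argument is legitimate, and that the derived inequality holds unconditionally on $[0,1]$ because $u^n$ is already known to be global. The derivation of \eqref{5.12} by feeding the sup-bound back into the absorbed energy inequality is the same in both arguments.
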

\begin{proof} We denote
$R=1+2\|U_0\|_{\cal{H}}$. Since, as already noted, $f^n$ and $g^n$
satisfy assumptions (FG1) with exponents $p=q=2$  then, by Lemma~\ref{Nemitskii}, the
Nemitskii operators $\widehat{f^n}: H^1(\Omega)\to L^2(\Omega)$ and
$\widehat{g^n}: H^1(\Gamma)\cap L^2(\Gamma_1)\to L^2(\Gamma_1)$ are,
possibly not uniformly in $n$, locally Lipschitz. Hence we can
introduce, as in \cite{bociulasiecka1,chueshovellerlasiecka}, their
globally Lipschitz truncations $\widehat{f^n_R}: H^1(\Omega)\to
L^2(\Omega)$ and $\widehat{g^n_R}: H^1(\Gamma)\cap L^2(\Gamma_1)\to
L^2(\Gamma_1)$, given by $\widehat{f^n_R}=\widehat{f^n}\cdot
\Pi_{H^1(\Omega),R}$ and $\widehat{g^n_R}=\widehat{g^n}\cdot
\Pi_{H^1(\Gamma)\cap L^2(\Gamma_1),R}$, where in any Hilbert space
$H$ we denote by $\Pi_{H,R}:H\to B_R(H)$ the projection onto the
ball $B_R(H)$ of radius $R$ centered at $0$ in $H$, given for $u\in
H$ (see \cite[Theorem~5.2]{brezis2}) by
$$\Pi_{H,R}(u)=\begin{cases}
u\,&\text{if $\|u\|_H\le R$,}\\
Ru/\|u\|_H\,&\text{otherwise.}
\end{cases}$$
Then the operator couple $F^n_R=(\widehat{f^n_R},\widehat{g^n_R})$
is globally Lipschitz from $H^1$ to $H^0$, and consequently by
applying \cite[Theorem~3.2]{Dresda1} the abstract Cauchy problem
\begin{equation}\label{5.13}
\begin{cases} \ddot{v}^n+Av^n+B(\dot{v}^n)=F^n_R(v^n)\qquad\text{in $X'$,}\\
v^n(0)=u_0,\quad \dot{v}^n(0)=u_1
\end{cases}
\end{equation}
 has a unique global weak solution
$v^n\in C([0,\infty);H^1)\cap C^1([0,\infty);H^0)$, which (see
\cite[Remark~3.6]{Dresda1}) satisfies the energy identity
\begin{multline}\label{5.14}
\left.\tfrac 12\|\dot{v}^n\|_{H^0}^2+\tfrac
12\|v^n\|_{H^1}^2\right|_0^t+\int_0^t\langle
B(\dot{v}^n,\dot{v}^n\rangle_Y\\=\left.\tfrac
12\|v^n\|_{2,\Gamma_1}^2\right|_0^t +\int_0^t \int_\Omega
\widehat{f^n_R}(v^n)v^n_t+\int_0^t \int_{\Gamma_1}
\widehat{g^n_R}(v^n){v^n_{|\Gamma}}_t,\qquad t\in [0,\infty).
\end{multline}
Setting $\eps_1=\min\{1,c_m',c_\mu'\}>0$, by weighted Young
inequality we have
$$\int_0^t \int_\Omega
\widehat{f^n_R}(v^n)v^n_t\le
\tfrac{\eps_1}{m_p}\int_0^t\|v^n_t\|_{m_p}^{m_p}+\tfrac
{\eps_1^{1-m_p'}}{m_p'}\int_0^t\|\widehat{f}_R^n(v^n)\|_{m_p'}^{m_p'}$$
and then, using the definition on $\widehat{f^n_R}$, \eqref{5.10},
Lemma~\ref{lemma5.1} and the fact that $m_p\ge 2$ and $R\ge 1$, we get
$$\int_0^t \int_\Omega
\widehat{f^n_R}(v^n)v^n_t\le
\tfrac{\eps_1}2\int_0^t\|v^n_t\|_{m_p}^{m_p}+k_{14}c_p^{m_p'}(1+R^{2p})\,t,\qquad
\text{for all $t\in [0,\infty)$.}$$ Using the same arguments to
estimate the term $\int_0^t \int_{\Gamma_1}
\widehat{g^n_R}(v^n){v_{|\Gamma}^n}_t$ we obtain
\begin{multline}\label{5.15}
\int_0^t \int_\Omega \widehat{f^n_R}(v^n)v^n_t+\int_0^t
\int_{\Gamma_1} \widehat{g^n_R}(v^n){v_{|\Gamma}^n}_t\\\le
\tfrac{\eps_1}2\int_0^t\left(\|v^n_t\|_{m_p}^{m_p}+\|{v_{|\Gamma}^n}_t\|_{\mu_q}^{\mu_q}\right)+k_{15}(1+R^{2(p+q)})\,t.
\end{multline}
By Young and H\"{o}lder inequality
in time we have
$$\tfrac 12 \|v^n(t)\|_{2,\Gamma_1}^2\!\!\le \tfrac 12
\|v^n(t)\|_{H^0}^2\!\!\le \tfrac 12
\left(\|u_0\|_{H^0}+\!\!\int_0^t\|\dot{v}^n\|_{H^0}\right)^2\!\!\le
\|u_0\|_{H^0}^2+t\int_0^t\|\dot{v}^n(\tau)\|_{H^0}^2\,d\tau,
$$
so, plugging it and \eqref{5.15} in \eqref{5.14} and denoting
$V^n=(v^n, \dot{v}^n)$, we get
\begin{multline*}\tfrac 12
\|V^n(t)\|_{\cal{H}}^2+\int_0^t\langle B(\dot{v}^n),\dot{v}^n\rangle_Y\le
\tfrac 32
\|U_0\|_{\cal{H}}^2+\int_0^t\|V^n(\tau)\|_{\cal{H}}^2\,d\tau\\+\tfrac{\eps_1}2\int_0^t\left(\|v^n_t\|_{m_p}^{m_p}+\|{v_{|\Gamma}^n}_t\|_{\mu_q}^{\mu_q}\right)+k_{15}(1+R^{2(p+q)})\,t
\qquad\text{for $t \in [0,1]$.}
\end{multline*}
 Denoting
\begin{equation}\label{5.16}
\begin{aligned}
\cal{I}^n_f(t)=&c_m'\int_0^t\|[v^n_t]_{\alpha}\|_{m,\alpha}^m+\tfrac
12 \|v^n_t(t)\|_2^2-\tfrac{\eps_1}2\|v^n_t(t)\|_{m_p}^{m_p},\\
\cal{I}^n_g(t)=&c_\mu'\int_0^t\|[{v^n_{|\Gamma}}_t(t)]_{\beta}\|_{\mu,\beta,\Gamma_1}^\mu+\tfrac
12
\|{v^n_{|\Gamma}}_t(t)\|_{2,\Gamma_1}^2-\tfrac{\eps_1}2\|{v^n_{|\Gamma}}_t(t)\|_{\mu_q}^{\mu_q},
\end{aligned}
\end{equation}
and using assumption (PQ3) in previous formula, we get
\begin{equation}\label{5.17}
\tfrac 12 \|V^n(t)\|_{\cal{H}}^2+\cal{I}^n_f(t)+\cal{I}^n_g(t)\le
\tfrac 32 \|U_0\|_{\cal{H}}^2+\tfrac
32\int_0^t\|V^n\|_{\cal{H}}^2\,d\tau+k_{15}(1+R^{2(p+q)})\,t
\end{equation}
for $t \in [0,1]$. Now we remark that, by \eqref{mpmuq}, when $p\le
1+\romega/2$ we have $m_p=2$ so, as $\eps_1\le 1$,
$\cal{I}^n_f(t)\ge c_m'\int_0^t \|[v^n_t]_{\alpha}\|_{m,\alpha}^m$.
When $p> 1+\romega/2$ we have $m_p=m>2$ and, by assumption (FGQP1)
and Remark~\ref{remark3.3}, we have $\|v^n_t\|_m^m\le
\|[v^n_t]_\alpha\|_{m,\alpha}^m$ and then, since $\eps_1\le c_m'$,
$\cal{I}^n_f(t)\ge \frac 12 c_m'\int_0^t
\|[v^n_t]_{\alpha}\|_{m,\alpha}^m$. Using the same arguments to
estimate from below $\cal{I}_g^n(t)$ we then get from \eqref{5.17}
\begin{multline}\label{5.18}
\tfrac 12 \|V^n(t)\|_{\cal{H}}^2+\tfrac 12 c_m'\int_0^t
\|[v^n_t]_{\alpha}\|_{m,\alpha}^m+\tfrac 12 c_\mu'\int_0^t
\|[(v_{|\Gamma_1}^n)_t]_{\beta}\|_{\mu,\beta,\Gamma_1}^\mu \\\le
\tfrac 32 \|U_0\|_{\cal{H}}^2+\tfrac
32\int_0^t\|V^n(\tau)\|_{\cal{H}}^2\,d\tau+k_{15}(1+R^{2(p+q)})\,t\qquad\text{for
$t \in [0,1]$.}
\end{multline}
Disregarding the second and third terms in the left--hand side of
\eqref{5.18} and using Gronwall inequality (see \cite[Lemma~4.2,~p.
179]{showalter}) we get
$$
\|V^n(t)\|_{\cal{H}}^2\le
3\|U_0\|_{\cal{H}}^2e^{3t}+2e^{3}k_{15}(1+R^{2(p+q)})\,t\qquad\text{for
$t \in [0,1]$.}
$$
Consequently
\begin{equation}\label{5.19}
\|V^n(t)\|_{\cal{H}}^2\le 1+4\|U_0\|_{\cal{H}}^2\le
(1+2\|U_0\|_{\cal{H}})^2=R^2,
\end{equation}
 provided $3e^{3t}\le 4$ and $2e^3k_{15}
(1+R^{2(p+q)})t\le 1$, that is $t\in [0,T_1]$, where
$$T_1=T_1(\|U_0\|_{\cal{H}}):=\min\left\{\tfrac 13 \log \tfrac
43,[2e^3k_{15}(1+(1+2\|U_0\|_{\cal{H}})^{2(p+q)}]^{-1}\right),$$
which is trivially decreasing. By the definitions of
$\widehat{f^n_R}$, $\widehat{g^n_R}$ and $F^n_R$ then we have
$F^n_R(v^n)(t)=(\widehat{f^n}(v^n)(t),\widehat{g^n}(v^n)(t))$ for
$t\in [0,T_1]$, so $v^n$ is a weak solution of \eqref{1.1n} in $[0,T_1]$.
Since weak solutions of \eqref{1.1n} are unique we get
$v^n=u^n$ in $[0,T_1]$, so \eqref{5.11} is nothing but \eqref{5.19}.
To prove \eqref{5.12} we note that plugging \eqref{5.11} in
\eqref{5.18}, since $v^n=u^n$ in $[0,T_1]$ and $2(p+q)\ge 2$, we get
\begin{equation}\label{5.20}
c_m'\int_0^{T_1} \|[u^n_t]_{\alpha}\|_{m,\alpha}^m+
c_\mu'\int_0^{T_1}
\|[(u_{|\Gamma_1}^n)_t]_{\beta}\|_{\mu,\beta,\Gamma_1}^\mu \le
k_{16}\left[1+\|U_0\|_{\cal{H}}^{2(p+q)}\right].
\end{equation}
Recalling that  $\overline{m}=\max\{2,m\}$
 and $\overline{\mu}=\max\{2,\mu\}$ we have
 \begin{alignat*}2
&\|[u^n_t]_\alpha\|_{\overline{m},\alpha}&&\le\|\alpha\|_\infty \|u^n_t\|_2+\|[u^n_t]_\alpha\|_{m,\alpha},\\
&\|[{u^n_{|\Gamma}}_t]_\beta\|_{\overline{\mu},\beta,\Gamma_1}
&&\le\|\beta\|_{\infty,\Gamma_1}\|{u^n_{|\Gamma}}_t\|_{2,\Gamma_1}+\|[{u^n_{|\Gamma}}_t]_\beta\|_{\mu,\beta,\Gamma_1}
\end{alignat*}
so by \eqref{5.11} and \eqref{5.20} we immediately get \eqref{5.12},
completing the proof.
\end{proof}

To pass to the limit as $n\to\infty$ we shall use the following
density result, which  is proved in Appendix~\ref{appendixB} for the
reader's convenience.
\begin{lem}\label{lemma5.4}
Let $0<T<\infty$. Then $C^1_c((0,T);H^{1,\infty,\infty})$ is dense
in $C_c((0,T);H^1)\cap C^1_c((0,T);H^0)\cap Z(0,T)$ with
the norm of $C([0,T];H^1)\cap C^1([0,T];H^0)\cap Z(0,T)$.
\end{lem}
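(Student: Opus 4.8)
The plan is to combine a spatial truncation, which forces the values into $L^\infty$ and hence into $H^{1,\infty,\infty}$, with a mollification in time, which restores $C^1$ regularity, and then to close the argument by a diagonal choice of the two parameters. Throughout I fix $\phi$ in the space to be approximated, with $\operatorname{supp}\phi\subset[a,b]\subset(0,T)$, and set $\delta=\min\{a,T-b\}>0$. I denote by $(\rho_\eps)_{\eps>0}$ the standard mollifiers on $\R$ and, for $k>0$, by $T_k(s)=\max\{-k,\min\{k,s\}\}$ the truncation at level $k$, extended to couples by $\cal{T}_k(u,v)=(T_ku,T_kv)$. Both operations must be checked against the coupling and the trace condition on $\Gamma_0$, and against the three seminorms defining the target norm.

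First I would treat the truncation. Since $T_k(0)=0$, $T_k$ is continuous on $H^1(\Omega)$ and on $H^1(\Gamma)$, and it commutes with traces, $\cal{T}_k\phi(t)$ lies in $H^1$ for every $t$, satisfies the boundary condition on $\Gamma_0$, and is bounded by $k$ in $L^\infty(\Omega)\times L^\infty(\Gamma_1)$; hence $\cal{T}_k\phi(t)\in H^{1,\infty,\infty}$. The key point is the uniform-in-$t$ convergence
$$\sup_{t\in[0,T]}\|\cal{T}_k\phi(t)-\phi(t)\|_{H^1}\to0\qquad\text{as }k\to\infty.$$
I would get it as follows. For $u\in H^1(\Omega)$ one has $\|T_ku-u\|_{H^1(\Omega)}^2\le\int_{\{|u|>k\}}(|u|^2+|\nabla u|^2)$, and the analogous bound holds on $\Gamma$. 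Since $\phi([a,b])$ is compact in $H^1$, its image under the continuous map $u\mapsto|u|^2+|\nabla u|^2$ is compact, hence uniformly integrable in $L^1$ by Dunford--Pettis; as $|\{|u|>k\}|\le\|u\|_2^2/k^2\to0$ uniformly on this compact set, the displayed supremum vanishes. The same reasoning, together with dominated convergence in time, gives $\|\cal{T}_k\phi-\phi\|_{Z(0,T)}\to0$.

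Next I would mollify in time, setting $\phi^{k,\eps}=\rho_\eps*\cal{T}_k\phi$ for $\eps<\delta$. Each $\phi^{k,\eps}$ is $C^\infty$ in time, is valued in $H^1$ and bounded by $k$ in $L^\infty(\Omega)\times L^\infty(\Gamma_1)$ (the mollifier being a probability density), and has support in $(0,T)$; thus $\phi^{k,\eps}\in C^1_c((0,T);H^{1,\infty,\infty})$. To estimate $\phi^{k,\eps}-\phi$ I split it as $\rho_\eps*(\cal{T}_k\phi-\phi)+(\rho_\eps*\phi-\phi)$. The second summand tends to $0$ as $\eps\to0$ in $C([0,T];H^1)$, in $C^1([0,T];H^0)$ (using $\rho_\eps'*\phi=\rho_\eps*\dot\phi$, valid since $\phi\in C^1_c$), and in $Z(0,T)$, by the standard properties of mollification. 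For the first summand, the $H^1$-supremum and the $Z$-norm are bounded by $\sup_t\|\cal{T}_k\phi-\phi\|_{H^1}$ and by $\|\cal{T}_k\phi-\phi\|_Z$, respectively, since $\|\rho_\eps\|_{L^1}=1$, so both are small once $k$ is large.

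The main obstacle is the time-derivative seminorm of the first summand, namely $\sup_t\|\rho_\eps'*(\cal{T}_k\phi-\phi)(t)\|_{H^0}$: here $\cal{T}_k\phi$ is not differentiable in time, so differentiation must fall on the mollifier and costs a factor $\|\rho_\eps'\|_{L^1}=C/\eps$. I would overcome this by a diagonal choice: for each $\eps$ pick $k=k(\eps)$ so large that $\sup_t\|\cal{T}_{k(\eps)}\phi(t)-\phi(t)\|_{H^1}\le\eps^2$, which is possible by the truncation step. Then the offending term is at most $(C/\eps)\eps^2=C\eps\to0$, while the $H^1$ and $Z$ errors of the first summand are bounded by $\eps^2$ and $\|\cal{T}_{k(\eps)}\phi-\phi\|_Z$ and also vanish. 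Letting $\eps\to0$ along this diagonal yields $\phi^{k(\eps),\eps}\to\phi$ in $C([0,T];H^1)\cap C^1([0,T];H^0)\cap Z(0,T)$, which is the asserted density.
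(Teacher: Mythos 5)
Your proof is correct, but it follows a genuinely different route from the paper's. The paper first mollifies in time only, which already produces a sequence in $C^1_c((0,T);X)$ with $X=H^{1,\overline{m},\overline{\mu}}_{\alpha,\beta}$ converging in the required norm, and then invokes an abstract density lemma (Lemma~\ref{lemmaA1}): since $H^{1,\infty,\infty}$ is dense in $X$ by \cite[Lemma~2.1]{Dresda1}, one approximates the \emph{time derivative} of a $C^1_c((0,T);X)$ function by piecewise linear $H^{1,\infty,\infty}$--valued functions and integrates, so no derivative is ever lost and no diagonal argument is needed. You instead truncate in the state variable first and mollify afterwards, which forces you to confront the $\|\rho_\eps'\|_{L^1}=C/\eps$ loss on the $C^1([0,T];H^0)$ seminorm; your diagonal choice $k=k(\eps)$ with truncation error $O(\eps^2)$ handles this cleanly, and your uniform-in-$t$ truncation estimate via compactness of $\phi([a,b])$ in $H^1$ and uniform integrability is a nice, correct replacement for the external density lemma. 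What each approach buys: the paper's is modular and loss-free but leans on a quoted density result and an extra abstract lemma; yours is self-contained but pays for it with the diagonal argument and with two inputs you should make explicit. First, the continuity of $u\mapsto T_ku$ on $H^1(\Omega)$ and $H^1(\Gamma)$ (a Marcus--Mizel type fact) is nontrivial and is what guarantees $\cal{T}_k\phi\in C([0,T];H^1)$, hence Bochner integrability of the convolution. Second, membership of $\phi^{k,\eps}$ in $C^1_c((0,T);H^{1,\infty,\infty})$ requires differentiability \emph{in the $H^{1,\infty,\infty}$ norm}, not merely in $H^1$ with $L^\infty$--bounded values; this does hold, since the $L^\infty(\Omega)\times L^\infty(\Gamma_1)$ norm of the relevant differences is controlled by $k$ times $L^1(\R)$ norms of differences of (translates and difference quotients of) $\rho_\eps$ and $\rho_\eps'$, but it deserves a line. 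Finally, to make the $Z(0,T)$ error of the first summand vanish you should also require $k(\eps)\to\infty$ (or $\|\cal{T}_{k(\eps)}\phi-\phi\|_{Z(0,T)}\le\eps$) in the diagonal choice, which is trivially arranged.
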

We  set
\begin{equation}\label{sigmaomegagammatilda}
\widetilde{\sigmaomega}=\begin{cases}
\somega &\text{if $1+\frac\romega 2<p=1+\frac\romega{m'}$},\\
2 & \text{otherwise},
\end{cases}
\text{and}\,\,\widetilde{\sigmagamma}=\begin{cases}
\sgamma &\text{if $1+\frac\rgamma 2<q=1+\frac\rgamma{\mu'}$},\\
2 & \text{otherwise,}
\end{cases}
\end{equation}
so, by \eqref{somegagammarelazioni1}, we have
$H^{1,\widetilde{\sigmaomega},\widetilde{\sigmagamma}}=H^{1,\sigmaomega,\sigmagamma}$
and consequently
$\cal{H}^{\widetilde{\sigmaomega},\widetilde{\sigmagamma}}=\cal{H}^{\sigmaomega,\sigmagamma}$.

The following result is a main step in the proof of
Theorem~\ref{theorem5.1}.
\begin{prop}\label{proposition5.1} Under the assumptions of
Theorem~\ref{theorem5.1} there is a decreasing function
$T_2:[0,\infty)\to (0,1]$, $T_2\le T_1$, such that for any $U_0\in
\cal{H}^{\sigmaomega,\sigmagamma}$ problem \eqref{1.1} has a weak
solution $u$ in
$[0,T_2(\|U_0\|_{\cal{H}^{\widetilde{\sigmaomega},\widetilde{\sigmagamma}}})]$.
Moreover
\begin{alignat}2\label{5.11ripetuta}
&\|U\|_{C([0,T_2(\|U_0\|_{\cal{H}})];\cal{H})}
&&\le 1+2\|U_0\|_{\cal{H}},\\
\label{5.12ripetuta}&\|\dot{u}\|_{Z(0,T_2(\|U_0\|_{\cal{H}}))}
&&\le\kappa(\|U_0\|_{\cal{H}}).
\end{alignat}
\end{prop}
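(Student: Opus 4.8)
The plan is to let $n\to\infty$ in the truncated problems \eqref{1.1n}, taking Lemma~\ref{lemma5.3} as the source of all compactness. I fix $U_0\in\cal{H}^{\sigmaomega,\sigmagamma}$ and let $T_2\le T_1$ be a threshold to be pinned down during the argument. By \eqref{5.11}--\eqref{5.12} the family $U^n=(u^n,\dot u^n)$ is bounded in $C([0,T_2];\cal{H})$ and $\dot u^n$ is bounded in $Z(0,T_2)$, so along a subsequence $u^n\overset{*}{\rightharpoonup}u$ in $L^\infty(0,T_2;H^1)$, $\dot u^n\overset{*}{\rightharpoonup}\dot u$ in $L^\infty(0,T_2;H^0)$ and $\dot u^n\rightharpoonup\dot u$ in $Z(0,T_2)$. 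Since $H^1\hookrightarrow\hookrightarrow H^0$, an Aubin--Lions--Simon argument yields $u^n\to u$ strongly in $C([0,T_2];H^0)$ and, after a further extraction, $u^n\to u$ a.e.\ in $(0,T_2)\times\Omega$ and on $(0,T_2)\times\Gamma_1$. As $f^n(x,\cdot)=\eta_n(\cdot)f(x,\cdot)\to f(x,\cdot)$ locally uniformly, we get $\widehat{f^n}(u^n)\to\widehat f(u)$ a.e.; combined with the uniform bound $\|\widehat{f^n}(u^n)\|_{L^\infty(0,T_2;L^{m_p'}(\Omega))}\le C$ from Lemma~\ref{lemma5.1} and \eqref{5.11}, this gives $\widehat{f^n}(u^n)\rightharpoonup\widehat f(u)$ in $L^{m_p'}((0,T_2)\times\Omega)$, and likewise $\widehat{g^n}(u^n)\rightharpoonup\widehat g(u)$ in $L^{\mu_q'}((0,T_2)\times\Gamma_1)$. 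By Lemma~\ref{lemma3.1} the damping terms $B(\dot u^n)$ are bounded in $Z'(0,T_2)$, hence $B(\dot u^n)\rightharpoonup\chi$ for some $\chi=(\chi_\Omega,\chi_\Gamma)$. Passing to the limit in \eqref{generdistribution}, after reducing the test functions to $C^1_c((0,T_2);H^{1,\infty,\infty})$ via Lemma~\ref{lemma5.4}, shows that $u$ solves \eqref{L} with $\xi=\widehat f(u)-\chi_\Omega$, $\eta=\widehat g(u)-\chi_\Gamma$, so by Lemma~\ref{lemma2.1} $U\in C([0,T_2];\cal{H})$ and $u$ obeys the corresponding energy identity.

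The heart of the matter is to show $\chi=B(\dot u)$, turning $u$ into a genuine weak solution of \eqref{1.1}. Writing the energy identity of Lemma~\ref{lemma3.3}(i) for $u^n$ expresses $\int_0^{T_2}\langle B(\dot u^n),\dot u^n\rangle$ as $\tfrac12\cal{E}(0)-\tfrac12\cal{E}^n(T_2)+S_f^n+S_g^n$, where $\cal{E}^n(t)=\int_\Omega|u^n_t|^2+\int_{\Gamma_1}|{u^n_{|\Gamma}}_t|^2+\int_\Omega|\nabla u^n|^2+\int_{\Gamma_1}|\nabla_\Gamma u^n|_\Gamma^2$, $\cal{E}(0)=\cal{E}^n(0)$ is $n$--independent, and $S_f^n,S_g^n$ are the two source powers. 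Granting $S_f^n\to S_f:=\int_0^{T_2}\!\int_\Omega\widehat f(u)u_t$ and $S_g^n\to S_g$, and using $U^n(T_2)\rightharpoonup U(T_2)$ in $\cal{H}$ (pointwise--in--time weak convergence, obtained from the uniform bound together with the control of $\ddot u^n$ in a dual space furnished by the equation), so that $\liminf_n\cal{E}^n(T_2)\ge\cal{E}(T_2)$ by weak lower semicontinuity, I deduce $\limsup_n\int_0^{T_2}\langle B(\dot u^n),\dot u^n\rangle\le\tfrac12\cal{E}(0)-\tfrac12\cal{E}(T_2)+S_f+S_g=\int_0^{T_2}\langle\chi,\dot u\rangle$, the last equality being the energy identity of Lemma~\ref{lemma2.1} for $u$. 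Monotonicity of $B$ (Lemma~\ref{lemma3.1}(iii)) then gives $\int_0^{T_2}\langle\chi-B(w),\dot u-w\rangle\ge0$ for every $w\in Z(0,T_2)$, and the Minty device $w=\dot u-\lambda z$, $\lambda\downarrow0$, yields $\chi=B(\dot u)$.

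Thus everything reduces to the convergence of the source powers, and this is where the two regimes of the statement and the key estimate of Section~\ref{section4} enter. For a term off the hyperbola, i.e.\ satisfying \eqref{R'}, the Nemitskii operator ($\widehat f\colon H^1(\Omega)\to L^{m_p'}(\Omega)$, resp.\ $\widehat g$) is compact, which upgrades the weak convergence above to $\widehat{f^n}(u^n)\to\widehat f(u)$ strongly in $L^{m_p'}((0,T_2)\times\Omega)$; paired with $u^n_t\rightharpoonup u_t$ this gives $S_f^n\to S_f$ at once, and the energy--Minty scheme applies verbatim. For a term on the hyperbola the operator is only bounded, and instead I would compare $u^n$ with $u^k$ directly: testing the difference of \eqref{1.1n} with $u^n-u^k$, discarding the nonnegative damping contribution by (PQ2), and splitting $\widehat{f^n}(u^n)-\widehat{f^k}(u^k)=[\widehat{f^n}(u^n)-\widehat{f^n}(u^k)]+[\widehat{f^n}(u^k)-\widehat{f^k}(u^k)]$, I bound the first bracket by Lemma~\ref{lemma4.1} applied to $\mathfrak f=f^n$ (whose constants are $n$--independent by Lemma~\ref{lemma5.1}(iii), and for which (FG2) supplies (F2)), and control the consistency error $\widehat{f^n}(u^k)-\widehat{f^k}(u^k)$, supported where $|u^k|\ge\min\{n,k\}$, through uniform integrability of the $H^1$--bounded family; the boundary term is handled identically via Lemma~\ref{lemma4.4}. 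Since $W_0=0$ (equal data) and the weights $\|u^n_t\|_{m_p}$, $\|u^k_t\|_{m_p}$ lie in $L^1(0,T_2)$ by \eqref{5.12} and \eqref{ZLMP}, choosing $\eps$ and $T_2$ so small that $K_4(\eps+T_2)\le\tfrac14$ lets me absorb the $\|W(t)\|_\cal{H}^2$ term and close a Gronwall estimate, proving $(U^n)$ Cauchy in $C([0,T_2];\cal{H})$; strong convergence then makes all nonlinear terms converge and $u$ is a weak solution directly, with no recourse to Minty, while mixed situations combine the two mechanisms term by term. The occurrence of $\|u_0\|_\somega$ in Lemma~\ref{lemma4.1} is precisely what forces the data into $\cal{H}^{\sigmaomega,\sigmagamma}$ and makes $T_2$ depend on $\|U_0\|_{\cal{H}^{\widetilde{\sigmaomega},\widetilde{\sigmagamma}}}=\|U_0\|_{\cal{H}^{\sigmaomega,\sigmagamma}}$.

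Finally \eqref{5.11ripetuta}--\eqref{5.12ripetuta} follow by letting $n\to\infty$ in \eqref{5.11}--\eqref{5.12}, through weak lower semicontinuity (or strong convergence) of $\|\cdot\|_\cal{H}$ and of $\|\cdot\|_{Z(0,T_2)}$, with $T_2$ the minimum of the thresholds produced above and manifestly decreasing in the data norm. I expect the real obstacle to be the on--hyperbola case: the failure of compactness forbids the easy strong/weak splitting of the source power and forces the quantitative comparison just sketched, whose success rests entirely on the delicate bookkeeping of Lemma~\ref{lemma4.1} (the double integration by parts in $t$, the interpolation near the critical exponent, and the $L^\somega$--control of the datum) and on verifying that the truncation consistency error is asymptotically negligible.
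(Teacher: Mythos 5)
Your overall architecture coincides with the paper's: truncate the sources, take the uniform bounds of Lemma~\ref{lemma5.3}, extract weak limits and a limit $\chi$ of $B(\dot u^n)$, pass to the limit in the distribution identity, and then fight for the identification $\chi=B(\dot u)$ by proving strong convergence of $U^n$ in $C([0,T_2];\cal{H})$ via a Gronwall scheme fed by the key estimate of Section~\ref{section4}. Your Minty/energy route for the off--hyperbola case is a legitimate variant of the paper's compactness argument (the paper's own footnote concedes that a purely compactness-based alternative works there). The problem lies in your treatment of the on--hyperbola case.

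There you compare $u^n$ with $u^k$ and must control the consistency error $\widehat{f^n}(u^k)-\widehat{f^k}(u^k)$, supported on $\{|u^k|\ge\min\{n,k\}\}$ and measured in $L^{m'}$; by (FG1) this is controlled by $\int_{\{|u^k|\ge\min\{n,k\}\}}\bigl(1+|u^k|^{(p-1)m'}\bigr)$, and on the hyperbola $(p-1)m'=\romega$ exactly. In the regime $1+\romega/2<p=1+\romega/m'\le\romega$ (nonempty and inside the theorem's scope: in space dimension $3$, e.g.\ $p=5$, $m=3$), one has $\sigmaomega=2$ and $\somega\le\romega$, so the only uniform bound on the family $\{u^k(t)\}$ is in $L^{\romega}(\Omega)$ through the \emph{critical} Sobolev embedding; boundedness at the critical exponent does not give equi-integrability of $\{|u^k|^{\romega}\}$ (this is precisely the concentration phenomenon), so your appeal to ``uniform integrability of the $H^1$--bounded family'' fails exactly where the key estimate is needed. (When $p>\romega$ on the hyperbola the datum lies in $L^{\somega}$ with $\somega>\romega$ and a H\"older argument would rescue you, but not when $p\le\romega$.) The paper sidesteps this by comparing $u^n$ with the \emph{fixed} weak limit $u$: it first shows $u$ solves \eqref{L} with forcing $\widehat f(u)-\chi_1$, so Lemma~\ref{lemma2.1} supplies the energy identity for $w^n=u^n-u$, and the consistency error becomes $\widehat{f^n}(u)-\widehat f(u)$, which involves a single function and tends to $0$ in $L^{m_p'}$ by plain dominated convergence (formula \eqref{giugi}) — no equi-integrability of a family is required. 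Substituting that comparison for your $u^n$--versus--$u^k$ one repairs the argument; the splitting, the uniform constants of Lemma~\ref{lemma5.1}(iii), the choice of $\eps$ and $T_2$, the Gronwall closure, and the final monotonicity identification of $\chi$ then go through essentially as you describe.
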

\begin{proof} By Lemmas~~\ref{lemma3.1} and \ref{lemma5.3} the sequences $(u^n)_n$,
$(\dot{u}^n)_n$ and $(B(\dot{u}^n))_n$ are bounded, respectively, in
$L^\infty(0,T_1;H^1)$, $L^\infty(0,T_1;H^0)\cap Z(0,T_1)$ and $L^{\overline{m}'}(0,T_1;
[L^{\overline{m}}(\Omega,\lambda_\alpha)]')\times
L^{\overline{\mu}'}(0,T_1;
[L^{\overline{\mu}}(\Gamma_1,\lambda_\beta)]')$. Moreover by
\eqref{R} we have $p<1+\romega$ and $q<1+\rgamma$ so by
Rellich--Kondrachov theorem (see \cite[Theorem~9.16,~p.
285]{brezis2} and \cite[Theorem~2.9,~p. 39]{hebey}) the embeddings
$H^1(\Omega)\hookrightarrow L^{p-1}(\Omega)$ and
$H^1(\Gamma)\hookrightarrow L^{q-1}(\Gamma)$ are compact. Then,
using Simon's compactness results (see \cite[Corollary~5,~p.
86]{simon}) we get that, up to a subsequence,
\begin{equation}\label{convergenze}
\left\{\begin{alignedat}2 &u^n\rightharpoonup^*  u &&\text{ in
$L^\infty(0,T_1;H^1)$},\\
& \dot{u}^n\rightharpoonup^* \dot{u} &&\text{ in
$L^\infty(0,T_1;H^0)$,}\\
&u^n\to u&&\text{ in
$C([0,T_1];L^{p-1}(\Omega)\times L^{q-1}(\Gamma_1))$,}\\
&\dot{u}^n\rightharpoonup \dot{u} &&\text{ in $Z(0,T_1)$,}\\
&B(\dot{u}^n)\rightharpoonup\chi &&\text{ in $L^{\overline{m}'}(0,T_1;
[L^{\overline{m}}(\Omega,\lambda_\alpha)]')\times
L^{\overline{\mu}'}(0,T_1;
[L^{\overline{\mu}}(\Gamma_1,\lambda_\beta)]')$,}
\end{alignedat}\right.
\end{equation}
where $\to$,  $\rightharpoonup$ and $\rightharpoonup^*$ respectively
stand for strong, weak and weak $^*$ convergence. Hence, by
\eqref{5.11}--\eqref{5.12}, estimates
\eqref{5.11ripetuta}--\eqref{5.12ripetuta} will be granted for any
choice of $T_2\le T_1$. Since $u^n$ is a weak solution of
\eqref{1.1n} in $[0,T_1]$, by Definition~\ref{definition3.1} we have
\begin{multline}\label{5.23}
\int_0^{T_1}\left[-(\dot{u}^n,\dot{\varphi})_{H^0}+\langle
Au^n,\varphi\rangle_{H^1}+\langle B(\dot{u}^n),\varphi\rangle
_Y\right]\\=\int_0^{T_1}\int_\Omega
\widehat{f^n}(u^n)\varphi+\int_{\Gamma_1}\widehat{g^n}(u^n)\varphi\qquad\text{for
all $\varphi\in C_c^1((0,T_1);H^{1,\infty,\infty})$.}
\end{multline}
We now pass to the limit in  \eqref{5.3} as $n\to\infty$. By
\eqref{convergenze} we immediately get
\begin{multline}\label{5.24}
\lim_n\int_0^{T_1}\left[-(\dot{u}^n,\dot{\varphi})_{H^0}+\langle
Au^n,\varphi\rangle_{H^1}+\langle B(\dot{u}^n),\varphi\rangle
_Y\right]\\=\int_0^{T_1}\left[-(\dot{u},\dot{\varphi})_{H^0}+\langle
Au,\varphi\rangle_{H^1}+\langle \chi,\varphi\rangle _Y\right].
\end{multline}
To pass to the limit in the first term in right--hand side of
\eqref{5.23} we note that by combining \eqref{convergenze} and
\eqref{5.9} with $\rho=p-1$ we have
$\widehat{f^n}(u^n)-\widehat{f^n}(u)\to 0$ in
$C([0,T_1];L^1(\Omega))$, hence a fortiori in
$L^1(0,T_1;L^1(\Omega))$. Next, by \eqref{5.1}---\eqref{5.2} and
(FG1), we have $f^n(\cdot,u)\to f(\cdot,u)$ a.e. in
$(0,T_1)\times\Omega$ and $|f^n(\cdot,u)-f(\cdot,u)|\le
c_p|1-\eta_n(u)|(1+|u|^{p-1})\le c_p(1+|u|^{p-1})\in
L^{m_p'}((0,T_1)\times\Omega)$ by \eqref{mpmuqbound}, hence by
Fubini's and Lebesgue dominated convergence theorem we get
\begin{equation}\label{giugi}
\widehat{f^n}(u)\to\widehat{f}(u)\quad\text{in
$L^{m_p'}(0,T_1;L^{m_p'}(\Omega))$}.
\end{equation}
A fortiori $\widehat{f^n}(u)\to\widehat{f}(u)$ in
$L^1(0,T_1;L^1(\Omega))$ which, combined with previous remark,
yields $\widehat{f^n}(u^n)\to\widehat{f}(u)$ in
$L^1(0,T_1;L^1(\Omega))$. Then, as $\varphi\in
C_c^1((0,T_1);H^{1,\infty,\infty})$,
\begin{equation}\label{5.25}
\int_0^{T_1}\int_\Omega \widehat{f^n}(u^n)\varphi\to
\int_0^{T_1}\int_\Omega \widehat{f}(u)\varphi.
\end{equation}
By similar arguments we get
\begin{equation}\label{5.26}
\int_0^{T_1}\int_{\Gamma_1} \widehat{g^n}(u^n)\varphi\to
\int_0^{T_1}\int_{\Gamma_1} \widehat{g}(u)\varphi.
\end{equation}
Combining \eqref{5.23}--\eqref{5.26} we obtain
\begin{equation}\label{5.27}
\int_0^{T_1}\left[(-\dot{u},\dot{\phi})_{H^0}+\langle
Au,\phi\rangle_{H^1}+\langle \chi,\phi\rangle
_Y\right]=\int_0^{T_1}\int_\Omega
\widehat{f}(u)\phi+\int_{\Gamma_1}\widehat{g}(u)\phi
\end{equation}
for all $\varphi\in C_c^1((0,T_1);H^{1,\infty,\infty})$. Using
Lemma~\ref{lemma5.4} the distribution identity \eqref{5.27} holds
for all $\varphi\in C_c((0,T);H^1)\cap C^1_c((0,T);H^0)\cap Z(0,T)$.
Moreover, denoting $\chi=(\chi_1,\chi_2)$, by the form of the Riesz
isomorphism between $L^{\overline{m}'}(\Omega;\lambda_\alpha)$ and
$[L^{\overline{m}}(\Omega;\lambda_\alpha)]'$, we have $\chi_1=\alpha
\chi_3$ where $\chi_3\in L^{\overline{m}'}(0,T_1;
L^{\overline{m}'}(\Omega;\lambda_\alpha))$ and by the same argument
$\chi_2=\alpha \chi_4$ where $\chi_4\in L^{\overline{\mu}'}(0,T_1;
L^{\overline{\mu}'}(\Gamma_1;\lambda_\beta))$. By the remarks made
before Definition~\ref{definition3.1} then $u$ is a weak solution of
\eqref{L} with $\rho=\overline{m}$, $\theta=\overline{\mu}$,
$\xi=\widehat{f}(u)-\chi_1$, $\eta=\widehat{g}(u)-\chi_2$ and
\eqref{2.7} holds. Hence $U\in C([0,T_1];\cal{H})$.
 To complete the proof we then only have to prove that
$B(\dot{u})=\chi$ in $[0,T_2]$ for a suitable
$T_2=T_2(\|U_0\|_{\cal{H}^{\widetilde{\sigmaomega},\widetilde{\sigmagamma}}})\in
(0,T_1]$,  this one being the main
technical point in the proof.

We claim that there is  such a $T_2$ for which, up to a subsequence,
\begin{equation}\label{CLAIM}
U^n\to U\qquad\text{strongly in $C([0,T_2];\cal{H})$.}
\end{equation}
 To prove it we introduce $w^n=u^n-u$, denoting
$W^n=U^n-U$, and we note that by \eqref{5.23} and \eqref{5.27}
$w^n$ is a weak solution of \eqref{L} with
$\xi=\widehat{f^n}(u^n)-\widehat{f}(u)-\widehat{P}(u^n_t)+\chi_1$
and
$\eta=\widehat{g^n}(u^n)-\widehat{g}(u)-\widehat{Q}({u_{|\Gamma}^n}_t)+\chi_2$.
They verify \eqref{2.7} with $\rho=\overline{m}$,
$\theta=\overline{\mu}$, so by Lemma~\ref{lemma2.1} (as $W^n(0)=0$)
the energy identity
\begin{multline}\label{5.34}
\frac 12\|W^n(t)\|_{\cal{H}}^2+\int_0^t\langle
B(\dot{u}^n)-\chi,\dot{w}^n\rangle_Y=\tfrac 12\|w^n(t)\|_{2,\Gamma_1}^2\\
+\int_0^t \int_\Omega
[\widehat{f^n}(u^n)-\widehat{f}(u)]w^n_t+\int_0^t \int_{\Gamma_1}
[\widehat{g^n}(u^n)-\widehat{g}(u)]{w^n_{|\Gamma}}_t,
\end{multline}
holds for $t\in [0,T_1]$. Consequently, by Lemma~\ref{lemma3.1}--(iii) and the trivial estimate
$\|w^n(t)\|_{2,\Gamma_1}^2\le\|w^n(t)\|_{H^0}^2\le
\int_0^t\|\dot{w}^n\|_{H^0}^2$ (where $T_1\le 1$ was used) we have
\begin{multline}\label{5.28}
\frac 12\|W^n(t)\|_{\cal{H}}^2\le \frac
12\int_0^t\|\dot{w}^n\|_{H^0}^2+ \int_0^{T_1}|\langle
B(\dot{u})-\chi,\dot{w}^n\rangle_Y|\\+\int_0^t \int_\Omega
[\widehat{f^n}(u^n)-\widehat{f}(u)]w^n_t+\int_0^t \int_{\Gamma_1}
[\widehat{g^n}(u^n)-\widehat{g}(u)]{w^n_{|\Gamma}}_t.
\end{multline}
By Lemma~\ref{lemma3.1} and \eqref{convergenze}
\begin{equation}\label{5.29}
a_n(U_0):=\int_0^{T_1}|\langle B(\dot{u})-\chi,\dot{w}^n\rangle_Y|\to
0\qquad\text{as $n\to\infty$}.
\end{equation}
We are now going to estimate the last two terms in the right--hand
side of \eqref{5.28}
\begin{equation}\label{5.30}
I^f_n(t):=\int_0^t \int_\Omega
[\widehat{f^n}(u^n)-\widehat{f}(u)]w^n_t,\quad I^g_n(t):=\int_0^t
\int_{\Gamma_1}
[\widehat{g^n}(u^n)-\widehat{g}(u)]{w^n_{|\Gamma}}_t.
\end{equation}
Trivially $I^f_n(t)=I^{f,1}_n(t)+I^{f,2}_n(t)$ and
$I^g_n(t)=I^{g,1}_n(t)+I^{g,2}_n(t)$, where
\begin{equation}\label{5.31}
\begin{alignedat}2
I^{f,1}_n(t)\!=&\!\int_0^t \!\!\int_\Omega
[\widehat{f^n}(u^n)-\widehat{f^n}(u)]w^n_t,\,\,\,
&&I^{g,1}_n(t)\!=\!\int_0^t\!\! \int_{\Gamma_1}
[\widehat{g^n}(u^n)-\widehat{g^n}(u)]{w^n_{|\Gamma}}_t,\\
I^{f,2}_n(t)\!=&\!\int_0^t \!\!\int_\Omega
[\widehat{f^n}(u)-\widehat{f}(u)]w^n_t,\,\,
&&I^{g,2}_n(t)\!=\!\int_0^t\!\! \int_{\Gamma_1}
[\widehat{g^n}(u)-\widehat{g}(u)]{w^n_{|\Gamma}}_t.
\end{alignedat}
\end{equation}
To estimate $I^{f,2}_n(t)$ we note that by H\"{o}lder inequality  and Fubini's
theorem
$$ I^{f,2}_n(t)\le b_n(U_0):=
\|\widehat{f^n}(u)-\widehat{f}(u)\|_{L^{m_p'}((0,T_1)\times\Omega)}\|w^n_t\|_{L^{m_p}((0,T_1)\times\Omega)}.
$$
By \eqref{ZLMP} the sequence $(w^n_t)_n$ is bounded in
$L^{m_p}((0,T_1)\times\Omega)$. Then by \eqref{giugi}
\begin{equation}\label{5.32}
I^{f,2}_n(t)\le b_n(U_0)\to 0\qquad\text{as $n\to\infty$}.
\end{equation}
By transposing previous arguments from $\Omega$ to $\Gamma_1$ we get
that, as $n\to\infty$,
\begin{equation}\label{5.33}
I^{g,2}_n(t)\le
c_n(U_0):=\|\widehat{g^n}(u)-\widehat{g}(u)\|_{L^{\mu_q'}((0,T_1)\times\Gamma_1)}\|{w^n_{|\Gamma}}_t\|_{L^{\mu_q}((0,T_1)\times\Gamma_1)}\to
0.
\end{equation}
To estimate $I^{f,1}_n(t)$ we shall distinguish between two cases:
\renewcommand{\labelenumi}{{(\roman{enumi})}}
\begin{enumerate}
\item $1+\romega/2<p<1+\romega/m'$,
\item $2\le p\le 1+\romega/2$ or
$1+\romega/2<p=1+\romega/m'$.
\end{enumerate}
In the first one, by Lemmas~\ref{lemma5.1} and \ref{lemma5.3}, H\"{o}lder inequality and \eqref{5.11ripetuta} ,
we get
$$I^{f,1}_n(t)\le d_n(U_0):=
\int_0^{T_1}\|\widehat{f^n}(u^n)-\widehat{f^n}(u)\|_{m'}\|w^n_t\|_{m}\le
K_{15}\int_0^{T_1}\|w^n\|_{(p-1)m'}\|w^n_t\|_{m},$$
where $K_{15}=K_{15}(\|U_0\|_{\cal H})=\overline{c_p'}k_3(1+(1+2\|U_0\|_{\cal H})^{p-2})$.
Since  $(p-1)m'<\romega$ the embedding
$H^1(\Omega)\hookrightarrow L^{(p-1)m'}(\Omega)$ is compact hence,
using \eqref{convergenze} and the Simon's compactness result
recalled above, up to a subsequence we have $w^n\to 0$ strongly in
$C([0,T_1]; L^{(p-1)m'}(\Omega))$ and consequently, being $w^n_t$
bounded in $L^m(0,T_1;L^m(\Omega))$ by assumption (FGQP1),
\begin{equation}\label{5.35}
I_n^{f,1}(t)\le d_n\to 0\qquad\text{as $n\to\infty$}.
\end{equation}
In the second case we are going to apply Lemma~\ref{lemma4.1}, so
let us check its  assumptions.  By (FG12) and Lemma~\ref{lemma5.1}
the functions $f$ and $f^n$ satisfy assumptions (FG12) with constants
$\overline{\boldsymbol{c}_p}'$ independent on $n\in\N$. Hence (F2)
holds when $1+\romega/2<p=1+\romega/m'$. Moreover, setting $R_1:[0,\infty)\to[0,\infty)$ by $R_1(\tau)=\max\{1+2\tau,
\kappa(\tau)\}$, we note that $R_1$ is increasing and
consequently $1+2\|U_0\|_{\cal{H}}\le R_1(\|U_0\|_{\cal{H}})\le
R_1\left(\|U_0\|_{\cal{H}^{\widetilde{\sigmaomega},\widetilde{\sigmagamma}}}\right)$.
Then, since in this case $\widetilde{\sigmaomega}=\somega$ by
\eqref{sigmaomegagammatilda},  setting
$R=R_1\left(\|U_0\|_{\cal{H}^{\widetilde{\sigmaomega},\widetilde{\sigmagamma}}}\right)$,
by Lemma~\ref{lemma5.3} and \eqref{5.11ripetuta}--\eqref{5.12ripetuta}  we have
\begin{equation}\label{5.36}
\|U^n\|_{C([0,T_1];\cal{H})},\,\, \|U\|_{C([0,T_1];\cal{H})},\,\,
\|\dot{u}^n\|_{Z(0,T_1)},\,\,\|\dot{u}\|_{Z(0,T_1)},\,\,\|u_0\|_\somega\le
R.
\end{equation}
Then,  for any $\eps>0$, denoting
$K_{16}=K_{16}(\|U_0\|_{\cal{H}^{\widetilde{\sigmaomega},\widetilde{\sigmagamma}}})=
K_4(R_1\left(\|U_0\|_{\cal{H}^{\widetilde{\sigmaomega},\widetilde{\sigmagamma}}}\right),
\overline{\boldsymbol{c}_p})$,
$K_{17}=K_{17}(\eps,U_0)=K_5\left(\eps,R_1\left(\|U_0\|_{\cal{H}^{\widetilde{\sigmaomega},\widetilde{\sigmagamma}}}\right),
u_0,u_0,\overline{\boldsymbol{c}_p}'\right)$,
by \eqref{4.3}  we get
\begin{equation}\label{5.37}
I_n^{f,1}(t)\le
K_{16}(\eps+t)\|W^n(t)\|_{\cal{H}}^2+K_{17}\int_0^t\left(1+\|u^n_t\|_m+\|u_t\|_m\right)\|W^n(\tau)\|^2_{\cal{H}}\,d\tau.
\end{equation}
Now, by setting $d_n(U_0)=0$ in case (ii)  and
$K_{16}=K_{16}(\|U_0\|_{\cal{H}^{\widetilde{\sigmaomega},\widetilde{\sigmagamma}}})
=K_{17}=K_{17}(\eps,U_0)=1$
in case (i), we  combine \eqref{5.35} and \eqref{5.37} to get
\begin{multline}\label{5.38}
I_n^{f,1}(t)\le d_n(U_0)+
K_{16}(\eps+t)\|W^n(t)\|_{\cal{H}}^2+\\K_{17}\int_0^t\left(1+\|u^n_t\|_{m_p}+\|u_t\|_{m_p}\right)\|W^n(\tau)\|^2_{\cal{H}}\,d\tau,
\end{multline}
where $d_n(U_0)\to 0$ as $n\to\infty$. Transposing previous
arguments to $\Gamma_1$, distinguishing between the two cases
\renewcommand{\labelenumi}{{(\roman{enumi})}}
\begin{enumerate}
\item $1+\rgamma/2<q<1+\rgamma/\mu'$, and
\item $2\le q\le 1+\rgamma/2$ or
$1+\rgamma/2<q=1+\rgamma/\mu'$,
\end{enumerate}
using Lemma~\ref{lemma4.4} instead of Lemma~\ref{lemma4.1} we
estimate $I_n^{g,1}(t)$ as
\begin{multline}\label{5.39}
I_n^{g,1}(t)\le e_n(U_0)+
K_{18}(\eps+t)\|W^n(t)\|_{\cal{H}}^2\\+K_{19}\int_0^t\left(1+\|{u^n_{|\Gamma}}_t\|_{\mu_q,\Gamma_1}+\|{u_{|\Gamma}}_t\|_{\mu_q,\Gamma_1}\right)\|W^n\|^2_{\cal{H}},
\end{multline}
where $e_n(U_0)\to 0$ as $n\to\infty$ and we denote
$K_{18}=K_{18}(\|U_0\|_{\cal{H}^{\widetilde{\sigmaomega},\widetilde{\sigmagamma}}})=
 K_{19}=K_{19}(\eps,U_0)=1$
 and $e_n(U_0)=
\int_0^{T_1}\|\widehat{g^n}(u^n)-\widehat{g^n}(u)\|_{\mu',\Gamma_1}\|(w_
{|\Gamma_1}^n)_t\|_{\mu,\Gamma_1}$ in case (i), while $e_n(U_0)=0$,
$
K_{18}=K_{18}(\|U_0\|_{\cal{H}^{\widetilde{\sigmaomega},\widetilde{\sigmagamma}}})=
K_{12}(R_1\left(\|U_0\|_{\cal{H}^{\widetilde{\sigmaomega},\widetilde{\sigmagamma}}}\right),\boldsymbol{\overline{c_q}})
$
and
$K_{19}=K_{19}(\eps,U_0)=K_{13}(\eps,R,u_0,u_0,\overline{\boldsymbol{c}_q}')$
in case (ii).

Hence, denoting $h_n(U_0)=b_n(U_0)+c_n(U_0)+d_n(U_0)+e_n(U_0)$,
\begin{equation*}\label{5.36six}
K_{20}=K_{20}(\|U_0\|_{\cal{H}^{\widetilde{\sigmaomega},\widetilde{\sigmagamma}}})=
K_{16}(\|U_0\|_{\cal{H}^{\widetilde{\sigmaomega},\widetilde{\sigmagamma}}})+
K_{18}(\|U_0\|_{\cal{H}^{\widetilde{\sigmaomega},\widetilde{\sigmagamma}}})
\end{equation*}
and
$K_{21}=K_{21}(\eps,U_0)=K_{17}(\eps,U_0)+K_{19}(\eps,U_0)$,
by \eqref{5.30}--\eqref{5.33}, \eqref{5.37} and \eqref{5.39} we get
that $h_n(U_0)\to 0$ as $n\to\infty$ and
\begin{multline}\label{4.39bis}
I_n^f(t)+I_n^g(t)\le h_n(U_0)+
K_{20}(\eps+t)\|W^n(t)\|_{\cal{H}}^2\\+
K_{21}\int_0^t\left(1+\|u^n_t\|_{m_p}+\|u_t\|_{m_p}+
\|{u^n_{|\Gamma}}_t\|_{\mu_q,\Gamma_1}+\|{u_{|\Gamma}}_t\|_{\mu_q,\Gamma_1}\right)\|W^n(\tau)\|^2_{\cal{H}}\,d\tau
\end{multline}
for all $t\in [0,T_1]$. Plugging it with \eqref{5.29} in
\eqref{5.28} we finally get
\begin{multline}\label{5.40}
\frac 12\|(W^n(t)\|_{\cal{H}}^2\le \l_n(U_0)+
K_{20}(\eps+t)\|W^n(t)\|_{\cal{H}}^2+\\
K_{22}\int_0^t\left(1+\|u^n_t\|_{m_p}+\|u_t\|_{m_p}+
\|{u^n_{|\Gamma}}_t\|_{\mu_q,\Gamma_1}+\|{u_{|\Gamma}}_t\|_{\mu_q,\Gamma_1}\right)\|W^n(\tau)\|^2_{\cal{H}}\,d\tau
\end{multline}
where
$K_{22}=K_{22}(\eps,U_0)=1+K_{21}(\eps,U_0)$
and
\begin{equation}\label{5.41}
l_n(U_0)=a_n(U_0)+h_n(U_0)\to 0\quad\text{as $n\to\infty$.}
\end{equation}
We now set the function $T_2:[0,\infty)\to(0,1]$ by $T_2(\tau)=\min\{T_1(\tau),1/8K_{20}(\tau)\}$. Hence $T_2\le T_1$, $K_{20}T_2\le
1/8$ and $T_2$ is decreasing. We also choose
$\eps=\eps_2:=1/8K_{20}(\|U_0\|_{\cal{H}^{\widetilde{\sigmaomega},\widetilde{\sigmagamma}}})$ so that, denoting
$K_{23}=K_{23}(U_0)=K_{22}(\eps_2,U_0)$,
by \eqref{5.40}
\begin{multline}\label{5.42}
\frac 14\|(W^n(t)\|_{\cal{H}}^2\le \l_n(U_0)+
K_{23}\int_0^t\Big(1+\|u^n_t\|_{m_p}+\|u_t\|_{m_p}\\+
\|{u^n_{|\Gamma}}_t\|_{\mu_q,\Gamma_1}+\|{u_{|\Gamma}}_t\|_{\mu_q,\Gamma_1}\Big)\|W^n(\tau)\|^2_{\cal{H}}\,d\tau
\end{multline}
for all $t\in
[0,T_2(\|U_0\|_{\cal{H}^{\widetilde{\sigmaomega},\widetilde{\sigmagamma}}})]$,
so by the already recalled Gronwall inequality
\begin{multline}\label{5.43}
\|W^n(t)\|_{\cal{H}}^2\le 4\l_n(U_0)\exp{\Big[
4K_{23}}\int_0^{T_2}\Big(1+\|u^n_t\|_{m_p}+\|u_t\|_{m_p}\\+
\|{u^n_{|\Gamma}}_t\|_{\mu_q,\Gamma_1}+\|{u_{|\Gamma}}_t\|_{\mu_q,\Gamma_1}\Big)\,d\tau
\Big]
\end{multline}
for all $t\in
[0,T_2(\|U_0\|_{\cal{H}^{\widetilde{\sigmaomega},\widetilde{\sigmagamma}}})]$.
Since, by \eqref{ZLMP}, the sequences $(u^n_t)_n$ and
${u_{|\Gamma}^n}_t$ are respectively bounded in
$L^{m_p}((0,T_1)\times\Omega)$ and in
$L^{\mu_q}((0,T_1)\times\Gamma_1)$, by \eqref{5.41} we get that
\eqref{CLAIM} holds, proving our claim.

Using \eqref{CLAIM}, \eqref{4.39bis}, the just used boundedness of
$(u^n_t)_n$, ${u_{|\Gamma}^n}_t$ and  $\lim_n h_n(U_0)=0$  we get
$I_n^f(T_2)+I_n^g(T_2)\to 0$ as $n\to\infty$. Consequently, using
\eqref{CLAIM} again in the energy identity \eqref{5.34} we get that
$\lim_n\int_0^{T_2}\langle B(\dot{u}^n)-\chi,\dot{w}^n\rangle_Y=0$. Since
by Lemma~\ref{lemma3.1} and \cite[Theorem~1.3,~p.40]{barbu} the
operator $B$ is maximal monotone in $Z(0,T_2)$ this fact yields, by
the classical monotonicity argument (see for example
\cite[Lemma~1.3,~p.49]{barbu1993}), that $B(\dot{u})=\chi$ in $(0,T_2)$,
concluding the proof.
\end{proof}
\begin{proof}[Proof of Theorem~\ref{theorem5.1}.] By
Proposition~\ref{proposition5.1} we get the existence of a weak
solution in $[0,T_2]$ when $U_0\in
\cal{H}^{\sigmaomega,\sigmagamma}$.  The existence of a maximal weak
solution $u$ in $[0,T_{\text{max}})$ follows by a standard application of
Zorn's lemma. By \eqref{lsomegagamma} and
Lemma~\ref{lemma3.3}--(i)--(iii) we get \eqref{1.15} and the
energy identity \eqref{energyidentity}, completing the proof of
(i--ii). To prove (iii) we suppose by contradiction that
$T_{\text{max}}<\infty$ and $\varlimsup_{t\to
T^-_{\text{max}}}\|U(t)\|_{H^1\times H^0}<\infty,$ which by
\eqref{1.15} implies $U\in L^\infty(0,T_{\text{max}};\cal{H})$. By
Lemma~\ref{lemma3.3}-(iii--iv) then $u$ is a weak solution in
$[0,T_{\text{max}}]$ and $U\in
C([0,T_{\text{max}}];\cal{H}^{\sigmaomega,\sigmagamma})$. Then,
applying Proposition~\ref{proposition5.1}, problem  \eqref{1.1} with
initial data $U(T_{\text{max}})$ has a weak solution $v$ in
$[0,T_2]$. By Lemma~\ref{lemma3.3}--(ii), $\bar{u}$ defined by
$\bar{u}(t)=u(t)$ for $t\in [0,T_{\text{max}}]$,
$\bar{u}(t)=v(t-T_{\text{max}})$ for $t\in
[T_{\text{max}},T_{\text{max}}+T_2]$ is a weak solution in
$[0,T_{\text{max}}+T_2]$, contradicting the maximality of $u$.
\end{proof}

We now state and prove, for the sake of clearness, some corollaries
of Theorem~\ref{theorem5.1} which generalize
Corollaries~\ref{corollary1.1}--\ref{corollary1.3} in the
introduction. The discussion made there applies here as
well.
\begin{cor} \label{corollary5.1} Under assumptions (PQ1--3), (FG1),
(FGQP1) and \eqref{R'} the conclusions of Theorem~\ref{theorem5.1}
hold and $H^{1,\sigmaomega,\sigmagamma}=H^1$.
\end{cor}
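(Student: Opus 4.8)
The plan is to deduce the statement directly from Theorem~\ref{theorem5.1}, the only point being to recover assumption (FG2) from the structural hypotheses together with \eqref{R'}, and then to identify the phase space. The guiding observation is that \eqref{R'} forbids source/sink terms lying on the hyperbola in the supercritical regime, precisely the regime in which (FG2) would force the extra second--order control (F2)--(G2).

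First I would check that (FG2) holds vacuously. Assume, towards a contradiction, that its premise for $f$ were met, i.e. that $1+\romega/2<p=1+\romega/m'$. The strict inequality $1+\romega/2<1+\romega/m'$ is equivalent to $m'<2$, hence to $m>2$; but for $m>2$ assumption \eqref{R'} requires $p<1+\romega/m'$, contradicting $p=1+\romega/m'$. Thus the premise never holds, and symmetrically the premise $1+\rgamma/2<q=1+\rgamma/\mu'$ for $g$ never holds. Consequently both implications constituting (FG2) are vacuously satisfied, so (FG1--2) is in force and Theorem~\ref{theorem5.1} applies, granting all its conclusions (with the energy identity in the form \eqref{energyidentity}).

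It remains to identify $H^{1,\sigmaomega,\sigmagamma}$ with $H^1$. By \eqref{sigmaomegagamma} one has $\sigmaomega=\somega$ only in the case $\romega<p=1+\romega/m'$; since $\romega\ge 2$ gives $1+\romega/2\le \romega$, this case implies $1+\romega/2<p=1+\romega/m'$, which was just excluded, so $\sigmaomega=2$, and in the same way $\sigmagamma=2$. Hence, using \eqref{H1plus} and the trivial embedding $H^1\hookrightarrow H^0=L^2(\Omega)\times L^2(\Gamma_1)$, one gets $H^{1,\sigmaomega,\sigmagamma}=H^{1,2,2}=H^1\cap[L^2(\Omega)\times L^2(\Gamma_1)]=H^1$, as claimed.

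There is no real obstacle here: the entire content is the elementary arithmetic incompatibility of \eqref{R'} with lying on the hyperbola, after which the corollary is an immediate specialization of Theorem~\ref{theorem5.1}.
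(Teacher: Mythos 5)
Your proposal is correct and follows essentially the same route as the paper: the paper likewise observes that \eqref{R'} is equivalent to $p\not=1+\romega/m'$ when $p>1+\romega/2$ (and symmetrically for $q$), so that (FG2) can be skipped, and then reads off $\sigmaomega=\sigmagamma=2$ from \eqref{sigmaomegagamma}. Your version merely spells out the elementary arithmetic ($1+\romega/2<1+\romega/m'$ iff $m>2$) that the paper leaves implicit.
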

\begin{proof} Since \eqref{R'} can be written also as
$p\not=1+\romega/m'$ when $p>1+\romega/2$ and $q\not=1+\rgamma/\mu'$
when $q>1+\rgamma/2$, clearly assumption (FG2) can be skipped. Moreover, by  \eqref{sigmaomegagamma},  when \eqref{R'} holds
we have $H^{1,\sigmaomega,\sigmagamma}=H^1$. \end{proof}
\begin{rem} Clearly when \eqref{R'} holds the proof of  Proposition~\ref{proposition5.1} can be simplified since Lemmas~\ref{lemma4.1} and \ref{lemma4.4} are not needed.
\end{rem}
\begin{cor}\label{corollary5.2} If assumptions (PQ1--3), (FG1--2),
(FGQP1)  are satisfied and \eqref{R''bis} holds, hence in particular
when $2\le p\le \romega$ and $2\le q\le \rgamma$,
 the conclusions of Theorem~\ref{theorem5.1} hold and
$H^{1,\sigmaomega,\sigmagamma}=H^1$.
\end{cor}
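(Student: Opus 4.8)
The plan is to observe that the structural hypotheses required by Theorem~\ref{theorem5.1}, namely (PQ1--3), (FG1--2) and (FGQP1), are precisely those assumed in Corollary~\ref{corollary5.2}, so that all conclusions of Theorem~\ref{theorem5.1} follow by a direct application. The only genuine content is therefore the identification $H^{1,\sigmaomega,\sigmagamma}=H^1$ forced by \eqref{R''bis}. Since, by \eqref{H1plus}, $H^{1,2,2}=H^1\cap[L^2(\Omega)\times L^2(\Gamma_1)]=H^1$ (the trace embedding giving $H^1\hookrightarrow L^2(\Omega)\times L^2(\Gamma_1)$), it suffices, by the definition \eqref{sigmaomegagamma} of $\sigmaomega$ and $\sigmagamma$, to show that the two exceptional branches $\romega<p=1+\romega/m'$ and $\rgamma<q=1+\rgamma/\mu'$ do not occur, whence $\sigmaomega=\sigmagamma=2$.

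To exclude the first branch I would argue by contradiction: if $\romega<p=1+\romega/m'$ then in particular $p>\romega$, which by \eqref{R} forces $m>\romega$ (indeed $\romega<p\le 1+\romega/\overline{m}'$ compels $\overline{m}>\romega$, hence $\overline{m}=m$ and $m>\romega$); but then \eqref{R''bis} yields $p<1+\romega/m'$, contradicting the equality $p=1+\romega/m'$. The same reasoning, transposed from $\Omega$ to $\Gamma_1$ by replacing $p,m,\romega$ with $q,\mu,\rgamma$, rules out the second branch. This is the exact analogue of the argument behind Corollary~\ref{corollary5.1}, the stronger hypothesis \eqref{R'} being here relaxed to \eqref{R''bis} at the cost of keeping assumption (FG2).

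Finally, for the parenthetical claim that \eqref{subSobolevcritical} is a special case, I would verify that $2\le p\le\romega$ already implies the first requirement in \eqref{R''bis}: whenever $m>\romega$ one has $1+\romega/m'=1+\romega-\romega/m>\romega\ge p$, so $p<1+\romega/m'$ holds automatically; symmetrically $2\le q\le\rgamma$ gives the second requirement, and the conclusion follows from the main case. I expect no real obstacle here, since everything reduces to unwinding the definitions; the only point demanding care is the strict/weak inequality bookkeeping, namely the implication $p>\romega\Rightarrow m>\romega$ and the strict gap $\romega<1+\romega/m'$ valid precisely when $m>\romega$, which together convert the mere presence of \eqref{R''bis} into the vanishing of the exceptional branch in \eqref{sigmaomegagamma}.
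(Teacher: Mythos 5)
Your argument is correct and coincides with the paper's own proof, which simply observes that under \eqref{R''bis} the exceptional branches of \eqref{sigmaomegagamma} cannot occur, so $\sigmaomega=\sigmagamma=2$ and $H^{1,\sigmaomega,\sigmagamma}=H^1$. Your expansion of the implication $p>\romega\Rightarrow m>\romega$ via \eqref{R}, and of the inclusion $2\le p\le\romega\Rightarrow p<1+\romega/m'$ when $m>\romega$, is exactly the bookkeeping the paper leaves implicit.
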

\begin{proof} When \eqref{R''bis} holds by \eqref{sigmaomegagamma} we have $\sigmaomega=\sigmagamma=2$,
hence $H^{1,\sigmaomega,\sigmagamma}=H^1$. \end{proof}

\renewcommand{\labelenumi}{{(\roman{enumi})}}
\begin{cor} \label{corollary5.3} Under assumptions (PQ1--3), (FG1--2),
(FGQP1)  the conclusions of Theorem~\ref{theorem5.1} hold when the
space $H^{1,\sigmaomega,\sigmagamma}$ is replaced by
\begin{enumerate}
\item the space $H^{1,\rho,\theta}_{\alpha,\beta}$, provided \eqref{ranges1}
holds;
\item the space $H^{1,\rho,\theta}$,
provided \eqref{spaziuguali} and \eqref{ranges1} holds;
\item the space $H^{1,\somega,\sgamma}$, provided  also assumption (FG2)$'$ holds.
\end{enumerate}
\end{cor}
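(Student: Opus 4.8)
The plan is to deduce all three statements from Theorem~\ref{theorem5.1}, treating (i) as the substantial case and reducing (ii)--(iii) to it; the mechanism is an embedding of the perturbed data space into $H^{1,\sigmaomega,\sigmagamma}$ combined with the regularity propagation of Lemma~\ref{lemma3.3}--(iii).

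For part (i) I would fix $(\rho,\theta)$ obeying \eqref{ranges1} and take $U_0=(u_0,u_1)\in H^{1,\rho,\theta}_{\alpha,\beta}\times H^0$. The first step is to prove $H^{1,\rho,\theta}_{\alpha,\beta}\hookrightarrow H^{1,\sigmaomega,\sigmagamma}$, so that Theorem~\ref{theorem5.1} applies. I would split according to \eqref{sigmaomegagamma}: when $\sigmaomega=2$ the $L^2(\Omega)$ integrability required by $H^{1,\sigmaomega,\sigmagamma}$ is automatic from $H^1\hookrightarrow L^2(\Omega)$; when $\sigmaomega=\somega$ one is in the case $\romega<p=1+\romega/m'$, hence $p>1+\romega/2$, so assumption (III) forces $\essinf_\Omega\alpha>0$, whence $L^{2,\rho}_\alpha(\Omega)=L^\rho(\Omega)\hookrightarrow L^{\somega}(\Omega)$ because $\rho\ge\sigmaomega=\somega$ and $\Omega$ is bounded. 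The identical dichotomy on $\Gamma_1$, using $\theta\ge\sigmagamma$, completes the embedding. Theorem~\ref{theorem5.1} then furnishes a maximal weak solution together with the energy identity \eqref{energyidentity} and the blow--up alternative, so conclusions (i) and (iii) of Theorem~\ref{theorem1.1} hold unchanged. Finally, since \eqref{ranges1} yields in particular $(\rho,\theta)\in[2,\max\{\romega,m\}]\times[2,\max\{\rgamma,\mu\}]$, Lemma~\ref{lemma3.3}--(iii) upgrades the regularity to $u\in C(\text{dom }u;H^{1,\rho,\theta}_{\alpha,\beta})$, which is exactly conclusion (ii) with $H^{1,\sigmaomega,\sigmagamma}$ replaced by $H^{1,\rho,\theta}_{\alpha,\beta}$.

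Part (ii) is then immediate: under \eqref{spaziuguali}, Remark~\ref{remark1.1} identifies $H^{1,\rho,\theta}_{\alpha,\beta}=H^{1,\rho,\theta}$, so the conclusion of (i) may be read with the unweighted space. For part (iii) I would first use Remark~\ref{remark3.6}, by which (FG2)$'$ is the general counterpart of (IV)$'$, so \eqref{lsomegagamma} holds and gives $\somega\le\max\{\romega,m\}$ and $\sgamma\le\max\{\rgamma,\mu\}$; together with \eqref{lsomegagammaMINORE} this shows that $(\rho,\theta)=(\somega,\sgamma)$ satisfies \eqref{ranges1}. It remains to verify \eqref{spaziuguali} for these exponents: if $p\le1+\romega/2$ then $\somega\le\romega$ by \eqref{somegagamma}, while if $p>1+\romega/2$ the admissible range is $[2,\infty)$, and symmetrically for $\sgamma$ and $q$; hence Remark~\ref{remark1.1} gives $H^{1,\somega,\sgamma}_{\alpha,\beta}=H^{1,\somega,\sgamma}$, and part (ii) applied with $(\rho,\theta)=(\somega,\sgamma)$ concludes. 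The one genuinely delicate point is the embedding in the first step: the case $\sigmaomega=\somega$ must be isolated and handled via the coercivity $\essinf_\Omega\alpha>0$ supplied by (III), without which the weighted space would not embed into the unweighted target.
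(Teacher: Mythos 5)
Your proposal is correct and follows essentially the same route as the paper: part (i) is Theorem~\ref{theorem5.1} combined with Lemma~\ref{lemma3.3}--(iii), part (ii) reduces to (i) via Remark~\ref{remark1.1}, and part (iii) is the special case $(\rho,\theta)=(\somega,\sgamma)$ of (ii) using \eqref{somegagammarelazioni2}--\eqref{lsomegagamma}. The only difference is that you spell out the embedding $H^{1,\rho,\theta}_{\alpha,\beta}\hookrightarrow H^{1,\sigmaomega,\sigmagamma}$ (via the dichotomy $\sigmaomega=2$ versus $\sigmaomega=\somega$ with $\essinf_\Omega\alpha>0$ from (FGQP1)), which the paper leaves implicit; this is a correct and worthwhile clarification.
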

\begin{proof} The statement (i) follows by combining
Theorem~\ref{theorem5.1} with Lemma~\ref{lemma3.3}--(iii), (ii)
follows by combining (i)  with Remark~\ref{remark1.1}, while (iii),
by \eqref{somegagammarelazioni2}--\eqref{lsomegagamma}, is a
particular case of (ii)  when (FG2)$'$ holds.\end{proof}

\begin{proof}[Proof of Theorem~\ref{theorem1.1} and
Corollaries~\ref{corollary1.1}--\ref{corollary1.3} in
Section~\ref{intro}.] When
$$P(x,v)=\alpha(x)P_0(v),\quad Q(x,v)=\beta(x)Q_0(v),\quad f(x,u)=f_0(u),\quad
g(x,u)=g_0(u),$$ by Remarks~\ref{remark3.1}--\ref{remark3.3}
assumptions (PQ1--3), (FG1), (FGQP1) reduce to (I--III), while by
Remark~\ref{remark3.6} (FG2) and (FG2)$'$ reduce to (IV)
and (IV)$'$, hence  Theorem~\ref{theorem1.1} and
Corollaries~\ref{corollary1.1}--\ref{corollary1.3} are particular
cases  of Theorem~\ref{theorem5.1} and
Corollaries~\ref{corollary5.1}--\ref{corollary5.3}.
\end{proof}
\section{Uniqueness and local well--posedness}\label{section6}
This section is devoted to our uniqueness and well--posedness
results for problem \eqref{1.1}. To get uniqueness of solutions we
need to restrict to sources satisfying (FG2)$'$ and  $u_0\in
H^{1,\sigmaomega,\sigmagamma}$, as in the last statement of
Corollary~\ref{corollary5.3}.
\begin{thm}[\bf Uniqueness]\label{theorem6.1} Suppose that (PQ1--3), (FG1), (FGQP1) and (FG2)$'$
 hold, let $U_0\in\cal{H}^{\sigmaomega,\sigmagamma}$ and $u$,
 $v$ are maximal solutions of \eqref{1.1}. Then $u=v$.
\end{thm}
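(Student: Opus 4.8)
The plan is to run the classical difference-of-solutions energy argument, the only non-routine ingredient being the control of the supercritical source terms, for which the key estimates of Lemmas~\ref{lemma4.1} and \ref{lemma4.4} are tailor-made. First I would reduce to a fixed bounded interval: if $u$ and $v$ are defined on $[0,T_u)$ and $[0,T_v)$, it suffices to prove $u=v$ on every $[0,T]$ with $T<\min\{T_u,T_v\}$ and $T\le 1$, since maximality then forces $T_u=T_v$ and $u\equiv v$. On such an interval both $U=(u,\dot u)$ and $V=(v,\dot v)$ lie in $C([0,T];\cal H)$ with $\dot u,\dot v\in Z(0,T)$ by Lemma~\ref{lemma3.3}(i) and Definition~\ref{definition3.1}, so I may fix $R\ge 0$ dominating all the norms appearing in \eqref{4.1} and \eqref{4.1bis}; the standing hypothesis on $U_0$ together with the propagation of integrability \eqref{3.14} guarantees that $u_0,v_0$ have the integrability $L^\somega(\Omega)$, $L^\sgamma(\Gamma_1)$ demanded by those lemmas.

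Next I would set $W=U-V=(w,\dot w)$, so that $W(0)=0$. Since the distributional formulation \eqref{generdistribution} is linear in the triple (solution, interior datum, boundary datum), subtracting the two copies shows that $w$ is a weak solution of the linear problem \eqref{L} with forcing $\xi=\widehat f(u)-\widehat f(v)-[\widehat P(u_t)-\widehat P(v_t)]$ and $\eta=\widehat g(u_{|\Gamma})-\widehat g(v_{|\Gamma})-[\widehat Q({u_{|\Gamma}}_t)-\widehat Q({v_{|\Gamma}}_t)]$, both of the admissible form \eqref{2.7} with $\rho=\om$, $\theta=\overline{\mu}$. Lemma~\ref{lemma2.1} then yields the energy identity, which, as $W(0)=0$, reads
\[\tfrac12\|\dot w(t)\|_{H^0}^2+\tfrac12\langle Aw,w\rangle_{H^1}+\int_0^t\langle B(\dot u)-B(\dot v),\dot w\rangle_Y=I_f(t)+I_g(t),\]
with $I_f(t)=\int_0^t\int_\Omega[\widehat f(u)-\widehat f(v)]w_t$ and $I_g(t)=\int_0^t\int_{\Gamma_1}[\widehat g(u_{|\Gamma})-\widehat g(v_{|\Gamma})]{w_{|\Gamma}}_t$. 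By the monotonicity of $B$ (Lemma~\ref{lemma3.1}(iii)) the damping integral is nonnegative and may be discarded; reconstructing $\tfrac12\|W(t)\|_{\cal H}^2$ on the left and bounding the reinstated boundary term by $\tfrac12\|w(t)\|_{2,\Gamma_1}^2\le\int_0^t\|W(\tau)\|_{\cal H}^2\,d\tau$ (using ${w_{|\Gamma}}(t)=\int_0^t{w_{|\Gamma}}_t$ and $T\le1$, exactly as in \eqref{5.28}), I reduce everything to estimating $I_f(t)+I_g(t)$.

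The heart of the proof is this source estimate, and it is also the main obstacle: once $p>1+\romega/2$ the naive bound $I_f(t)\le\int_0^t\|\widehat f(u)-\widehat f(v)\|_{m_p'}\|w_t\|_{m_p}$ is useless, because $w_t$ is controlled only in $L^{m_p}$ in space--time and not pointwise in $\cal H$. This is precisely the difficulty circumvented by Lemmas~\ref{lemma4.1} and \ref{lemma4.4} through integration by parts in time. Applying them with $\mathfrak f=f$, $\mathfrak g=g$ and the fixed $R$ above gives, for every $\eps>0$,
\[I_f(t)+I_g(t)\le K(\eps+t)\|W(t)\|_{\cal H}^2+K'\!\int_0^t\bigl(1+\|u_t\|_{m_p}+\|v_t\|_{m_p}+\|{u_{|\Gamma}}_t\|_{\mu_q,\Gamma_1}+\|{v_{|\Gamma}}_t\|_{\mu_q,\Gamma_1}\bigr)\|W(\tau)\|_{\cal H}^2\,d\tau,\]
the $\|W_0\|_{\cal H}^2$ contributions vanishing since $W(0)=0$; here $K=K(R)$, $K'=K'(\eps,R)$, and the weight multiplying $\|W\|_{\cal H}^2$ lies in $L^1(0,T)$ because $\dot u,\dot v\in Z(0,T)$ and \eqref{ZLMP} holds. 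It is here that assumption (FG2)$'$ (hence $N\le4$ and (F2) when $p>1+\romega/2$, and correspondingly for $g$) and the $L^\somega$, $L^\sgamma$ integrability of the data are indispensable.

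Finally I would close the argument. Inserting the last estimate into the reduced inequality, choosing $\eps$ small and then shrinking to a subinterval $[0,T^*]$ on which $K(\eps+T^*)\le\tfrac14$, I absorb the $\|W(t)\|_{\cal H}^2$ term into the left-hand side and obtain $\|W(t)\|_{\cal H}^2\le C\int_0^t\phi(\tau)\|W(\tau)\|_{\cal H}^2\,d\tau$ with $\phi\in L^1(0,T^*)$, so Gronwall's inequality forces $W\equiv0$, i.e. $u=v$, on $[0,T^*]$. Because $K,K'$ depend only on $R$ (increasing in $R$) and because \eqref{3.14} keeps $u(t),v(t)$ in $L^\somega(\Omega)$, $L^\sgamma(\Gamma_1)$ for every $t$, the step length $T^*$ can be chosen uniformly on $[0,T]$; restarting from $T^*,2T^*,\dots$ covers $[0,T]$ in finitely many identical steps and gives $u=v$ on all of $[0,T]$. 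Letting $T\uparrow\min\{T_u,T_v\}$ and invoking maximality yields $u\equiv v$, completing the proof. Everything outside the third paragraph is the standard monotone--energy scheme; the genuine work is concentrated in invoking the key estimates to produce a Gronwall-compatible quadratic bound for the supercritical and super--supercritical sources.
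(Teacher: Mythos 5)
Your proposal is correct and follows essentially the same route as the paper's proof of Theorem~\ref{theorem6.1}: the difference $w=u-v$ is viewed as a weak solution of \eqref{L}, the damping contribution is discarded by monotonicity of $B$ (Lemma~\ref{lemma3.1}), the source terms are controlled by the key estimates of Lemmas~\ref{lemma4.1} and \ref{lemma4.4}, and absorption plus Gronwall yield $W\equiv0$ on a small interval, after which one continues; your uniform-step iteration and the paper's supremum/contradiction argument are interchangeable here because the problem is autonomous. The only caveat --- shared with the paper, which applies Lemmas~\ref{lemma4.1} and \ref{lemma4.4} with $R$ from \eqref{6.1} without further comment --- is that the hypothesis $U_0\in\cal{H}^{\sigmaomega,\sigmagamma}$ yields the required $u_0,v_0\in L^{\somega}(\Omega)\times L^{\sgamma}(\Gamma_1)$ by Sobolev embedding when $p\le\romega$ and $q\le\rgamma$, and by the definition of $\sigmaomega,\sigmagamma$ on the hyperbola, but not in the intermediate range $\romega<p<1+\romega/m'$ (resp.\ $\rgamma<q<1+\rgamma/\mu'$), so your appeal to \eqref{3.14} does not by itself supply that integrability there.
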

\begin{proof} We fix $u,v$, so constants $K_i$ introduced in this proof will depend also on them.
 We denote $\text{dom }u=[0,T^u_{\text{max}})$ and  $\text{dom
}v=[0,T^v_{\text{max}})$.  By Lemma~\ref{lemma3.3} and
\eqref{somegagammarelazioni2}--\eqref{lsomegagamma} we have $U\in
C([0,T^u_{\text{max}});\cal{H}^{\sigmaomega,\sigmagamma})$ and $V\in
C([0,T^v_{\text{max}});\cal{H}^{\sigmaomega,\sigmagamma})$. We set
$\widetilde{T}_{\max}=\min\{T^u_{\text{max}},T^v_{\text{max}}\}$.

We claim that there is a (possibly small)
$\widetilde{T}<\widetilde{T}_{\max}$ such that $u=v$ in
$[0,\widetilde{T}]$. To prove our claim we set
$T'=\min\{1,\widetilde{T}_{\max}/2\}$, $R_2=R_2(u,v)$ by
\begin{equation}\label{6.1}
R_2=\max\left\{\|U\|_{C([0,T'];\cal{H}^{\sigmaomega,\sigmagamma})},\,
\|V\|_{C([0,T'];\cal{H}^{\sigmaomega,\sigmagamma}},\,
\|\dot{u}\|_{Z(0,T')},\,\|\dot{v}\|_{Z(0,T')}\right\},
\end{equation}
and we denote $w=u-v$, $W=U-V$. Clearly $w$ is a weak solution
of \eqref{L} with
$\xi=\widehat{f}(u)-\widehat{f}(v)-\widehat{P}(u_t)+\widehat{P}(v_t)$,
$\eta=\widehat{g}(u)-\widehat{g}(v)-\widehat{Q}(u_t)+\widehat{Q}(v_t)$,
$\xi,\eta$ satisfying \eqref{2.7} with $\rho=\overline{m}$,
$\theta=\overline{\mu}$ and $W(0)=0$. Hence, by
Lemma~\ref{lemma2.1}, the energy identity
\begin{multline*}
\tfrac 12\|W(t)\|_{\cal{H}}^2+\int_0^t\langle
B(\dot{u})-B(\dot{v}),\dot{w}\rangle_Y\\= \tfrac 12\|w(t)\|_{2,\Gamma_1}^2+\int_0^t
\int_\Omega [\widehat{f}(u)-\widehat{f}(v)]w_t+\int_0^t
\int_{\Gamma_1} [\widehat{g}(u)-\widehat{g}(v)]{w_{|\Gamma}}_t,
\end{multline*}
holds for $t\in [0,T']$. Consequently, by Lemma~\ref{lemma3.1}--(iii)
and the trivial estimate
$\|w(t)\|_{2,\Gamma_1}^2\le\|w(t)\|_{H^0}^2\le
\int_0^t\|\dot{w}\|_{H^0}^2$ (where $T'\le 1$ was used) we have, for
$t\in [0,T']$,
$$
\frac 12\|W(t)\|_{\cal{H}}^2\le \frac
12\int_0^t\|\dot{w}\|_{H^0}^2+\int_0^t \int_\Omega
[\widehat{f}(u)-\widehat{f}(v)]w_t+\int_0^t \int_{\Gamma_1}
[\widehat{g}(u)-\widehat{g}(v)]{w_{|\Gamma}}_t. $$ By assumption
(FG2)$'$ we can apply Lemmas~\ref{lemma4.1} and \ref{lemma4.4} with
$R$ given by \eqref{6.1}, hence for any $\eps>0$, denoting
$K_{24}=K_4(R_2,\boldsymbol{c}_p)+K_{12}(R_2),\boldsymbol{c}_q)$,
$$K_{25}=K_{25}(\eps)=K_5(\eps,R_2,u_0,u_0,\boldsymbol{c}_p')+K_{13}(\eps,R_2,u_0,u_0,\boldsymbol{c}_q')+1,$$
\begin{equation}\label{costantivettoriali}
\boldsymbol{c_p}=(c_p,c_p'),\quad \boldsymbol{c_q}=(c_q,c_q'),\quad \boldsymbol{c_p'}=(c_p,c_p',c_p''), \quad \boldsymbol{c_q'}=(c_q,c_q',c_q''),
\end{equation}
plugging \eqref{4.1} and \eqref{4.1bis} in previous estimate we get
\begin{multline*}
\tfrac 12\|W(t)\|_{\cal{H}}^2\le K_{24}(\eps+t)\|W(t)\|_{\cal{H}}^2+
K_{25}\int_0^t\Big(1+\|u_t\|_{m_p}+\|v_t\|_{m_p}\\+
\|{u_{|\Gamma}}_t\|_{\mu_q,\Gamma_1}+\|{v_{|\Gamma}}_t\|_{\mu_q,\Gamma_1}\Big)\|W(\tau)\|^2_{\cal{H}}\,d\tau.
\end{multline*}
Choosing $\eps=\eps_3:=1/8K_{24}$ and
 denoting
$K_{26}=K_{25}(\eps_3)$ we have
\begin{multline*}
\tfrac 14\|W(t)\|_{\cal{H}}^2\le
K_{26}\int_0^t\Big(1+\|u_t\|_{m_p}+\|v_t\|_{m_p}+
\|{u_{|\Gamma}}_t\|_{\mu_q,\Gamma_1}\\+\|{v_{|\Gamma}}_t\|_{\mu_q,\Gamma_1}\Big)\|W(\tau)\|^2_{\cal{H}}\,d\tau,
\end{multline*}
for $t\in [0,\widetilde{T}]$ where $\widetilde{T}:=\min\{T',
1/8K_{24}\}$.  Since by \eqref{ZLMP} we have
$1+\|u_t\|_{m_p}+\|v_t\|_{m_p}+
\|{u_{|\Gamma}}_t\|_{\mu_q,\Gamma_1}+\|{v_{|\Gamma}}_t\|_{\mu_q,\Gamma_1}\in
L^1(0,\widetilde{T})$, by the already recalled Gronwall inequality
we get $\|W(t)\|^2_{\cal{H}}\le 0$ for $t\in [0,\widetilde{T}]$,
proving our claim.

The statement now follows in a standard way, which is described in
the sequel for the reader's convenience. We set $T^*=\sup\{t\in
[0,\widetilde{T}_{\max}): u=v\,\,\text{in}\,\, [0,t]\}$. Clearly
$T^*\le \widetilde{T}_{\max}$ and $U=V$ in $[0,T^*)$. Supposing by
contradiction that $T^*<\widetilde{T}_{\max}$ we then have $U,V\in
C([0,T^*];\cal{H}^{\sigmaomega,\sigmagamma})$, so $U(T^*)=V(T^*)\in
\cal{H}^{\sigmaomega,\sigmagamma}$. Then, since \eqref{1.1}  is
autonomous, $\tilde{u}(t):=u(t+T^*)$ and $\tilde{v}(t):=v(t+T^*)$
are weak solutions of \eqref{1.1}, with initial data
$u(T^*),\dot{u}(T^*)$, in $[0, \widetilde{T}_{\max}-T^*)$, so by our
claim $\widetilde{u}=\widetilde{v}$ in $[0,\tau)$ for some $\tau>0$,
i.e. $u=v$ in $[0,T^*+\tau)$ contradicting the definition of $T^*$.
Hence $T^*=\widetilde{T}_{\max}$ and $U=V$ in
$[0,\widetilde{T}_{\max})$. Finally
$T^u_{\text{max}}=T^v_{\text{max}}$ since  if
$T^u_{\text{max}}<T^v_{\text{max}}$ then $v$ is a proper extension
of $u$, a contradiction.
\end{proof}
Essentially by combining Corollary~\ref{corollary5.3} and
Theorem~\ref{theorem6.1} we get
\begin{thm}[\bf Local existence--uniqueness]\label{theorem6.2} Suppose that (PQ1--3), (FG1), (FGQP1) and (FG2)$'$
 hold. Then all conclusions of Theorem~\ref{theorem1.2} hold
true when problem \eqref{1.2} is generalized to problem \eqref{1.1},
provided the energy identity \eqref{energyidentityreduced} is
generalized to \eqref{energyidentity}.
\end{thm}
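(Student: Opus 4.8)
The plan is to obtain Theorem~\ref{theorem6.2} by merging the existence statement of Corollary~\ref{corollary5.3}(iii) with the uniqueness statement of Theorem~\ref{theorem6.1}, and then to upgrade the blow--up alternative of Theorem~\ref{theorem5.1} to the stronger space $H^{1,\somega,\sgamma}$. Since (FG2)$'$ is stronger than (FG2), all of (PQ1--3), (FG1--2), (FGQP1) hold, so Corollary~\ref{corollary5.3}(iii) applies: for every $U_0=(u_0,u_1)\in\cal{H}^{\somega,\sgamma}$ problem \eqref{1.1} admits a maximal weak solution $u$ on some $[0,T_{\text{max}})$ which, by that corollary together with Lemma~\ref{lemma3.3}(iii), enjoys the regularity $U\in C([0,T_{\text{max}});H^{1,\somega,\sgamma}\times H^0)$ and satisfies the energy identity \eqref{energyidentity}. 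This already yields the existence part of conclusion (i) and the whole of conclusion (ii), the latter being \eqref{1.21} with \eqref{energyidentity} in place of \eqref{energyidentityreduced}.

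For uniqueness I would note that, by \eqref{lsomegagammaMINORE}, one has $\sigmaomega\le\somega$ and $\sigmagamma\le\sgamma$, hence $H^{1,\somega,\sgamma}\hookrightarrow H^{1,\sigmaomega,\sigmagamma}$ and $U_0\in\cal{H}^{\sigmaomega,\sigmagamma}$. Thus Theorem~\ref{theorem6.1} applies verbatim and any two maximal solutions issued from $U_0$ coincide, completing conclusion (i).

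The remaining, and main, point is the blow--up alternative (iii), i.e. that $T_{\text{max}}<\infty$ forces both the $\varlimsup$ in $H^1\times H^0$ and the full limit in $H^{1,\somega,\sgamma}\times H^0$ to be $+\infty$. The first equality is exactly the continuation alternative already contained in Theorem~\ref{theorem5.1}. For the genuine limit in the stronger norm I would argue by contradiction: if $\varliminf_{t\to T_{\text{max}}^-}\|U(t)\|_{\cal{H}^{\somega,\sgamma}}=:L<\infty$, I pick $t_n\uparrow T_{\text{max}}$ with $\|U(t_n)\|_{\cal{H}^{\somega,\sgamma}}\le L+1$. The key observation is that the local existence time furnished by Proposition~\ref{proposition5.1} is a decreasing function of the data norm; composing it with the embedding constant of $H^{1,\somega,\sgamma}\hookrightarrow H^{1,\sigmaomega,\sigmagamma}$ gives a single $\tau>0$, depending only on $L$, such that the problem with datum $U(t_n)\in\cal{H}^{\somega,\sgamma}$ has a solution on $[0,\tau]$ for every $n$. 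For $n$ large, $t_n+\tau>T_{\text{max}}$; gluing this solution to $u$ at $t_n$ by Lemma~\ref{lemma3.3}(ii), and invoking the uniqueness just established to identify the glued function with $u$ on $[t_n,T_{\text{max}})$, produces a proper extension of $u$, contradicting maximality. Hence the limit is $+\infty$ and \eqref{specialblowup} follows.

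I expect this last restart argument to be the only delicate step, the subtlety being the locally uniform lower bound on the existence time measured in the $H^{1,\somega,\sgamma}$ norm; everything else is a bookkeeping combination of already proved results. Finally, the concluding assertion $H^{1,\somega,\sgamma}=H^1$ when $2\le p\le\romega$ and $2\le q\le\rgamma$ follows immediately from \eqref{somegagammarelazioni1}, since then $\somega\le\romega$ and $\sgamma\le\sgamma$ give $\somega\le\romega$, $\sgamma\le\rgamma$, so that $H^1\hookrightarrow H^{1,\somega,\sgamma}$ by Sobolev embedding, the reverse inclusion being trivial.
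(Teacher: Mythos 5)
Your proposal is correct and follows essentially the same route as the paper: existence and regularity come from Corollary~\ref{corollary5.3}(iii) together with Lemma~\ref{lemma3.3}, uniqueness from Theorem~\ref{theorem6.1}, and the strengthened blow--up alternative \eqref{specialblowup} from the contradiction/restart argument using the uniform local existence time $T_2$ of Proposition~\ref{proposition5.1} (controlled through the embedding $H^{1,\somega,\sgamma}\hookrightarrow H^{1,\widetilde{\sigmaomega},\widetilde{\sigmagamma}}$), gluing via Lemma~\ref{lemma3.3}(ii) and identification via uniqueness. The only cosmetic difference is that the paper derives the divergence in the weaker norm of $\cal{H}^{\widetilde{\sigmaomega},\widetilde{\sigmagamma}}$ (which implies it in $\cal{H}^{\somega,\sgamma}$), whereas you negate the limit directly in $\cal{H}^{\somega,\sgamma}$; both are valid.
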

\begin{proof} By combining Corollary~\ref{corollary5.3} and
Theorem~\ref{theorem6.1} we immediately get statements (i--ii) and
$\varlimsup_{t\to T^-_{\text{max}}}\|U(t)\|_{H^1\times H^0}=\infty$
when $T_{\text{max}}<\infty$, so we have only to prove that
$\lim_{t\to T^-_{\text{max}}}\|U(t)\|_{H^{1,\somega,\sgamma}\times
H^0}=\infty$ in this case. This fact follows from
Proposition~\ref{proposition5.1} and a standard procedure, described
in the sequel. Since $H^{1,\somega,\sgamma}\hookrightarrow
H^{1,\sigmaomega,\sigmagamma}=H^{1,\widetilde{\sigmaomega},\widetilde{\sigmagamma}}$
we shall prove that $\lim_{t\to
T^-_{\text{max}}}\|U(t)\|_{H^{1,\widetilde{\sigmaomega},\widetilde{\sigmagamma}}\times
H^0}=\infty$. Suppose by contradiction that
$M:=\sup_n\{\|U(t_n)\|_{\cal{H}^{\widetilde{\sigmaomega},\widetilde{\sigmagamma}}}\}<\infty$
for some $t_n\to T^-_{\max}$. Then by
Proposition~\ref{proposition5.1} for each $n\in\N$ problem
\eqref{1.1} with initial data $U(t_n)$ has a weak solution  $v_n$ in
$[0,T_2(M)]$. Hence, by Lemma~\ref{lemma3.3}--(ii), $w_n$ defined by
$w_n(t)=u(t)$ for $t\in [0,t_n]$ and $w_n(t)=v_n(t-t_n)$ for $t\in
[t_n,t_n+T_2(M)]$ is a weak solution of \eqref{1.1} in
$[0,t_n+T_2(M)]$ and, by  Theorem~\ref{theorem6.1}, $w_n=u$ so,
being $u$ maximal, $t_n+T_2(M)<T^-_{\max}$  which,
when $n\to\infty$, gives a contradiction.
\end{proof}
\begin{rem} From the proof of the case (ii) in  Proposition~\ref{proposition5.1} it is clear
that one can give an alternative proof of Theorem~\ref{theorem6.2} without using compactness
arguments.
\end{rem}
We now give a consequence of
Theorem~\ref{theorem6.2} which generalizes Corollary
~\ref{corollary1.4} in the introduction, the discussion made there
applying as well.
\begin{cor} \label{corollary6.1} Under assumptions (PQ1--3), (FG1), (FGQP1) and (FG2)$'$ the
conclusions of Theorem~\ref{theorem6.2} hold when the space
$H^{1,\somega,\sgamma}$ is replaced by
\begin{enumerate}
\item the space $H^{1,\rho,\theta}_{\alpha,\beta}$, provided
\eqref{ranges2} holds, and
\item
the space $H^{1,\rho,\theta}$, provided \eqref{spaziuguali} and \eqref{ranges2} hold.
\end{enumerate}
\end{cor}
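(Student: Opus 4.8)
The plan is to treat this exactly as Corollary~\ref{corollary5.3} was treated relative to Theorem~\ref{theorem5.1}: it is a \emph{more regular data} statement, so the whole content is to show that for data in the smaller space $H^{1,\rho,\theta}_{\alpha,\beta}\times H^0$ one recovers existence and uniqueness from Theorem~\ref{theorem6.2}, propagates the extra integrability through the solution by Lemma~\ref{lemma3.3}--(iii), and finally upgrades the blow--up alternative to the stronger norm. Since part (ii) reduces to part (i) by Remark~\ref{remark1.1}, which identifies $H^{1,\rho,\theta}_{\alpha,\beta}$ with $H^{1,\rho,\theta}$ exactly when $(\rho,\theta)$ satisfies \eqref{spaziuguali}, I would concentrate on part (i).

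First I would establish the embedding $H^{1,\rho,\theta}_{\alpha,\beta}\hookrightarrow H^{1,\somega,\sgamma}$, which is what lets Theorem~\ref{theorem6.2} apply. Working on $\Omega$ (the argument on $\Gamma_1$ being identical), by \eqref{ranges2} we have $\rho\ge\somega$, and I would split on whether $\somega$ lies inside the Sobolev range. When $p\le\romega$, \eqref{somegagammarelazioni1} gives $\somega\le\romega$, so $H^1(\Omega)\hookrightarrow L^\somega(\Omega)$ by Sobolev embedding and nothing is needed from the weight. When $p>\romega$, then $p>1+\romega/2$, so (FGQP1) forces $\essinf_\Omega\alpha>0$ and, normalizing $\essinf_\Omega\alpha=1$ as in Remark~\ref{remark3.3}, the weighted space satisfies $L^{2,\rho}_\alpha(\Omega)\hookrightarrow L^\rho(\Omega)\hookrightarrow L^\somega(\Omega)$ because $\rho\ge\somega$ and $\Omega$ is bounded. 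This embedding is the one genuinely new point of the proof; everything else is bookkeeping. Granted it, Theorem~\ref{theorem6.2} produces the unique maximal weak solution $u$ on $[0,T_{\text{max}})$ for every $(u_0,u_1)\in H^{1,\rho,\theta}_{\alpha,\beta}\times H^0$.

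Next I would read off the regularity and the energy identity. Since \eqref{ranges2} places $(\rho,\theta)$ in $[2,\max\{\romega,m\}]\times[2,\max\{\rgamma,\mu\}]$, Lemma~\ref{lemma3.3}--(iii) applies verbatim and yields $u\in C([0,T_{\text{max}});H^{1,\rho,\theta}_{\alpha,\beta})$, hence $U\in C([0,T_{\text{max}});H^{1,\rho,\theta}_{\alpha,\beta}\times H^0)$; the energy identity \eqref{energyidentity} is carried over unchanged from Theorem~\ref{theorem6.2}. For the blow--up alternative, Theorem~\ref{theorem6.2}--(iii) already gives $\varlimsup_{t\to T^-_{\text{max}}}\|U(t)\|_{H^1\times H^0}=\infty$ and $\lim_{t\to T^-_{\text{max}}}\|U(t)\|_{H^{1,\somega,\sgamma}\times H^0}=\infty$ when $T_{\text{max}}<\infty$. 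Because the embedding above makes $\|\cdot\|_{H^{1,\somega,\sgamma}\times H^0}$ dominated by $\|\cdot\|_{H^{1,\rho,\theta}_{\alpha,\beta}\times H^0}$, divergence in the weaker norm forces divergence in the stronger one, so $\lim_{t\to T^-_{\text{max}}}\|U(t)\|_{H^{1,\rho,\theta}_{\alpha,\beta}\times H^0}=\infty$ as well, which completes (i).

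The only place that demands care is the embedding step, and specifically the regime $p>\romega$ (resp. $q>\rgamma$), where one must invoke the positivity of $\essinf_\Omega\alpha$ (resp. $\essinf_{\Gamma_1}\beta$) guaranteed by (FGQP1): without an effective damping weight the space $L^{2,\rho}_\alpha(\Omega)$ need not embed into $L^\somega(\Omega)$. Once the interplay between \eqref{ranges2}, \eqref{somegagammarelazioni1}--\eqref{somegagammarelazioni2} and the normalization of Remark~\ref{remark3.3} is checked, the corollary follows by assembling Theorem~\ref{theorem6.2}, Lemma~\ref{lemma3.3}--(iii) and Remark~\ref{remark1.1}, exactly in the pattern of Corollary~\ref{corollary5.3}.
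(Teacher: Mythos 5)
Your proposal is correct and follows exactly the route the paper intends: the paper gives no explicit proof of Corollary~\ref{corollary6.1}, but the argument is the verbatim analogue of the proof of Corollary~\ref{corollary5.3}, namely combining Theorem~\ref{theorem6.2} with Lemma~\ref{lemma3.3}--(iii) for part (i) and with Remark~\ref{remark1.1} for part (ii). Your extra care in verifying the embedding $H^{1,\rho,\theta}_{\alpha,\beta}\hookrightarrow H^{1,\somega,\sgamma}$ (Sobolev when $p\le\romega$, and $\essinf_\Omega\alpha>0$ from (FGQP1) when $p>\romega$) and in transferring the blow--up alternative to the stronger norm is sound and simply makes explicit what the paper leaves implicit.
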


We now give our main local Hadamard well--posedness result for
problem \eqref{1.1}, restricting to damping terms satisfying also
assumption (PQ4), to non--bicritical
nonlinearities and to $u_0\in H^{1,s_2,s_2}$ with $s_1,s_2$
satisfying \eqref{ranges2}.

\begin{thm}[\bf Local Hadamard well--posedness I]\label{theorem6.3} Suppose that (PQ1--4), (FG1), (FGQP1),
(FG2)$'$, \eqref{R''} hold and  let $s_1,s_2$ satisfy
\eqref{ranges2ter}. Then all conclusions of Theorem~\ref{theorem1.3}
hold true when problem \eqref{1.2} is generalized to
\eqref{1.1}.

 In particular \eqref{1.1}  is locally well--posed in
$\cal{H}$, under assumptions (PQ1--3), (FG1), (FGQP1) and
(FG2)$'$, when $2\le p<\romega$ and $2\le q< \rgamma$ .
\end{thm}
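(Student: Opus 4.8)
The plan is to transplant the difference estimate of the uniqueness proof (Theorem~\ref{theorem6.1}) to the setting of \emph{non--matching} initial data, the decisive point being that the constants $K_6$, $K_{14}$ in Lemmas~\ref{lemma4.1} and \ref{lemma4.4} become independent of the data once the $L^{s_1}(\Omega)$ and $L^{s_2}(\Gamma_1)$ norms are controlled through \eqref{4.4} and \eqref{4.4bis}. Fix $T^*\in(0,T_{\text{max}})$. Since \eqref{ranges2ter} implies \eqref{ranges2}, Corollary~\ref{corollary6.1} gives $U\in C([0,T^*];H^{1,s_1,s_2}\times H^0)$ with $\dot u\in Z(0,T^*)$; let $R_*$ bound these norms. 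As $(u_{0n},u_{1n})\to(u_0,u_1)$ in $H^{1,s_1,s_2}\times H^0\hookrightarrow\cal{H}^{\sigmaomega,\sigmagamma}$, the data are uniformly bounded there, so Proposition~\ref{proposition5.1} produces a common existence time and uniform $\cal{H}$-- and $Z$--bounds on an initial subinterval.

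First I would prove the core estimate on a short interval $[0,T]$, $T\le1$, on which both $u^n$ and $u$ exist and all quantities in \eqref{4.1}--\eqref{4.1bis} and \eqref{4.4}--\eqref{4.4bis} are $\le R$. Setting $w^n=u^n-u$, $W^n=U^n-U$, the difference solves \eqref{L} with forcing built from $\widehat f$, $\widehat g$, $\widehat P$, $\widehat Q$, and Lemma~\ref{lemma2.1} yields the analogue of \eqref{5.34} with $W^n_0\ne0$. Dropping the monotone term $\int_0^t\langle B(\dot u^n)-B(\dot u),\dot w^n\rangle_Y\ge0$ (Lemma~\ref{lemma3.1}), bounding $\|w^n(t)\|_{2,\Gamma_1}^2\le\int_0^t\|\dot w^n\|_{H^0}^2$, estimating the source differences by Lemmas~\ref{lemma4.1} and \ref{lemma4.4} with data--independent constants, choosing $\eps$ and shrinking $T$ to absorb the $(\eps+t)\|W^n(t)\|_\cal{H}^2$ contribution, and invoking Gronwall together with the $L^1$--bound \eqref{ZLMP} of the weights $1+\|u^n_t\|_{m_p}+\dots$, I obtain, for a constant $C=C(R)$,
\begin{equation*}
\|W^n\|_{C([0,T];\cal{H})}^2\le C\,\|W^n_0\|_{\cal{H}}^2,
\end{equation*}
whose right--hand side tends to $0$; this is $\cal{H}$--convergence on $[0,T]$.

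The genuinely new step, and the one I expect to be the main obstacle, is upgrading this to the $H^{1,s_1,s_2}$ topology. When $s_1\le\romega$ (resp. $s_2\le\rgamma$) Sobolev embedding turns $\cal{H}$--convergence into $C([0,T];L^{s_1})$--convergence for free. When $s_1>\romega$, \eqref{R''} and \eqref{somegagammarelazioni3} force $m>\romega$, so assumption (PQ4) supplies (P4) and hence $\widetilde{c_m}''>0$ in \eqref{3.20}. Returning to the \emph{exact} energy identity, its right--hand side tends to $0$ by the previous step, so the nonnegative monotone term does too; combining this with \eqref{3.20} (whose last summand is dominated by that term) and with $\int_0^t\int_\Omega\alpha|w^n_t|^2\le\|\alpha\|_\infty T\|W^n\|_{C([0,T];\cal{H})}^2\to0$ gives $\int_0^T\int_\Omega\alpha|w^n_t|^{m}\to0$, whence $\int_0^T\|w^n_t\|_m^m\to0$ by Remark~\ref{remark3.3}. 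Writing $w^n(t)=w^n_0+\int_0^t w^n_t$ and using H\"older in time with $s_1\le m$,
\begin{equation*}
\sup_{t\in[0,T]}\|w^n(t)\|_{s_1}\le\|w^n_0\|_{s_1}+C\,T^{1/m'}\Big(\int_0^T\|w^n_t\|_m^m\Big)^{1/m}\longrightarrow0.
\end{equation*}
The boundary component is identical, using Lemma~\ref{lemma4.4}, (Q4) and \eqref{3.21}; hence $U^n\to U$ in $C([0,T];H^{1,s_1,s_2}\times H^0)$.

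Finally a continuation argument globalises the estimate. Because the length $T$ depends only on an upper bound for the relevant norms, and $U$ stays bounded by $R_*$ on $[0,T^*]$, I would cover $[0,T^*]$ by finitely many intervals of a fixed length $T=T(R_*+1)$ and run a bootstrap: on each block the difference estimate keeps $\|U^n\|\le R_*+1$ for $n$ large, so both the bounds and the smallness propagate from block to block. Concatenation yields conclusion (ii) on $[0,T^*]$ and shows $T^n_{\text{max}}>T^*$ eventually, giving (i) $T_{\text{max}}\le\varliminf_nT^n_{\text{max}}$. In the \emph{in particular} case $2\le p<\romega$, $2\le q<\rgamma$ one has $\sigmaomega=\sigmagamma=2$ and every admissible $s_1\le\romega$, $s_2\le\rgamma$, so $H^{1,s_1,s_2}=H^1$ and (PQ4) is vacuous, which is precisely well--posedness in $\cal{H}$ under (PQ1--3), (FG1), (FGQP1), (FG2)$'$.
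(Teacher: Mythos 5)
Your proposal is correct and follows essentially the same route as the paper's proof of Theorem~\ref{theorem6.3}: uniform bounds from Proposition~\ref{proposition5.1}, the data--independent constants $K_6$, $K_{14}$ from the final parts of Lemmas~\ref{lemma4.1} and \ref{lemma4.4} (available because $s_1>\somega$, $s_2>\sgamma$), absorption of the $(\eps+t)\|W^n(t)\|_{\cal H}^2$ term, Gronwall, the coercivity \eqref{3.20}--\eqref{3.21} under (PQ4) to upgrade from $\cal{H}$ to $C([0,T];L^{s_1}(\Omega)\times L^{s_2}(\Gamma_1))$, and a finite iteration to cover $[0,T^*]$. The only cosmetic difference is that you first drop the monotone damping term and later revisit the energy identity to extract its decay, whereas the paper keeps the coercive lower bound on the left--hand side throughout (cf. \eqref{6.11}--\eqref{6.13}); the two bookkeeping choices are equivalent.
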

\begin{proof}
Let $U_{0n}:=(u_{0n},u_{1n})\to U_0:=(u_0,u_1)$ in
$\cal{H}^{s_1,s_2}$ and $U^n\in
C([0,T^n_{\max});\cal{H}^{s_1,s_2})$, $U\in
C([0,T_{\max});\cal{H}^{s_1,s_2})$, $s_1$ and $s_2$ be fixed as in the statement.
\begin{footnote}{functions and constants $K_i$ introduced in this proof will depend also on them.}\end{footnote} Since
$H^{1,s_1,s_2}\hookrightarrow H^{1,\somega,\sgamma}\hookrightarrow
H^{1,\sigmaomega,\sigmagamma}=H^{1,\widetilde{\sigmaomega},\widetilde{\sigmagamma}}$,
\begin{equation}\label{6.2}
M(t)=\max\{\|U\|_{C([0,t];\cal{H}^{s_1,s_2})},\|U\|_{C([0,t];\cal{H}^{\somega,\sgamma})},\|U\|_{C([0,t];\cal{H}^{\widetilde{\sigmaomega},\widetilde{\sigmagamma}})}
\}
\end{equation}
defines an increasing function $M:[0, T_{\max})\to
[0,\infty)$. Hence
\begin{equation}\label{6.3} T_3(t)=T_2(1+M(t)),
\end{equation}
where $T_2$ is the function defined in
Proposition~\ref{proposition5.1}, defines a decreasing function
$T_3:[0,T_{\max})\to [0,1)$,
 with $T_3(t)\le T_1(1+M(t))$. Now let
$T^*\in (0,T_{\max})$. By \eqref{6.2} we have
$\|U_0\|_{\cal{H}^{s_1,s_2}},\|U_0\|_{\cal{H}^{\somega,\sgamma}},\|U_0\|_{\cal{H}^{\widetilde{\sigmaomega},\widetilde{\sigmagamma}}}\le
M(T^*)$ and consequently, since $U_{0n}\to U_0$ in
$\cal{H}^{s_1,s_2}$, there is $n_1=n_1(T^*)\in\N$,  such that for all $n\ge
n_1(T^*)$,
\begin{equation}\label{6.4}
\begin{alignedat}4
&\|U_{0n}\|_{\cal{H}^{s_1,s_2}},
&&\|U_{0n}\|_{\cal{H}^{\somega,\sgamma}},
&&\|U_{0n}\|_{\cal{H}^{\widetilde{\sigmaomega},\widetilde{\sigmagamma}}}\le &&1+M(T^*),\\
&\|U_0\|_{\cal{H}^{s_1,s_2}}, &&\|U_0\|_{\cal{H}^{\somega,\sgamma}},
&&\|U_0\|_{\cal{H}^{\widetilde{\sigmaomega},\widetilde{\sigmagamma}}}
\le &&1+M(T^*).
\end{alignedat}
\end{equation}
Since $u_n$ and $u$ are unique maximal solutions by
\eqref{6.3}--\eqref{6.4} and Proposition~\ref{proposition5.1}
\begin{gather}\label{6.5}
T_3(T^*)<T_{\max},\quad T_3(T^*)<T_{\max}^n,\\
\label{6.6}\begin{alignedat}3
&\|U^n\|_{C([0,T_3(T^*)];\cal{H})},\quad
&&\|U\|_{C([0,T_3(T^*)];\cal{H})}&&\le 1+2\|U_0\|_{\cal{H}},\\
&\|\dot{u}^n\|_{Z(0,T_3(T^*))},\quad &&\|\dot{u}\|_{Z(0,T_3(T^))}&&\le
\kappa(\|U_0\|_{\cal{H}})
\end{alignedat}
\end{gather}
for all $n\ge n_1(T^*)$. Hence, as
$\|U_0\|_{\cal{H}}\le\|U_0\|_{\cal{H}^{\widetilde{\sigmaomega},\widetilde{\sigmagamma}}}$,
setting the increasing function $R_3:[0,T_{\max})\to [0,\infty)$ by
$R_3(\tau)=\max\{3+2M(\tau), \kappa(1+M(\tau))\}$, by \eqref{6.4}
and \eqref{6.6} we have
\begin{equation}\label{6.7}
\begin{alignedat}3
&\|U^n\|_{C([0,T_3(T^*)];\cal{H})},\quad
&&\|U\|_{C([0,T_3(T^*)];\cal{H})}&&\le R_3(T^*),\\
&\|\dot{u}^n\|_{Z(0,T_3(T^*))},\quad &&\|\dot{u}\|_{Z(0,T_3(T^*))}&&\le
R_3(T^*)
\end{alignedat}
\end{equation}
for all $n\ge n_1(T^*)$. Since by  (FG2)$'$ the property (F2) holds when
$p>1+\romega/2$, by \eqref{ranges2ter} we have $\somega<s_1$,
$\sgamma<s_2$ and $1+M(T^*)\le R_3(T^*)$, we can apply the final
parts of  Lemmas~\ref{lemma4.1} and \ref{lemma4.4}. Consequently, keeping the notation \eqref{costantivettoriali} and
denoting $w^n=u^n-u$, $W^n=U^n-U$, $W_{0n}=U_{0n}-U_0$,
$K_{27}=K_{27}(T^*)=K_4(R_3(T^*),\boldsymbol{c}_p)+K_{12}(R_3(T^*),\boldsymbol{c}_q)$ and
$$K_{28}=K_{28}(\eps,T^*)=K_6(\eps,R_3(T^*),\boldsymbol{c}_p')
+K_{14}(\eps,R_3(T^*),\boldsymbol{c}_q'),$$ we have the estimate
\begin{multline}\label{6.9}
\int_0^t \int_\Omega
[\widehat{f}(u^n)-\widehat{f}(u)]w^n_t+\int_0^t \int_{\Gamma_1}
[\widehat{g}(u^n)-\widehat{g}(u)]{w^n_{|\Gamma}}_t\\\le
K_{27}(\eps+t)\|W^n(t)\|_{\cal{H}}^2+
K_{28}\Bigg[\|W_{0n}\|_{\cal{H}}^2+\int_0^t\Big(1+\|u^n_t\|_{m_p}+\|u_t\|_{m_p}\\+
\|{u^n_{|\Gamma}}_t\|_{\mu_q,\Gamma_1}+\|{u_{|\Gamma}}_t\|_{\mu_q,\Gamma_1}\Big)\|W^n\|^2_{\cal{H}}\Bigg]
\qquad\text{for all $t\in [0,T_3(T^*)]$.}
\end{multline}
Since $w^n$ is a weak solution of \eqref{L} with
$\xi=\widehat{f}(u^n)-\widehat{f}(u)-\widehat{P}(u^n_t)+\widehat{P}(u_t)$
and
$\eta=\widehat{g}(u^n)-\widehat{g}(u)-\widehat{Q}({u_{|\Gamma}^n}_t)+\widehat{Q}({u_{|\Gamma}}_t)$
verifying \eqref{2.7} with $\rho=\overline{m}$,
$\theta=\overline{\mu}$,  by Lemma~\ref{lemma2.1} the energy
identity
\begin{multline}\label{6.10}
\frac 12\|W^n(t)\|_{\cal{H}}^2+\int_0^t\langle
\widehat{B}(\dot{u}^n)-\widehat{B}(\dot{u}),\dot{w}^n\rangle_Y=\tfrac
12\|W_{0n}\|_{\cal{H}}^2-\tfrac 12\|w_{0n}\|_{2,\Gamma_1}^2\\+\tfrac
12\|w^n(t)\|_{2,\Gamma_1}^2 +\int_0^t \int_\Omega
[\widehat{f}(u^n)-\widehat{f}(u)]w^n_t+\int_0^t \int_{\Gamma_1}
[\widehat{g}(u^n)-\widehat{g}(u)]{w^n_{|\Gamma}}_t,
\end{multline}
holds for all $t\in [0,T_3(T^*)]$. By  (PQ4) and
\eqref{3.20}--\eqref{3.21} we have
\begin{multline}\label{6.11}
\langle \widehat{B}(\dot{u}^n)-\widehat{B}(\dot{u}),\dot{w}^n\rangle_Y\ge
\widetilde{c_m}''\|[w_t^n]_\alpha\|_{m,\alpha}^m
-c_m'''\|\alpha\|_\infty\|w^n_t\|_2^2\\
+\widetilde{c_\mu}''\|[{w_{|\Gamma}}_t^n]_\beta\|_{\mu,\beta,\Gamma_1}^\mu
-c_\mu'''\|\beta\|_{\infty,\Gamma_1}\|(w^n_{|\Gamma1})_t\|_2^2\\
\ge
\widetilde{c_m}''\|[w_t^n]_\alpha\|_{m,\alpha}^m+\widetilde{c_\mu}''\|[{w_{|\Gamma}}_t^n]_\beta\|_{\mu,\beta,\Gamma_1}^\mu
-k_{17}\|W^n\|_{\cal{H}}^2,
\end{multline}
with $\widetilde{c_m}''>0$ when $p\ge \romega$ and
$\widetilde{c_\mu}''>0$ when $q\ge \rgamma$. Hence, setting
$K_{29}=K_{29}(\eps,T^*)=
2+K_{28}(\eps,T^*)$ and plugging
\eqref{6.9}, \eqref{6.11} and the trivial estimate $\tfrac12
\|w^n(t)\|_{2,\Gamma_1}^2\le\tfrac12 \|w^n(t)\|_{H^0}^2\le
\|W_{0n}\|_{\cal{H}}^2+\int_0^t\|W^n\|_{H^0}^2$ (where $T_3(T^*)\le
1$ was used) in \eqref{6.10} we get
\begin{multline}\label{6.12}
\frac
12\|W^n(t)\|_{\cal{H}}^2+\int_0^t\widetilde{c_m}''\|[w_t^n]_\alpha\|_{m,\alpha}^m+\widetilde{c_\mu}''\|[{w_{|\Gamma}}_t^n]_\beta\|_{\mu,\beta,\Gamma_1}^\mu\\\le
K_{27}(\eps+t)\|W^n(t)\|_{\cal{H}}^2+
K_{29}\Bigg[\|W_{0n}\|_{\cal{H}}^2+\int_0^t\Big(1+\|u^n_t\|_{m_p}+\|u_t\|_{m_p}\\+
\|{u^n_{|\Gamma}}_t\|_{\mu_q,\Gamma_1}+\|{u_{|\Gamma}}_t\|_{\mu_q,\Gamma_1}\Big)\|W^n\|^2_{\cal{H}}\Bigg]
\qquad\text{for all $t\in [0,T_3(T^*)]$.}
\end{multline}
We now set
$T_4(T^*)=\min\{T_3(T^*),1/8K_{27}(T^*)\}$,
so that $T_4\le T_3$ and $K_{27}T_4\le 1/8$. We
also choose $\eps=\eps_4:=1/8K_{27}(T^*)$
so that, setting
$K_{30}=K_{30}(T^*)=4K_{29}(\eps_4,T^*)$,
by \eqref{6.12} we have
\begin{multline}\label{6.13}
\|W^n(t)\|_{\cal{H}}^2+\int_0^t\widetilde{c_m}''\|[w_t^n]_\alpha\|_{m,\alpha}^m+\widetilde{c_\mu}''\|[{w_{|\Gamma}}_t^n]_\beta\|_{\mu,\beta,\Gamma_1}^\mu\\\le
K_{30}\Bigg[\|W_{0n}\|_{\cal{H}}^2+\int_0^t\Big(1+\|u^n_t\|_{m_p}+\|u_t\|_{m_p}+
\|{u^n_{|\Gamma}}_t\|_{\mu_q,\Gamma_1}+\|{u_{|\Gamma}}_t\|_{\mu_q,\Gamma_1}\Big)\|W^n\|^2_{\cal{H}}\Bigg]
\end{multline}
for all $t\in [0,T_4(T^*)]$. By disregarding the second term in the
left--hand side of \eqref{6.13} and applying  Gronwall
inequality  we get
\begin{multline*}
\|(W^n(t)\|_{\cal{H}}^2\le K_{30}\|W_{0n}\|_{\cal {H}}^2\exp{\Bigg[
K_{30}}\int_0^{T_3(T^*)}\Big(1+\|u^n_t\|_{m_p}+\|u_t\|_{m_p}\\+
\|{u^n_{|\Gamma}}_t\|_{\mu_q,\Gamma_1}+\|{u_{|\Gamma}}_t\|_{\mu_q,\Gamma_1}\Big)\,d\tau
\Bigg].
\end{multline*}
Consequently, by H\"{o}lder inequality in time, \eqref{ZLMP} and
\eqref{6.7},
\begin{equation}\label{6.14}
\|W^n(t)\|_{\cal{H}}^2\le K_{30}\|W_{0n}\|_{\cal {H}}^2e^{
K_{30}[1+2R_3(T^*)]}\qquad\text{for all $t\in [0,T_4(T^*)]$,}
\end{equation}
from which we immediately get
\begin{equation}\label{6.15}
W^n\to 0\quad\text{in $C([0,T_4(T^*)];\cal{H})$ \quad as
$n\to\infty$.}
\end{equation}
To get the stronger (when $p\ge \romega$ or $q\ge \rgamma$)
convergence in  $C([0,T_4(T^*)];\cal{H}^{s_1,s_2})$ we now plug
\eqref{6.14} into \eqref{6.13} and use \eqref{ZLMP}, \eqref{6.7} and
H\"{o}lder inequality to get
\begin{multline*}
\int_0^{T_4(T^*)}\widetilde{c_m}''\|[w_t^n]_\alpha\|_{m,\alpha}^m+\widetilde{c_\mu}''\|[{w_{|\Gamma}}_t^n]_\beta\|_{\mu,\beta,\Gamma_1}^\mu\le
K_{30}\|W_{0n}\|_{\cal {H}}^2\Bigg\{1+K_{30}e^{
K_{30}[1+2R_3(T^*)]}\\\times\int_0^{T_3(T^*)}\Big(1+\|u^n_t\|_{m_p}+\|u_t\|_{m_p}+
\|{u^n_{|\Gamma}}_t\|_{\mu_q,\Gamma_1}+\|{u_{|\Gamma}}_t\|_{\mu_q,\Gamma_1}\Big)\,d\tau\Big\}\\\le
K_{30}\|W_{0n}\|_{\cal {H}}^2\Big\{1+K_{30}e^{
K_{30}[1+2R_3(T^*)]}[1+2R_3(T^*)]\Bigg\},
\end{multline*}
from which it immediately follows that
\begin{equation}\label{6.16}
\widetilde{c_m}''\|[w_t^n]_\alpha\|_{L^m(0,T_4(T^*);L^m_\alpha(\Omega)}^m,\widetilde{c_\mu}''\|[{w_{|\Gamma}}_t^n]_{L^\mu(0,T_4(T^*);L^\mu_\beta(\Gamma_1)}^\mu\to
0
\end{equation}
as $n\to\infty$. When $p\ge\romega$ we have $\widetilde{c_m}''>0$ so
by \eqref{6.16} and (FGQP1) it follows $w^n_t\to 0$ in
$L^m(0,T_4(T^*);L^m(\Omega))$. Since by
\eqref{somegagammarelazioni2} and \eqref{ranges2ter} in this case we
have $s_1\le m$, we derive  $w^n_t\to 0$ in
$L^{s_1}(0,T_4(T^*);L^{s_1}(\Omega))$. As $w_{0n}\to 0$ in
$L^{s_1}(\Omega)$ we get by a trivial integration in time that
$w^n\to 0$ in $C([0,T_4(T^*)];L^{s_1}(\Omega))$. Using similar
arguments we get $w^n_{|\Gamma}\to 0$ in
$C([0,T_4(T^*)];L^{s_2}(\Gamma_1))$ when $q\ge\rgamma$.
Consequently, using \eqref{6.15},  Sobolev embeddings and
\eqref{ranges2ter} when $p<\romega$ and $q<\rgamma$ we derive
\begin{equation}\label{6.17}
W^n\to 0\quad\text{in $C([0,T_4(T^*)];\cal{H}^{s_1,s_2})$ \quad as
$n\to\infty$.}
\end{equation}
We then complete the proof by repeating previous arguments a finite
 number of times. More explicitly we set the
function $\kappa_1:(0,T_{\max})\to\N_0$,  by
$\kappa_1(T^*)=\min\{k\in\N_0 : T^*/T_4(T^*)\le k+1\}$, so that
\begin{equation}\label{6.18}
\kappa_1(T^*)T_4(T^*)<T^*\le [\kappa_1(T^*)+1]T_4(T^*).
\end{equation}
If $\kappa_1(T^*)=0$, that is if $T^*\le T_4(T^*)$, by \eqref{6.17}
we have $W^n\to 0$ in $C([0,T^*];\cal{H}^{s_1,s_2})$, that is the
conclusion (ii) in the statement of Theorem~\ref{theorem1.3}.
Moreover in this case by \eqref{6.5} we have
\begin{equation*}
T^*\le T_4(T^*)\le T_3(T^*)<T^n_{\max}\qquad\text{for all $n\ge
n_1(T^*)$.}
\end{equation*}
Now let $\kappa_1(T^*)\ge 1$, that is $T_4(T^*)<T^*$. By \eqref{6.2}
we have $\|U(T_4(T^*))\|_{\cal{H}^{s_1,s_2}}$,
$\|U(T_4(T^*))\|_{\cal{H}^{\somega,\sgamma}}$,
$\|U(T_4(T^*))\|_{\cal{H}^{\widetilde{\sigmaomega},\widetilde{\sigmagamma}}}\le
M(T^*)$, and consequently, by \eqref{6.17}, there is
$n_2=n_2(T^*)\in\N$ such that for all $n\ge
n_2(T^*)$
\begin{equation}\label{6.19}
\begin{alignedat}4
&\|U^n_1\|_{\cal{H}^{s_1,s_2}},\quad
&&\|U^n_1\|_{\cal{H}^{\somega,\sgamma}},\quad
&&\|U^n_1\|_{\cal{H}^{\widetilde{\sigmaomega},\widetilde{\sigmagamma}}}\le &&1+M(T^*),\\
&\|U_1\|_{\cal{H}^{s_1,s_2}}, &&\|U_1\|_{\cal{H}^{\somega,\sgamma}},
&&\|U_1\|_{\cal{H}^{\widetilde{\sigmaomega},\widetilde{\sigmagamma}}}
\le &&1+M(T^*),
\end{alignedat}
\end{equation}
where we denote $U^n_1=U^n(T_4(T^*))$ and $U_1=U(T_4(T^*))$. Since
problem \eqref{1.1} is autonomous, by applying
Lemma~\ref{lemma3.3}--(ii) and Theorem~\ref{theorem6.1}, starting
from \eqref{6.19} we can repeat all arguments from \eqref{6.4} to
\eqref{6.17}, getting in this way that
\begin{equation}\label{6.21}
2T_4(T^*)\le T_3(T^*)+T_4(T^*)<T_{\max},\quad 2T_4(T^*)\le
T_3(T^*)+T_4(T^*)<T_{\max}^n,
\end{equation}
for all $n\ge n_2(T^*)$, and
$$
\begin{aligned}
&\widetilde{c_m}''\|[w_t^n]_\alpha\|_{L^m(T_4(T^*),2T_4(T^*));L^m_\alpha(\Omega)}^m,\quad
\widetilde{c_\mu}''\|[{w_{|\Gamma}}_t^n]_{L^\mu(T_4(T^*),2T_4(T^*));L^\mu_\beta(\Gamma_1)}^\mu\to
0, \\
&W^n\to 0\quad\text{in $C([T_4(T^*),2T_4(T^*)];\cal{H}^{s_1,s_2})$ as
$n\to\infty$,}
\end{aligned}
$$
 which by \eqref{6.17} implies \label{6.22bis}
\begin{equation}
\label{6.22}
\begin{aligned}
&\widetilde{c_m}''\|[w_t^n]_\alpha\|_{L^m(0,2T_4(T^*));L^m_\alpha(\Omega)}^m,\quad
\widetilde{c_\mu}''\|[{w_{|\Gamma}}_t^n]_{L^\mu(0,2T_4(T^*));L^\mu_\beta(\Gamma_1)}^\mu\to
0,\\
& W^n\to 0\quad\text{in $C([0,2T_4(T^*)];\cal{H}^{s_1,s_2})$ \quad
as $n\to\infty$.}
\end{aligned}
\end{equation}
If $\kappa_1(T^*)=1$, that is if $T^*\le 2T_4(T^*)$, by \eqref{6.22}
we have
$$\begin{gathered}
\widetilde{c_m}''\|[w_t^n]_\alpha\|_{L^m(0,T^*);L^m_\alpha(\Omega)}^m+
\widetilde{c_\mu}''\|[{w_{|\Gamma}}_t^n]_{L^\mu(0,T^*);L^\mu_\beta(\Gamma_1)}^\mu\to
0,\\
U^n\to U\quad\text{in $C([0,T^*];\cal{H}^{s_1,s_2})$,}
\end{gathered}
$$
 that is the
conclusion (ii) in the statement of Theorem~\ref{theorem1.3}.
Moreover, in this case by \eqref{6.21} we have
\begin{equation}\label{6.23}
T^*<T^n_{\max}\qquad\text{for all $n\ge n_2(T^*)$.}
\end{equation}
If $\kappa_1(T^*)>1$ we repeat the procedure above $\kappa_1(T^*)-1$
times to get
\begin{equation}\label{6.24}
\begin{aligned}
&T^*\le (\kappa_1(T^*)+1)T_4(T^*)<T^n_{\max}\qquad\text{for all $n\ge
n_{\kappa_1(T^*)+1}(T^*)$},\\
&\widetilde{c_m}''\|[w_t^n]_\alpha\|_{L^m(0,(\kappa_1(T^*)+1)T_4(T^*));L^m_\alpha(\Omega)}^m\to
0,\\
&\widetilde{c_\mu}''\|[{w_{|\Gamma}}_t^n]_{L^\mu(0,(\kappa_1(T^*)+1)T_4(T^*));L^\mu_\beta(\Gamma_1)}^\mu\to
0,
\\
&W^n\to 0\quad\text{in
$C([0,(\kappa_1(T^*)+1)T_4(T^*)];\cal{H}^{s_1,s_2})$ \quad as
$n\to\infty$.}
\end{aligned}
\end{equation}
By \eqref{6.24} and \eqref{6.18} we then get
\begin{equation}\label{6.26}
\begin{aligned}
&\widetilde{c_m}''\|[w_t^n]_\alpha\|_{L^m(0,T^*);L^m_\alpha(\Omega)}^m,\quad
\widetilde{c_\mu}''\|[{w_{|\Gamma}}_t^n]_{L^\mu(0,T^*);L^\mu_\beta(\Gamma_1)}^\mu\to
0,
\\
&W^n\to 0\quad\text{in $C([0,T^*];\cal{H}^{s_1,s_2})$ \quad as
$n\to\infty$,}
\end{aligned}
\end{equation}
and, being $T^*\in (0,T_{\max})$ arbitrary, $T_{\text{max}}\le \varliminf_n T^n_{\text{max}}$.
\end{proof}
The aim of our final main result is  to get well--posedness for  spaces  in Corollary~\ref{corollary5.3}.
\begin{thm}[\bf Local Hadamard well--posedness II]\label{theorem6.4} Suppose that  (PQ1--3), (PQ4)$'$,(FG1), (FGQP1),
(FG2)$'$, \eqref{R''} hold and $\rho,\theta$ satisfy \eqref{ranges3}.
Then problem \eqref{1.1} is locally well--posed in
$H^{1,\rho,\theta}_{\alpha,\beta}\times H^0$, i.e. the conclusions
of Theorem~\ref{theorem1.3} hold true with $H^{1,s_1,s_2}$ replaced
by $H^{1,\rho,\theta}_{\alpha,\beta}$ and problem \eqref{1.2} is
generalized to problem \eqref{1.1}. In particular it is
locally--well posed  in $\cal{H}^{\rho,\theta}$ when
$\rho,\theta$ satisfy \eqref{ranges3}, $\rho\le \romega$  if  $p\le
1+\romega/2$ and $\theta\le \rgamma$ if $q\le 1+\rgamma/2$.
\end{thm}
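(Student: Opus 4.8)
The plan is to reduce Theorem~\ref{theorem6.4} to Theorem~\ref{theorem6.3} by exploiting the structural similarity of the two statements. The key observation is that the well--posedness argument in the proof of Theorem~\ref{theorem6.3} was carried out in the scale $H^{1,s_1,s_2}$ with $s_1,s_2$ satisfying \eqref{ranges2ter}, and the only places where the specific (unweighted) nature of these spaces entered were: first, through the key estimates of Lemmas~\ref{lemma4.1} and \ref{lemma4.4}, which require $u_0,v_0\in L^{s_1}(\Omega)$, $L^{s_2}(\Gamma_1)$ with $s_1>\somega$, $s_2>\sgamma$; second, through the coercivity inequalities \eqref{3.20}--\eqref{3.21}, used via assumption (PQ4) in \eqref{6.11}; and third, through the final integration--in--time step transferring convergence in the damping norms $L^m(0,T;L^m_\alpha(\Omega))$ and $L^\mu(0,T;L^\mu_\beta(\Gamma_1))$ into convergence in the state space. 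First I would observe that, by \eqref{ranges3}, the exponents $\rho,\theta$ obey $\somega<\rho\le\max\{\romega,m\}$ and $\sgamma<\theta\le\max\{\rgamma,\mu\}$, so in particular $\rho>\somega$ and $\theta>\sgamma$, exactly the strict inequalities needed to invoke the \emph{final parts} of Lemmas~\ref{lemma4.1}--\ref{lemma4.4} with $u_0,v_0$ only in the respective $L^\rho,L^\theta$ spaces.

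Next I would replace assumption (PQ4) by the stronger (PQ4)$'$, under which \eqref{3.20}--\eqref{3.21} hold with $\widetilde{c_m}''>0$ whenever $m>\romega$ and $\widetilde{c_\mu}''>0$ whenever $\mu>\rgamma$. The role of (PQ4)$'$ here is precisely to guarantee positive coercivity constants in the \emph{weighted} estimate, because now the convergence must be extracted in the weighted norms $\|[w_t^n]_\alpha\|_{m,\alpha}$, $\|[{w_{|\Gamma}}_t^n]_\beta\|_{\mu,\beta,\Gamma_1}$ that define $H^{1,\rho,\theta}_{\alpha,\beta}$, rather than in the unweighted $L^m,L^\mu$ norms. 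I would then rerun the energy--identity computation \eqref{6.10}--\eqref{6.13} verbatim, with $U_{0n}\to U_0$ now taken in $\cal{H}^{\rho,\theta}_{\alpha,\beta}:=H^{1,\rho,\theta}_{\alpha,\beta}\times H^0$; the function $M(t)$ in \eqref{6.2} is redefined using the $\cal{H}^{\rho,\theta}_{\alpha,\beta}$, $\cal{H}^{\somega,\sgamma}$ and $\cal{H}^{\widetilde{\sigmaomega},\widetilde{\sigmagamma}}$ norms, all controlled since $H^{1,\rho,\theta}_{\alpha,\beta}\hookrightarrow H^{1,\somega,\sgamma}$ by \eqref{lsomegagamma} together with assumption (III). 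Gronwall's inequality then yields, as in \eqref{6.14}, the basic convergence $W^n\to 0$ in $C([0,T_4(T^*)];\cal{H})$, and feeding this back into \eqref{6.13} gives the analogue of \eqref{6.16}, namely that the weighted damping norms $\widetilde{c_m}''\|[w_t^n]_\alpha\|_{L^m(0,T_4;L^m_\alpha(\Omega))}^m$ and $\widetilde{c_\mu}''\|[{w_{|\Gamma}}_t^n]_\beta\|_{L^\mu(0,T_4;L^\mu_\beta(\Gamma_1))}^\mu$ tend to $0$.

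The decisive step, and the one I expect to be the main obstacle, is upgrading this weighted--damping convergence to convergence in $C([0,T^*];\cal{H}^{\rho,\theta}_{\alpha,\beta})$. When $m>\romega$ (so $\widetilde{c_m}''>0$) one obtains $[w_t^n]_\alpha\to 0$ in $L^m(0,T_4;L^m_\alpha(\Omega))$; since by \eqref{ranges3} and \eqref{somegagammarelazioni2} we have $\rho\le m$, a weighted H\"older inequality in the time--space variable gives $[w_t^n]_\alpha\to 0$ in $L^\rho(0,T_4;L^\rho_\alpha(\Omega))$, and because $w_{0n}\to 0$ in $L^{2,\rho}_\alpha(\Omega)$ the representation $w^n(t)=w_{0n}+\int_0^t w^n_t$ in $L^2(\Omega)$ transfers this, by integration in time, into $w^n\to 0$ in $C([0,T_4];L^{2,\rho}_\alpha(\Omega))$. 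The symmetric argument on $\Gamma_1$, using $\theta\le\mu$ and $\widetilde{c_\mu}''>0$ when $\mu>\rgamma$, gives the boundary component. In the remaining ranges $\rho\le\romega$, $\theta\le\rgamma$ the $H^{1,\rho,\theta}_{\alpha,\beta}$ norm is controlled by $H^1$ through Sobolev embedding and Remark~\ref{remark1.1}, so the already established $\cal{H}$--convergence suffices. Combining both regimes yields $W^n\to 0$ in $C([0,T_4(T^*)];\cal{H}^{\rho,\theta}_{\alpha,\beta})$, the exact analogue of \eqref{6.17}. The finite iteration argument of \eqref{6.18}--\eqref{6.26}, which only uses autonomy (Lemma~\ref{lemma3.3}--(ii)) and uniqueness (Theorem~\ref{theorem6.1}), then carries this local estimate up to an arbitrary $T^*\in(0,T_{\max})$ without change, establishing both $T_{\max}\le\varliminf_n T^n_{\max}$ and the convergence on every compact subinterval. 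Finally, the \emph{in particular} clause follows from Remark~\ref{remark1.1}: under \eqref{spaziuguali}, which holds precisely when $\rho\le\romega$ if $p\le 1+\romega/2$ and $\theta\le\rgamma$ if $q\le 1+\rgamma/2$, one has $H^{1,\rho,\theta}_{\alpha,\beta}=H^{1,\rho,\theta}$, so well--posedness in $\cal{H}^{\rho,\theta}$ is immediate.
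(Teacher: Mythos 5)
Your proposal is correct and follows essentially the same route as the paper: the paper simply shortcuts the ``rerun'' by noting that $H^{1,\rho,\theta}_{\alpha,\beta}\hookrightarrow H^{1,\overline{s_1}(\rho),\overline{s_2}(\theta)}$ for suitable exponents $\overline{s_1}(\rho),\overline{s_2}(\theta)$ satisfying \eqref{ranges2ter} (using (FGQP1) to identify $L^{2,\rho}_\alpha(\Omega)=L^\rho(\Omega)$ when $p\ge\romega$), so that the conclusion \eqref{6.26} of the proof of Theorem~\ref{theorem6.3} applies verbatim, and then performs exactly your decisive upgrade step: (PQ4)$'$ gives $\widetilde{c_m}''>0$ when $\rho>\romega$ (hence $m>\romega$ and $\rho\le m$ by \eqref{ranges3}), so $w^n_t\to 0$ in $L^\rho(0,T^*;L^{2,\rho}_\alpha(\Omega))$ and integration in time yields $w^n\to 0$ in $C([0,T^*];L^{2,\rho}_\alpha(\Omega))$, with the ``in particular'' clause handled via Remark~\ref{remark1.1} as you indicate. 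The only point you leave slightly implicit is that the final parts of Lemmas~\ref{lemma4.1}--\ref{lemma4.4} need $u_{0n}$ bounded in the unweighted $L^\rho(\Omega)$, $L^\theta(\Gamma_1)$ norms, which is guaranteed precisely by the identification above.
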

\begin{proof} At first we remark that, for any $\rho,\theta$
satisfying \eqref{ranges3}, setting
\begin{equation}\label{6.27}
\overline{s_1}(\rho)=
\begin{cases}
2,   \quad&\text{if $p<\romega$},\\
\rho \quad&\text{if $p\ge\romega$},
\end{cases}\qquad
\overline{s_2}(\theta)=
\begin{cases}
2,   \quad&\text{if $q<\rgamma$},\\
\theta \quad&\text{if $q\ge\rgamma$},
\end{cases}
\end{equation}
when $p\ge \romega$ we have $\rho\ge \romega$
so, by (FGQP1),
$L^{2,\rho}_\alpha(\Omega)=L^\rho(\Omega)$, while by the same
arguments when $q\ge \rgamma$ we have
$L^{2,\rho}_\beta(\Gamma_1)=L^\rho(\Gamma_1)$. Hence
$H^{1,\rho,\theta}_{\alpha,\beta}\hookrightarrow
H^{1,\overline{s_1}(\rho),\overline{s_2}(\theta)}$.

Consequently, given a sequence $U_{0n}\to U_0$ in
$H^{1,\rho,\theta}_{\alpha,\beta}\times H^0$, we have $U_{0n}\to
U_0$ in $\cal{H}^{\overline{s_1}(\rho),\overline{s_2}(\theta)}$, hence,
since  $(\overline{s_1}(\rho),\overline{s_2}(\theta))$ satisfies
\eqref{ranges2ter}, by \eqref{6.26} we get
\begin{equation}
\begin{aligned}\label{6.28}
&\widetilde{c_m}''\|[w_t^n]_\alpha\|_{L^m(0,T^*);L^m_\alpha(\Omega)}^m,\quad
\widetilde{c_\mu}''\|[{w_{|\Gamma}}_t^n]_\beta\|_{L^\mu(0,T^*);L^\mu_\beta(\Gamma_1)}^\mu\to
0,
\\&W^n\to 0\quad\text{in $C([0,T^*];\cal{H})$ \quad as $n\to\infty$,}
\end{aligned}
\end{equation}
so to prove that $W^n\to 0$ in
$C([0,T^*];H^{1,\rho,\theta}_{\alpha,\beta}\times H^0)$, by Sobolev
embeddings,  reduces to prove the following two facts: if
$\rho>\romega$ then $w_n\to 0$ in
$C[0,T^*];L^{2,\rho}_\alpha(\Omega))$, and, if $\theta>\rgamma$, then
$w_n\to 0$ in $C[0,T^*];L^{2,\rho}_\beta(\Gamma_1))$. We  prove the
first one. When $\rho>\romega$, by \eqref{ranges3} we also have
$m>\romega$. Then, by  (PQ4)$'$, see Remark~\ref{remark3.8},
we have $\widetilde{c_m}''>0$, so by \eqref{6.28} we get $w^n_t\to
0$ in $L^m(0,T^*;L^{2,m}_\alpha(\Omega))$. Consequently, since
$\rho\le m$ by \eqref{ranges3}, $w^n_t\to 0$ in
$L^\rho(0,T^*;L^{2,\rho}_\alpha(\Omega))$. Since $w_{0n}\to 0$ in
$L^{2,\rho}_\alpha(\Omega)$ then $w_n\to 0$ in
$C[0,T^*];L^{2,\rho}_\alpha(\Omega))$. The proof of the second fact
uses similar arguments and it is omitted. Finally, when
$(\rho,\theta)$ satisfy \eqref{ranges3},  $\rho\le \romega$  if  $p\le
1+\romega/2$ and $\theta\le \rgamma$ if $q\le 1+\rgamma/2$, by Remark~\ref{remark1.1}
we have  $H^{1,\rho,\theta}=H^{1,\rho,\theta}_{\alpha,\beta}$.
\end{proof}
\begin{proof}[Proof of Theorems~\ref{theorem1.2}--\ref{theorem1.4} and
Corollary~\ref{corollary1.4} in Section~\ref{intro}.] When
$$P(x,v)=\alpha(x)P_0(v),\quad Q(x,v)=\beta(x)Q_0(v),\quad f(x,u)=f_0(u),\quad
g(x,u)=g_0(u),$$ by Remarks~\ref{remark3.1}--\ref{remark3.3}
assumptions (PQ1--3), (FG1), (FGQP1) reduce to (I--III). Moreover
by Remark~\ref{remark3.6} (FG2) and (FG2)$'$  reduce to
(IV) and (IV)$'$, while by Remark~\ref{remark3.9} (PQ4) and (PQ4)$'$
reduce to (V) and (V)$'$.  Hence
Theorems~\ref{theorem1.2}--\ref{theorem1.4} and
Corollary~\ref{corollary1.4} are particular cases  of
Theorem~\ref{theorem6.2}--\ref{theorem6.4} and
Corollary~\ref{corollary6.1}.
\end{proof}
\section{Global existence}\label{section7}
In this section we shall prove that when the source parts of the perturbation terms $f$ and $g$   has at most linear growth at infinity, uniformly in the space
variable, or, roughly, it is dominated by the corresponding damping term, then weak solutions of \eqref{1.1} found in Theorem~\ref{theorem5.1} are global in time provided $u_0\in H^{1,p,q}$.

To precise our statement we introduce, the assumption (FG1) being in force, the primitives of the functions $f$ and $g$ by
\begin{equation}\label{fgprimitives}
\mathfrak{F}(x,u)=\int_0^u f(x,s)\,ds,\qquad\text{and}\quad \mathfrak{G}(y,u)=\int_0^u g(y,s)\,ds,
\end{equation}
for a.a. $x\in\Omega$, $y\in\Gamma_1$ and all $u\in\R$. Moreover we shall make the following specific assumption:
\renewcommand{\labelenumi}{{(FGQP2)}}
\begin{enumerate}
\item there are $p_1$ and $q_1$ verifying \eqref{V.2} and constants $C_{p_1},C_{q_1}\ge 0$ such that
$$
\mathfrak{F}(x,u)\le C_{p_1}\left[1+u^2+\alpha(x)|u|^{p_1}\right], \,\,\mathfrak{G}(y,u)\le  C_{q_1}\left[1+u^2+\beta(y)|u|^{q_1}\right]
$$
for a.a. $x\in\Omega$, $y\in\Gamma_1$ and all $u\in\R$.
\end{enumerate}
Since $\mathfrak{F}(\cdot,u)=\int_0^1 f(\cdot,su)u\,ds$ (and similarly $\frak{G}$), assumption
(FGQP2) is a weak version of
 of the following one:
\renewcommand{\labelenumi}{{(FGQP2)$'$}}
\begin{enumerate}
\item there are $p_1$ and $q_1$ verifying \eqref{V.2} and constants $C'_{p_1},C'_{q_1}\ge 0$ such that
$$f(x,u)u\le C'_{p_1}\left[|u|+u^2+\alpha(x)|u|^{p_1}\right], \quad g(y,u)u\le  C'_{q_1}\left[|u|+u^2+\beta(y)|u|^{q_1}\right]
$$
for a.a. $x\in\Omega$, $y\in\Gamma_1$ and all $u\in\R$.
\end{enumerate}
\begin{rem}\label{remark6.3ff}
Assumptions (FG1) and (FGQP2)$'$ hold
 provided
 \begin{equation}\label{centerdotbis}
 f=f^0+f^1+f^2,\qquad g=g^0+g^1+g^2,
 \end{equation}
 where $f^i$, $g^i$ satisfy the following assumptions:
\renewcommand{\labelenumi}{{(\roman{enumi})}}
\begin{enumerate}
\item $f^0$ and $g^0$ are a.e. bounded and independent on $u$;
\item $f^1$ and $g^1$ satisfy (FG1) with exponents $p_1$ and $q_1$ satisfying \eqref{V.2}, and
\begin{enumerate}
\item when $p_1>2$ and $\essinf_\Omega\alpha=0$ there is a constant $\overline{c_{p_1}}\ge 0$ such
\begin{footnote}{that is $\varlimsup_{|u|\to\infty}|f_1(\cdot,u)|/|u|^{p_1-1}\le\overline{c_{p_1}}\alpha$ a.e. uniformly in $\Omega$.}\end{footnote} that
$$|f^1(x,u)|\le \overline{c_{p_1}}\left[1+|u|+\alpha(x)|u|^{p_1-1}\right]$$
for a.a. $x\in\Omega$ and all $u\in\R$;
\item when $q_1>2$ and $\essinf_{\Gamma_1}\beta =0$ there is a constant $\overline{c_{q_1}}\ge 0$ such that
$$|g^1(y,u)|\le \overline{c_{q_1}}\left[1+|u|+\beta(y)|u|^{q_1-1}\right]$$
for a.a. $y\in\Gamma_1$ and all $u\in\R$;
\end{enumerate}
\item $f^2$ and  $g^2$ satisfy (FG1),
$f^2(x,u)u\le 0$ and $g^2(y,u)u\le 0$ for a.a. $x\in\Omega$, $y\in\Gamma_1$ and all $u\in\R$.
\end{enumerate}
Conversely any couple of functions $f$ and $g$ satisfying (FG1) and (FGQP2)$'$ admits a decomposition of the form \eqref{centerdotbis}--(i--iii) with
$f^1$ and $g^1$ being source terms. Indeed one can set
$f^0=f(\cdot,0)$,
$$f^1(\cdot,u)=
\begin{cases}[f(\cdot,u)-f^0]^+ \,\,&\text{if $u>0$,}\\
\,\,\qquad 0\,\,&\text{if $u=0$,}\\
-[f(\cdot,u)-f^0]^- \,\,&\text{if $u<0$,}
\end{cases}\,\, \text{and}\,\,
f^2(\cdot,u)\begin{cases}-[f(\cdot,u)-f^0]^- \,\,&\text{if $u>0$,}\\
\,\,\qquad 0\,\,&\text{if $u=0$,}\\
[f(\cdot,u)-f^0]^+ \,\,&\text{if $u<0$,}
\end{cases}
$$
and define $g^0$, $g^1$, $g^2$ in the analogous way.
\end{rem}
\begin{rem}\label{remark6.4}When dealing with  problem \eqref{1.2} assumption
(FGQP2) reduces to (VI). The function $f\equiv f_2$ defined in \eqref{modellisupplementari} satisfies  (FGQP2) provided
one among the following cases occurs:
\renewcommand{\labelenumi}{{(\roman{enumi})}}
\begin{enumerate}
\item $\gamma_1^+=\gamma_2^+\equiv 0$,
\item $\gamma_2^+\equiv0$, $\gamma_1^+\not\equiv0$,  $\widetilde{p}\le\max\{2,m\}$ and
$\gamma_1\le c'_1\alpha$ a.e. in $\Omega$ when $\widetilde{p}>2$
\item $\gamma_1^+\not=0$, $\gamma_2^+\not\equiv 0$, $p\le\max\{2,m\}$,
$\gamma_1\le c'_1\alpha$ when $\widetilde{p}>2$ and $\gamma_2\le c'_2\alpha$ when $p>2$, a.e. in $\Omega$,
\end{enumerate}
where $c_1', c_2'\ge 0$ denote suitable constants. The analogous cases (j--jjj) occurs when $g\equiv g_2$,
so that $(f_2,g_2)$ satisfies (FGQP2) provided any combination between the cases (i--iii) and (j--jjj) occurs.
In particular then a damping term can be localized provided the corresponding source is equally localized.

Finally when $f\equiv f_3$ and $g\equiv g_3$ as in  \eqref{f3g3}, assumption (FGQP2) holds provided
$f_0$ and $g_0$  satisfy assumption (VI) (where we conventionally take $f_0\equiv 0$ when $\gamma\equiv 0$ and
$g_0\equiv 0$ when $\delta\equiv 0$), $\gamma\le \alpha$ when $p_1>2$ and
$\delta\le \beta$ when $q_1>2$.
\end{rem}

We can now state the main result of this section.
\begin{thm}[\bf Global analysis]  \label{theorem7.1}
The following conclusions hold true.
\renewcommand{\labelenumi}{{(\roman{enumi})}}
\begin{enumerate}
\item {\bf (Global existence)} Suppose that (PQ1--3), (FG1--2) and (FGQP1--2) hold. Then for any $(u_0,u_1)\in \cal{H}^{\lomega,\lgamma}$
the weak maximal solution $u$ of problem \eqref{1.1} found in Theorem~\ref{theorem5.1} is global in time, that is
$T_{\text{max}}=\infty$, and $u\in C([0,T_{\text{max}});H^{1,\lomega,\lgamma})$.

In particular, when \eqref{subSobolevcritical} holds, for any $(u_0,u_1)\in \cal{H}$ problem \eqref{1.1} has a global weak solution.

\item {\bf (Global existence--uniqueness)} Suppose that (PQ1--3), (FG1), (FG2)$'$ and (FGQP1--2) hold. Then for any $(u_0,u_1)\in H^{1,\somega,\sgamma}\times H^0$ the
unique maximal solution of problem \eqref{1.1} found in  Theorem~\ref{theorem6.2} is global in time, that is
$T_{\text{max}}=\infty$, and $u\in C([0,\infty);H^{1,\somega,\sgamma})$.

In particular, when \eqref{subSobolevcritical} holds, for any $(u_0,u_1)\in H^1\times H^0$ problem \eqref{1.1} has a unique global weak solution.

\item {\bf (Global Hadamard well--posedness)}  Suppose that (PQ1--4), (FG1), (FG2)$'$,  (FGQP1--2)
and \eqref{R''} hold. Then problem \eqref{1.1} is  globally well--posed in $H^{1,s_1,s_2}\times H^0$ for
$s_1$ and $s_2$ satisfying \eqref{ranges2ter}, that is $T_{\text{max}}=\infty$ in Theorem~\ref{theorem6.3}.

Consequently the semi--flow generated by problem \eqref{1.2} is a dynamical system in $H^{1,s_1,s_2}\times H^0$.

In particular, when $2\le p<\romega$ and $2\le q<\rgamma$ and under assumptions (PQ1--3), (FG1), (FG2)$'$,  (FGQP1--2),
problem \eqref{1.1} is globally well--posed in $H^1\times H^0$, so the semi--flow generated by \eqref{1.1} is a dynamical system in $H^1\times H^0$.
\end{enumerate}
\end{thm}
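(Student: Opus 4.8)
The plan is to prove part (i) by an a priori energy bound combined with the blow--up alternative of Theorem~\ref{theorem5.1}(iii): if $\|U(t)\|_{\cal H}$ stays bounded on every bounded subinterval of $[0,T_{\text{max}})$, then necessarily $T_{\text{max}}=\infty$. So fix $(u_0,u_1)\in\cal{H}^{\lomega,\lgamma}$ and let $u$ be the maximal weak solution from Theorem~\ref{theorem5.1}; since $H^{1,\lomega,\lgamma}\hookrightarrow H^{1,p,q}$ (by \eqref{ellis234} together with Sobolev embedding), we have $u_0\in H^{1,p,q}$, so by Lemma~\ref{Nemitskii} the potentials $\mathfrak{F}(\cdot,u_0),\mathfrak{G}(\cdot,u_0)$ are integrable and, by Corollary~\ref{corollary5.3}, $u\in C([0,T_{\text{max}});H^{1,\lomega,\lgamma})\hookrightarrow C([0,T_{\text{max}});H^{1,p,q})$. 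First I would rewrite the source terms in the energy identity \eqref{energyidentity} as time derivatives of the potentials, i.e. $\int_s^t\int_\Omega f(\cdot,u)u_t=\int_\Omega[\mathfrak{F}(\cdot,u(t))-\mathfrak{F}(\cdot,u(s))]$ and likewise on $\Gamma_1$; this chain rule along the trajectory is justified by the abstract Leibnitz formula of Lemma~\ref{lemma4.2} (with $X_1=L^p(\Omega)$, $X_2=L^{\overline m}(\Omega)$ and the analogues on $\Gamma_1$), exactly as in \cite{Dresda1}. Introducing $\Phi(t)=\tfrac12\|\dot u(t)\|_{H^0}^2+\tfrac12\langle Au(t),u(t)\rangle_{H^1}$ and the total potential $P(t)=\int_\Omega\mathfrak{F}(\cdot,u(t))+\int_{\Gamma_1}\mathfrak{G}(\cdot,u(t))$, the energy identity together with (PQ3) yields $\Phi(t)-P(t)+D(t)=\Phi(0)-P(0)$, where $D(t)=\int_0^t\int_\Omega P(\cdot,u_t)u_t+\int_0^t\int_{\Gamma_1}Q(\cdot,{u_{|\Gamma}}_t){u_{|\Gamma}}_t\ge0$, whence $\Phi(t)\le\Phi(0)-P(0)+P(t)$.

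The heart of the matter is to bound $P(t)$ from above using assumption (FGQP2) and the fact that $p_1\le\overline m$, $q_1\le\overline\mu$. By (FGQP2), $P(t)\le C[1+\|u(t)\|_2^2+\|u(t)\|_{2,\Gamma_1}^2+\int_\Omega\alpha|u(t)|^{p_1}+\int_{\Gamma_1}\beta|u(t)|^{q_1}]$. The $L^2$ terms are harmless: from $u(t)=u_0+\int_0^t u_t$ one gets $\|u(t)\|_2^2\le2\|u_0\|_2^2+2t\int_0^t\|\dot u\|_{H^0}^2\le2\|u_0\|_2^2+4T\int_0^t\Phi$, and likewise on $\Gamma_1$. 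For the genuinely nonlinear terms, when $p_1=2$ one simply uses $\int_\Omega\alpha|u|^2\le\|\alpha\|_\infty\|u\|_2^2$; when $p_1>2$, assumption \eqref{V.3} guarantees $\essinf_\Omega\alpha>0$ and $\overline m=m\ge p_1$, and I would write $\|\alpha^{1/p_1}u(t)\|_{p_1}\le\|\alpha^{1/p_1}u_0\|_{p_1}+\int_0^t\|\alpha^{1/p_1}u_t\|_{p_1}$. Using H\"older in $x$ with exponents $m/(m-p_1)$ and $m/p_1$ to interpolate $\int_\Omega\alpha|u_t|^{p_1}$ between $\|\alpha\|_1$ and $\int_\Omega\alpha|u_t|^m$, and then H\"older in time, gives $\int_0^t\|\alpha^{1/p_1}u_t\|_{p_1}\le C\,t^{1/m'}\big(\int_0^t\int_\Omega\alpha|u_t|^m\big)^{1/m}$. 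Since by (Plow) $\int_0^t\int_\Omega\alpha|u_t|^m\le\tfrac1{c_m'}D(t)\le\tfrac1{c_m'}(\Phi(0)-P(0)+P(t))$, I obtain $\int_\Omega\alpha|u(t)|^{p_1}\le C_1+C_2\,t^{p_1/m'}(\Phi(0)-P(0)+P(t))^{p_1/m}$, with exponent $p_1/m\le1$; the boundary term $\int_{\Gamma_1}\beta|u|^{q_1}$ is handled identically with $\mu,q_1$ replacing $m,p_1$.

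I would then close the estimate. If $p_1<m$ (and $q_1<\mu$) the exponent $p_1/m<1$ lets Young's inequality split $(\cdot)^{p_1/m}\le\eps(\cdot)+C_\eps$, and choosing $\eps$ small (the factor $t^{p_1/m'}\le T^{p_1/m'}$ being bounded on $[0,T]$) absorbs the $P(t)$ contribution into the left--hand side, giving $P(t)\le C(1+\int_0^t\Phi)$. In the borderline case $p_1=m$ (so $p_1/m=1$) the same absorption works only for $t$ in a short interval $[0,\delta]$ with $\delta$ depending solely on the fixed constants; one bounds $P$ on $[0,\delta]$, uses $u\in C([0,T_{\text{max}});H^{1,p,q})$ to restart from $\delta$ with $\int_\Omega\alpha|u(\delta)|^{p_1}$ finite, and iterates finitely many times to reach any $T$. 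In either case $\Phi(t)\le\Phi(0)-P(0)+P(t)\le C(1+\int_0^t\Phi)$, and Gronwall's inequality gives $\Phi(t)\le Ce^{Ct}$; together with the $L^2(\Gamma_1)$ bound this controls $\|U(t)\|_{\cal H}$ on every $[0,T]$, so $T_{\text{max}}=\infty$ by Theorem~\ref{theorem5.1}(iii). The asserted regularity is the one already supplied by Corollary~\ref{corollary5.3}, and the special case follows because \eqref{subSobolevcritical} forces $\lomega=\lgamma=2$, i.e. $H^{1,\lomega,\lgamma}=H^1$.

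Parts (ii) and (iii) I would deduce by combining (i) with the local results. Since $\lomega\le\somega\le s_1$ and $\lgamma\le\sgamma\le s_2$ (by \eqref{lsomegagammaMINORE} and \eqref{ranges2ter}) and $\Omega$ is bounded, the embeddings $H^{1,s_1,s_2}\hookrightarrow H^{1,\somega,\sgamma}\hookrightarrow H^{1,\lomega,\lgamma}$ hold; thus the global existence of (i) applies to the unique maximal solutions of Theorem~\ref{theorem6.2} and to the solutions of Theorem~\ref{theorem6.3}, forcing $T_{\text{max}}=\infty$ there. For (ii) this yields uniqueness and $u\in C([0,\infty);H^{1,\somega,\sgamma})$; for (iii), with $T_{\text{max}}=\infty$ (and likewise $T^n_{\text{max}}=\infty$) conclusion (i) of Theorem~\ref{theorem6.3} is automatic and its conclusion (ii) holds for every finite $T^*$, which is exactly global well--posedness, so the induced semi--flow is a dynamical system; the subcritical special cases reduce as before to $H^1\times H^0$. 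The main obstacle I expect is the a priori bound of the two middle paragraphs, specifically controlling the nonlinear potential $\int_\Omega\alpha|u(t)|^{p_1}$ by the damping dissipation and disentangling the resulting self--referential inequality for $P(t)$ in the borderline case $p_1=\overline m$ (and $q_1=\overline\mu$) via the short--interval continuation; an ancillary delicate point is the rigorous chain rule for the primitives $\mathfrak{F},\mathfrak{G}$ along a trajectory of only weak regularity.
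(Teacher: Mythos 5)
Your proposal is correct in substance and reaches the same reduction as the paper (parts (ii) and (iii) are indeed obtained by feeding part (i) into Theorems~\ref{theorem6.2} and \ref{theorem6.3} via the embeddings $H^{1,s_1,s_2}\hookrightarrow H^{1,\somega,\sgamma}\hookrightarrow H^{1,\lomega,\lgamma}$), but your a priori estimate for part (i) follows a genuinely different route. The paper introduces the auxiliary functional $\cal{I}(v)=C_{p_1}\int_\Omega\alpha|v|^{p_1}+C_{q_1}\int_{\Gamma_1}\beta|v|^{q_1}$, adds it to the energy to form $\Upsilon(t)$, differentiates $\cal{I}(u(t))$ in time, and estimates the resulting integrand $\int_\Omega\alpha|u|^{p_1-1}|u_t|$ by a weighted Young inequality, absorbing the small multiple of $\int_\Omega\alpha|u_t|^{p_1}\le\|\alpha\|_\infty|\Omega|+\|[u_t]_\alpha\|_{m,\alpha}^m$ into the dissipation $-c_m'\|[u_t]_\alpha\|_{m,\alpha}^m$ \emph{pointwise in time} before applying Gronwall; this treats $p_1<m$ and $p_1=m$ uniformly. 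You instead integrate the dissipation first, control $\|\alpha^{1/p_1}u(t)\|_{p_1}$ by the fundamental theorem of calculus plus H\"{o}lder in space and time against $\bigl(\int_0^t\int_\Omega\alpha|u_t|^m\bigr)^{1/m}\le (D(t)/c_m')^{1/m}$, and close the resulting self--referential inequality for the potential by Young when $p_1<m$ and by a short--interval absorption--and--restart iteration when $p_1=m$. Both arguments are sound; the paper's buys a single uniform Gronwall step at the cost of setting up the auxiliary functional $\cal{I}$ and its chain rule, while yours avoids $\cal{I}$ entirely but pays with the case distinction and the finite iteration in the borderline case.

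One technical point to repair: the chain rule $\int_s^t\int_\Omega f(\cdot,u)u_t=\int_\Omega\mathfrak{F}(\cdot,u(t))-\int_\Omega\mathfrak{F}(\cdot,u(s))$ does \emph{not} follow from Lemma~\ref{lemma4.2}, which is a product rule for the bilinear duality pairing $\langle v,u\rangle_{X_1}$ and cannot be applied to the nonlinear functional $v\mapsto\int_\Omega\mathfrak{F}(\cdot,v)$. The paper proves a separate abstract chain rule (Lemma~\ref{chainrule}, via mollification in time and compactness of the trajectory $u([s,t])$ in $H^{1,p,q}$) precisely for this purpose, applied with $X_1=H^{1,p,q}$ and $Y_1=L^{\widetilde{m_p}}(\Omega)\times L^{\widetilde{\mu_q}}(\Gamma_1)$. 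You correctly flag this as the delicate point, and the statement you need is true, but you should invoke (or reprove) that lemma rather than Lemma~\ref{lemma4.2}. With that substitution your argument goes through.
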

To prove Theorem~\ref{theorem7.1} we shall use  following abstract version of the classical chain rule, which proof is given for the reader's convenience.
\begin{lem}\label{chainrule} Let $X_1$ and $Y_1$ be real Banach spaces such that $X_1\hookrightarrow Y_1$ with dense embedding, so that
$Y_1'\hookrightarrow X_1'$, and let $I$ be a bounded real interval.

Then for any $J_1\in C^1(X_1)$ having Fr\`{e}chet derivative $J_1'\in C(X_1;Y_1')$ and
any $w\in W^{1,1}(I;Y_1)\cap C(\overline{I};X_1)$ we have
 $J_1\cdot w\in W^{1,1}(I)$ and $(J_1\cdot w)'=\langle J_1'\cdot w,w'\rangle_{Y_1}$ almost everywhere in $I$, where $\cdot$ denotes the composition product.
\end{lem}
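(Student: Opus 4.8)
The plan is to reduce the statement to the single integral identity
\[
J_1(w(t)) - J_1(w(s)) = \int_s^t \langle J_1'(w(\tau)), w'(\tau)\rangle_{Y_1}\, d\tau \quad \text{for all } s \le t \text{ in } \overline I, \quad (\star)
\]
and then to read off both conclusions from it. First I would check that the candidate derivative $\tau \mapsto \langle J_1'(w(\tau)), w'(\tau)\rangle_{Y_1}$ lies in $L^1(I)$: the map $\tau \mapsto J_1'(w(\tau))$ is continuous from $\overline I$ into $Y_1'$ (composition of the continuous $w\colon\overline I\to X_1$ with $J_1'\in C(X_1;Y_1')$), hence bounded on the compact $\overline I$, while $w' \in L^1(I;Y_1)$ by hypothesis, so the pairing is dominated by a constant times $\|w'(\cdot)\|_{Y_1}\in L^1(I)$. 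Granting $(\star)$, the continuous function $t \mapsto J_1(w(t))$ is then an indefinite Lebesgue integral of an $L^1$ function, hence absolutely continuous, i.e. it belongs to $W^{1,1}(I)$, and its a.e. derivative is precisely the integrand. A fact used repeatedly is the compatibility of the two pairings: since $X_1 \hookrightarrow Y_1$ densely, the adjoint embedding $Y_1' \hookrightarrow X_1'$ satisfies $\langle \Phi, h\rangle_{X_1} = \langle \Phi, h\rangle_{Y_1}$ for every $\Phi \in Y_1'$ and $h \in X_1$, which lets me evaluate $J_1'(x)\in Y_1'$ against increments $h\in X_1\subset Y_1$ in either space interchangeably.

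To prove $(\star)$ I would regularize $w$ in time. Working first on a subinterval $I' \Subset I$, let $w_\eps = w * \rho_\eps$ be the mollification (a legitimate $X_1$-valued Bochner integral, since $w \in C(\overline I; X_1)$). Then $w_\eps \in C^1(\overline{I'}; X_1)$ with $w_\eps' = (w')_\eps$, and as $\eps \to 0$ one has the standard convergences $w_\eps \to w$ in $C(\overline{I'}; X_1)$ and $w_\eps' \to w'$ in $L^1(I'; Y_1)$. For the smooth $X_1$-valued curve $w_\eps$ the elementary chain rule for Fr\'echet-$C^1$ functionals applies on $X_1$, giving, after integration,
\[
J_1(w_\eps(t)) - J_1(w_\eps(s)) = \int_s^t \langle J_1'(w_\eps(\tau)), w_\eps'(\tau)\rangle_{X_1}\, d\tau = \int_s^t \langle J_1'(w_\eps(\tau)), w_\eps'(\tau)\rangle_{Y_1}\, d\tau
\]
for $s,t \in \overline{I'}$, the last equality being the duality compatibility (note $w_\eps'(\tau)\in X_1\subset Y_1$). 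It then remains to pass to the limit $\eps \to 0$ and to extend the resulting equality from $\overline{I'}$ to $\overline I$ by continuity of both sides in $(s,t)$.

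The left-hand side converges to $J_1(w(t)) - J_1(w(s))$ since $J_1$ is continuous and $w_\eps \to w$ in $X_1$. For the right-hand side I would split $\langle J_1'(w_\eps), w_\eps'\rangle_{Y_1} - \langle J_1'(w), w'\rangle_{Y_1} = \langle J_1'(w_\eps) - J_1'(w), w_\eps'\rangle_{Y_1} + \langle J_1'(w), w_\eps' - w'\rangle_{Y_1}$; the second term tends to $0$ in $L^1(I')$ because $J_1'(w(\cdot))$ is bounded in $Y_1'$ and $w_\eps' \to w'$ in $L^1(I'; Y_1)$. The main obstacle is the first term, which requires the uniform convergence $J_1'(w_\eps(\cdot)) \to J_1'(w(\cdot))$ in $C(\overline{I'}; Y_1')$ (the $L^1$-bound on $w_\eps'$ being uniform in $\eps$). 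This is exactly where the hypothesis $J_1' \in C(X_1; Y_1')$ is essential: the images $w_\eps(\overline{I'})$ together with $w(\overline{I'})$ lie within a vanishing $X_1$-neighborhood of the compact set $w(\overline{I'})$, hence form a relatively compact subset of $X_1$, and on a compact set the continuous map $J_1'$ is uniformly continuous, so $\sup_{\tau}\|J_1'(w_\eps(\tau)) - J_1'(w(\tau))\|_{Y_1'} \to 0$. Once this is secured, dominated convergence closes the passage to the limit and yields $(\star)$.
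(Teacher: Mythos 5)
Your proof is correct and follows essentially the same route as the paper's: mollify the curve in time, apply the elementary chain rule to the smooth $X_1$-valued approximations, and pass to the limit using the continuity of $J_1$ and $J_1'$ together with the relative compactness in $X_1$ of the union of the trajectories. The only differences are cosmetic — you work with the fundamental-theorem-of-calculus identity on subintervals $I'\Subset I$ and conclude by uniform convergence of $J_1'(w_\eps(\cdot))$, whereas the paper extends $w$ to $\R$ by reflection, phrases the limit in terms of the weak (distributional) derivative against test functions, and uses dominated convergence along a subsequence.
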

\begin{proof} We first note that, when $w\in C^1(\R;X_1)$, by the chain rule for the
Fr\`{e}chet derivative (see \cite[Proposition~1.4, p. 12]{ambrosettiprodi}), we have $J_1\cdot w\in C^1(\R)$ and
$$(J_1\cdot w)'=\langle J_1'\cdot w,w'\rangle_{X_1}=\langle J_1'\cdot w,w'\rangle_{X_1}\qquad\text{in $\R$.}$$

When $w\in W^{1,1}(I;Y_1)\cap C(\overline{I};X_1)$ we first extend it by reflexion to $w\in W^{1,1}(\R;Y_1)\cap C(\R;X_1)$
as in \cite[Theorem~8.6, p.~209]{brezis2}). Then, denoting by $(\rho_n)_n$ a standard sequence of mollifiers and  by $*$ the standard convolution product in $\R$,
we set $w_n=\rho_n *w$, so $w_n\in C^1(\R;X_1)$ and, as in \cite[Proposition~4.21, p.~108 and proof of Theorem~8.7, p.~211]{brezis2}),
we have ${w_n}_{|I}\to w$ in $W^{1,1}(I;Y_1)\cap C(\overline{I};X_1)$. By previous remark
\begin{equation}\label{chainruleq1}
\int_I J_1\cdot w_n\varphi'=-\int_I\langle J_1'\cdot w_n,w_n'\rangle_{Y_1}\phi\qquad\text{for all $\phi\in C_c^1(I)$ and $n\in\N$.}
\end{equation}
We now claim that $C_0:=\bigcup_{i=0}^\infty w_{k_i}(\overline{I})$ (where we denoted $w_0=w$) is compact in $X_1$.
Indeed, given any sequence $(x_n)_n$ in $C_0$, either there is $N_0\in\N$ such that $x_n\in \bigcup_{i=0}^{N_0} w_{k_i}(\overline{I})$ for all $n\in\N$, and hence
$(x_n)_n$ has a convergent subsequence since this set is compact, or
there are sequences $(t_n)_n$ in $\overline{I}$ and $(k_n)_n$ in $\N_0$ such that
$x_n=w_{k_n}(t_n)$ for all $n\in\N$ and  $k_n\to \infty$.
Then $t_n\to \overline{t}\in\overline{I}$,  up to a subsequence, and consequently
$$\|x_n-w(\overline{t})\|_{X_1}\le\|w_{k_n}(t_n)-w(t_n)\|_{X_1}+\|w(t_n)-w(\overline{t})\|_{X_1}\to 0,$$
proving our claim.

We now pass to the limit in \eqref{chainruleq1}. By the continuity of $J_1$ and our claim  we get that $J_1\cdot w_n\to J_1\cdot w$ in $I$ and that
$J_1(C_0)$ is compact, and hence bounded, in $X_1$. Consequently $(J_1\cdot w_n)_n$ is uniformly bounded in $I$ and we get
$\lim_n \int_I J_1\cdot w_n\varphi'=\int_I J_1\cdot w\,\varphi'$. Moreover, up to a subsequence, there is $\psi\in L^1(I)$ such
that $w_n'\to w'$  and  $\|w_n'\|_{Y_1}\le \psi$ a.e. in $I$. By the continuity of $J_1'$ and our claim  we get that $J_1'\cdot w_n\to J_1'\cdot w$
in $I$ and that $J_1'(C_0)$  is compact in $Y_1'$. Consequently $$M:=\sup\{\|J_1'\cdot w_n(t)\|_{Y_1'}, n\in\N, t\in I\}<\infty.$$
It follows that $\langle J_1'\cdot w_n,w_n'\rangle_{Y_1}\to \langle J_1'\cdot w,w'\rangle_{Y_1}$ a.e in $I$ and that
$\langle J_1'\cdot w_n,w_n'\rangle_{Y_1}\le M\psi$, so  $\lim_n \int_I\langle J_1'\cdot w_n,w_n'\rangle_{Y_1}=\int_I\langle J_1'\cdot w,w'\rangle_{Y_1}$.
\end{proof}

\begin{proof}[Proof of Theorem~\ref{theorem7.1}]
We first remark that, since $\cal{H}^{\somega,\sgamma}\subset\cal{H}^{\lomega,\lgamma}$,  parts (ii) and (iii) simply follow by combining Theorems~\ref{theorem6.2}--\ref{theorem6.3} with part (i),
hence in the sequel we are just going to prove it.
Since $\cal{H}^{\lomega,\lgamma}\subset\cal{H}^{\sigmaomega,\sigmagamma}$ by Theorem~\ref{theorem5.1}
problem \eqref{1.1} has a maximal weak solution $u$ in $[0,T_{\max})$ and
\begin{equation}\label{CCCP1234}
\lim\limits_{t\to T^-_{\text{max}}}
\|u(t)\|_{H^1}+\|u'(t)\|_{H^0}=\infty.
\end{equation}
provided $T_{\max}<\infty$. Moreover, by \eqref{lomegagamma} and \eqref{lsomegagamma}, the couple $(\lomega,\lgamma)$ satisfies \eqref{spaziuguali} and \eqref{ranges1}, so
 $u\in C([0,T_{\text{max}});H^{1,\lomega,\lgamma})$ by Corollary~\ref{corollary5.3}.
We no suppose by contradiction that $T_{\max}<\infty$, so \eqref{CCCP1234} holds.

By  (FG1)  and  Sobolev embedding theorem we can set the potential operator
$J:H^{1,p,q}\to\R$ by
\begin{equation}\label{potentialJ}
J(v)=\int_\Omega \mathfrak{F}(\cdot,v)+\int_{\Gamma_1} \mathfrak{G}(\cdot,v_{|\Gamma})\qquad\text{for all $v\in H^{1,p,q}$,}
\end{equation}
and, using standard results on Nemitskii operators (see \cite[pp. 16--22]{ambrosettiprodi})
one easily gets that $J\in C^1(H^{1,p,q})$, with  Fr\`echet derivative $J'=(\widehat{f},\widehat{g})$. Moreover, by Lemma~\ref{Nemitskii},
$\widehat{f}\in C(L^p(\Omega);L^{p'}(\Omega))$, $\widehat{f}\in C(H^1(\Omega);L^{m_p'}(\Omega))$,
$\widehat{g}\in C(L^q(\Gamma_1);L^{q'}(\Gamma_1))$ and $\widehat{g}\in C(H^1(\Gamma)\cap L^2(\Gamma_1);L^{\mu_q'}(\Gamma_1))$.
Hence, setting the auxiliary exponents
$$\widetilde{m_p}=
\begin{cases}
m_p=2 &\text{if $p\le 1+\romega/2$},\\
m_p=m &\text{if $p>\max\{m,1+\romega/2\}$},\\
p &\text{if $1+\romega/2<p\le m$},
\end{cases}
\widetilde{\mu_q}=\begin{cases}
\mu_q=2 &\text{if $q\le 1+\rgamma/2$},\\
\mu_q=\mu &\text{if $q>\max\{\mu,1+\rgamma/2\}$},\\
q &\text{if $1+\rgamma/2<q\le \mu$},
\end{cases}
$$
we have $J'=(\widehat{f},\widehat{g})\in C(H^{1,p,q};L^{\widetilde{m_p}'}(\Omega)\times L^{\widetilde{\mu_q}'}(\Gamma_1))$.
We also introduce the functional $\cal{I}:H^{1,p,q}\to\R^+_0$ given by
\begin{equation}\label{6.106bis}
\cal{I}(v)=C_{p_1}\int_\Omega \alpha|v|^{p_1}+C_{q_1}\int_{\Gamma_1}\beta|v|^{q_1}.
\end{equation}
Since by \eqref{V.2} we have $p_1\le p$ and $q_1\le q$, the functions
\begin{equation}\label{frakfg}
\mathfrak{f}(x,u)=p_1C_{p_1}\alpha(x)|u|^{p_1-2}u\qquad\text{and}\quad \mathfrak{g}(x,u)=q_1C_{q_1}\beta(x)|u|^{q_1-2}u
\end{equation}
satisfy assumption (FG1) with exponents $p$ and $q$, hence by repeating previous arguments
$\cal{I}\in C^1(H^{1,p,q})$, with  Fr\`echet derivative
$\cal{I}'=(\mathfrak{f},\mathfrak{g})\in C(H^{1,p,q};L^{\widetilde{m_p}'}(\Omega)\times L^{\widetilde{\mu_q}'}(\Gamma_1))$.

We are now going to apply Lemma~\ref{chainrule}, with $X_1=H^{1,p,q}$ and $Y_1=L^{\widetilde{m_p}}(\Omega)\times L^{\widetilde{\mu_q}}(\Gamma_1)$,
to the potential operators $J_1=J$ and $J_1=\cal{I}$, to $w=u$ and to $I=[s,t]\subset [0,T_{\max})$.
By the definition of  $\widetilde{m_p}$, $\widetilde{\mu_q}$ and \cite[Lemma~2.1]{Dresda1} we have
$H^{1,p,q}\hookrightarrow L^{\widetilde{m_p}}(\Omega)\times L^{\widetilde{\mu_q}}(\Gamma_1)$ with dense embedding.
Moreover, since $H^{1,\lomega,\lgamma}\hookrightarrow H^{1,p,q}$, we have $u\in C([0,T_{\text{max}});H^{1,p,q})$.
Next, since by definition $\widetilde{m_p}\le m_p$ and $\widetilde{\mu_q}\le \mu_q$, by \eqref{ZLMP} we have
$u'\in L^1_{\loc}([0,T_{\max});L^{\widetilde{m_p}}(\Omega)\times
L^{\widetilde{\mu_q}}(\Gamma_1))$.

Then, by Lemma~\ref{chainrule}, we have $J\cdot u,\, \cal{I}\cdot u\in W^{1,1}_{\loc}([0,T_{\max}))$  and
\begin{align}\label{potentialderivative}
J(u(t))-J(u(s))=&\int_s^t\left[\int_\Omega f(\cdot,u)u_t+\int_{\Gamma_1} g(\cdot,u_{|\Gamma})(u_{|\Gamma})_t\right],\\\label{potentialderivative1}
\cal{I}(u(t))-\cal{I}(u(s))=&\int_s^t\left[\int_\Omega \mathfrak{f}(\cdot,u)u_t+\int_{\Gamma_1}\mathfrak{g}(\cdot,u_{|\Gamma})(u_{|\Gamma})_t\right],
\end{align}
for all $s,t\in [0,T_{\max})$,
where $\frak{f}$ and $\frak{g}$ are given by \eqref{frakfg}.

We also introduce the energy functional $\cal{E}\in C^1(\cal{H}^{p,q})$
defined for $(v,w)\in\cal{H}^{p,q}$ by
 \begin{equation}\label{energyfunction}
\cal{E}(v,w)=\frac 12 \|w\|_{H^0}^2+\tfrac 12\int_\Omega |\nabla v|^2+\tfrac 12\int_{\Gamma_1}|\nabla_{\Gamma} v|_{\Gamma}^2-J(v),
 \end{equation}
  and the energy function associated to $u$ by
 \begin{equation}\label{energyfunctionbis}
 \cal{E}_{u}(t)=\cal{E}(u(t),u'(t)),\qquad\text{for all $t\in [0,T_{\max})$}.
 \end{equation}
By \eqref{potentialderivative} and \eqref{energyfunction} the energy identity \eqref{energyidentity}
can be rewritten as
\begin{equation}\label{eniddd}
\cal{E}_u(t)-\cal{E}_u(s)+\int_s^t\langle B(u'),u'\rangle_Y=0\quad\text{for all $s,t\in [0,T_{\max})$.}
\end{equation}
Consequently,  by   \eqref{2.5}, \eqref{energyfunction} and \eqref{energyfunctionbis}, for $t\in [0,T_{\max})$ we have
\begin{equation}\label{nnn6.6}
\tfrac 12 \|u'(t)\|_{H^0}^2+\tfrac 12 \|u(t)\|_{H^1}^2\le \cal{E}_u(0)+\tfrac 12 \|u(t)\|_{H^0}^2+J(u(t))
-\int_0^t\langle B(u'),u'\rangle_Y.
\end{equation}

The proof can then be completed, starting from \eqref{nnn6.6}, as in \cite[Proof of Theorem 6.2]{Dresda1}, since
\eqref{nnn6.6} is nothing but (the correct form of) formula \cite[(6.18)]{Dresda1}. For the reader's convenience we repeat in the sequel the arguments used there.

We introduce an auxiliary function associated to $u$ by
\begin{equation}\label{6.105}
\Upsilon(t)=\tfrac 12 \|u'(t)\|_{H^0}^2+\tfrac 12 \|u(t)\|_{H^1}^2+\cal{I}(u(t)),\qquad\text{for all $t\in [0,T_{\max})$}.
\end{equation}
By \eqref{nnn6.6} and \eqref{6.105} we have
\begin{equation}\label{6.106}
\Upsilon(t)\le\cal{E}_u(0)+\tfrac 12 \|u(t)\|_{H^0}^2+J(u(t))+\cal{I}(u(t))-\int_0^t\langle B(u'),u'\rangle_W.
\end{equation}
By \eqref{6.106bis} and assumption (FGQP1)  we get
\begin{equation}\label{new}
J(v)\le \left[C_{p_1}|\Omega|+C_{q_1}\sigma(\Gamma)\right]\, \left(1+\|v\|_{H^0}^2\right)+\cal{I}(v)\qquad\text{for all $v\in H^1$.}
\end{equation}
By \eqref{6.106}-- \eqref{new} we thus obtain
\begin{equation}\label{new2}
\Upsilon(t)\le \cal{E}_u(0)+k_{18}+k_{18}\|u(t)\|_{H^0}^2+2\cal{I}(u(t))-\int_0^t\langle B(u'),u'\rangle_W,
\end{equation}
where $k_{18}=C_{p_1}|\Omega|+C_{q_1}\sigma(\Gamma)+1/2$. Writing $\|u(t)\|_{H^0}^2=\|u_0\|_{H^0}^2+2\int_0^t (u',u)_{H^0}$ in \eqref{new2}
and using  \eqref{6.106bis} and \eqref{potentialderivative1} we get
\begin{equation}\label{6.107}
\begin{aligned}
\Upsilon(t)\le & K_{31}+\int_0^t \left[2k_{18}(u',u)_{H^0}-\langle B(u'),u'\rangle_W\phantom{\int}\right.\\+& \left.2p_1C_{p_1}\int_\Omega \alpha|u|^{p_1-2}uu_t+2q_1C_{q_1}\int_{\Gamma_1}\beta|u|^{q_1-2}u(u_{|\Gamma})_t\right],
\end{aligned}
\end{equation}
where $K_{31}=K_{31}(u_0,u_1)=\cal{E}_u(0)+2\cal{I}(u_0)+k_{18}(1+\|u_0\|_{H^0}^2)$.
Consequently, by assumption (PQ3),  Cauchy--Schwartz  and Young inequalities, we get the preliminary estimate
\begin{equation}\label{6.108}
\begin{aligned}
\Upsilon(t)\le &K_{31}+\int_0^t\!\!\left[- c_m'\|[u_t]_\alpha\|_{m,\alpha}^m\negquad-c_\mu'\|[(u_\Gamma)_t]_\beta\|_{\mu,\beta}^\mu\!\!+k_{18}\left(\|u'\|_{H^0}^2+\|u\|_{H^0}^2\right)\right.\\+&\left.2p_1C_{p_1}\int_\Omega \alpha|u_t| |u|^{p-1}|u_t|+2q_1C_{q_1}\int_{\Gamma_1}\beta |u|^{q-1}|(u_{|\Gamma})_t|
|(u_{|\Gamma})_t|\right]
\end{aligned}
\end{equation}
for all $t\in[0,T_{\max})$.
We now estimate, a.e. in $[0,T_{\max})$, the last three integrands  in the right--hand side of \eqref{6.108}.
By \eqref{6.105} we get
\begin{equation}\label{6.109}
k_{18}\|u'\|_{H^0}^2\le 2k_{18} \Upsilon.
\end{equation}
Moreover, by the embedding $H^1(\Omega;\Gamma)\hookrightarrow L^2(\Omega)\times L^2(\Gamma)$,
\begin{equation}\label{6.110}
 \|u\|_{H^0}^2\le k_{19} \|u\|_{H^1}^2.
\end{equation}
Consequently, by \eqref{6.105},
\begin{equation}\label{6.111}
k_{18}\|u\|_{H^0}^2\le  k_{20}\Upsilon.
\end{equation}
To estimate the addendum $2p_1C_{p_1}\int_\Omega \alpha|u|^{p_1-1}|u_t|$ we now distinguish between the cases $p_1=2$ and $p_1>2$.
When $p_1=2$,  by \eqref{6.105}, \eqref{6.111} and Young inequality,
\begin{equation}\label{6.112}
2p_1C_{p_1}\int_\Omega \alpha|u|^{p-1}|u_t|\le p_1C_{p_1}\|\alpha\|_\infty(\|u\|_{H^0}^2+\|u'\|_{H^0}^2)\le k_{21}\Upsilon,
\end{equation}
where $k_{21}=2p_1C_{p_1}\|\alpha\|_\infty (1+k_{19})$.

When $p_1>2$, for any $\eps\in (0,1]$ to be fixed later, by  weighted Young inequality
\begin{equation}\label{6.114}
2p_1C_{p_1}\int_\Omega \alpha|u|^{p_1-1}|u_t|\le 2(p_1-1)C_{p_1}\eps^{1-p_1'}\int_\Omega \alpha|u|^{p_1}+2\eps C_{p_1}\int_\Omega \alpha |u_t|^{p_1}.
\end{equation}
By \eqref{6.105} we have
\begin{equation}\label{6.114bis}
2(p_1-1)C_{p_1}\eps^{1-p_1'}\int_\Omega \alpha|u|^{p_1}\le 2(p_1-1)\eps^{1-p_1'}\Upsilon.
\end{equation}
Moreover by \eqref{V.2} we have $p_1\le m=\overline{m}$ and consequently  $|u_t|^{p_1}\le 1+|u_t|^m$ a.e. in $\Omega$, which yields
\begin{equation}\label{6.115}
\int_\Omega \alpha |u_t|^{p_1}\le \int_\Omega\alpha+\int_\Omega \alpha |u_t|^m\le \|\alpha\|_\infty |\Omega|+\|[u_t]_\alpha\|_{m,\alpha}^m.
\end{equation}
Plugging \eqref{6.114bis} and \eqref{6.115} in \eqref{6.114} we get, as $\eps\le1$,
\begin{equation}\label{6.116}
2p_1C_{p_1}\int_\Omega \alpha|u|^{p_1-1}|u_t|\le k_{22}\left(\eps^{1-p_1'}\Upsilon+\eps \|[u_t]_\alpha\|_{m,\alpha}^m+1\right).
\end{equation}
Comparing \eqref{6.110} and \eqref{6.116} we get that for $p\ge 2$ we have
\begin{equation}\label{6.117}
2p_1C_{p_1}\int_\Omega \alpha|u|^{p_1-1}|u_t|\le k_{23}\left[(1+\eps^{1-p_1'})\Upsilon+\eps \|[u_t]_\alpha\|_{m,\alpha}^m+1\right].
\end{equation}
We estimate the last integrand in the right--hand side of
\eqref{6.108} by transposing from $\Omega$ to $\Gamma_1$ the arguments used to get \eqref{6.117}. At the end we get
\begin{equation}\label{6.118}
2q_1C_{q_1}\int_{\Gamma_1} \beta|u|^{q_1-1}|(u_{|\Gamma})_t|\le
k_{24}\left[(1+\eps^{1-q_1'})\Upsilon+\eps \|[(u_{|\Gamma})_t]_\beta\|_{\mu,\beta,\Gamma_1}^\mu+1\right].
\end{equation}
Plugging estimates \eqref{6.109}, \eqref{6.111}, \eqref{6.117} and \eqref{6.118} into \eqref{6.108} we get
 \begin{multline}\label{6.119}
   \Upsilon(t)\le K_{31}+\int_0^t \left[(k_{23}\eps-c_m')\|[u_t]_\alpha\|_{m,\alpha}^m+(k_{24}\eps-c_\mu')\|[(u_\Gamma)_t]_\beta\|_{\mu,\beta}^\mu\right]\\+
   k_{25}\int_0^t\left[(1+\eps^{1-p_1'}+\eps^{1-q_1'})\Upsilon+1\right]\qquad\text{for all $t\in [0,T_{\max})$.}
 \end{multline}
 Fixing $\eps=\eps_1$, where $\eps_1=\min\{1, c_m'/k_{23}, c_\mu'/k_{24}\}$, and setting $K_{32}=K_{32}(u_0,u_1)=K_{31}(u_0,u_1)+k_{25}(1+\eps_1^{1-p_1'}+\eps_1^{1-q_1'})$, the estimate
 \eqref{6.119} reads as
 $$\Upsilon(t)\le K_{32}(1+t)+K_{32}\int_0^t \Upsilon(s)\,ds\qquad\text{for all $t\in [0,T_{\max})$.}$$
 Then, since $T_{\max}<\infty$, by Gronwall Lemma (see \cite[Lemma 4.2, p. 179]{showalter}),   $\Upsilon$ is bounded in $[0,T_{\max})$, getting, by
 \eqref{CCCP1234} and \eqref{6.105}, the desired contradiction.
\end{proof}
We now state and prove, for the sake of clearness, two corollaries
of Theorem~\ref{theorem7.1}--(i)  which generalize
Corollaries~\ref{corollary1.5}--\ref{corollary1.6} in the
introduction. The discussion made there applies here as
well.
\begin{cor} \label{corollary7.1} Suppose that  (PQ1--3), (FG1),
(FGQP1--2) and \eqref{R'} hold. Then for any $(u_0,u_1)\in H^{1,p,q}\times H^0$
problem \eqref{1.1} has a global weak solution $u\in C([0,\infty);H^{1,p,q})$.
\end{cor}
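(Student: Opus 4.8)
The plan is to reduce Corollary~\ref{corollary7.1} to Theorem~\ref{theorem7.1}--(i) by checking that, under the present hypotheses, all the assumptions of that theorem are satisfied and that the target space $H^{1,p,q}$ coincides with the space $H^{1,\lomega,\lgamma}$ appearing there. First I would observe that \eqref{R'}, which reads $p<1+\romega/m'$ when $m>2$ and $q<1+\rgamma/\mu'$ when $\mu>2$, is exactly the condition ruling out sources on the hyperbola, as already exploited in Corollary~\ref{corollary5.1}. In particular \eqref{R'} can be rewritten as $p\not=1+\romega/m'$ whenever $p>1+\romega/2$ and $q\not=1+\rgamma/\mu'$ whenever $q>1+\rgamma/2$, so the hypotheses triggering assumption (FG2) are never met and (FG2) may be dropped. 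This is the analogue, in the global setting, of the remark following Corollary~\ref{corollary5.1} that Lemmas~\ref{lemma4.1} and \ref{lemma4.4} are not needed here.

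Next I would identify the auxiliary exponents. By the definition \eqref{lomegagamma} of $\lomega$ and $\lgamma$, the case $\romega<p=1+\romega/m'$ (and the corresponding one for $q$) is excluded by \eqref{R'}, so only the two remaining branches survive:
\begin{equation*}
\lomega=\begin{cases} p &\text{if $\romega<p<1+\romega/m'$},\\ 2 &\text{if $p\le\romega$},\end{cases}
\qquad
\lgamma=\begin{cases} q &\text{if $\rgamma<q<1+\rgamma/\mu'$},\\ 2 &\text{if $q\le\rgamma$}.\end{cases}
\end{equation*}
In either branch one has $H^{1,\lomega,\lgamma}=H^{1,p,q}$: when $p\le\romega$ the Sobolev embedding $H^1(\Omega)\hookrightarrow L^p(\Omega)$ gives $H^{1,2,\cdot}=H^{1,p,\cdot}$, while when $\romega<p<1+\romega/m'$ one has $\lomega=p$ directly; the boundary component is handled symmetrically. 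Thus the data space $H^{1,p,q}\times H^0=\cal{H}^{\lomega,\lgamma}$, exactly the space in which Theorem~\ref{theorem7.1}--(i) provides a global solution with the stated continuity.

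I would then verify that the remaining structural hypotheses of Theorem~\ref{theorem7.1}--(i) hold: (PQ1--3), (FG1), (FGQP1) and (FGQP2) are assumed directly, and (FG2) has just been shown to be vacuous under \eqref{R'}, so (FG1--2) is in force. Applying Theorem~\ref{theorem7.1}--(i) then yields, for every $(u_0,u_1)\in\cal{H}^{\lomega,\lgamma}=H^{1,p,q}\times H^0$, that the maximal weak solution $u$ found in Theorem~\ref{theorem5.1} is global, $T_{\text{max}}=\infty$, and $u\in C([0,\infty);H^{1,\lomega,\lgamma})=C([0,\infty);H^{1,p,q})$, which is precisely the assertion. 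The only point requiring a little care—the main (though mild) obstacle—is the bookkeeping showing $H^{1,\lomega,\lgamma}=H^{1,p,q}$ under \eqref{R'}, i.e.\ that the supercritical-off-the-hyperbola branch $\lomega=p$ and the sub-Sobolev branch $\lomega=2$ together reproduce the space $H^{1,p,q}$ via \eqref{somegagammarelazioni1} and Remark~\ref{remark1.1}; everything else is a direct invocation of the already-established global existence theorem.
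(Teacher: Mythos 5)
Your argument is correct and follows essentially the same route as the paper's own proof: observe that \eqref{R'} excludes the hyperbola case so that (FG2) is vacuous, check via \eqref{lomegagamma} (using Sobolev embedding in the sub-Sobolev-critical branch) that $H^{1,\lomega,\lgamma}=H^{1,p,q}$, and then invoke Theorem~\ref{theorem7.1}--(i). Your bookkeeping of the two surviving branches of $\lomega$, $\lgamma$ is exactly the intended content of the paper's two-line proof (which, incidentally, misprints $H^{1,\sigmaomega,\sigmagamma}$ for $H^{1,\lomega,\lgamma}$).
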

\begin{proof} Since \eqref{R'} can be written also as
$p\not=1+\romega/m'$ when $p>1+\romega/2$ and $q\not=1+\rgamma/\mu'$
when $q>1+\rgamma/2$, clearly assumption (FG2) can be skipped. Moreover, by  \eqref{lomegagamma},  when \eqref{R'} holds
we have $H^{1,\sigmaomega,\sigmagamma}=H^{1,p,q}$. \end{proof}
\begin{cor}\label{corollary7.2} Suppose that (PQ1--3), (FG1--2),
(FGQP1--2) and \eqref{R''bis} hold. Then
for any $(u_0,u_1)\in H^{1,p,q}\times H^0$
problem \eqref{1.1} has a global weak solution $u\in C([0,\infty);H^{1,p,q})$.
\end{cor}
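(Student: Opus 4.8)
The plan is to obtain Corollary~\ref{corollary7.2} as a direct consequence of the global existence part Theorem~\ref{theorem7.1}--(i), exactly as Corollary~\ref{corollary5.2} was deduced from Theorem~\ref{theorem5.1}, so that the entire content reduces to the space identification $H^{1,\lomega,\lgamma}=H^{1,p,q}$ under \eqref{R''bis}. First I would observe that the hypotheses (PQ1--3), (FG1--2) and (FGQP1--2) assumed here are precisely those required by Theorem~\ref{theorem7.1}--(i); hence for every $(u_0,u_1)\in\cal{H}^{\lomega,\lgamma}$ the maximal weak solution furnished by Theorem~\ref{theorem5.1} is global, i.e.\ $T_{\text{max}}=\infty$, and enjoys the regularity $u\in C([0,\infty);H^{1,\lomega,\lgamma})$.

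The core step is then to show that, when \eqref{R''bis} holds, $H^{1,\lomega,\lgamma}$ coincides with $H^{1,p,q}$, so that $\cal{H}^{\lomega,\lgamma}=H^{1,p,q}\times H^0$ by \eqref{varicalH} and the statement follows verbatim. I would argue through the case distinction in \eqref{lomegagamma}. If $p>\romega$ then, since by \eqref{R} one has $m>\romega$ (the implication noted after assumption (V)$'$), condition \eqref{R''bis} forces $p<1+\romega/m'$; thus $\romega<p<1+\romega/m'$ and \eqref{lomegagamma} gives $\lomega=p$, so that the $\Omega$--constraint defining $H^{1,\lomega,\cdot}$ is exactly $u\in L^p(\Omega)$. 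If instead $p\le\romega$ then $\lomega=2$, but the Sobolev embedding $H^1(\Omega)\hookrightarrow L^p(\Omega)$ renders membership $u\in L^p(\Omega)$ automatic for every $u\in H^1$, so the $\Omega$--constraint of $H^{1,p,\cdot}$ is void and coincides with that of $H^{1,2,\cdot}$. Running the identical dichotomy on $(q,\mu,\rgamma)$ for the boundary exponent $\lgamma$ (using $q>\rgamma\Rightarrow\mu>\rgamma$ and the boundary Sobolev embedding $H^1(\Gamma)\hookrightarrow L^q(\Gamma)$ when $q\le\rgamma$) yields $H^{1,\lomega,\lgamma}=H^{1,p,q}$ in all four combinations of the two dichotomies.

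Since there is essentially no estimate to perform here, I do not anticipate a genuine obstacle; the only point deserving care is the bookkeeping in the case analysis above, namely making sure that whenever $\lomega$ (resp.\ $\lgamma$) equals $2$ the missing integrability $u\in L^p(\Omega)$ (resp.\ $u_{|\Gamma}\in L^q(\Gamma_1)$) is recovered for free from a Sobolev embedding, and that the implications $p>\romega\Rightarrow m>\romega$ and $q>\rgamma\Rightarrow\mu>\rgamma$ are indeed available from \eqref{R}, so that \eqref{R''bis} is applicable precisely on the super--supercritical part of the range (as in Corollary~\ref{corollary5.2}, assumption (FG2) then cannot be skipped, being retained for the Sobolev critical pieces). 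Once $\cal{H}^{\lomega,\lgamma}=H^{1,p,q}\times H^0$ is established, Theorem~\ref{theorem7.1}--(i) delivers both the global solution and the stated regularity $u\in C([0,\infty);H^{1,p,q})$, completing the proof.
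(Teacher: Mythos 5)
Your proposal is correct and follows essentially the same route as the paper: the paper's own proof consists precisely of invoking Theorem~\ref{theorem7.1}--(i) after noting that \eqref{R''bis} together with \eqref{lomegagamma} yields $H^{1,\lomega,\lgamma}=H^{1,p,q}$. Your case analysis (using $p>\romega\Rightarrow m>\romega$ from \eqref{R} to exclude the hyperbola case, and Sobolev embedding to absorb the $L^p$/$L^q$ constraints when $p\le\romega$, $q\le\rgamma$) simply spells out the details behind that one-line identification.
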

\begin{proof} When \eqref{R''bis} holds by \eqref{lomegagamma} we have  $H^{1,\lomega,\lgamma}=H^{1,p,q}$. \end{proof}

We now state, for the reader convenience, the global--in--time version of the more general local analysis made in Corollaries~\ref{corollary5.3},
\ref{corollary6.1} and Theorem~\ref{theorem6.4}, simply obtained
by combining them with Theorem~\ref{theorem7.1}.

\begin{cor}[\bf Global analysis in the scale of spaces]  \label{corollary7.3}
The following conclusions hold true.
\renewcommand{\labelenumi}{{(\roman{enumi})}}
\begin{enumerate}
\item {\bf (Global existence)} Under assumptions (PQ1--3), (FG1--2),
(FGQP1--2) the
main conclusion of Theorem~\ref{theorem7.1}--(i) hold when
$H^{1,\lomega,\lgamma}$ is replaced by
\renewcommand{\labelenumii}{{(\roman{enumi}.\arabic{enumii})}}
\begin{enumerate}
\item  $H^{1,\rho,\theta}_{\alpha,\beta}$, provided $\rho$ and $\theta$ satisfy
\eqref{ranges4}, and by
\item   $H^{1,\rho,\theta}$, provided $\rho$ and $\theta$ satisfy \eqref{spaziuguali} and \eqref{ranges4}.
\end{enumerate}
\item {\bf (Global existence--uniqueness)} Under assumptions (PQ1--3), (FG1), (FG2)$'$,
(FGQP1--2) the main conclusion of Theorem~\ref{theorem7.1}--(ii) holds when the space
$H^{1,\somega,\sgamma}$ is replaced by
\renewcommand{\labelenumii}{{(\roman{enumi}.\arabic{enumii})}}
\begin{enumerate}
\item $H^{1,\rho,\theta}_{\alpha,\beta}$, provided $\rho$ and $\theta$ satisfy \eqref{ranges2}
and by
\item $H^{1,\rho,\theta}$, provided $\rho$ and $\theta$ satisfy \eqref{spaziuguali} and \eqref{ranges2}.
\end{enumerate}
\item {\bf (Global Hadamard well--posedness)}  Under assumptions (PQ1--3), (PQ4)$'$, (FG1), (FGQP1),
(FG2)$'$ and \eqref{R''} problem \eqref{1.1} is locally well--posed
\renewcommand{\labelenumii}{{(\roman{enumi}.\arabic{enumii})}}
\begin{enumerate}
\item in $H^{1,\rho,\theta}_{\alpha,\beta}\times H^0$ when $\rho$ and $\theta$ satisfy \eqref{ranges3} (that is the conclusions
of Theorem~\ref{theorem7.1}--(iii) hold true when $H^{1,s_1,s_2}$ is replaced
by $H^{1,\rho,\theta}_{\alpha,\beta}$), and
\item $H^{1,\rho,\theta}\times H^0$ when $\rho,\theta$ satisfy
\eqref{spaziuguali} and \eqref{ranges3} (that is the conclusions
of Theorem~\ref{theorem7.1}--(iii) hold true when $H^{1,s_1,s_2}$ is replaced
by $H^{1,\rho,\theta}$).
\end{enumerate}
\end{enumerate}
\end{cor}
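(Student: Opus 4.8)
The plan is to prove Corollary~\ref{corollary7.3} exactly as announced just before its statement: by grafting the global-in-time conclusions of Theorem~\ref{theorem7.1} onto the scale-of-spaces regularity already furnished by Corollaries~\ref{corollary5.3} and \ref{corollary6.1} and by Theorem~\ref{theorem6.4}. No new estimate is required; the entire content is the bookkeeping of continuous embeddings together with the range inclusions \eqref{ranges1}, \eqref{ranges2}, \eqref{ranges2ter}, \eqref{ranges3}, \eqref{ranges4}. For part (i) I would first record the embedding $H^{1,\rho,\theta}_{\alpha,\beta}\hookrightarrow H^{1,\lomega,\lgamma}$, valid whenever $(\rho,\theta)$ satisfies \eqref{ranges4}. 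Indeed $\rho\ge\lomega$ and $\theta\ge\lgamma$; when $p\le\romega$ one has $\lomega=2$ and the interior component is controlled by the Sobolev embedding $H^1(\Omega)\hookrightarrow L^2(\Omega)$, whereas when $p>\romega$ assumption (FGQP1), through (III), gives $\essinf_\Omega\alpha>0$, so by Remark~\ref{remark1.1} one has $L^{2,\rho}_\alpha(\Omega)=L^\rho(\Omega)\hookrightarrow L^{\lomega}(\Omega)$ on the bounded set $\Omega$; the boundary component is treated identically. With this embedding any datum $(u_0,u_1)\in H^{1,\rho,\theta}_{\alpha,\beta}\times H^0$ lies in $H^{1,\lomega,\lgamma}\times H^0$, so Theorem~\ref{theorem7.1}--(i) yields $T_{\text{max}}=\infty$; since $\lomega\ge\sigmaomega$ and $\lgamma\ge\sigmagamma$ by \eqref{lsomegagammaMINORE}, the range \eqref{ranges4} is contained in \eqref{ranges1}, whence Corollary~\ref{corollary5.3}--(i) propagates the regularity $u\in C([0,\infty);H^{1,\rho,\theta}_{\alpha,\beta})$. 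This proves (i.1), and (i.2) follows immediately since Remark~\ref{remark1.1} identifies $H^{1,\rho,\theta}_{\alpha,\beta}=H^{1,\rho,\theta}$ under \eqref{spaziuguali}.

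Part (ii) is entirely parallel: for $(\rho,\theta)$ satisfying \eqref{ranges2} the same reasoning gives $H^{1,\rho,\theta}_{\alpha,\beta}\hookrightarrow H^{1,\somega,\sgamma}$ (using $\somega\le\romega$ with Sobolev embedding when $p\le\romega$, and $L^{2,\rho}_\alpha(\Omega)=L^\rho(\Omega)\hookrightarrow L^{\somega}(\Omega)$ when $p>\romega$, since there $\rho\ge\somega$), so Theorem~\ref{theorem7.1}--(ii) gives $T_{\text{max}}=\infty$ and Corollary~\ref{corollary6.1} supplies the regularity in $H^{1,\rho,\theta}_{\alpha,\beta}$; (ii.2) is again Remark~\ref{remark1.1}. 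For part (iii), Theorem~\ref{theorem6.4} already provides local well-posedness in $H^{1,\rho,\theta}_{\alpha,\beta}\times H^0$ for $(\rho,\theta)$ satisfying \eqref{ranges3}, and in its proof the embedding $H^{1,\rho,\theta}_{\alpha,\beta}\hookrightarrow H^{1,\overline{s_1}(\rho),\overline{s_2}(\theta)}$ into a space whose exponents satisfy \eqref{ranges2ter} is made explicit. Because (PQ4)$'$ implies (PQ4) under \eqref{R''} (via \eqref{somegagammarelazioni3}), Theorem~\ref{theorem7.1}--(iii) applies with $s_i=\overline{s_i}$ and forces $T_{\text{max}}=\infty$ for every such datum; consequently the continuous-dependence conclusion of Theorem~\ref{theorem6.4}, a priori valid only on $[0,T^*]$ with $T^*<T_{\text{max}}$, now holds for all $T^*>0$, which is precisely global well-posedness in the scale of spaces. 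Part (iii.2) is once more the specialization $H^{1,\rho,\theta}_{\alpha,\beta}=H^{1,\rho,\theta}$ of Remark~\ref{remark1.1}.

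Since all the analytic difficulty has been discharged in Theorems~\ref{theorem5.1}, \ref{theorem6.2}--\ref{theorem6.4} and \ref{theorem7.1}, the only delicate point is conceptual rather than computational: global existence, which is a priori available only for data in the specific space appearing in Theorem~\ref{theorem7.1}, must be transported to the larger scale of spaces through the embeddings above, while the finer regularity $u\in C([0,\infty);H^{1,\rho,\theta}_{\alpha,\beta})$ must be shown to persist for all time. The latter is exactly the time-propagation of regularity of Lemma~\ref{lemma3.3}--(iii) that underlies Corollary~\ref{corollary5.3}, and it costs nothing once $T_{\text{max}}=\infty$ has been established. I would therefore present the proof as three short verifications, one per item, each reducing to a single embedding inclusion together with an invocation of the matching local and global theorems.
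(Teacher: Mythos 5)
Your proposal is correct and follows exactly the route the paper intends: Corollary~7.3 is stated there with no separate proof beyond the remark that it is ``simply obtained by combining'' Corollaries~5.3, 6.1 and Theorem~6.4 with Theorem~7.1, and your write-up supplies precisely the missing bookkeeping (the embeddings $H^{1,\rho,\theta}_{\alpha,\beta}\hookrightarrow H^{1,\lomega,\lgamma}$, resp.\ $H^{1,\somega,\sgamma}$, via Sobolev embedding when $p\le\romega$ and via $\essinf_\Omega\alpha>0$ from (III) when $p>\romega$, the inclusion of \eqref{ranges4} in \eqref{ranges1}, and the propagation of regularity from Lemma~3.3--(iii) once $T_{\text{max}}=\infty$). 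The only caveat, inherited from the paper's own statement rather than introduced by you, is that in part (iii) the invocation of Theorem~7.1--(iii) tacitly uses the global-existence hypothesis (FGQP2), which the printed hypothesis list for that item omits.
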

\begin{proof}[Proof of Theorem~\ref{theorem1.5} and
Corollaries~\ref{corollary1.5}--\ref{corollary1.7} in Section~\ref{intro}.] When
$$P(x,v)=\alpha(x)P_0(v),\quad Q(x,v)=\beta(x)Q_0(v),\quad f(x,u)=f_0(u),\quad
g(x,u)=g_0(u),$$ by Remarks~\ref{remark3.1}--\ref{remark3.3}
assumptions (PQ1--3), (FG1), (FGQP1) reduce to (I--III). Moreover
by Remark~\ref{remark3.6} (FG2) and (FG2)$'$  reduce to
(IV) and (IV)$'$. By Remark~\ref{remark3.9} (PQ4) and (PQ4)$'$
reduce to (V) and (V)$'$, while by Remark~\ref{remark6.4} assumption (FGQP2) reduces to (VI).  Hence
Theorem~\ref{theorem1.5} and
Corollaries~\ref{corollary1.5}--\ref{corollary1.6} are particular cases  of
Theorem~\ref{theorem7.1} and
Corollaries~\ref{corollary7.1}--\ref{corollary7.3}.
\end{proof}

\appendix

\section{Proof of Lemma~\ref{lemma5.4}}\label{appendixB}
To prove Lemma~\ref{lemma5.4} we first recall the following
elementary result, which proof is given only for the reader's
convenience.
\begin{lem}\label{lemmaA1} Let $0<T<\infty$ and $Y_1,Y_2$ be two
Banach spaces with $Y_2$ densely embedded in $Y_1$. Then
$C^1_c((0,T);Y_2)$ is dense in $C^1_c((0,T);Y_1)$ with respect to
the norm of $C^1([0,T];Y_1)$.
\end{lem}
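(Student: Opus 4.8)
The plan is to reduce the simultaneous approximation of $\phi$ and $\phi'$ to a single approximation of the derivative alone, exploiting that every $\phi\in C^1_c((0,T);Y_1)$ is recovered from $\phi'$ by integration. First I would fix $\phi\in C^1_c((0,T);Y_1)$ with $\operatorname{supp}\phi\subseteq[a,b]\subset(0,T)$, set $g:=\phi'\in C_c((0,T);Y_1)$, and note that $\operatorname{supp} g\subseteq[a,b]$ and, since $\phi(0)=\phi(T)=0$, that $\int_0^T g=0$. The reduction is then: if I can produce $\psi_n\in C_c((0,T);Y_2)$ with uniformly compact support, with $\int_0^T\psi_n=0$, and with $\psi_n\to g$ in $C([0,T];Y_1)$, then $\phi_n(t):=\int_0^t\psi_n$ lies in $C^1_c((0,T);Y_2)$ — the vanishing-integral condition is exactly what forces $\phi_n$ to be $0$ beyond the support of $\psi_n$, hence compactly supported in $(0,T)$ — and it satisfies $\phi_n'=\psi_n\to\phi'$ uniformly together with $\|\phi_n(t)-\phi(t)\|_{Y_1}\le T\,\|\psi_n-g\|_{C([0,T];Y_1)}\to 0$. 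Thus $\phi_n\to\phi$ in $C^1([0,T];Y_1)$, and it remains only to approximate $g$.

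For the approximation of $g$ I would use a piecewise-linear interpolant with $Y_2$-valued nodes. Since $g$ is continuous with compact support it is uniformly continuous, and $g(a)=g(b)=0$. Given $\eps>0$, I would pick a partition $a=t_0<\dots<t_N=b$ fine enough that $\|g(t)-g(t_i)\|_{Y_1}<\eps$ and $\|g(t)-g(t_{i+1})\|_{Y_1}<\eps$ for $t\in[t_i,t_{i+1}]$, choose $y_i\in Y_2$ with $\|y_i-g(t_i)\|_{Y_1}<\eps$ for $1\le i\le N-1$, set $y_0=y_N=0$, and let $h$ be the continuous function that is affine in $t$ on each $[t_i,t_{i+1}]$ with $h(t_i)=y_i$, extended by $0$ outside $[a,b]$. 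Then $h\in C_c((0,T);Y_2)$, and writing $h(t)-g(t)$ as a convex combination of $y_i-g(t)$ and $y_{i+1}-g(t)$ on each subinterval yields the uniform bound $\|h-g\|_{C([0,T];Y_1)}\le 2\eps$.

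To restore the vanishing integral I would fix once and for all a scalar $\chi\in C_c((0,T);\R)$ with $\int_0^T\chi=1$, observe that the Bochner integral $c:=\int_0^T h$ lies in $Y_2$ (being a limit of Riemann sums in the closed space $Y_2$) and satisfies $\|c\|_{Y_1}=\|\int_0^T(h-g)\|_{Y_1}\le T\,\|h-g\|_{C([0,T];Y_1)}$ because $\int_0^T g=0$ and the embedding $Y_2\hookrightarrow Y_1$ commutes with integration, and then set $\psi:=h-c\,\chi\in C_c((0,T);Y_2)$, which has $\int_0^T\psi=0$ and $\|\psi-g\|_{C([0,T];Y_1)}\le\|h-g\|_{C([0,T];Y_1)}+\|c\|_{Y_1}\|\chi\|_\infty\le C\eps$. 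Taking $\eps=1/n$ produces the required sequence $\psi_n$. I expect the genuine conceptual content to be entirely in the antiderivative trick of the first paragraph, which converts a $C^1$-approximation into a $C^0$-approximation of the derivative; everything afterwards is elementary bookkeeping, and the only points needing care are keeping the interpolant and the correction supported inside $(0,T)$ and verifying that integration against a continuous $Y_2$-valued integrand stays in $Y_2$ with its $Y_1$-norm controlled through the continuous embedding.
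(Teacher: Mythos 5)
Your argument is correct, and its core coincides with the paper's: both proofs approximate the derivative $\dot u$ by piecewise--linear interpolants with nodes in $Y_2$ (using uniform continuity of $\dot u$ and density of $Y_2$ in $Y_1$) and then integrate, reducing the $C^1$--approximation of $u$ to a $C^0$--approximation of $\dot u$. The only genuine divergence is in how compact support of the antiderivative is recovered, since the primitive $w_n(t)=\int_0^t v_n$ of the interpolant need not vanish near $t=T$ (the total integral $\int_0^T v_n$ need not be zero). The paper fixes this a posteriori: it multiplies $w_n$ by a fixed cut--off $\xi$ equal to $1$ on $\operatorname{supp}u$ and supported in $(0,T)$, using that $w_n\to u$ in $C^1([0,T];Y_1)$ forces $w_n\to 0$ in $C^1$ outside $\operatorname{supp}u$, so that $\xi w_n\to u$ in $C^1$. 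You fix it a priori: you subtract from the interpolant a multiple $c\,\chi$ of a fixed scalar bump so that the corrected approximant of $\dot u$ has zero mean, which makes its primitive automatically compactly supported; the price is the easy estimate $\|c\|_{Y_1}\le T\|h-\dot u\|_{C([0,T];Y_1)}$ and the observation that $c=\int_0^T h\in Y_2$, which is immediate here because $h$ is piecewise affine with nodes in $Y_2$. Both devices are sound and of comparable length: your mean--zero correction avoids the product--rule bookkeeping for $\xi w_n$, while the paper's cut--off avoids introducing the auxiliary bump $\chi$ and the integral correction. The estimates you state (the $2\varepsilon$ bound for the interpolant via convex combinations, and the bound $\|\phi_n(t)-\phi(t)\|_{Y_1}\le T\|\psi_n-\dot u\|_{C([0,T];Y_1)}$ for the primitives) all check out.
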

\begin{proof} Let $u\in C^1_c((0,T);Y_1)$ and $\eta\in (0,T/2)$ such
that $\text{supp }u\subset [\eta,T-\eta]$. Since $u$ is uniformly
continuous and  $Y_2$ is dense in $Y_1$ one can easily build a
sequence of piecewise linear functions $(v_n)_n$ in $C_c((0,T);Y_2)$
such that $v_n\to \dot{u}$ in $C([0,T];Y_1)$. Hence setting
$w_n(t)=\int_0^t v_n(\tau)\,d\tau$ for $\tau\in [0,T]$ we have
$w_n\in C^1([0,T];Y_2)$ and, since $w_n(0)=u(0)=0$, $w_n\to u$ in
$C^1([0,T];Y_1)$. Consequently $w_n\to 0$ in $C^1([0,\eta]\cup
[T-\eta,T];Y_1)$.

Taking a standard cut--off function $\xi\in C^\infty_c(\R)$ such
that $0\le \xi\le 1$, $\xi=1$ in $ [\eta,T-\eta]$ and $\xi=0$ in
$(-\infty,\eta/2)\cup (T-\eta/2, \infty)$ and setting $u_n=\xi w_n$
we trivially have $u_n\in C^1([0,T];Y_2)$,  $\text{supp }u_n\subset
[\eta/2,T-\eta/2]$. Moreover  $u_n=w_n\to u$ in
$C^1([\eta,T-\eta];Y_1)$, while $u_n\to 0$ in $C^1([0,\eta]\cup
[T-\eta,T];Y_1)$. Since $u=0$ in $[0,\eta]\cup [T-\eta,T]$ we get
$u_n\to u$ in $C^1([0,T];Y_1)$.
\end{proof}
\begin{proof}[Proof of Lemma~\ref{lemma5.4}.] Given  $\varphi\in C_c((0,T);H^1)\cap C^1_c((0,T);H^0)\cap
Z(0,T)$, trivially extended  to $\varphi\in C_c(\R;H^1)\cap
C^1_c(\R;H^0)\cap Z(-\infty,\infty)$, by standard time
regularization we build a sequence $(\psi_n)_n$ in
$C^1_c((0,T;X)$ such that $\psi_n\to \varphi$ in the norm of
$C([0,T];H^1)\cap C^1([0,T];H^0)\cap Z(0,T)$, where
$X=H^{1,\overline{m},\overline{\mu}}_{\alpha,\beta}$ (see
\eqref{3.5}). Since by
\cite[Lemma~2.1]{Dresda1} $H^{1,\infty,\infty}$ is dense in $X$  it follows from Lemma~\ref{lemmaA1} that
$C^1_c((0,T);H^{1,\infty,\infty})$ is dense in $C^1_c((0,T);X)$ with
respect to the norm of $C^1([0,T];X)$ and then,  since $X=H^1\cap
[L^{2,\overline{m}}_\alpha(\Omega)\times
L^{2,\overline{\mu}}_\beta(\Gamma_1)]$, with respect to the norm of
$C([0,T];H^1)\cap C^1([0,T];H^0)\cap Z(0,T)$.
\end{proof}
\renewcommand{\arraystretch}{1.3}
\setlength{\arrayrulewidth}{0.7pt}
\begin{table}[h]
\begin{center}\label{Table5}
\begin{tabular}{|c|c|}
\multicolumn{2}{>{\normalsize}c}{\sc Table 5. \rm Further results when $N=3$ and $\essinf_\Omega \alpha>0$}\\
\hline
                     &$2\le q<\infty$ \\
\hline
$2\le p\le 4$        &   \\
\cline{1-1}
$4<p<6$        &  well--posedness in $H^{1,\rho,2}$ for $\rho\in [6,6\vee m]$\\
\hline
$6=p<m$           &  existence--uniqueness in $H^{1,\rho,2}$ for $\rho\in [6,m]$\\
\cdashline{2-2}[1pt/1pt]
                  &  well--posedness  in $H^{1,\rho,2}$ for $\rho\in (6,m]$\\
\hline
$6=p=m$           &  existence--uniqueness in $H^1$\\
\hline
$6<p<1+6/m'$      &  local existence in $H^{1,\rho,2}$ for $\rho\in [6,m]$\\
\cdashline{2-2}[1pt/1pt]
                  &  global existence in $H^{1,\rho,2}$ for $\rho\in [p,m]$\\
\cdashline{2-2}[1pt/1pt]
                  &  existence--uniqueness in $H^{1,\rho,2}$ for $\rho\in [3(p-2)/2,m]$\\
\cdashline{2-2}[1pt/1pt]
                  &  well--posedness  in $H^{1,\rho,2}$ for $\rho\in \left(3(p-2)/2,m\right]$\\
\hline
$6<p=1+6/m'$      & existence--uniqueness in $H^{1,\rho,2}$ for $\rho\in [3(p-2)/2,m]$\\
\cdashline{2-2}[1pt/1pt]
                  &  well--posedness  in $H^{1,\rho,2}$ for $\rho\in (3(p-2)/2,m]$\\
\hline
\end{tabular}
\end{center}
\medskip
\end{table}
\setlength{\tabcolsep}{2pt}
\renewcommand{\arraystretch}{0.1}
\setlength{\arrayrulewidth}{0.7pt}
\begin{table}
\begin{center}\label{Table6}
\begin{tabular}{|>{\tiny}l|>{\tiny}c|>{\tiny}c|>{\tiny}c|>{\tiny}c|>{\tiny}c|}
\multicolumn{6}{>{\normalsize}c}{\sc Table 6. \rm Further results when $N=4$ and $\essinf_\Omega \alpha, \essinf_{\Gamma_1}\beta>0$}\\
\multicolumn{6}{>{\normalsize}c}{}\\
\multicolumn{6}{>{\normalsize}c}{}\\
\multicolumn{6}{>{\normalsize}c}{}\\
\hline
                                 &\raisebox{1.5ex}[12pt]{\tiny{$2\le q\le 4$}}\vline\raisebox{1.5ex}[12pt]{\tiny{$4<q<6$}}   &  \raisebox{1.5ex}[12pt]{\tiny{$6=q<\mu$}}                            & \raisebox{1.5ex}[12pt]{\tiny{$6=q=\mu$}}                 & \raisebox{1.5ex}[12pt]{\tiny{$6<q<1+ 6/\mu'$}}                                  & \raisebox{1.5ex}[12pt]{\tiny{$6<q=1+6/\mu'$}}            \\
\hline
                     &                                                  &                                                    &                                  &local                                                     &                                               \\
                     &                                                  &                                                    &                                  &existence                                                 &                                               \\
                     &                                                  &                                                    &                                  &in $H^{1,\rho,\theta}$ for                                &                                               \\
                     &                                                  &                                                    &                                  &$\rho\in   [4,4\vee m]$                                   &                                               \\
                     &                                                  &                                                    &                                  &$\theta\in [6,\mu]$                                       &                                               \\
\cdashline{5-5}[1pt/1pt]
$2\le p\le 3$        &                                                  &                                                    &                                  &global                                                    &                                               \\
                     &                                                  &                                                    &                                  &existence                                                 &                                               \\
                     &                                                  &                                                    &                                  &in $H^{1,\rho,\theta}$ for                                &                                               \\
                     &                                                  &                                                    &                                  &$\rho\in   [4,4\vee m]    $                               &                                               \\
                     &                                                  &                                                    &                                  &$\theta\in [q,\mu]$                                       &                                               \\
\cline{1-1}\cdashline{5-5}[1pt/1pt]
                     &                                                  &existence--                                         &existence--                       &existence--                                               &existence--                                    \\
                     &                                                  &uniqueness                                          &uniqueness                        &uniqueness                                                &uniqueness                                       \\
                     &                                                  &in $H^{1,\rho,\theta}$ for                          &in $H^{1,\rho,2}$ for             &in $H^{1,\rho,\theta}$ for                                &in $H^{1,\rho,\theta}$ for                       \\
                     &                                                  &$\rho\in   [4,4\vee m]$                             &$\rho\in   [4,4\vee m]$           &$\rho\in   [4,4\vee m]$                                   &$\rho\in   [4,4\vee m]$                        \\
                     &                                                  &$\theta\in [6,\mu]$                                 &                                  &$\theta\in \left[\frac 32(q-2),\mu\right]$                &$\theta\in \left[\frac 32(q-2),\mu\right]$    \\
\cdashline{3-3}[1pt/1pt]\cdashline{5-6}[1pt/1pt]
$3<p<4$              & well--                                           & well--                                             &                                  &well--                                                    &well--                                         \\
                     & posedness                                        & posedness                                          &                                  &posedness                                                 &posedness                                        \\
                     &in $H^{1,\rho,\theta}$ for                        & in $H^{1,\rho,\theta}$ for                         &                                  &in $H^{1,\rho,\theta}$ for                                &in $H^{1,\rho,\theta}$ for                    \\
                     &$\rho\in   [4,4\vee m]$                           &$\rho\in   [4,4\vee m]$                             &                                  &$\rho\in   [4,4\vee m]$                                   &$\rho\in   [4,4\vee m]$                        \\
                     &$\theta\in [6,6\vee\mu]$                          &$\theta\in (6,\mu]$                                 &                                  &$\theta\in \left(\frac 32(q-2),\mu\right]$                &$\theta\in \left(\frac 32(q-2),\mu\right]$    \\
\hline
                     &                                                  &                                                    &                                  &local                                                     &                                               \\
                     &                                                  &                                                    &                                  &existence                                                 &                                               \\
                     &                                                  &                                                    &                                  &in $H^{1,\rho,\theta}$ for                                &                                               \\
                     &                                                  &                                                    &                                  &$\rho\in   [4,m]$                                         &                                               \\
                     &                                                  &                                                    &                                  &$\theta\in [6,\mu]$                                       &                                               \\
\cdashline{5-5}[1pt/1pt]
                     &                                                  &                                                    &                                  &global                                                    &                                               \\
                     &                                                  &                                                    &                                  &existence                                                 &                                               \\
                     &                                                  &                                                    &                                  &in $H^{1,\rho,\theta}$ for                                &                                               \\
                     &                                                  &                                                    &                                  &$\rho\in   [4,m]$                                         &                                               \\
                     &                                                  &                                                    &                                  &$\theta\in [q,\mu]$                                       &                                               \\
\cdashline{5-5}[1pt/1pt]
$4=p<m$              &existence--                                       &existence--                                         &existence--                       &existence--                                               &existence--                                    \\
                     &uniqueness                                        &uniqueness                                          &uniqueness                        &uniqueness                                                &uniqueness                                       \\
                     &in $H^{1,\rho,\theta}$ for                        &in $H^{1,\rho,\theta}$ for                          &in $H^{1,\rho,2}$ for             &in $H^{1,\rho,\theta}$ for                                &in $H^{1,\rho,\theta}$ for                       \\
                     &$\rho\in   [4,m]$                                 &$\rho\in   [4,m]$                                   &$\rho\in [4,m]$                   &$\rho\in   [4,m]$                                         &$\rho\in   [4,m]$                            \\
                     &$\theta\in [6,6\vee\mu]$                          &$\theta\in [6,\mu]$                                 &                                  &$\theta\in \left[\frac 32(q-2),\mu\right]$                &$\theta\in \left[\frac 32(q-2),\mu\right]$    \\
\cdashline{2-3}[1pt/1pt]\cdashline{5-6}[1pt/1pt]
                     & well--                                           & well--                                             &                                  &well--                                                    &well--                                         \\
                     & posedness                                        & posedness                                          &                                  &posedness                                                 &posedness                                        \\
                     &in $H^{1,\rho,\theta}$ for                        & in $H^{1,\rho,\theta}$ for                         &                                  &in $H^{1,\rho,\theta}$ for                                &in $H^{1,\rho,\theta}$ for                    \\
                     &$\rho\in   (4, m]$                                &$\rho\in(4,m]$                                      &                                  &$\rho\in   (4,m]$                                         &$\rho\in   (4,m]$                        \\
                     &$\theta\in [6,6\vee\mu]$                          &$\theta\in (6,\mu]$                                 &                                  &$\theta\in \left(\frac 32(q-2),\mu\right]$                &$\theta\in \left(\frac 32(q-2),\mu\right]$    \\
\hline
                     &                                                  &                                                    &                                  &local                                                     &                                               \\
                     &                                                  &                                                    &                                  &existence                                                 &                                               \\
                     &                                                  &                                                    &                                  &in $H^{1,2,\theta}$ for                                   &                                               \\
                     &                                                  &                                                    &                                  &$\theta\in [6,\mu]$                                       &                                               \\
\cdashline{5-5}[1pt/1pt]
                     &                                                  &                                                    &                                  &global                                                    &                                               \\
                     &                                                  &                                                    &                                  &existence                                                 &                                               \\
$4=p=m$              &                                                  &                                                    &                                  &in $H^{1,2,\theta}$ for                                   &                                               \\
                     &                                                  &                                                    &                                  &$\theta\in [q,\mu]$                                       &                                               \\
\cdashline{5-5}[1pt/1pt]
                     &existence--                                       &existence--                                         &existence--                       &existence--                                               &existence--                                    \\
                     &uniqueness                                        &uniqueness                                          &uniqueness                        &uniqueness                                                &uniqueness                                       \\
                     &in $H^{1,2,\theta}$ for                           &in $H^{1,2,\theta}$ for                             &in $H^1$                          &in $H^{1,2,\theta}$ for                                   &in $H^{1,2,\theta}$ for                       \\
                     &$\theta\in [6,6\vee\mu]$                          &$\theta\in [6,\mu]$                                 &                                  &$\theta\in \left[\frac 32(q-2),\mu\right]$                &$\theta\in \left[\frac 32(q-2),\mu\right]$    \\
\hline
                     &local                                             &local                                               &local                             &local                                                     &local                                            \\
                     &existence                                         &existence                                           &existence                         &existence                                                 &existence                                                   \\
                     &in $H^{1,\rho,\theta}$ for                        &in $H^{1,\rho,\theta}$ for                          &in $H^{1,\rho,2}$ for             &in $H^{1,\rho,\theta}$ for                                &in $H^{1,\rho,\theta}$ for                         \\
                     &$\rho\in[4,m]$                                    &$\rho\in[4,m]$                                      &$\rho\in[4,m]$                    &$\rho\in[4,m]$                                            &$\rho\in[4,m]$                            \\
                     &$\theta\in [6,6\vee\mu]$                          &$\theta\in [6,\mu]$                                 &                                  &$\theta\in [6,\mu]$                                       &$\theta\in \left[\frac 32(q-2),\mu\right]$    \\
\cdashline{2-6}[1pt/1pt]
                     &global                                            &global                                              &global                            &global                                                    &global                                           \\
                     &existence                                         &existence                                           &existence                         &existence                                                 &existence                                                   \\
                     &in $H^{1,\rho,\theta}$ for                        &in $H^{1,\rho,\theta}$ for                          &in $H^{1,\rho,2}$ for             &in $H^{1,\rho,\theta}$ for                                &in $H^{1,\rho,\theta}$ for                         \\
                     &$\rho\in[p,m]$                                    &$\rho\in[p,m]$                                      &$\rho\in[p,m]$                    &$\rho\in[p,m]$                                            &$\rho\in[p,m]$                            \\
$4<p<1+4/m'$         &$\theta\in [6,6\vee\mu]$                          &$\theta\in [6,\mu]$                                 &                                  &$\theta\in [q,\mu]$                                       &$\theta\in \left[\frac 32(q-2),\mu\right]$    \\
\cdashline{2-6}[1pt/1pt]
                     &existence--                                       &existence--                                         &existence--                       &existence--                                               &existence--                                    \\
                     &uniqueness                                        &uniqueness                                          &uniqueness                        &uniqueness                                                &uniqueness                                       \\
                     &in $H^{1,\rho,\theta}$ for                        &in $H^{1,\rho,\theta}$ for                          &in $H^{1,\rho,2}$ for             &in $H^{1,\rho,\theta}$ for                                &in $H^{1,\rho,\theta}$ for                       \\
                     &$\rho\in   [2(p-2),m]$                            &$\rho\in   [2(p-2),m]$                              &$\rho\in [2(p-2),m]$              &$\rho\in   [2(p-2),m]$                                    &$\rho\in   [2(p-2),m]$                            \\
                     &$\theta\in [6,6\vee\mu]$                          &$\theta\in [6,\mu]$                                 &                                  &$\theta\in \left[\frac 32(q-2),\mu\right]$                &$\theta\in \left[\frac 32(q-2),\mu\right]$    \\
\cdashline{2-3}[1pt/1pt]\cdashline{5-6}[1pt/1pt]
                     & well--                                           & well--                                             &                                  &well--                                                    &well--                                         \\
                     & posedness                                        & posedness                                          &                                  &posedness                                                 &posedness                                        \\
                     &in $H^{1,\rho,\theta}$ for                        & in $H^{1,\rho,\theta}$ for                         &                                  &in $H^{1,\rho,\theta}$ for                                &in $H^{1,\rho,\theta}$ for                    \\
                     &$\rho\in   (2(p-2),m]$                            &$\rho\in   (2(p-2),m]$                              &                                  &$\rho\in   (2(p-2),m]$                                    &$\rho\in   (2(p-2),m]$                        \\
                     &$\theta\in [6,6\vee\mu]$                          &$\theta\in (6,\mu]$                                 &                                  &$\theta\in \left(\frac 32(q-2),\mu\right]$                &$\theta\in \left(\frac 32(q-2),\mu\right]$    \\
\hline
                     &                                                  &                                                    &                                  &local                                                     &                                               \\
                     &                                                  &                                                    &                                  &existence                                                 &                                               \\
                     &                                                  &                                                    &                                  &in $H^{1,\rho,\theta}$ for                                &                                               \\
                     &                                                  &                                                    &                                  &$\rho\in   [2(p-2),m]$                                    &                                               \\
                     &                                                  &                                                    &                                  &$\theta\in [6,\mu]$                                       &                                               \\
\cdashline{5-5}[1pt/1pt]
                     &                                                  &                                                    &                                  &global                                                    &                                               \\
                     &                                                  &                                                    &                                  &existence                                                 &                                               \\
                     &                                                  &                                                    &                                  &in $H^{1,\rho,\theta}$ for                                &                                               \\
                     &                                                  &                                                    &                                  &$\rho\in   [2(p-2),m]$                                    &                                               \\
$4<p=1+4/m'$         &                                                  &                                                    &                                  &$\theta\in [q,\mu]$                                       &                                               \\
\cdashline{5-5}[1pt/1pt]
                     &existence--                                       &existence--                                         &existence--                       &existence--                                               &existence--                                    \\
                     &uniqueness                                        &uniqueness                                          &uniqueness                        &uniqueness                                                &uniqueness                                       \\
                     &in $H^{1,\rho,\theta}$ for                        &in $H^{1,\rho,\theta}$ for                          &in $H^{1,\rho,2}$ for             &in $H^{1,\rho,\theta}$ for                                &in $H^{1,\rho,\theta}$ for                       \\
                     &$\rho\in   [2(p-2),m]$                            &$\rho\in   [2(p-2),m]$                              &$\rho\in [2(p-2),m]$              &$\rho\in   [2(p-2),m]$                                    &$\rho\in   [2(p-2),m]$                            \\
                     &$\theta\in [6,6\vee\mu]$                          &$\theta\in [6,\mu]$                                 &                                  &$\theta\in \left[\frac 32(q-2),\mu\right]$                &$\theta\in \left[\frac 32(q-2),\mu\right]$    \\
\cdashline{2-3}[1pt/1pt]\cdashline{5-6}[1pt/1pt]
                     & well--                                           & well--                                             &                                  &well--                                                    &well--                                         \\
                     & posedness                                        & posedness                                          &                                  &posedness                                                 &posedness                                        \\
                     &in $H^{1,\rho,\theta}$ for                        & in $H^{1,\rho,\theta}$ for                         &                                  &in $H^{1,\rho,\theta}$ for                                &in $H^{1,\rho,\theta}$ for                    \\
                     &$\rho\in   (2(p-2),m]$                            &$\rho\in   (2(p-2),m]$                              &                                  &$\rho\in   (2(p-2),m]$                                    &$\rho\in   (2(p-2),m]$                        \\
                     &$\theta\in [6,6\vee\mu]$                          &$\theta\in (6,\mu]$                                 &                                  &$\theta\in \left(\frac 32(q-2),\mu\right]$                &$\theta\in \left(\frac 32(q-2),\mu\right]$    \\
\hline
\end{tabular}
\end{center}
\end{table}
\setlength{\tabcolsep}{1pt}
\renewcommand{\arraystretch}{0.1}
\setlength{\arrayrulewidth}{0.7pt}
\begin{table}
\begin{center}\label{Tables7-8}
\begin{tabular}{|>{\tiny}l|>{\tiny}c|>{\tiny}c|>{\tiny}c|>{\tiny}c|>{\tiny}c|}
\multicolumn{6}{>{\normalsize}c}{\sc Table 7. \rm Further results when $N=5$  and $\essinf_\Omega \alpha, \essinf_{\Gamma_1}\beta>0$}\\
\multicolumn{6}{>{\normalsize}c}{}\\
\multicolumn{6}{>{\normalsize}c}{}\\
\multicolumn{6}{>{\normalsize}c}{}\\
\hline
                                 &\raisebox{1.5ex}[12pt]{\tiny{$2\le q<4$}}\,\vline\,\raisebox{1.5ex}[12pt]{\tiny{$2\le q<4$}}   &  \raisebox{1.5ex}[12pt]{\tiny{$4=q<\mu$}}                            & \raisebox{1.5ex}[12pt]{\tiny{$4=q=\mu$}}                 & \raisebox{1.5ex}[12pt]{\tiny{$4<q<1+ 4/\mu'$}}                                  & \raisebox{1.5ex}[12pt]{\tiny{$4<q=1+4/\mu'$}}            \\
\hline
                     &                                                  &                                                    &                                  &local                                                     &                                               \\
                     &                                                  &                                                    &                                  &existence                                                 &                                               \\
                     &                                                  &                                                    &                                  &in $H^{1,\rho,\theta}$ for                                &                                               \\
                     &                                                  &                                                    &                                  &$\rho\in\left[\frac {10}3,\frac{10}3\vee m\right]$        &                                               \\
                     &                                                  &                                                    &                                  &$\theta\in [4,\mu]$                                       &                                               \\
\cdashline{5-5}[1pt/1pt]
                     &                                                  &                                                    &                                  &global                                                    &                                               \\
                     &                                                  &                                                    &                                  &existence                                                 &                                               \\
                     &                                                  &                                                    &                                  &in $H^{1,\rho,\theta}$ for                                &                                               \\
                     &                                                  &                                                    &                                  &$\rho\in\left[\frac {10}3,\frac{10}3\vee m\right]$        &                                               \\
                     &                                                  &                                                    &                                  &$\theta\in [q,\mu]$                                       &                                               \\
\cdashline{5-5}[1pt/1pt]
$2\le p\le \frac 83$ &                                                   &existence--                                         &existence--                       &existence--                                               &existence--                                    \\
                     &                                                  &uniqueness                                          &uniqueness                        &uniqueness                                                &uniqueness                                       \\
                     &                                                  &in $H^{1,\rho,\theta}$ for                          &in $H^{1,\rho,2}$ for             &in $H^{1,\rho,\theta}$ for                                &in $H^{1,\rho,\theta}$ for                       \\
                     &                                                  &$\rho\in\left[\frac {10}3,\frac{10}3\vee m\right]$ &$\rho\in\left[\frac {10}3,\frac{10}3\vee m\right]$&$\rho\in\left[\frac {10}3,\frac{10}3\vee m\right]$&$\rho\in\left[\frac {10}3,\frac{10}3\vee m\right]$                        \\
                     &                                                  &$\theta\in [4,\mu]$                                 &                                  &$\theta\in [2(q-2),\mu]$                                  &$\theta\in [2(q-2),\mu]$    \\
\cdashline{3-3}[1pt/1pt]\cdashline{5-6}[1pt/1pt]
                     & well--                                           & well--                                             &                                  &well--                                                    &well--                                                 \\
                     & posedness                                        & posedness                                          &                                  &posedness                                                 &posedness                                               \\
                     &in $H^{1,\rho,\theta}$ for                        & in $H^{1,\rho,\theta}$ for                         &                                  &in $H^{1,\rho,\theta}$ for                                &in $H^{1,\rho,\theta}$ for                             \\
                     &$\rho\in   \left[\frac {10}3,\frac{10}3\vee m\right]$&$\rho\in\left[\frac {10}3,\frac{10}3\vee m\right]$&                                  &$\rho\in\left[\frac {10}3,\frac{10}3\vee m\right]$       &$\rho\in   \left[\frac {10}3,\frac{10}3\vee m\right]$                        \\
                     &$\theta\in [4,4\vee\mu]$                          &$\theta\in (4,\mu]$                                 &                                  &$\theta\in (2(q-2),\mu]$                                  &$\theta\in (2(q-2),\mu]$    \\
\hline
                     &                                                  &                                                    &                                  &local                                                     &                  \\
                     &existence                                         &existence                                           &  existence                       &existence                                                 &                 existence                                             \\
                     &in $H^{1,\rho,\theta}$ for                        &in $H^{1,\rho,\theta}$ for                          &in $H^{1,\rho,2}$ for             &in $H^{1,\rho,\theta}$ for                                &in $H^{1,\rho,\theta}$ for                                       \\
                     &$\rho\in\left[\frac {10}3,\frac{10}3\vee m\right]$&$\rho\in\left[\frac {10}3,\frac{10}3\vee m\right]$  &$\rho\in\left[\frac {10}3,\frac{10}3\vee m\right]$&$\rho\in\left[\frac {10}3,\frac{10}3\vee m\right]$    &     $\rho\in   \left[\frac {10}3,\frac{10}3\vee m\right]$\\
$\frac83<p<\frac{10}3$         &$\theta\in [4,4\vee\mu]$                          &$\theta\in [4,\mu]$                                 &                                  &$\theta\in [4,\mu]$                                       &                 $\theta\in [2(q-2),\mu]$                                                                                    \\
\cdashline{5-5}[1pt/1pt]
                     &                                                  &                                                    &                                  &global                                                    &                                               \\
                     &                                                  &                                                    &                                  &existence                                                 &                                               \\
                     &                                                  &                                                    &                                  &in $H^{1,\rho,\theta}$ for                                &                                               \\
                     &                                                  &                                                    &                                  &$\rho\in\left[\frac {10}3,\frac{10}3\vee m\right]$        &                                               \\
                     &                                                  &                                                    &                                  &$\theta\in [q,\mu]$                                       &                                               \\
\hline
                     &                                                  &                                                    &                                  &local                                                     &                                               \\
                     &existence                                         &existence                                           &  existence                       &existence                                                 &                 existence                                             \\
                     &in $H^{1,\rho,\theta}$ for                        &in $H^{1,\rho,\theta}$ for                          &in $H^{1,\rho,2}$ for             &in $H^{1,\rho,\theta}$ for                                &in $H^{1,\rho,\theta}$ for                                       \\
                     &$\rho\in\left[\frac {10}3, m\right]$              &$\rho\in\left[\frac {10}3,m\right]$                 &$\rho\in\left[\frac {10}3,m\right]$&$\rho\in\left[\frac {10}3,m\right]$                      &$\rho\in\left[\frac {10}3, m\right]$\\
$\frac{10}3=p\le m$  &$\theta\in [4,4\vee\mu]$                          &$\theta\in [4,\mu]$                                 &                                  &$\theta\in [4,\mu]$                                       &$\theta\in [2(q-2),\mu]$\\
\cdashline{5-5}[1pt/1pt]
                     &                                                  &                                                    &                                  &global                                                    &                                               \\
                     &                                                  &                                                    &                                  &existence                                                 &                                               \\
                     &                                                  &                                                    &                                  &in $H^{1,\rho,\theta}$ for                                &                                               \\
                     &                                                  &                                                    &                                  &$\rho\in\left[\frac {10}3, m\right]$                      &                                               \\
                     &                                                  &                                                    &                                  &$\theta\in [q,\mu]$                                       &                                               \\
\hline
                     &local                                             &local                                               &local                             &local                                                     &local                                            \\
                     &existence                                         &existence                                           &existence                         &existence                                                 &existence                                                   \\
                     &in $H^{1,\rho,\theta}$ for                        &in $H^{1,\rho,\theta}$ for                          &in $H^{1,\rho,2}$ for             &in $H^{1,\rho,\theta}$ for                                &in $H^{1,\rho,\theta}$ for                         \\
                     &$\rho\in\left[\frac{10}3,m\right]$                &$\rho\in\left[\frac{10}3,m\right]$                  &$\rho\in\left[\frac{10}3,m\right]$&$\rho\in\left[\frac{10}3,m\right]$                        &$\rho\in\left[\frac{10}3,m\right]$       \\
$\frac{10}3<p<1+\frac{10}{3m'}$    &$\theta\in [4,4\vee\mu]$                          &$\theta\in [4,\mu]$                                 &                                  &$\theta\in [4,\mu]$                                       &$\theta\in [2(q-2),\mu]$    \\
\cdashline{2-6}[1pt/1pt]
                     &global                                            &global                                              &global                            &global                                                    &global                                           \\
                     &existence                                         &existence                                           &existence                         &existence                                                 &existence                                                   \\
                     &in $H^{1,\rho,\theta}$ for                        &in $H^{1,\rho,\theta}$ for                          &in $H^{1,\rho,2}$ for             &in $H^{1,\rho,\theta}$ for                                &in $H^{1,\rho,\theta}$ for                         \\
                     &$\rho\in[p,m]$                                    &$\rho\in[p,m]$                                      &$\rho\in[p,m]$                    &$\rho\in[p,m]$                                            &$\rho\in[p,m]$                            \\
                    &$\theta\in [4,4\vee\mu]$                           &$\theta\in [4,\mu]$                                 &                                  &$\theta\in [q,\mu]$                                       &$\theta\in [2(q-2),\mu]$    \\
\hline
&\multicolumn{5}{>{\tiny}c|}{} \\
$\frac{10}3<p=1+\frac{10}{3m'}$&\multicolumn{5}{>{\tiny}c|}{no results} \\
&\multicolumn{5}{>{\tiny}c|}{} \\
\hline
\end{tabular}

\bigskip

\setlength{\tabcolsep}{5.8pt}
\renewcommand{\arraystretch}{0.7}
\setlength{\arrayrulewidth}{0.7pt}
\begin{tabular}{|>{\tiny}c|>{\tiny}c|>{\tiny}c|>{\tiny}c|>{\tiny}c|}
\multicolumn{5}{>{\normalsize}c}{\sc Table 8. \rm Further results when $N\ge 6$  and $\essinf_\Omega \alpha, \essinf_{\Gamma_1}\beta>0$}\\
\multicolumn{5}{>{\normalsize}c}{}\\
\hline
                                 &\raisebox{1.5ex}[12pt]{\tiny{$2\le q\le 1+\rgamma/2$}}   &  \raisebox{1.5ex}[12pt]{\tiny{$1+\rgamma/2<q\le\rgamma$}}                            & \raisebox{1.5ex}[12pt]{\tiny{$\rgamma<q<1+\rgamma/\mu'$}}                 & \raisebox{1.5ex}[12pt]{\tiny{$\rgamma<q=1+\rgamma/\mu'$}}                                            \\
\hline
                                 &well--posedness                                   &                                         & local  existence                                                    &                                                                                \\
$2\le p\le 1+\romega/2$          &in $H^{1,\rho,\theta}$                            &                                         & in $H^{1,\rho,\theta}$                                        &                                                                                \\
                                 &$\rho\in[\romega,\romega\vee m]$                  &                                         & $\rho\in[\romega,\romega\vee m]$       &                                                                                \\
                                 &$\theta\in[\rgamma,\rgamma\vee\mu]$               &                                         & $\theta\in[\rgamma,\mu]$                &                                                                                \\
\cline{1-2} \cdashline{4-4}[1pt/1pt]
                                 &\multicolumn{2}{>{\tiny}c|}{\qquad \qquad\qquad \qquad existence}                           &global existence                                  &                                                                                     \\
$1+\romega/2<p\le\romega$        &\multicolumn{2}{>{\tiny}c|}{\qquad \qquad\qquad \qquad in $H^{1,\rho,\theta}$}              & in $H^{1,\rho,\theta}$                           &                                                                                    \\
                                 &\multicolumn{2}{>{\tiny}c|} {\qquad \qquad\qquad \qquad $\rho\in[\romega,\romega\vee m]$}   &$\rho\in[\romega,\romega\vee m]$&                                                                        \\
                                 & \multicolumn{2}{>{\tiny}c|}{\qquad \qquad\qquad \qquad $\theta\in[\rgamma,\rgamma\vee\mu]$}&$\theta\in[q,\mu]$        &                                                                           \\
\cline{1-4}
                                 &  \multicolumn{2}{>{\tiny}c|}{local existence}                                               &local existence                   &                                                               \\
                                 &  \multicolumn{2}{>{\tiny}c|}{in $H^{1,\rho,\theta}$}                                        & in $H^{1,\rho,\theta}$                         &                              \\
                                 &\multicolumn{2}{>{\tiny}c|} {$\rho\in[\romega,m]$}                                           &$\rho\in[\romega,m]$&                                                                        \\
$\romega<p<1+\romega/m'$         & \multicolumn{2}{>{\tiny}c|}{$\theta\in[\rgamma,\rgamma\vee\mu]$}                            &$\theta\in[\rgamma,\mu]$        &                                                                           \\
\cdashline{2-4}[1pt/1pt]
                                 &\multicolumn{2}{>{\tiny}c|}{global existence}                                                &global   existence                         &          \\
                                 &\multicolumn{2}{>{\tiny}c|}{in $H^{1,\rho,\theta}$ }                                         & in $H^{1,\rho,\theta}$          &                                      \\
                                 &\multicolumn{2}{>{\tiny}c|} {$\rho\in[p,m]$}                                                 &$\rho\in[p,m]$&                                                                        \\
                                 & \multicolumn{2}{>{\tiny}c|}{$\theta\in[\rgamma,\rgamma\vee\mu]$}                            &$\theta\in[q,\mu]$        &                                                                           \\
\cline{1-4}
&\multicolumn{4}{>{\tiny}c|}{} \\
$\romega<p=1+\romega/m'$&\multicolumn{4}{>{\tiny}c|}{\qquad\qquad \qquad\qquad \qquad\qquad \qquad\qquad \qquad\qquad \qquad\qquad\qquad\qquad no results} \\
&\multicolumn{4}{>{\tiny}c|}{} \\
\hline
\end{tabular}

\end{center}
\end{table}
\def\cprime{$'$}
\providecommand{\bysame}{\leavevmode\hbox to3em{\hrulefill}\thinspace}
\providecommand{\MR}{\relax\ifhmode\unskip\space\fi MR }
\providecommand{\MRhref}[2]{%
  \href{http://www.ams.org/mathscinet-getitem?mr=#1}{#2}
}
\providecommand{\href}[2]{#2}

\end{document}